\newtheorem{theorem}{Theorem}[section]
\newtheorem{theorems}{Theorem}[section]
\newtheorem{corollary}{Corollary}[section]
\newtheorem{conjecture}{Conjecture}[section]
\newtheorem{proposition}{Proposition}[section]
\newtheorem{lemma}{Lemma}[section]
\theoremstyle{definition}
\newtheorem{remark}{Remark}[section]
\numberwithin{equation}{section}
\newcommand{\me}{\mathrm{e}}
\newcommand{\mi}{\mathrm{i}}
\newcommand{\md}{\mathrm{d}}
\begin{document}

\title{The Degenerate Third Painlev\'{e} Equation:
Complete Asymptotic Classification of Solutions
in the Neighbourhood of the Regular Singular Point\footnote{In the event
of any discrepancies between the published and preprint versions of the
paper, the reader is advised to refer to the preprint version.}}
\author{A.~V.~Kitaev\thanks{\texttt{E-mail: kitaev@pdmi.ras.ru}} \, and \,
A.~Vartanian\\
Steklov Mathematical Institute, Fontanka 27, St. Petersburg 191023, Russia}
\date{August 15, 2025}
\maketitle
\begin{abstract}
\noindent
We give a classification for the small-$\tau$ asymptotic behaviours of solutions to the degenerate third Painlev\'e
equation,
\begin{equation*}
u^{\prime \prime}(\tau) \! = \! \frac{(u^{\prime}(\tau))^{2}}{u(\tau)} \! - \! \frac{u^{\prime}(\tau)}{\tau}
\! + \! \frac{1}{\tau} \! \left(-8 \varepsilon (u(\tau))^{2} \! + \! 2ab \right) \! + \! \frac{b^{2}}{u(\tau)},\quad
\varepsilon=\pm1,\quad\varepsilon b>0,\quad
a\in\mathbb{C}\setminus\mi\mathbb{Z},
\end{equation*}
in terms of the monodromy data of a $2\times2$ matrix linear ODE whose isomonodromy deformations they describe.
We also study the complete asymptotic expansions of the solutions.
\vspace{0.30cm}

\textbf{2020 Mathematics Subject Classification.} 33E17, 34M30, 34M35, 34M40,

34M55, 34M56

\vspace{0.23cm}

\textbf{Abbreviated Title.} The Degenerate Third Painlev\'e Equation

\vspace{0.23cm}

\textbf{Key Words.} Painlev\'e equation, monodromy data, asymptotic expansion, B\"acklund

transformations, generating function

\end{abstract}
\pagebreak
\tableofcontents
\clearpage
\section{Introduction} \label{sec:Introduction}
We consider the degenerate third Painlev\'e equation in the following form,
\begin{equation}\label{eq:dp3}
u^{\prime \prime}(\tau) \! = \! \frac{(u^{\prime}(\tau))^{2}}{u(\tau)} \! - \! \frac{u^{\prime}(\tau)}{\tau}
\! + \! \frac{1}{\tau} \! \left(-8 \varepsilon (u(\tau))^{2} \! + \! 2ab \right) \! + \! \frac{b^{2}}{u(\tau)},\qquad
\varepsilon=\pm1,\quad\varepsilon b>0,\quad a\in\mathbb{C}.
\end{equation}
The parameters $\varepsilon$ and $b\in\mathbb{R}$ can be fixed, as particular real numbers, by a scaling
transformation of the dependent and independent variables, while the parameter $a$ coincides with the formal monodromy
of the associated Fuchs-Garnier pair \cite{KitVar2004}, and, thus, plays a more substantial role; we call it the
parameter of formal monodromy. Due to the classification given in \cite{OKSO2006}, equation~\eqref{eq:dp3} is
referred to as the $D_7$ case of the third Painlev\'e equation. In this paper, however, we do not make reference to
the space of initial values, and consider equation~\eqref{eq:dp3} from the point of view of isomonodromy deformation
theory. The latter point of view suggests calling equation~\eqref{eq:dp3} the $A_3$ case of the third Painlev\'e
equation (see \cite{KitVar2023}, Section 5).\footnote{This is in accordance with the classification of singularities
of normal forms of singular cubic surfaces in terms of Dynkin diagrams \cite{Sakamaki2010}. Such cubic surfaces
appear as the result of the application of the standard projectivization procedure~\cite{KitVar2023} to the monodromy
manifolds of Painlev\'e equations.}

In all of our works, we use the name ``degenerate third Painlev\'e equation'', because, according to the canonical
classification of the Painlev\'e equations given by Ince \cite{Ince}, it is a special
case of the third Painlev\'e equation that can be obtained from the complete third Painlev\'e equation by a
double-scaling limit \cite{KitVar2004}; this fact does not depend on the
methodology used to study the equation, and, at the same time, makes reference to the theory of special
functions.\footnote{\label{foot:D8} There is one more case of the third Painlev\'e equation that can be obtained
by double-scaling limits of the complete and degenerate third Painlev\'e equations; in the classification of
\cite{OKSO2006}, it is called the $D_8$ case of the third Painlev\'e equation. From our point of view, it can be
referred to as a ``doubly-degenerate'' third Painlev\'e equation. The latter equation is related to a special case
of the complete third Painlev\'e equation via a simple quadratic transformation; therefore, the analytic and
asymptotic properties of its solutions can be obtained from the corresponding properties of the complete third
Painlev\'e equation.}


The fact that the general solution to equation~\eqref{eq:dp3} is recognized as a special function used by
a wide range of specialists in mathematical physics~\cite{buckmil2024,GIKMO} suggests the need for a detailed
description of the properties of this function.\footnote{\label{foot:refs}
Here, we cite only the two recent papers~\cite{buckmil2024,GIKMO}; many more references can be found in
\cite{KitVar2004,KitVarZapiski2024}.}

We decided to begin this description by considering one of the most rudimentary
analytical questions, namely, the asymptotic description of the degenerate third Painlev\'e function in the
neighbourhood of the regular singular point of equation~\eqref{eq:dp3}. Our attention to this question was drawn by
B. I. Suleimanov, who realized that, although the small-$\tau$ asymptotics for the general solution of
equation~\eqref{eq:dp3} is obtained in \cite{KitVar2004}, one cannot extract from it the asymptotics of the solution
that appeared in his work~\cite{Suleimanov2017}. Actually, from the formal point of view, it is not possible to obtain
the answer to his question by simply referring to Theorem 3.4 of \cite{KitVar2004}, because, for the Suleimanov
solution, the leading term of our asymptotic formula vanishes, and the estimate for the correction term does not
allow one to calculate the full set of the monodromy data corresponding to this solution from the remaining terms
of our asymptotic formula. To thoroughly study this case, we wrote the two papers~\cite{KitSIGMA2019} and
\cite{KitVar2023}; although the Suleimanov case is now resolved, there are other solutions for which the absence of
a proper estimate for the correction term creates a similar problem.

There is another problem with our small-$\tau$ asymptotic formula: the set of the general solutions considered in
Theorem 3.4 of \cite{KitVar2004} does not include all solutions of equation~\eqref{eq:dp3}. This fact is
straightforward to observe, because the real part of the parameter $\rho$ describing the branching of solutions
$u(\tau)$ at $\tau=0$ obeys the restriction $|\mathrm{Re}\,\rho|<1/2$; therefore, for those solutions with
$|\mathrm{Re}\,\rho|=1/2$, the asymptotic formulae are absent, even though the corresponding set of the monodromy
data depends on three real parameters.
Furthermore, for $\mathrm{Re}\,\rho\to\pm1/2$, in order to achieve a good approximation via the asymptotics obtained
in Theorem 3.4 of \cite{KitVar2004}, one has to consider this approximation in an ever-shrinking neighbourhood of
the origin, which, ultimately, is numerically unattainable. The standard paradigm for dealing with this problem
would be to invoke the correction terms, which were not considered in Theorem 3.4 of \cite{KitVar2004}; in fact,
an infinite number of such correction terms would be needed if $\rho$ is not bounded away from $\pm1/2$.

In Theorem 3.4 of \cite{KitVar2004}, there are additional restrictions on the monodromy data, namely,
$|\mathrm{Im}\,a|<1$ and $g_{11}g_{22}\neq0$ (see Section~\ref{sec:Baecklund}). The first of these restrictions
does not appear to be ``crucial'', because it is clear that the asymptotics can be extended via B\"acklund
transformations shifting the parameter $a\to a\pm\mi$. The following questions, however, remain to be answered:
(i) should the application of the B\"acklund transformations be left as an iterative
procedure;\footnote{\label{foot:itterativeBaecklund} If $a$ satisfies the condition $n<\mathrm{Im}\,a<n+1$
for $n\in\mathbb{Z}$, then $|n|$ B\"acklund transformations are required in order to find a desired parametrization.}
(ii) can the parametrization of the asymptotics via the monodromy data be presented in closed form; and
(iii) how can asymptotics be constructed for the cases $\mathrm{Im}\,a=n$, $n\in\mathbb{N}$, or $g_{11}g_{22}=0$?

As mentioned in \cite{KitVar2023}, there are solutions of equation~\eqref{eq:dp3} which depend on a parameter that
is ``concealed'' in the $n$th term of the asymptotic expansion, so that the leading term of asymptotics
does not allow for the unique specification of such a solution.

In this paper, we address all of the questions outlined above. Before we started working on this paper,
we amassed considerable experience by applying the asymptotics obtained in Theorem 3.4 of \cite{KitVar2004} to our
study \cite{KitVar2023} of algebroid solutions of equation~\eqref{eq:dp3} for $a=0$, and, as a result, presented
in Appendix B of \cite{KitVar2023} a more convenient version of the asymptotic formulae (equivalent to those in
\cite{KitVar2004}), together with the asymptotics of the auxiliary \emph{mole function} $\varphi(\tau)$
(see Section~\ref{sec:Baecklund} for its definition).
In preparation for this work, we wrote the paper \cite{KitVarZapiski2024}, where we: (i) removed the
restriction $g_{11}g_{22}\neq0$; (ii) simplified the notation (without corrections of the results) of Theorem 3.5
of \cite{KitVar2004} for solutions $u(\tau)$ possessing logarithmic behaviour as $\tau\to0$; (iii) included
asymptotics for the function $\varphi(\tau)$ in the logarithmic case; and (iv) presented a numerical visualization of
their asymptotics for $a=0$.

After the preliminaries delineated above, we are ready for the classification of the small-$\tau$ asymptotic
behaviour of the degenerate third Painlev\'e transcendent. This classification is based on three analytic
ingredients: our parametrization for the leading term of the small-$\tau$ asymptotics of the general solution,
$u(\tau)$, and the function $\varphi(\tau)$ in terms of the monodromy data \cite{KitVar2023,KitVarZapiski2024};
B\"acklund transformations; and complete asymptotic expansions for $u(\tau)$ at the origin.

Why do we claim that our classification of solutions of equation~\eqref{eq:dp3} via their small-$\tau$
asymptotics is complete? The answer to this question is based on the isomonodromy deformation method: for every
point of the monodromy manifold (see Section~\ref{sec:Baecklund}), we proved the existence of the solution, and
obtained the corresponding asymptotics as $\tau\to0$. The proof is based on the justification scheme for the
isomonodromy deformation method suggested in \cite{Kit1989} and our results for the small-$\tau$ asymptotics
obtained in \cite{KitVar2004,KitVarZapiski2024}.

The $\pmb{\tau}$-functions for the Painlev\'e equations, since their appearance in the Jimbo-Miwa paper
\cite{JM-2-1981}, have proved to be very important objects in applications related to integrable models in
quantum field theory and random matrix theory. The reader may, therefore, pose the following---natural---question:
why is the $\pmb{\tau}$-function not considered in this paper? Our answer to this question is simple: a paper based
on the isomonodromy approach to the $\pmb{\tau}$-function will appear in the not-too-distant future! It is in this
latter paper, and not the present one, that we study the connection problem for the $\pmb{\tau}$-function of the
degenerate third Painlev\'e equation; otherwise, since a simple classification of its small-$\tau$ asymptotic
behaviour can be gleaned straightforwardly from the results obtained in this paper and will not, therefore,
supplement additional knowledge about this function, its inclusion in the present work would only lead to inflate
the pagination count.

Another topic that isn't included in this work, although it is closely related to the study of small-$\tau$
asymptotics of solutions of equation~\eqref{eq:dp3}, is the description of the properties of algebroid solutions.
The construction of asymptotics for algebroid solutions does not present any difficulties, since such solutions
correspond to rational values of the branching parameter $\rho$, and asymptotic formulae for the general solution
of equation~\eqref{eq:dp3} are applicable to them without any additional restrictions. At the same time, our study
of algebroid solutions of equation~\eqref{eq:dp3} for $a=0$ in \cite{KitVar2023} shows that these solutions possess
interesting properties that are worthy of further investigation.

Here, in addition to the degenerate third Painlev\'e transcendent, we also give the corresponding results for its
associated mole function, $\varphi(\tau)$, which was introduced by us in \cite{KitVar2023}. Analogous functions are
not a novelty in the theory of Painlev\'e equations; in fact, without endowing them with any special name(s),
such functions, which, in our notation, are equivalent to $\me^{\mi \varphi (\tau)}$, were introduced in
\cite{JM-2-1981} for all the Painlev\'e equations, with the exceptions of the first and the degenerate third
Painlev\'e equations.
Such functions play an important role in the isomonodromy deformations of $2\times2$ matrix
linear ODEs, since it is these functions, together with the corresponding Painlev\'e functions, which define
the isomonodromy deformations of these linear ODEs. Unlike the function $u(\tau)$, the function
$\me^{\mi\varphi(\tau)}$ depends on an additional non-vanishing multiplicative parameter, which, in this work, we
express in terms of the monodromy data; this fact allows us to calculate connection formulae for asymptotics of some
interesting integrals related to $u(\tau)$.\footnote{\label{foot:integrals} See, for example, \cite{KitVar2019}.}
The mole function does not possess the Painlev\'e property, so that its analytic continuation depends on the path of
continuation; this ambiguity, however, is defined by a period of the exponential function, that is, $2\pi\mi k$,
$k\in\mathbb{Z}$. Thus, one can also write connection formulae for asymptotics of $\varphi(\tau)\!\!\!\mod\!(2\pi)$.
The function $\varphi(\tau)$ can be defined as an integral of the function $u(\tau)$ (see Section~\ref{sec:Baecklund},
equation~\eqref{eq:varphi}), and the calculation of the
parameter $k$ as a function of the monodromy data of the solution $u(\tau)$ for some special paths
of integration, e.g., along the real axis, may represent an interesting technical problem. The name ``mole function''
for $\varphi(\tau)$ appeared as our emotional reaction to observing how much the graph of the function
$\varphi(\tau)$, weaving up, and then down, the real axis, resembled the trajectory of a mole’s movements (see
Section 6 of \cite{KitVar2023} for the corresponding details).

To finalize the general part of the Introduction, it is worth mentioning that, to the best of our knowledge
at the present time, the classification of small-$\tau$ asymptotics in terms of the monodromy data of
associated $2\times2$ matrix linear ODEs has not yet been completed for all of the Painlev\'e equations which have
a regular singular point, that is, the sixth, all versions of the fifth, and the third Painlev\'e equations. At the
same time, though, all the ingredients that we use in this paper are well known for the aforementioned Painlev\'e
equations, so that, with their help, such a classification for these Painlev\'e equations can be completed without
having to create any additional technical tools.

In Section~\ref{sec:Baecklund}, we summarize all the facts that are necessary in order to understand the
results presented in this paper; in particular, the definitions of the function $\varphi(\tau)$, the monodromy
manifold, and the B\"acklund transformations. Section~\ref{sec:general} concerns the asymptotics of the general
solutions which are valid in the neighbourhood of $\rho=1/2$.
In Sections~\ref{sec:special cases}--\ref{sec:Meromorphic}, we present asymptotic descriptions of the solutions
whose asymptotics are not described by the formulae for the general solutions. In
Appendices~\ref{app:sec:full0expansion}, \ref{app:sec:full0expansionLog}, and \ref{app:sec:full0expansionLog2},
we study various features of the complete asymptotic expansions of the solutions; in particular, we develop the
technique of generating functions for these expansions.\footnote{\label{foot:gengenfunctions} Such generating
functions, with minor modifications, can be constructed for all the Painlev\'e equations.}
These generating functions not only allow one to explicitly calculate the coefficients of the expansions,
but are also very helpful for the study of the special solutions in
Sections~\ref{sec:special cases}--\ref{sec:Meromorphic}. Distinguished amongst the results obtained in these
appendices we mention the new type of small-$\tau$ asymptotic formula for $u(\tau)$ obtained in
Appendix~\ref{app:subsec:asympt-universal} that is uniform with respect to the branching parameter $\rho$;
we also explicitly obtain the first correction term of this asymptotic formula, and our calculations show
that one can develop this uniform asymptotics into a complete asymptotic expansion.

During the course of the implementation of our original plan to describe all solutions of equation~\eqref{eq:dp3}
by relating their small-$\tau$ asymptotics to the monodromy data, we exceeded both the time and scope of our
intended presentation. Therefore, some of the technical issues which we had planned on addressing in this paper have
been moved to a follow-up work in which: (i) a complete classification of the small-$\tau$ asymptotics
of solutions for $a\in\mi\mathbb{Z}$ is given; (ii) additional results regarding the description of the poles and
zeros considered in Section~\ref{sec:poles} and in Section 4 of \cite{KitVarZapiski2024},  including the asymptotics
of the corresponding expansion parameters, are obtained; and (iii) a numerical visualization of the asymptotics
derived in this work is presented.

As the paper is relatively long and contains a variety of results and ideas, we provide guidance on how interested
readers can use the results of this work. Our presumption is that there are two categories of readers of this paper:
(i) those who will use our results to solve specific problems of mathematical physics; and
(ii) specialists working in the field of Painlev\'{e} equations and asymptotics.

How can the results of this paper be used by those who have obtained, whilst studying a specific mathematical model,
a particular solution (or a family of solutions) of equation~\eqref{eq:dp3}? Such solutions are distinguished by
properties that are inherited from the mathematical model being considered. The following properties of the solutions
will be helpful in applying the results of this paper:
\begin{enumerate}
\item
for a solution holomorphic at $\tau=0$, see Section~\ref{sec:Meromorphic};
\item
for a solution having an infinite sequence of poles accumulating at $\tau=0$, see Section~\ref{sec:poles};
\item
for a solution having an infinite sequence of zeros accumulating at $\tau=0$, refer to Section 4 of
\cite{KitVarZapiski2024};
\item
the small-$\tau$ asymptotics of the solution of interest are obtained by the reader in terms of the parameters of
the mathematical model being considered; see Sections~\ref{sec:general}--\ref{sec:logarithm} or
Appendix~\ref{app:subsec:error-correction-terms}: in these sections find asymptotics for $u(\tau)$ with the same
$\tau$-dependence as obtained, and equate the parameters of the model under investigation with the monodromy data
provided in this paper.

The $\tau$-dependence of the asymptotics in Section~\ref{sec:general} and the asymptotics presented in
Section~\ref{sec:special cases} (item {\bf(1)} of Theorems~\ref{th:Asympt0-rho-eq-1pn+ia2}
and \ref{th:Asympt0-rho-eq-1pn-ia2}) coincide. In Remark~\ref{rem:uas-pq1q2} we explain how the reader can make the
distinction between these cases.
By following the above steps, one can obtain expressions for the original parameters via the quadratic products of
the monodromy data and use these formulae to find large-$\tau$ asymptotics of the solution by employing the results
presented in Appendix C of \cite{KitVar2023}.

The asymptotics of the mole function for the solution presented in Appendix~\ref{app:subsec:error-correction-terms}
is given in Appendix B of \cite{KitVar2023}.
If the mole function does not appear in the model under investigation, it does not create any difficulties in finding
the monodromy parametrization of the function $u(\tau)$, because it depends on an additional monodromy parameter
which does not affect the monodromy parametrization of $u(\tau)$.
\end{enumerate}

The starting point for reading this paper by experts in Painlev\'e equations and asymptotics will likely be the
monodromy data defined in Section 2.
In this paper we present the monodromy data as co-ordinates of an 8-component vector whose first co-ordinate is $a$.
The three successive co-ordinates, $s_{0}^{0}$, $s_{0}^{\infty}$, and $s_{1}^{\infty}$, called the Stokes multipliers,
play a crucial role in our classification of the small-$\tau$ asymptotic behavior of solutions to
equation~\eqref{eq:dp3}:
\begin{enumerate}
\item
if $s_0^0 = \pm 2 \mi$, then the asymptotics contain logarithmic terms (see Section 5);
\item
if $s_0^{\infty}s_1^{\infty}=0$,\footnote{In this case, note that, as follows from equation~\eqref{eq:monodromy:s}
(see Section~\ref{sec:Baecklund} below),
$s_0^0\neq\pm 2\mi$, since it is assumed in the paper that $\mi a\notin\mathbb{Z}$.} then there exist special
solutions, i.e., solutions depending on one complex parameter, with power-like asymptotic behaviour
(see Section~\ref{sec:special cases}).
The case $s_0^{\infty}=s_1^{\infty}=0$, which is a particular sub-case of the solutions studied in
Section~\ref{sec:special cases}, is considered separately in Section~\ref{sec:Meromorphic}; and
\item
the general case $s_0^{\infty}s_1^{\infty}(s_0^0\pm2\mi)\neq0$ is considered in Section~\ref{sec:general} and
Appendix~\ref{app:subsec:error-correction-terms}. In Appendix~\ref{app:subsec:error-correction-terms}, we present
a generic asymptotic expansion for general solutions of the Painlev\'e equations having a regular singular point
at $\tau=0$.
In Section 3, we propose another formula for the leading term of asymptotics for the general solutions, and in
Appendix~\ref{app:subsec:super-generating-function}, we demonstrate how to construct the corresponding complete
asymptotic expansion. Both asymptotic expansions have overlaping domains of applicability; however, the generic
expansion (Appendix~\ref{app:subsec:error-correction-terms}) ``works better''
{}\footnote{\label{foot:works} An asymptotic formula ``works better'' means that, for the same values of $\tau$,
it better approximates the corresponding solution $u(\tau)$.} when the real part of the branching parameter
$\sigma=4\rho$ is close to $0$, while the asymptotics of Section~\ref{sec:general} ``works better'' when
$\mathrm{Re}\,\sigma\approx2$. For $\mathrm{Re}\,\sigma=2$, the generic asymptotic formula is not applicable,
whilst the asymptotics of Section~\ref{sec:general} is still valid (the situation is reversed for
$\mathrm{Re}\,\sigma=0$).
In Appendix~\ref{app:subsec:asympt-universal}, we propose a unique formula for the
small-$\tau$ asymptotics of $u(\tau)$ that is valid for all admissible values of $\mathrm{Re}\,\sigma\in[-2,2]$.
\end{enumerate}
\section{The Monodromy Manifold and B\"acklund Transformations}\label{sec:Baecklund}
In \cite{KitVar2004}, we introduced a $2\times2$ matrix linear ODE whose isomonodromy deformations are governed by
the pair of functions $(u(\tau),\me^{\mi\varphi(\tau)})$; furthermore, it was shown that $\varphi(\tau)$ solves the
ODE
\begin{equation}\label{eq:varphi}
\varphi{'}(\tau)=\frac{2a}{\tau}+\frac{b}{u(\tau)},
\end{equation}
where $u(\tau)$ is a solution of equation~\eqref{eq:dp3}.\footnote{\label{foot:comment-def-varphi} The function
$\varphi(\tau)$ is an important ingredient of the theory of the degenerate third Painlev\'e transcendent: the
significance of its role is discussed in an upcoming paper. In \cite{KitVar2023}, the asymptotic properties of
$\varphi(\tau)$ for a particular algebroid solution of equation \eqref{eq:dp3} were analysed; in fact, in
\cite{KitVar2023}, we coined the name \emph{mole function} for $\varphi(\tau)$.}
The pair of functions $(u(\tau), \me^{\mi\varphi(\tau)})$ can be uniquely parametrized via the co-ordinates of
the points of the monodromy manifold, so that the mole function, $\varphi(\tau)$,\footref{foot:comment-def-varphi}
is defined up to $2\pi n$, $n\in\mathbb{Z}$, rather than up to an arbitrary constant of integration, as follows from
equation~\eqref{eq:varphi}.

In \cite{KitVar2004}, we defined a monodromy manifold that can be presented in terms of the monodromy data associated
with a $2\times2$ matrix linear ODE. Consider $\mathbb{C}^{8}$ with
co-ordinates $(a,s_{0}^{0}, s_{0}^{\infty}, s_{1}^{\infty}, g_{11},\linebreak[1]g_{12},g_{21},g_{22})$,
where the parameter of formal monodromy, $a$, the Stokes multipliers, $s_{0}^{0}$, $s_{0}^{\infty}$, and $s_{1}^{\infty}$,
and the elements of the connection matrix, $(G)_{ij} \! =: \! g_{ij}$, $i,j \! \in \! \lbrace 1,2 \rbrace$, are
called the \emph{monodromy data}. These monodromy data are related by the set of algebraic equations
{}\footnote{In terms of the parameter $\me^{\pi a}$, these equations are algebraic.}
\begin{gather}
s_{0}^{\infty}s_{1}^{\infty} \! = \! -1 \! - \! \me^{-2 \pi a} \! - \! \mi s_{0}^{0}
\me^{-\pi a}, \label{eq:monodromy:s} \\
g_{21}g_{22} \! - \! g_{11}g_{12} \! + \! s_{0}^{0}g_{11}g_{22} \! = \! \mi \me^{-\pi a},
\label{eq:monodromy:main} \\
g_{11}^{2} \! - \! g_{21}^{2} \! - \! s_{0}^{0} g_{11} g_{21} \! = \! \mi s_{0}^{\infty}
\me^{-\pi a}, \label{eq:monodromy:s0} \\
g_{22}^{2} \! - \! g_{12}^{2} \! + \! s_{0}^{0} g_{12} g_{22} \! = \! \mi s_{1}^{\infty}
\me^{\pi a}, \label{eq:monodromy:s1} \\
g_{11}g_{22} \! - \! g_{12} g_{21} \! = \! 1. \label{eq:monodromy:detG}
\end{gather}
The system~\eqref{eq:monodromy:s}--\eqref{eq:monodromy:detG} defines an algebraic variety, which we call
the \emph{manifold of the monodromy data}, $\mathscr{M}$. The manifold
$\mathscr{M}$ provides a two-fold parametrization of the set of solutions of the system~\eqref{eq:dp3},
\eqref{eq:varphi},
namely, each solution $(u(\tau),\me^{\mi\varphi(\tau)})$ corresponds to two, and only two, points
$(a,s_0^0,s_0^{\infty},s_1^{\infty},g_{11},g_{12},g_{21},g_{22})$ and
$(a,s_0^0,s_0^{\infty},s_1^{\infty},-g_{11},-g_{12},-g_{21},-g_{22})$ on $\mathscr{M}$, and vice versa.
For the unique parametrization of solutions
of equation~\eqref{eq:dp3} in terms of the monodromy data, one has to identify (glue) points of $\mathscr{M}$ that
correspond to matrices $G$ and $-G$; by doing so, one arrives at the so-called \emph{contracted monodromy manifold}
(see \cite{KitVar2023,KitVarZapiski2023} for details). At the same time, no difficulties are encountered while
addressing the study of the functions $u(\tau)$ and $\me^{\mi\varphi(\tau)}$ with the help of $\mathscr{M}$ in
conjunction with the gluing condition $G\in\mathrm{PSL}(2,\mathbb{C})$. The latter condition is not, in fact,
consequential in those cases where one can fix any representative of $G$ in $\mathrm{PSL}(2,\mathbb{C})$;
for example, in obtaining the connection formulae for asymptotics of solutions. The advantage of the latter
representation for the monodromy manifold is that it simplifies the analysis of special solutions by making it easier
to observe the relationship between the corresponding pair of functions $(u(\tau),\me^{\mi\varphi(\tau)})$ and the
auxiliary $2\times2$ matrix linear ODE.

In the sections that follow, we address the application of B\"acklund transformations for equation~\eqref{eq:dp3}
to the construction of the small-$\tau$ asymptotics of the functions $u(\tau)$ and $\me^{\mi\varphi(\tau)}$.
The B\"acklund transformations for the degenerate third Painlev\'e equation (the formulae equivalent to
equations~\eqref{eq:u+} and \eqref{eq:u-} below) were obtained by Gromak~\cite{Gromak1973}. For our studies, not only do we need these formulae, but we also require the action of
the B\"acklund transformations on the manifold of the monodromy data. This action was obtained in \cite{KitVar2004} by
virtue of the realisation of the B\"acklund transformations via the Schlesinger transformations of an associated
$2\times2$ matrix linear ODE; more precisely, for any solution $(u(\tau),\varphi(\tau))$
of the system~\eqref{eq:dp3}, \eqref{eq:varphi} corresponding to the monodromy data
$(a,s_{0}^{0},s_{0}^{\infty},s_{1}^{\infty},g_{11},g_{12},g_{21},g_{22})$,
the functions (see \cite{KitVar2004}, p. 1198)
\begin{align}
u_+(\tau)&=-\frac{\mi\varepsilon b}{8(u(\tau))^2}\left(\tau(u^\prime(\tau)+\mi b)+(2a\mi-1)u(\tau)\right),
\label{eq:u+}\\
\varphi_+(\tau)&=\varphi(\tau)-\mi\ln\left(-\frac{u(\tau)u_+(\tau)}{\varepsilon b\tau^2}\right),\label{eq:varphi+}
\end{align}
and
\begin{align}
u_-(\tau)&=\frac{\mi\varepsilon b}{8(u(\tau))^2}\left(\tau(u^\prime(\tau)-\mi b)-(2a\mi+1)u(\tau)\right),
\label{eq:u-}\\
\varphi_-(\tau)&=\varphi(\tau)+\mi\ln\left(-\frac{u(\tau)u_-(\tau)}{\varepsilon b\tau^2}\right),\label{eq:varphi-}
\end{align}
solve the system~\eqref{eq:dp3}, \eqref{eq:varphi} for $a=a_{+}:= a+\mi$ and $a=a_{-}:= a-\mi$, respectively.
The monodromy data corresponding to these functions are
\begin{align}
(a,s_{0}^{0},s_{0}^{\infty},s_{1}^{\infty},g_{11},g_{12},g_{21},g_{22})_+=
(a+\mi,-s_{0}^{0},s_{0}^{\infty},s_{1}^{\infty},-\mi g_{11},-\mi g_{12},\mi g_{21},\mi g_{22}),\label{eq:data+}\\
(a,s_{0}^{0},s_{0}^{\infty},s_{1}^{\infty},g_{11},g_{12},g_{21},g_{22})_-=
(a-\mi,-s_{0}^{0},s_{0}^{\infty},s_{1}^{\infty},\mi g_{11},\mi g_{12},-\mi g_{21},-\mi g_{22}).\label{eq:data-}
\end{align}
The transformations \eqref{eq:data+} and \eqref{eq:data-} are mutually inverse, that is,
$(u_+)_-(\tau)=(u_-)_+(\tau)=u(\tau)$ and $(\varphi_+)_-(\tau)=(\varphi_-)_+(\tau)=\varphi(\tau)$. These facts
can be established either by a direct calculation with the help of equation~\eqref{eq:dp3}, or without any
calculations by simply referring to the transformations~\eqref{eq:data+} and \eqref{eq:data-} for the monodromy data.
Note that the direct and inverse B\"acklund transformations differ by a formal conjugation, i.e., the change
$\mi\to-\mi$ (cf. equations~\eqref{eq:u+}, \eqref{eq:varphi+}, and \eqref{eq:data+} with
equations~\eqref{eq:u-}, \eqref{eq:varphi-}, and \eqref{eq:data-}, respectively).

The function $f(\tau):=u_+(\tau)u(\tau)$ solves a second-order ODE that is quadratic with respect to the second
derivative, possesses the Painlev\'e property, and is equivalent to equation~\eqref{eq:dp3}. This ODE was discovered
by Bureau \cite{B1972} via a Painlev\'e-type analysis, and was subsequently rediscovered by Cosgrove and Scoufis
\cite{CosgroveScoufis1993} in the course of their studies on the classification of second-order ODEs that are
quadratic with respect to the second derivative and appeared in their classification scheme as SD-III.A.
Later, in \cite{KitVar2004}, an ODE equivalent to SD-III.A was independently derived via a Hamiltonian reformulation
of equation~\eqref{eq:dp3}.\footnote{\label{foot:SDIIIa} In \cite{KitVar2019}, we studied integrals taken along
the segment $[0,\tau]\in\mathbb{R}_+$ for the functions $\varphi(\tau)$ and $f(\tau)/\tau$ corresponding to
a meromorphic solution of equation~\eqref{eq:dp3} vanishing at the origin.}

Now, fixing some $a_0\in\mathbb{C}$ and denoting by $u_0(\tau)$ any solution of equation~\eqref{eq:dp3} with $a=a_0$,
define, for $n\in\mathbb{Z}$, the solution $u_n(\tau)$ as the $n$th iteration of the solution $u_0(\tau)$ under the
transformations \eqref{eq:u+} and \eqref{eq:u-}. By definition, $u_n(\tau)$ solves equation~\eqref{eq:dp3} for
$a=a_n:=a_0+\mi n$.

One can derive 2-node differential-difference  and 3-node difference and differential-difference
relations that are satisfied by the sequence $u_n=u_n(\tau)$ or related functions.

The B\"acklund transformations themselves can be considered as 2-node differential-difference relations. To see this,
one substitutes $u=u_n$, $a=a_n$, and $u_+=u_{n+1}$ into equation~\eqref{eq:u+} and $u=u_{n+1}$, $a=a_{n+1}$,
and $u_-=u_n$ into equation~\eqref{eq:u-}. The third relation can be obtained as a compatibility condition of the
two differential-difference equations obtained via a renotation argument as described in the previous sentence,
namely, multiply the first and second equations by $u_n$ and $u_{n+1}$, respectively, and then equate the resulting
right-hand sides to find that
\begin{equation*}\label{eq:2-node}
\frac{u_n^{\prime}(\tau)+\mi b}{u_n(\tau)}+\frac{u_{n+1}^{\prime}(\tau)-\mi b}{u_{n+1}(\tau)}=0\quad
\Rightarrow\quad
(u_n(\tau)u_{n+1}(\tau))^\prime+\mi b(u_{n+1}(\tau)-u_n(\tau))=0.
\end{equation*}

To write 3-node relations, it is convenient to introduce the function $v_n(\tau):=u_n(\tau)/\tau$; then,
equations~\eqref{eq:u+} and \eqref{eq:u-} imply that
\begin{align}
v_n^2\big(v_{n+1}-v_{n-1}\big)&=-\frac{\varepsilon b}{4\tau}\frac{\md v_n}{\md\tau},\label{eq:dif-discrete-v-n}\\
v_n^2\big(v_{n+1}+v_{n-1}\big)&=\frac{\varepsilon b}{4\tau^2}(b+2a_nv_n),\label{eq:discrete-v-n}
\end{align}
where $v_n=v_n(\tau)$.\footnote{\label{foot:comment:2004} In the corresponding equations on p. 1198 of \cite{KitVar2004},
slightly different definitions are used, namely, $n\to-n$ and $v_n\to v_{-n}$; moreover, the differential-difference equation
for $v_n(\tau)$ contains a misprint: its right-hand side should be divided by $v_n\tau$.}
The differential-difference equation \eqref{eq:dif-discrete-v-n} is related to the
Volterra chain \cite{Volterra1931} with free ends by the following transformation:
\begin{gather}
w_n(x):=v_n(\tau)v_{n+1}(\tau),\qquad x:=-2\tau^2/\varepsilon b,\label{eq:w-x-change-Volterra}\\
\frac{\md w_n}{\md x}= w_n(w_{n+1}-w_{n-1}).\label{eq:Volterra-chain}
\end{gather}
Equation~\eqref{eq:discrete-v-n} is equivalent to one of the so-called discrete Painlev\'e equations.
\begin{remark}\label{rem:Lambert}
Introducing the function $\alpha_n(x)=\sqrt{w_n(x)}$, one finds that $\alpha_n(x)$ defines a solution of the
Kac-van Moerbeke system of differential-difference equations \cite{Kac-vanMoerbeke1975};
solutions of the last system can be mapped, via a discrete Miura-type transformation, to solutions of the system of
Toda lattice equations.

For any sequence of functions $F_n(x)$, $n\in\mathbb{Z}$, define the difference operator
$\Delta$: $\Delta F_n(x)=F_{n+1}(x)-F_{n-1}(x)$; then, introducing the function $g_n(x)=w_{n}(x)w_{n+1}(x)$, where $w_n(x)$
solves equation~\eqref{eq:Volterra-chain}, one shows that
\begin{equation}\label{eq:log-differential-difference}
\frac{\md^2}{\md x^2}\ln g_n(x)=\Delta^2 g_n(x)= g_{n+2}(x)-2g_n(x)+g_{n-2}(x).
\end{equation}
\hfill$\blacksquare$\end{remark}
\section{Small-$\tau$ Asymptotics: The Generic Case}\label{sec:general}
Equation~\eqref{eq:dp3} has a regular singular point at $\tau=0$, so that the bulk of its solutions $u(\tau)$
are not single-valued. For the characterization of the asymptotic behaviour of such solutions at $\tau=0$, we
introduced the branching parameter $\rho$ in \cite{KitVar2004}, and proved that it satisfies the following equation
\begin{equation} \label{eq:rho-general}
\cos(2\pi\rho)=-\frac{\mi s_{0}^{0}}{2}=\cosh(\pi a)+\frac{1}{2}s_{0}^{\infty}s_{1}^{\infty}\me^{\pi a},
\end{equation}
where the right-hand side is defined in terms of the monodromy data associated with $u(\tau)$, and the inequality
\begin{equation}\label{eq:rho-restriction}
|\mathrm{Re}\,\rho|<1/2.
\end{equation}
The conditions \eqref{eq:rho-general} and \eqref{eq:rho-restriction} define the parameter $\rho$ up to a sign.
The asymptotic formulae for the general solution $u(\tau)$ and the corresponding mole function $\varphi(\tau)$
(cf. \cite{KitVar2004,KitVar2023}) are invariant with respect to the reflection $\rho\to-\rho$, so that the choice
of the sign of $\rho$ is a matter of convenience. The asymptotic formulae for some special/particular solutions
might be written with a predetermined choice for the sign of $\rho$ having been made, and if so, it is stated
as such.

For the formulation of the results obtained in this paper, we find it convenient to introduce another branching
parameter, $\varrho$, which solves equation~\eqref{eq:rho-general} with $\rho\to\varrho$, and whose real
part is fixed as follows:
\begin{equation}\label{eq:varrho-restriction}
\mathrm{Re}\,\varrho\in(0,1).
\end{equation}
Equation~\eqref{eq:rho-general} and the restriction~\eqref{eq:varrho-restriction} fix the parameter
$\varrho$ modulo the reflection $\varrho\to1-\varrho$, so that our asymptotic formulae for general solutions
should be invariant under this symmetry. For the asymptotic description of some particular solutions, we can impose
a special condition on $\varrho$ which may not be compatible with the symmetry.
\begin{remark}\label{rem:varrho-restriction}
The restriction~\eqref{eq:varrho-restriction} means that asymptotics of solutions with monodromy data
belonging to the ray $\mathrm{Im}(s_0^0)\in[2,+\infty)$ and $\mathrm{Re}(s_0^0)=0$ cannot be described in
terms of the parameter $\varrho$; furthermore, the asymptotic formulae in terms of $\varrho$, although formally
correct in a small neighbourhood of this ray, do not, in practice, allow one to achieve satisfactory approximation
of the corresponding solutions, and they can only be used with a sufficiently large number (depending on the
smallness of the neighbourhood) of correction terms. In this case, the reader should apply the asymptotic formulae
written in terms of the parameter $\rho\neq0$ (see the text subsequent to equation~\eqref{app:eq:Asympt0:u2004} in
Appendix~\ref{app:subsec:error-correction-terms}), or the uniform asymptotic formula~\eqref{app:eq:u:Asympt-uniform}
with $\sigma=4\rho\neq0$. The asymptotics for $\rho=0$ ($s_0^0=2\mi$) is constructed in
Theorem~\ref{th:ln-regular-a-non0} of Section~\ref{sec:logarithm}.
There is one more special meromorphic solution of equation~\eqref{eq:dp3} that corresponds to the case
$s_0^0=2\mi\cosh(\pi a)$ which is studied in Theorem~\ref{eq:mondata:s0s1EQ0} of Section~\ref{sec:Meromorphic}.
Our results obtained in \cite{KitVar2004,KitVar2023} were formulated in terms of $\rho$, with the restriction
$|\mathrm{Re}\,\rho|<1/2$, so that the asymptotic description of the solutions corresponding to the monodromy data
for $\mathrm{Re}\,\varrho=1/2$ was excluded. Furthermore, when compared with our previous results, the asymptotic formulae presented below provide a
much better approximation for solutions in a neighbourhood of the points with $\mathrm{Re}\,\rho=1/2$.
The results presented in \cite{KitVar2004,KitVar2023} are more effective for small values of
$|\rho|$.\footnote{\label{foot:supplement} In a follow-up paper, we are going to discuss the numerical
aspects of these issues.}
\hfill$\blacksquare$\end{remark}

In Theorems~\ref{th:altB1asympt0m} and \ref{th:alt2B1asympt0p}${}^{\mathbf\prime}$ below, we present
asymptotic formulae for the general solution $u(\tau)$ and its associated mole function $\varphi(\tau)$.
In Theorem~\ref{th:altB1asympt0m}, the asymptotics of $u(\tau)$ depends on the two complex parameters
(``constants of integration'') $\varrho$ and---the ratio---$w_1/w_2$, whilst the corresponding asymptotics of
the function $\varphi(\tau)$ contains the additional integration constant---the product---$w_1w_2$;
these constants of integration are parametrized via the monodromy data $s_0^0$, $g_{11}:g_{21}$, and
$s_0^{\infty}$, respectively.
In Theorem~\ref{th:alt2B1asympt0p}${}^{\mathbf\prime}$, the situation is similar; more precisely, the two
integration constants $w_1/w_2$ and $w_1w_2$ are replaced by $w_3/w_4$ and $w_3w_4$, respectively, and
the monodromy parametrization of the latter is obtained via $g_{22}:g_{12}$ and $s_1^{\infty}$, respectively.
\begin{remark}\label{rem:tau0plus}
Throughout the paper, we use the notation $\tau\to0^+$. It can be understood in the usual sense as $|\tau|\to0$ for
$\arg\tau=0$; however, the asymptotics are valid under more general assumptions, namely, $|\tau|\to0$ for
$-\pi<\arg\tau<\pi$. The latter definition will be clarified further in Section~\ref{sec:poles}, which is related
to the study of poles of the function $u(\tau)$.
\hfill$\blacksquare$\end{remark}
\begin{theorem}\label{th:altB1asympt0m}
Let $(u(\tau), \varphi(\tau))$ be a solution of the system~\eqref{eq:dp3}, \eqref{eq:varphi}
corresponding to the monodromy data $(a,s_{0}^{0},s_{0}^{\infty},s_{1}^{\infty},g_{11},g_{12},g_{21},g_{22})$.
Suppose that: {\rm(i)} $s_0^\infty\neq0$, which implies that
\begin{equation}\label{eq:cond-gik-Theo31}
g_{11}\me^{\pi\mi/4}\me^{\mp\pi\mi\varrho}+g_{21}\me^{-\pi\mi/4}\me^{\pm\pi\mi\varrho}\neq0,
\end{equation}
where either the upper signs or the lower signs, respectively, are taken;\footnote{\label{foot:reflection-varrho}
As discussed at the beginning of this section, the parameter $\varrho$ is defined modulo the reflection
$\varrho\to1-\varrho$, so that any one of these values for $\varrho$ can be chosen; in particular, both
restrictions~\eqref{eq:cond-gik-Theo31} hold simultaneously.}
{\rm(ii)} $s_1^\infty\neq0$, thus
\begin{equation}\label{eq:cond-varrho-a-k-Theo31}
\varrho\neq\pm\frac{\mi a}{2}+k,\quad
k\in\mathbb{Z};
\end{equation}
and {\rm(iii)} $s_0^0\neq-2\mi$, which implies
\begin{equation}\label{eq:varrhoneq1/2}
\varrho\neq1/2.
\end{equation}
Finally, impose the technical assumption $-2<\mathrm{Im}(a)<0$.\footnote{\label{foot:-2<Ima<0} This assumption is
removed in Theorem~\ref{th:restriction-on-Ima-removed}.}

Then,
\begin{align}
u(\tau)\underset{\tau\to0^{+}}{=}
&\frac{\varepsilon(1-2\varrho)^2w_1w_2\big(1+\mathcal{O}\big(\tau^{4\mathrm{Re}(\varrho)}\big)+
\mathcal{O}\big(\tau^{4-4\mathrm{Re}(\varrho)}\big)\big)}{\tau\big(w_1\tau^{1-2\varrho}+w_2\tau^{-1+2\varrho}\big)^2},
\label{eq:Asympt0u-}\\
\me^{\mi\varphi (\tau)} \underset{\tau \to 0^{+}}{=}
&\me^{\frac{3\pi\mi}{2}}\me^{-\frac{\pi a}{2}}\frac{2\pi}{w_1w_2}\big(2\tau^2\big)^{\mi a}
\big(1+\mathcal{O}\big(\tau^{4\mathrm{Re}(\varrho)}\big)+\mathcal{O}\big(\tau^{4-4\mathrm{Re}(\varrho)}\big)\big),
\label{eq:Asympt0varphi-}
\end{align}
where
\begin{align}
w_{1}=&\left(\tfrac{1}{2}(\varepsilon b)\me^{\pi\mi/2}\right)^{\frac{1}{2}
-\varrho}\frac{\Gamma(2\varrho)}{\Gamma(2-2\varrho)}
\Gamma(1-\varrho+\mi a/2)\left(g_{11}\me^{\pi\mi/4}\me^{-\pi\mi\varrho}
+g_{21}\me^{-\pi\mi/4}\me^{\pi\mi\varrho}\right),
\label{eq:w1}\\
w_{2}=&\left(\tfrac{1}{2}(\varepsilon b)\me^{\pi\mi/2}\right)^{\varrho-\frac{1}{2}}
\frac{\Gamma(2-2\varrho)}{\Gamma(2\varrho)}
\Gamma(\varrho+\mi a/2)\left(g_{11}\me^{\pi\mi/4}\me^{\pi\mi\varrho}
+g_{21}\me^{-\pi\mi/4}\me^{-\pi\mi\varrho}\right),
\label{eq:w2}
\end{align}
and $\Gamma (\ast)$ is the gamma function {\rm \cite{BE1}}.
\end{theorem}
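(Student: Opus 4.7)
The plan is to derive Theorem~\ref{th:altB1asympt0m} from the asymptotic results of \cite{KitVar2004,KitVar2023,KitVarZapiski2024}, which were formulated in terms of the old branching parameter $\rho$ satisfying $|\mathrm{Re}\,\rho|<1/2$, and to reformulate them in the Bäcklund-covariant rational form \eqref{eq:Asympt0u-} that remains well-defined and sharp up to the critical line $\mathrm{Re}\,\varrho=1/2$. Since both parameters solve $\cos(2\pi\cdot)=-\mi s_0^0/2$, they are related modulo translations and reflections; with the normalizations $|\mathrm{Re}\,\rho|<1/2$ and $\mathrm{Re}\,\varrho\in(0,1)$, one may take $\varrho=1/2+\rho$ or $\varrho=1/2-\rho$ on the appropriate half of the strip, and the required invariance of the answer under $\varrho\mapsto 1-\varrho$ (which exchanges $w_1$ and $w_2$) is built into \eqref{eq:Asympt0u-}. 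The non-degeneracy conditions (i)--(iii) are then the direct transcriptions of $s_0^\infty\neq0$, $s_1^\infty\neq0$, and $s_0^0\neq-2\mi$, with (ii) also guaranteeing that the $\Gamma$-factors in \eqref{eq:w1}--\eqref{eq:w2} have neither poles nor zeros.

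To identify the coefficients I would proceed in two movements. First, for $\mathrm{Re}\,\varrho<1/2$ the denominator of \eqref{eq:Asympt0u-} is dominated by $w_2\tau^{-1+2\varrho}$, and the right-hand side collapses at leading order to $\varepsilon(1-2\varrho)^2(w_1/w_2)\tau^{1-4\varrho}$; matching against the previously established leading term pins down the ratio $w_1/w_2$ in terms of the connection-matrix data $g_{11}:g_{21}$, and invariance under $\varrho\mapsto 1-\varrho$ together with the reflected asymptotics valid for $\mathrm{Re}\,\varrho>1/2$ pins down $w_2/w_1$. Second, the overall product $w_1w_2$ is fixed by integrating \eqref{eq:varphi} and matching with the asymptotics of $\me^{\mi\varphi(\tau)}$; the parametrization through $s_0^\infty$ (implicit in the prefactor of \eqref{eq:Asympt0varphi-}) then closes the system and yields \eqref{eq:w1}--\eqref{eq:w2}. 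The explicit $\Gamma$-function structure appears because the $\tau\to0$ limit of the Fuchs--Garnier pair degenerates to a confluent hypergeometric model whose Stokes and connection data are governed by Mellin--Barnes integrals with poles at $\varrho+\mi a/2$ and $1-\varrho+\mi a/2$.

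To upgrade these formal identifications to a rigorous asymptotic statement with the uniform error $\mathcal{O}(\tau^{4\mathrm{Re}(\varrho)})+\mathcal{O}(\tau^{4-4\mathrm{Re}(\varrho)})$, I would invoke the isomonodromy justification scheme of \cite{Kit1989}, applied to the Fuchs--Garnier pair of \cite{KitVar2004}: one constructs parametrices near $\tau=0$ from the model problem, glues them to the exact solution via a small Riemann--Hilbert correction, and reads off $u(\tau)$ and $\me^{\mi\varphi(\tau)}$ from the resulting uniform estimates. The technical assumption $-2<\mathrm{Im}(a)<0$ ensures that $\Gamma(1-\varrho+\mi a/2)$ and $\Gamma(\varrho+\mi a/2)$ remain bounded away from their poles for all admissible $\varrho$, so that $w_1w_2\neq0$ and neither correction exponent is swallowed by the leading one; this restriction is subsequently removed by Bäcklund iteration (Theorem~\ref{th:restriction-on-Ima-removed}).

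The main obstacle, I expect, will be establishing the uniformity of the error bounds as $\mathrm{Re}\,\varrho\to 1/2$. In the older $\rho$-formulation, the two power contributions $\tau^{1-2\varrho}$ and $\tau^{-1+2\varrho}$ become comparable precisely at $\mathrm{Re}\,\varrho=1/2$, and any separate treatment of them degrades the error estimate to something useless in a shrinking neighbourhood of this line. The whole point of writing the leading asymptotics in the rational form \eqref{eq:Asympt0u-} is to package both contributions into a single expression so that the parametrix construction produces a single uniform error; verifying that the Riemann--Hilbert small-norm argument indeed yields the stated combined error $\mathcal{O}(\tau^{4\mathrm{Re}(\varrho)})+\mathcal{O}(\tau^{4-4\mathrm{Re}(\varrho)})$ uniformly on compacts in $\{\mathrm{Re}\,\varrho\in(0,1)\}\setminus\{1/2\}$ will be the delicate step.
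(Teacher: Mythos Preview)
Your proposal misses the paper's central mechanism. The paper does \emph{not} prove Theorem~\ref{th:altB1asympt0m} by direct matching of coefficients against the $\rho$-formulation, nor by a fresh Riemann--Hilbert/parametrix argument. Instead, it starts from the already-established result (Theorem~B.1 of \cite{KitVar2023}, valid for $|\mathrm{Im}\,\tilde a|<1$ and $|\mathrm{Re}\,\tilde\rho|<1/2$) and applies a single B\"acklund transformation \eqref{eq:u-}, \eqref{eq:varphi-} to the pair $(\tilde u,\tilde\varphi)$. Under this transformation $\tilde a\mapsto\tilde a-\mi$ and $\tilde s_0^0\mapsto-\tilde s_0^0$; the latter forces $\cos(2\pi\tilde\rho)\mapsto-\cos(2\pi\tilde\rho)$, whence the new branching parameter is $\varrho=1/2-\tilde\rho$. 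This is why the technical restriction reads $-2<\mathrm{Im}(a)<0$: it is exactly the image of the interval $|\mathrm{Im}\,\tilde a|<1$ under $\tilde a\mapsto\tilde a-\mi$, and has nothing to do with keeping the $\Gamma$-factors in \eqref{eq:w1}--\eqref{eq:w2} away from their poles (that is already guaranteed, for all $a$, by condition~(ii)). Your explanation of the restriction is therefore incorrect, and a direct translation of Theorem~B.1 along the lines you describe would yield a statement valid for $|\mathrm{Im}\,a|<1$, not for $-2<\mathrm{Im}(a)<0$.

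The B\"acklund route also disposes of what you identify as the ``main obstacle''. Since $\varrho=1/2-\tilde\rho$, the critical line $\mathrm{Re}\,\varrho=1/2$ corresponds to $\tilde\rho=0$, where the seed asymptotics from \cite{KitVar2023} is perfectly regular (it is the logarithmic case $\tilde\rho=0$ itself that is excluded, hence condition~(iii)); no new uniformity analysis is needed. The sharp error $\mathcal{O}(\tau^{4\mathrm{Re}\,\varrho})+\mathcal{O}(\tau^{4-4\mathrm{Re}\,\varrho})$ is then read off from the complete local expansion~\eqref{app:eq:0-u-expansion} after observing that the B\"acklund transformation preserves the order of the correction. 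You are right that B\"acklund iteration removes the restriction on $\mathrm{Im}(a)$ in Theorem~\ref{th:restriction-on-Ima-removed}, but you have the logic inverted: the B\"acklund transformation is already the engine of Theorem~\ref{th:altB1asympt0m} itself, not merely a post-processing step.
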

\begin{proof}
The definition of the parameter $\rho$ (and $\varrho$) via the first relation in equation~\eqref{eq:rho-general}
allows one to factorize the left-hand side of equation~\eqref{eq:monodromy:s0} defining $s_0^{\infty}$ as
\begin{equation}\label{eq:s0infty-factorization}
-\mi(g_{11}\me^{\pi\mi/4}\me^{-\pi\mi\varrho}+g_{21}\me^{-\pi\mi/4}\me^{\pi\mi\varrho})
(g_{11}\me^{\pi\mi/4}\me^{\pi\mi\varrho}+g_{21}\me^{-\pi\mi/4}\me^{-\pi\mi\varrho})=\mi s_0^{\infty}\me^{-\pi a}.
\end{equation}
The factorization~\eqref{eq:s0infty-factorization} proves the condition~\eqref{eq:cond-gik-Theo31}.
In order to prove condition ~\eqref{eq:cond-varrho-a-k-Theo31}, we assume that $\varrho=\pm\frac{\mi a}{2}+k$,
$k\in\mathbb{Z}$, hence $\cos(2\pi\varrho)=\cosh(\pi a)$; thus, comparing the left- and right-hand sides of the
relation~\eqref{eq:rho-general}, one arrives at $s_0^{\infty}s_1^{\infty}=0$, which implies
that $s_1^{\infty}=0$, since $s_0^{\infty}\neq0$ is assumed.

Consider the solution $(\tilde{u}(\tau),\tilde{\varphi}(\tau))$ of the system~\eqref{eq:dp3}, \eqref{eq:varphi} with
the monodromy parameters
$(\tilde{a},\tilde{s}_{0}^{0},\tilde{s}_{0}^{\infty},\tilde{s}_{1}^{\infty},\tilde{g}_{11},\tilde{g}_{12},
\tilde{g}_{21},\tilde{g}_{22})$. Assume that $|\mathrm{Im}(a)|<1$ and the corresponding branching parameter
$\tilde{\rho}$ satisfies the conditions $\tilde{\rho}\neq0$ and $|\mathrm{Re}(\tilde{\rho})|<1/2$; then, the
asymptotics as $\tau\to0$ of the functions $\tilde{u}(\tau)$ and $\tilde{\varphi}(\tau)$ are given by Theorem B.1 of
\cite{KitVar2023}.\footnote{\label{foot:condition-g11g22neq0} Theorem B.1 of
\cite{KitVar2023} contains the additional condition $g_{11}g_{22}\neq0$ on the monodromy data; this condition is
removed in \cite{KitVarZapiski2024}.}

Apply the B\"acklund transformations~\eqref{eq:u-} and \eqref{eq:varphi-} to the functions $\tilde{u}(\tau)$ and
$\tilde{\varphi}(\tau)$, respectively, by substituting for these functions their corresponding asymptotic expansions
given in Theorem B.1 of \cite{KitVar2023}. In so doing, one has to take into account that the asymptotic expansions
in Theorem B.1 of \cite{KitVar2023} are differentiable with respect to $\tau$, so that the asymptotics of the
function $\tilde{u}'(\tau)$ is equal to the derivative of the asymptotics of the function $\tilde{u}(\tau)$.
This fact follows from the derivation of the small-$\tau$ asymptotics presented in Section~5 of
\cite{KitVar2004}, where asymptotics of the functions $u(\tau)$ and $u'(\tau)$ are obtained independently in terms
of the asymptotics of the functions $A(\tau)$, $B(\tau)$, $C(\tau)$, and
$D(\tau)$.\footnote{\label{foot:ABCD} These functions define the coefficients of
the $2\times2$ matrix linear ODE whose isomonodromy deformations are governed by the functions $u(\tau)$ and
$\varphi(\tau)$; see Section 1 of \cite{KitVar2004} and Appendix A of \cite{KitVarZapiski2024}.}
Alternatively, the statement regarding the differentiability of the asymptotics of $u(\tau)$ can be confirmed with
the help of the complete asymptotic expansion as $\tau\to0$ given in Appendix~\ref{app:subsec:error-correction-terms}.
As a consequence of the application of the B\"acklund transformations~\eqref{eq:u-} and \eqref{eq:varphi-},
we obtain small-$\tau$ asymptotics for the pair of functions $\tilde{u}_-(\tau)$ and $\tilde{\varphi}_-(\tau)$,
respectively, that are parametrized in terms of the monodromy data corresponding to the original functions
$\tilde{u}(\tau)$ and $\tilde{\varphi}(\tau)$; in particular, for the parameter of formal monodromy,
$\tilde{a}_-=\tilde{a}-\mi$, thus $-2<\mathrm{Im}(\tilde{a}_-)<0$.

Simplify, now, the notation: let $(\tilde{u}_-(\tau),\tilde{\varphi}_-(\tau))\to(u(\tau),\varphi(\tau))$, denote
the corresponding monodromy data without ``tildes'' and ``minus subscripts'', and use equation~\eqref{eq:data-}
to present the small-$\tau$ asymptotics of the functions $u(\tau)$ and $\varphi(\tau)$ in terms of their monodromy
data. Note that, after this ``renotation'', $\tilde{\rho}$ is the only parameter with a ``tilde'' that remains
in the asymptotics of the functions $u(\tau)$ and $\varphi(\tau)$. Taking into account that $\tilde{s}_0^0=-s_0^0$,
one defines the branching parameter $\varrho=1/2-\tilde{\rho}$ so that it solves equation~\eqref{eq:rho-general} and
satisfies the conditions~\eqref{eq:varrho-restriction} and \eqref{eq:varrhoneq1/2}.\footnote{\label{foot:rhoneq0}
As a result of the conditions for the parameter $\tilde{\rho}$ stated at the beginning of the proof.}

After these rearrangements, we arrive at the formulae for the leading terms of the asymptotics stated in
equations~\eqref{eq:Asympt0u-} and \eqref{eq:Asympt0varphi-}. The correction terms in the
asymptotics~\eqref{eq:Asympt0u-} are obtained with the help of the local expansion~\eqref{app:eq:0-u-expansion}.
In \cite{KitVar2004,KitVar2023}, the error for the leading term of asymptotics of the functions $\tilde{u}(\tau)$
and $\me^{\mi\tilde{\varphi}(\tau)}$ is written as the multiplicative factor $(1+o(\tau^{\delta}))$, where the value
of $\delta>0$ was not specified.\footnote{\label{foot:delta}
In fact, the value of $\delta$ can be estimated via the method employed in \cite{KitVar2004}; even though it is
straightforward, it requires more elaborate and cumbersome calculations. It is much easier to find the precise value
for $\delta$ by referring to the local result (see Appendix~\ref{app:subsec:error-correction-terms} for details).}
It is not difficult to see that the B\"acklund transformations preserve the order of the correction term(s), that is,
the asymptotics of the functions $u(\tau)$ and $\me^{\mi\varphi(\tau)}$ can be presented as the product of their
leading terms and the factor $(1+o(\tau^{\delta}))$, where $\delta>0$ is the same as the corresponding one for the
functions with ``tildes''.

Assume that $\tilde\rho\in[0,1/2)$; then, comparing the asymptotics of the function $\tilde{u}(\tau)$ given in
Theorem B.1 of \cite{KitVar2023} (with the change of notation $u(\tau)\to\tilde{u}(\tau)$) to the
expansion~\eqref{app:eq:0-u-expansion} (once again with the change of notation $u(\tau)\to\tilde{u}(\tau)$), we find
that the $o(\tau^{\delta})$ term is, in fact, equal to $\mathcal{O}(\tau^{2-\sigma})$, where $\sigma=4\tilde\rho$.
Thus, taking into account that $\tilde\rho=1/2-\varrho$, we find that the correction term to the leading term of
asymptotics can be presented as the multiplicative factor
$\big(1+\mathcal{O}\big(\tau^{4\mathrm{Re}\,\varrho}\big)\big)$. In this case, i.e.,
$0\leqslant\mathrm{Re}\,\tilde{\rho}<1/2$, the term of order $\mathcal{O}\big(\tau^{4-4\mathrm{Re}\,\varrho}\big)$
can be omitted.
For the case $\tilde\rho\in(-1/2,0]$, similar considerations imply that the correction term can be presented in
the multiplicative form $\big(1+\mathcal{O}\big(\tau^{4-4\mathrm{Re}\,\varrho}\big)\big)$
(the reflection!).\footnote{\label{foot:corr-term-Section3} An alternative derivation for the correction term
is given in Appendix~\ref{app:sec:full0expansion}, Remark~\ref{app:eq:Theorem3.1error-corr-term}.}
Finally, to obtain the correction term for the asymptotics~\eqref{eq:Asympt0varphi-}, one integrates
equation~\eqref{eq:varphi} using the expansion~\eqref{app:eq:0-u-expansion}.
\end{proof}
\begin{remark}\label{rem:alt2B1asympt0m}
Instead of applying the B\"acklund transformations~\eqref{eq:u-} and \eqref{eq:varphi-} to the solution
$(\tilde{u}(\tau),\tilde{\varphi}(\tau))$ as done in the proof of Theorem~\ref{th:altB1asympt0m}, we can use,
instead, the B\"acklund transformations~\eqref{eq:u+} and \eqref{eq:varphi+}. Repeating, \emph{verbatim}, the
construction delineated in the proof of Theorem~\ref{th:altB1asympt0m}, with, of course, the obvious replacement
of the reference to equation~\eqref{eq:data-} by a reference to equation~\eqref{eq:data+}, we arrive at
Theorem~\ref{th:alt2B1asympt0p}${}^{\mathbf\prime}$ below.
\hfill$\blacksquare$\end{remark}
\begin{theorems}\hspace{-10pt}${}^{\mathbf\prime}$\label{th:alt2B1asympt0p}
Let $(u(\tau), \varphi(\tau))$ be a solution of the system~\eqref{eq:dp3}, \eqref{eq:varphi} corresponding to the
monodromy data $(a,s_{0}^{0},s_{0}^{\infty},s_{1}^{\infty},g_{11},g_{12},g_{21},g_{22})$.
Suppose that:
{\rm(i)} $s_1^\infty\neq0$, which implies that
\begin{equation}\label{eq:cond-gik-Theo31prime}
g_{12}\me^{\pi\mi/4}\me^{\mp\pi\mi\varrho}+g_{22}\me^{-\pi\mi/4}\me^{\pm\pi\mi\varrho}\neq0,
\end{equation}
where either the upper signs or the lower signs, respectively, are
taken;\footnote{\label{foot:gik-conditions-Th31prime} Both conditions~\eqref{eq:cond-gik-Theo31prime} hold
simultaneously (see footnote~\ref{foot:reflection-varrho}).}
{\rm(ii)} $s_0^\infty\neq0$, thus
\begin{equation*}\label{eq:cond-varrho-a-k-Theo31prime}
\varrho\neq\pm\frac{\mi a}{2}+k,\quad
k\in\mathbb{Z};
\end{equation*}
and {\rm(iii)} $s_0^0\neq-2\mi$, which implies
\begin{equation*}\label{eq:varrhoneq1/2prime}
\varrho\neq1/2.
\end{equation*}
Finally, impose the technical assumption $0<\mathrm{Im}(a)<2$.\footref{foot:-2<Ima<0}

Then,
\begin{align}
u(\tau)\underset{\tau\to0^{+}}{=}
&\frac{\varepsilon(1-2\varrho)^2w_3w_4\big(1+\mathcal{O}\big(\tau^{4\mathrm{Re}(\varrho)}\big)+
\mathcal{O}\big(\tau^{4-4\mathrm{Re}(\varrho)}\big)\big)}
{\tau\big(w_3\tau^{1-2\varrho}+w_4\tau^{-1+2\varrho}\big)^2},\label{eq:Asympt0u+}\\
\me^{\mi\varphi (\tau)} \underset{\tau \to 0^{+}}{=}
&\me^{\frac{3\pi\mi}{2}}\me^{\frac{\pi a}{2}}\frac{w_3w_4}{2\pi}\big(2\tau^2\big)^{\mi a}
\big(1+\mathcal{O}\big(\tau^{4\mathrm{Re}(\varrho)}\big)+
\mathcal{O}\big(\tau^{4-4\mathrm{Re}(\varrho)}\big)\big),\label{eq:Asympt0varphi+}
\end{align}
where
\begin{align}
w_{3}&=\left(\tfrac{1}{2}(\varepsilon b)\me^{-\pi\mi/2}\right)^{\frac{1}{2}-\varrho}\frac{\Gamma(2\varrho)}{\Gamma(2-2\varrho)}
\Gamma(1-\varrho-\mi a/2)\left(g_{12}\me^{\pi\mi/4}\me^{-\pi\mi\varrho}+g_{22}\me^{-\pi\mi/4}\me^{\pi\mi\varrho}\right),
\label{eq:w3}\\
w_{4}&=\left(\tfrac{1}{2}(\varepsilon b)\me^{-\pi\mi/2}\right)^{\varrho-\frac{1}{2}}\frac{\Gamma (2-2\varrho)}{\Gamma(2\varrho)}
\Gamma(\varrho-\mi a/2) \! \left(g_{12} \me^{\mi \pi/4}\me^{\pi\mi\varrho}+g_{22}\me^{-\pi\mi/4}\me^{-\pi\mi\varrho}\right).
\label{eq:w4}
\end{align}
\end{theorems}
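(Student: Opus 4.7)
The plan is to mirror the proof of Theorem~\ref{th:altB1asympt0m} with the B\"acklund transformations~\eqref{eq:u+} and \eqref{eq:varphi+} playing the role previously assigned to \eqref{eq:u-} and \eqref{eq:varphi-}. As outlined in Remark~\ref{rem:alt2B1asympt0m}, the argument is literally verbatim, so the task is essentially to track how the roles of the two pairs of connection-matrix entries, and the sign of the shift in $a$, change accordingly.

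I would start, as before, from an auxiliary solution $(\tilde u(\tau),\tilde\varphi(\tau))$ of~\eqref{eq:dp3}, \eqref{eq:varphi} with monodromy parameters satisfying $|\mathrm{Im}(\tilde a)|<1$, $\tilde\rho\neq0$, and $|\mathrm{Re}(\tilde\rho)|<1/2$, and apply Theorem~B.1 of \cite{KitVar2023} to obtain its small-$\tau$ asymptotics. Then I apply \eqref{eq:u+} and \eqref{eq:varphi+} (using differentiability of the leading-order asymptotics to carry $\tilde u'(\tau)$ through, as justified in the proof of Theorem~\ref{th:altB1asympt0m}) to produce the pair $(u(\tau),\varphi(\tau)):=(\tilde u_+(\tau),\tilde\varphi_+(\tau))$. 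Because the shift is now $a=\tilde a+\mi$, the resulting solution automatically has $0<\mathrm{Im}(a)<2$, and, by~\eqref{eq:data+}, the data with tildes are related to the untilded ones by $\tilde g_{11}=\mi g_{11}$, $\tilde g_{12}=\mi g_{12}$, $\tilde g_{21}=-\mi g_{21}$, $\tilde g_{22}=-\mi g_{22}$, together with $\tilde s_{0}^{0}=-s_{0}^{0}$ and $\tilde s_{0,1}^{\infty}=s_{0,1}^{\infty}$. Re-expressing the result in terms of the unshifted data then yields~\eqref{eq:Asympt0u+} and~\eqref{eq:Asympt0varphi+}.

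The analogue of the factorization~\eqref{eq:s0infty-factorization}, obtained by applying the identity $\cos(2\pi\varrho)=-\mi s_{0}^{0}/2$ to~\eqref{eq:monodromy:s1} instead of~\eqref{eq:monodromy:s0}, reads
\[
\bigl(g_{12}\me^{\pi\mi/4}\me^{-\pi\mi\varrho}+g_{22}\me^{-\pi\mi/4}\me^{\pi\mi\varrho}\bigr)\bigl(g_{12}\me^{\pi\mi/4}\me^{\pi\mi\varrho}+g_{22}\me^{-\pi\mi/4}\me^{-\pi\mi\varrho}\bigr)=s_{1}^{\infty}\me^{\pi a},
\]
which justifies~\eqref{eq:cond-gik-Theo31prime} and explains how the linear combinations of $(g_{12},g_{22})$ appearing in~\eqref{eq:w3} and~\eqref{eq:w4} arise. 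The sign change $\mi a/2\to-\mi a/2$ in the gamma-function arguments, as well as the replacement $\me^{\pi\mi/2}\to\me^{-\pi\mi/2}$ in the power base, reflects the opposite direction of the B\"acklund shift relative to the proof of Theorem~\ref{th:altB1asympt0m}. The condition $\varrho\neq\pm\mi a/2+k$ is verified exactly as in that proof: equality would force $s_{0}^{\infty}s_{1}^{\infty}=0$ via~\eqref{eq:rho-general}, now incompatible with the assumed $s_{0}^{\infty}\neq0$.

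The correction terms are handled identically: comparing the asymptotics of $\tilde u(\tau)$ with the local expansion~\eqref{app:eq:0-u-expansion}, one reads off that the $o(\tau^{\delta})$ error in~\cite{KitVar2004,KitVar2023} is of order $\mathcal{O}(\tau^{2-\sigma})$ with $\sigma=4\tilde\rho$; the identification $\tilde\rho=1/2-\varrho$ (forced by $\tilde s_{0}^{0}=-s_{0}^{0}$ together with the restrictions $|\mathrm{Re}\,\tilde\rho|<1/2$ and $\mathrm{Re}\,\varrho\in(0,1)$) then gives the stated multiplicative factor $\bigl(1+\mathcal O\bigl(\tau^{4\mathrm{Re}(\varrho)}\bigr)+\mathcal O\bigl(\tau^{4-4\mathrm{Re}(\varrho)}\bigr)\bigr)$, and the correction to~\eqref{eq:Asympt0varphi+} follows by integrating~\eqref{eq:varphi} against the local expansion. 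The main---indeed only---obstacle I anticipate is bookkeeping: ensuring that the sign conventions in~\eqref{eq:u+}, \eqref{eq:varphi+}, and~\eqref{eq:data+} propagate consistently through the final formulae for $w_3$ and $w_4$, so that the phases $\me^{-\pi\mi/2}$ and the opposite signs in the gamma-function arguments line up correctly after the tilded/untilded change of variables.
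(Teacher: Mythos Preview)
Your proposal is correct and follows exactly the approach the paper takes: as Remark~\ref{rem:alt2B1asympt0m} states, Theorem~\ref{th:alt2B1asympt0p}${}^{\mathbf\prime}$ is obtained by repeating the proof of Theorem~\ref{th:altB1asympt0m} verbatim with the B\"acklund transformations~\eqref{eq:u+}, \eqref{eq:varphi+} and the monodromy-data shift~\eqref{eq:data+} in place of~\eqref{eq:u-}, \eqref{eq:varphi-}, \eqref{eq:data-}. Your identification of the factorization of~\eqref{eq:monodromy:s1}, the shift $a=\tilde a+\mi$ yielding $0<\mathrm{Im}(a)<2$, and the handling of the correction terms via $\tilde\rho=1/2-\varrho$ are all precisely the steps the paper intends.
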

\begin{remark}\label{rem:symmetry}
Note that
$w_k(\varrho)=-w_{k+1}(1-\varrho)$, $k=1,3$, which
manifests the invariance of the asymptotics of $u(\tau)$ and $\me^{\mi\varphi(\tau)}$ under the reflection
$\varrho\to1-\varrho$.
\hfill$\blacksquare$\end{remark}
\begin{proposition}\label{prop:equivalence Th31Th31prime}
The following identities hold:
\begin{equation}\label{eqs:w-identities}
w_1w_2w_3w_4=(2\pi)^2\me^{-\pi a},\qquad
\frac{w_1}{w_2}=\frac{w_3}{w_4}.
\end{equation}
\end{proposition}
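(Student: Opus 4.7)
The plan is to verify both identities by direct computation from the closed-form expressions \eqref{eq:w1}--\eqref{eq:w2} and \eqref{eq:w3}--\eqref{eq:w4}, using the algebraic relations among the monodromy data in equations \eqref{eq:monodromy:s}--\eqref{eq:monodromy:detG} together with the definition~\eqref{eq:rho-general} of $\varrho$.

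For the product identity, I first compute $w_1w_2$ and $w_3w_4$ separately. In $w_1w_2$, the $\big(\tfrac{1}{2}\varepsilon b\,\me^{\pi\mi/2}\big)$-powers cancel (the exponents sum to $0$), and the gamma-ratio factors $\Gamma(2\varrho)/\Gamma(2-2\varrho)$ cancel as well. The remaining bilinear form in $(g_{11},g_{21})$ expands to $\mi(g_{11}^2-g_{21}^2)+2g_{11}g_{21}\cos(2\pi\varrho)$; substituting $\cos(2\pi\varrho)=-\mi s_0^0/2$ from \eqref{eq:rho-general} and $g_{11}^2-g_{21}^2=\mi s_0^\infty\me^{-\pi a}+s_0^0 g_{11}g_{21}$ from \eqref{eq:monodromy:s0} collapses this to $-s_0^\infty\me^{-\pi a}$. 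The parallel calculation for $w_3w_4$, using \eqref{eq:monodromy:s1} in place of \eqref{eq:monodromy:s0}, yields $s_1^\infty\me^{\pi a}\Gamma(1-\varrho-\mi a/2)\Gamma(\varrho-\mi a/2)$. Forming the product $w_1w_2w_3w_4$, the $\me^{\mp\pi a}$-factors cancel, and pairing the gammas as $\Gamma(\varrho+\mi a/2)\Gamma(1-\varrho-\mi a/2)$ and $\Gamma(\varrho-\mi a/2)\Gamma(1-\varrho+\mi a/2)$ the reflection formula gives $\pi^2/\bigl[\sin(\pi(\varrho+\mi a/2))\sin(\pi(\varrho-\mi a/2))\bigr]$. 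The product-to-sum identity converts the denominator to $\tfrac12[\cosh(\pi a)-\cos(2\pi\varrho)]$, which by \eqref{eq:rho-general} equals $-\tfrac14 s_0^\infty s_1^\infty\me^{\pi a}$; the $s_0^\infty s_1^\infty$ factors then cancel, leaving $(2\pi)^2\me^{-\pi a}$.

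For the ratio identity I will show the equivalent statement $w_1w_4=w_2w_3$. Again the $(\varepsilon b/2)$-powers and the $\Gamma(2\varrho)/\Gamma(2-2\varrho)$-ratios cancel in both products, and the surviving pairs of gammas combine by reflection to $\pi/\sin(\pi(\varrho\mp\mi a/2))$. The $g$-factor products now have the mixed form $\mi(g_{11}g_{12}-g_{21}g_{22})+g_{11}g_{22}\me^{\pm 2\pi\mi\varrho}+g_{12}g_{21}\me^{\mp 2\pi\mi\varrho}$; here \eqref{eq:monodromy:main} eliminates $g_{11}g_{12}-g_{21}g_{22}$ in favour of $s_0^0 g_{11}g_{22}-\mi\me^{-\pi a}$, \eqref{eq:monodromy:detG} replaces $g_{12}g_{21}$ by $g_{11}g_{22}-1$, and the $g_{11}g_{22}$ terms collapse after substituting $2\cos(2\pi\varrho)=-\mi s_0^0$, leaving the clean expression $\me^{-\pi a}-\me^{\pm 2\pi\mi\varrho}$. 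Combining with the remaining exponential prefactors $\pm\mi\me^{\mp\mi\pi\varrho}$, I will use the sinh-type identity $\me^{\mi\pi\varrho}\me^{\pi a/2}-\me^{-\mi\pi\varrho}\me^{-\pi a/2}=2\mi\sin(\pi(\varrho-\mi a/2))$ (and its $a\to-a$ analogue) to show both $w_1w_4$ and $w_2w_3$ equal the same quantity proportional to $\me^{-\pi a/2}$, independently of the sign convention.

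The main obstacle is purely bookkeeping: tracking the many $\me^{\pm\pi\mi/4}$ and $\me^{\pm\pi\mi\varrho}$ phase factors through the expansions and correctly applying equations~\eqref{eq:monodromy:main}--\eqref{eq:monodromy:s1} and the identity $\cos(2\pi\varrho)=-\mi s_0^0/2$. As an independent consistency check, one may note that on the overlap of the technical assumptions of Theorems~\ref{th:altB1asympt0m} and~\ref{th:alt2B1asympt0p}${}^{\mathbf\prime}$ the two pairs of formulae describe the same solution $(u(\tau),\me^{\mi\varphi(\tau)})$; matching the coefficient of $\tau^{-1}$ in $u(\tau)$ forces $w_1/w_2=w_3/w_4$, and matching $\me^{\mi\varphi(\tau)}$ forces $w_1w_2w_3w_4=(2\pi)^2\me^{-\pi a}$, giving a conceptual route to the same conclusion.
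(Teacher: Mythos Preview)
Your proof is correct and follows exactly the approach the paper indicates: the paper's own proof consists of the single sentence ``Straightforward calculations using definitions \eqref{eq:w1}, \eqref{eq:w2}, \eqref{eq:w3}, and \eqref{eq:w4}'', and you have carried out precisely those calculations in full detail, correctly invoking the monodromy relations \eqref{eq:monodromy:main}--\eqref{eq:monodromy:detG}, the defining relation \eqref{eq:rho-general} for $\varrho$, and the reflection formula for the gamma function. Your concluding remark about recovering the identities by matching the asymptotics of $u(\tau)$ and $\me^{\mi\varphi(\tau)}$ is a pleasant consistency check that the paper does not state here (though it is the content of Corollary~\ref{cor:Th31=Th32}, which the paper derives \emph{from} this proposition rather than the other way around).
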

\begin{proof}
Straightforward calculations using definitions \eqref{eq:w1}, \eqref{eq:w2}, \eqref{eq:w3},
and \eqref{eq:w4}.
\end{proof}
\begin{corollary}\label{cor:Th31=Th32}
Theorems~{\rm\ref{th:altB1asympt0m}} and {\rm\ref{th:alt2B1asympt0p}}${}^{\mathbf\prime}$ are valid for
monodromy data subject to the conditions $(s_0^0+2\mi)s_0^{\infty}s_1^{\infty}\neq0$ and
$\mathrm{Im}\,a\in(-2,0)\cup(0,2)${\rm;}
in particular, the asymptotics of the functions $u(\tau)$ and $\me^{\mi\varphi(\tau)}$ corresponding to the same set
of monodromy data defined by these theorems coincide.
\end{corollary}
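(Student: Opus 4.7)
The plan is to leverage Proposition~\ref{prop:equivalence Th31Th31prime} as the bridge that welds Theorems~\ref{th:altB1asympt0m} and~\ref{th:alt2B1asympt0p}${}^{\mathbf\prime}$ together into a single statement on the union $\mathrm{Im}\,a\in(-2,0)\cup(0,2)$. First I would verify the algebraic equivalence of the two leading-term expressions, and then I would use this equivalence to lift each theorem from its original half-strip to the full domain.

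For the equivalence step, fix monodromy data satisfying $(s_0^0+2\mi)s_0^\infty s_1^\infty\neq 0$ with, say, $\mathrm{Im}\,a\in(-2,0)$ so that Theorem~\ref{th:altB1asympt0m} directly applies. Using the second identity in~\eqref{eqs:w-identities}, namely $w_1/w_2=w_3/w_4$, I would rewrite
\begin{equation*}
w_1\tau^{1-2\varrho}+w_2\tau^{-1+2\varrho}=\frac{w_2}{w_4}\bigl(w_3\tau^{1-2\varrho}+w_4\tau^{-1+2\varrho}\bigr),
\end{equation*}
and then compute $w_1w_2/(w_2/w_4)^2=(w_1/w_2)\,w_4^2=(w_3/w_4)\,w_4^2=w_3w_4$, which yields
\begin{equation*}
\frac{\varepsilon(1-2\varrho)^2 w_1w_2}{\tau(w_1\tau^{1-2\varrho}+w_2\tau^{-1+2\varrho})^2}
=\frac{\varepsilon(1-2\varrho)^2 w_3w_4}{\tau(w_3\tau^{1-2\varrho}+w_4\tau^{-1+2\varrho})^2},
\end{equation*}
so the leading term of~\eqref{eq:Asympt0u-} coincides identically with that of~\eqref{eq:Asympt0u+}. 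For the mole function, the first identity in~\eqref{eqs:w-identities} gives $(2\pi)/(w_1w_2)\cdot\me^{-\pi a/2}=w_3w_4/(2\pi)\cdot\me^{\pi a/2}$, which matches the leading coefficients in~\eqref{eq:Asympt0varphi-} and~\eqref{eq:Asympt0varphi+}. Since the error factors in both theorems are written in exactly the same shape, the two asymptotic formulas are term-for-term identical whenever both sets of parameters $w_1,w_2$ and $w_3,w_4$ are well defined.

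Next I would argue the extension of each theorem beyond its original strip. The assumptions $s_0^\infty s_1^\infty\neq 0$ and $s_0^0\neq-2\mi$, together with $a\notin\mi\mathbb{Z}$, already guarantee that none of the gamma factors or bracketed $g_{ij}$-combinations appearing in $w_1,\ldots,w_4$ can vanish or blow up for any $a$ with $\mathrm{Im}\,a\in(-2,0)\cup(0,2)$, so the formulas themselves are well defined throughout the combined region. Given monodromy data with $\mathrm{Im}\,a\in(0,2)$, Theorem~\ref{th:alt2B1asympt0p}${}^{\mathbf\prime}$ delivers the asymptotics~\eqref{eq:Asympt0u+}, \eqref{eq:Asympt0varphi+}; by the equivalence just established, these coincide with~\eqref{eq:Asympt0u-}, \eqref{eq:Asympt0varphi-}, and hence Theorem~\ref{th:altB1asympt0m} holds in this region too. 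The symmetric argument, starting from $\mathrm{Im}\,a\in(-2,0)$, extends Theorem~\ref{th:alt2B1asympt0p}${}^{\mathbf\prime}$ to the same union. Both theorems are therefore valid on $\mathrm{Im}\,a\in(-2,0)\cup(0,2)$, and, in particular, produce the same asymptotic expansions.

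I do not expect a genuine obstacle here: the work is essentially algebraic bookkeeping, and the analytic content is already secured by Proposition~\ref{prop:equivalence Th31Th31prime} and by the proofs of the two theorems on their respective half-strips. The only delicate point to double-check is that the reflection ambiguity $\varrho\mapsto 1-\varrho$ is being used consistently in both theorems (cf.\ Remark~\ref{rem:symmetry}), so that the labelling of $w_1,w_2$ versus $w_3,w_4$ in the identity $w_1/w_2=w_3/w_4$ matches the branch implicit in each asymptotic formula; once the compatible branch of $\varrho$ is fixed, the manipulation above goes through verbatim.
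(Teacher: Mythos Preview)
Your proof is correct and follows the same approach as the paper: both rely on the identities of Proposition~\ref{prop:equivalence Th31Th31prime} to show that the formulae in Theorems~\ref{th:altB1asympt0m} and~\ref{th:alt2B1asympt0p}${}^{\mathbf\prime}$ coincide, and then conclude that each theorem extends to the other's half-strip. Your version simply spells out the algebraic manipulations and the well-definedness check that the paper leaves implicit in its one-line proof.
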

\begin{proof}
Using the identities proved in Proposition~\ref{prop:equivalence Th31Th31prime}, we find that the asymptotic formulae
for the functions $u(\tau)$ and $\me^{\mi\varphi(\tau)}$ given in Theorem~\ref{th:altB1asympt0m} for
$\mathrm{Im}\,a\in(-2,0)$ coincide with the asymptotic formulae for these functions given in
Theorem~\ref{th:alt2B1asympt0p}${}^{\mathbf\prime}$ for $\mathrm{Im}\,a\in(0,2)$.
\end{proof}
\begin{theorem}\label{th:restriction-on-Ima-removed}
Theorems~{\rm\ref{th:altB1asympt0m}} and {\rm\ref{th:alt2B1asympt0p}}\hspace{-0pt}${}^{\mathbf\prime}$ are
valid for all $\mathrm{Im}(a)\in\mathbb{R}$ provided that all the other conditions stated therein hold.
\end{theorem}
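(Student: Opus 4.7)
The plan is to lift the technical assumption on $\mathrm{Im}(a)$ by induction, using a single B\"acklund transformation per step to shift the admissible window for $\mathrm{Im}(a)$ by one unit of $\mi$. The base case is the interval $(-2,2)\setminus\{0\}$, which is fully covered by Theorems~\ref{th:altB1asympt0m} and \ref{th:alt2B1asympt0p}${}^{\mathbf\prime}$ together with the compatibility statement of Corollary~\ref{cor:Th31=Th32}. I would prove that, whenever the conclusion of the two theorems holds for all admissible $a$ with $\mathrm{Im}(a)\in(c,c+2)$ for some $c\in\mathbb{R}$, it also holds for $\mathrm{Im}(a)\in(c+1,c+3)$, and, symmetrically, for $\mathrm{Im}(a)\in(c-3,c-1)$. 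Iterating this extension in both directions then covers all of $\mathbb{R}$ (the excluded integer values of $\mathrm{Im}(a)$ correspond to $a\in\mi\mathbb{Z}$, which is excluded throughout).

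The inductive step parallels, essentially verbatim, the argument in the proof of Theorem~\ref{th:altB1asympt0m}, but with the inductive hypothesis playing the role formerly played by Theorem~B.1 of \cite{KitVar2023}. Let $u(\tau)$ be a solution of the system~\eqref{eq:dp3},\eqref{eq:varphi} with $\mathrm{Im}(a)\in(c+1,c+3)$, and set $\tilde{u}(\tau):=u_-(\tau)$ and $\tilde{\varphi}(\tau):=\varphi_-(\tau)$ via \eqref{eq:u-},\eqref{eq:varphi-}. Then $(\tilde{u},\tilde{\varphi})$ solves the system with parameter $\tilde{a}=a-\mi$, whose imaginary part lies in $(c,c+2)$, and its monodromy data are obtained from those of $u$ via \eqref{eq:data-}. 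The inductive hypothesis supplies the asymptotics of $\tilde{u}$ and $\tilde{\varphi}$ in the form \eqref{eq:Asympt0u-}--\eqref{eq:Asympt0varphi-} (or, equivalently by Corollary~\ref{cor:Th31=Th32}, \eqref{eq:Asympt0u+}--\eqref{eq:Asympt0varphi+}) expressed in the tilded monodromy data. Next, I would invert the transformation using $u=(\tilde{u})_+$, i.e.\ substituting the (differentiable) asymptotic expansion of $\tilde{u}$ into the right-hand side of \eqref{eq:u+} with $u\to\tilde{u}$ and $a\to\tilde{a}$; the companion computation for $\me^{\mi\varphi(\tau)}$ proceeds in parallel through \eqref{eq:varphi+}.

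The main obstacle is the algebraic bookkeeping in this substitution. The branching parameter $\tilde{\varrho}$ of $\tilde{u}$ satisfies $\cos(2\pi\tilde{\varrho})=-\cos(2\pi\varrho)$ (since $\tilde{s}_{0}^{0}=-s_{0}^{0}$), hence $\tilde{\varrho}=1/2-\varrho$ modulo the reflection $\tilde{\varrho}\to 1-\tilde{\varrho}$; the phase factors $\pm\mi$ appearing in \eqref{eq:data-} on the $g_{ij}$ combine with this shift of the branching parameter in the $w$-formulas so that, after cancellation, the leading-order expression collapses exactly to \eqref{eq:Asympt0u-} with $w_{1},w_{2}$ given by \eqref{eq:w1}--\eqref{eq:w2} evaluated in the \emph{original} monodromy data of $u$. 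The identities in Proposition~\ref{prop:equivalence Th31Th31prime} and the reflection symmetry noted in Remark~\ref{rem:symmetry} are precisely what one needs to translate the intermediate tilded parametrization into either \eqref{eq:Asympt0u-} or \eqref{eq:Asympt0u+}, and the two coincide by Corollary~\ref{cor:Th31=Th32}, so the induction closes regardless of which of the two theorems is invoked at the previous level. Finally, the error factors $1+\mathcal{O}(\tau^{4\mathrm{Re}(\varrho)})+\mathcal{O}(\tau^{4-4\mathrm{Re}(\varrho)})$ propagate unchanged under the rational substitution \eqref{eq:u+}, by the same differentiability-of-asymptotics argument used in the proof of Theorem~\ref{th:altB1asympt0m} and confirmed by the complete asymptotic expansion in Appendix~\ref{app:subsec:error-correction-terms}.
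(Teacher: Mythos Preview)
Your overall strategy---bootstrapping the admissible range of $\mathrm{Im}(a)$ via B\"acklund transformations---matches the paper's. But the inductive step as you describe it does not close, for a structural reason.

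You claim that applying $B_+$ (i.e.\ \eqref{eq:u+}) to the $\tilde\varrho$-asymptotics \eqref{eq:Asympt0u-} of $\tilde u$ produces, after cancellation, the $\varrho$-asymptotics \eqref{eq:Asympt0u-} for $u$. It does not: the computation returns the $\rho$-formulation of Theorem~B.1 in \cite{KitVar2023}, whose leading term is a product of two linear factors in $\tau^{\pm2\rho}$ multiplied by $\tau$, not the reciprocal of a square divided by $\tau$. This is immediate from how Theorem~\ref{th:altB1asympt0m} was derived: it was obtained from Theorem~B.1 by applying $B_-$, so undoing that with $B_+$ lands you back at Theorem~B.1, not at Theorem~\ref{th:altB1asympt0m} with shifted $a$. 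Proposition~\ref{prop:equivalence Th31Th31prime} and Remark~\ref{rem:symmetry} only interchange the $w_1,w_2$ and $w_3,w_4$ parametrizations of the $\varrho$-form; neither bridges to the $\rho$-form.

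The paper handles this by keeping Theorem~B.1 explicitly in the loop: one half-step takes Theorems~\ref{th:altB1asympt0m}/\ref{th:alt2B1asympt0p}${}^{\mathbf\prime}$ (on some strip) to Theorem~B.1 on a strip shifted by~$1$, and the next half-step takes Theorem~B.1 back to Theorems~\ref{th:altB1asympt0m}/\ref{th:alt2B1asympt0p}${}^{\mathbf\prime}$ on a strip shifted by~$2$. Before launching the bootstrap, the paper also inserts an analyticity-in-$a$ argument to show that the $\rho$- and $\varrho$-formulations coincide (modulo their respective correction terms) and thereby fills in the line $\mathrm{Im}(a)=0$ left over from Corollary~\ref{cor:Th31=Th32}. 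Your parenthetical that ``the excluded integer values of $\mathrm{Im}(a)$ correspond to $a\in\mi\mathbb{Z}$'' conflates $\mathrm{Im}(a)\in\mathbb{Z}$ with $a\in\mi\mathbb{Z}$; for $\mathrm{Re}(a)\neq 0$ these conditions are unrelated, so the gap at $\mathrm{Im}(a)=0$ is genuine and needs the analyticity step.
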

\begin{proof}
The proof proceeds via the following bootstrap-type argument.

Firstly, reference to Corollary~\ref{cor:Th31=Th32} proves the statement of the theorem for
$\mathrm{Im}(a)\in(-2,0)\cup(0,2)$.

Secondly, we note that for the general solutions ($\mathrm{Re}\,\varrho,\mathrm{Re}\,\rho\in(0,1)$), the
asymptotic expansions presented in Theorems~\ref{th:altB1asympt0m} and
\ref{th:alt2B1asympt0p}\hspace{-0pt}${}^{\mathbf\prime}$ and in Theorem B.1 of \cite{KitVar2023} coincide, for all
values of $a$, modulo the corresponding correction terms up to which they are considered. Furthermore, these
correction terms, as well as the explicitly written leading terms, are holomorphic functions of $a$ at $a=0$;
therefore, all three asymptotic formulae at $a=0$ define asymptotics of the same functions $u(\tau)$ and
$\me^{\mi\varphi(\tau)}$, provided they are constructed in terms of the monodromy data corresponding to these
functions, as this fact is proved for one of these asymptotics, that is, the asymptotics obtained in Theorem B.1
of \cite{KitVar2023}. Thus, the theorem is valid for $\mathrm{Im}(a)\in(-2,2)$.

We now begin the bootstrap procedure.
Apply to the asymptotics of Theorem~\ref{th:altB1asympt0m} the B\"acklund transformations~\eqref{eq:u+} and
\eqref{eq:varphi+}, which are the inverses of the transformations used to obtain the asymptotics of this theorem;
consequently, one arrives at the original asymptotics stated in Theorem B.1 of \cite{KitVar2023}, but with the
interval of validity of the original asymptotics extended from $\mathrm{Im}\,a\in(-1,1)$ to $\mathrm{Im}\,a\in(-1,3)$.
If one applies, in an analogous manner, the B\"acklund transformations~\eqref{eq:u-} and \eqref{eq:varphi-} to
the asymptotics given in Theorem~\ref{th:alt2B1asympt0p}\hspace{-0pt}${}^{\mathbf\prime}$, then one concludes that
the original asymptotics in Theorem B.1 of \cite{KitVar2023} are valid in the interval $\mathrm{Im}\,a\in(-3,1)$.
Thus, the original asymptotics are valid for $\mathrm{Im}\,a\in(-3,3)$ instead of just for
$\mathrm{Im}\,a\in(-1,1)$. Subsequently, we revert back to the proof of Theorem~\ref{th:altB1asympt0m} and establish
that it is, in fact, true for $\mathrm{Im}\,a\in(-4,2)$, and that the corresponding asymptotics in
Theorem~\ref{th:alt2B1asympt0p}\hspace{-0pt}${}^{\mathbf\prime}$ are valid for $\mathrm{Im}\,a\in(-2,4)$, that is,
both theorems are applicable for $\mathrm{Im}\,a\in(-4,4)$.
This procedure can be repeated as many times as is necessary in order to arrive at the telescoping system of
intervals of validity of the theorem.

A mathematical induction argument completes the proof.
\end{proof}
\section{Power-Like Small-$\tau$ Asymptotics: Special Cases for $\rho\neq0$ and $\varrho\neq1/2$}
\label{sec:special cases}
The following theorems describe one-parameter families of solutions corresponding to special cases of the
monodromy data that were excluded from Theorems~\ref{th:alt2B1asympt0p} and
\ref{th:alt2B1asympt0p}\hspace{-0pt}${}^{\mathbf\prime}$.
Unlike Section~\ref{sec:general}, the branching parameters $\varrho$ and $\sigma$ in this section are fixed
in terms of $a$.
\begin{theorem}\label{th:Asympt0-rho-eq-1pn+ia2}
Let $(u(\tau), \varphi(\tau))$ be a solution of the system~\eqref{eq:dp3}, \eqref{eq:varphi} corresponding to the
monodromy data
$(a,s_{0}^{0},s_{0}^{\infty},s_{1}^{\infty},g_{11},g_{12},g_{21},g_{22})$.
Suppose that
\begin{equation}\label{eqs:conditions-th3.1-limit1}
a\neq\mi k,\quad
k\in\mathbb{Z},\qquad
s_0^{\infty}=0,\quad
\mathrm{and}\quad
s_1^{\infty}\neq0;
\end{equation}
then, $g_{21}\in\mathbb{C}\setminus\{0\}$, and the remaining monodromy data are given by the following equations:
\begin{equation}\label{eqs:spec-monodromy-th3.1-1}
s_0^0=2\mi\cosh(\pi a),\quad
g_{11}=\mi\me^{-\pi a}g_{21},\quad
g_{12}=-\frac{\me^{\pi a}+\mi s_1^{\infty}g_{21}^2}{2\sinh(\pi a)g_{21}},\quad
g_{22}=\frac{\mi-\me^{\pi a}s_1^{\infty}g_{21}^2}{2\sinh(\pi a)g_{21}}.
\end{equation}
\begin{enumerate}
\item[\pmb{$(1)$}]\label{Th4.1case1}
Assume that
$\mathrm{Im}\,a>0$, and define---uniquely---numbers $\varrho\in\mathbb{C}$, with $\mathrm{Re}\,\varrho\in(0,1)$, and
$n\in\mathbb{Z}_{\geqslant0}$ such that $\varrho=1+n+\mi a/2$, i.e.,
$\lfloor{\mathrm{Im}\,a/2}\rfloor=n$ and $\mathrm{Re}\,\varrho=1-\{\mathrm{Im}\,a/2\}$, where $\lfloor\cdot\rfloor$
and $\{\cdot\}$ denote, respectively, the floor and the fractional part of the real number; then,
\begin{align}
u(\tau)\underset{\tau\to0^{+}}{=}
&\frac{\varepsilon(1-2\varrho)^2\hat{w}_1\hat{w}_2\big(1+\mathcal{O}\big(\tau^{4\mathrm{Re}(\varrho)}\big)+
\mathcal{O}\big(\tau^{4(1-\mathrm{Re}(\varrho))}\big)\big)}
{\tau\big(\hat{w}_1\tau^{1-2\varrho}+\hat{w}_2\tau^{-1+2\varrho}\big)^2},\label{eq:Asympt0spec1u-}\\
\me^{\mi\varphi (\tau)} \underset{\tau \to 0^{+}}{=}
&\me^{\frac{3\pi\mi}{2}}\me^{-\frac{\pi a}{2}}\frac{2\pi}{\hat{w}_1\hat{w}_2}\big(2\tau^2\big)^{\mi a}
\big(1+\mathcal{O}\big(\tau^{4\mathrm{Re}(\varrho)}\big)+
\mathcal{O}\big(\tau^{4(1-\mathrm{Re}(\varrho))}\big)\big),\label{eq:Asympt0spec1varphi-}
\end{align}
where
\begin{align}
\hat{w}_{1}=&\left(\tfrac{1}{2}(\varepsilon b)\me^{\pi\mi/2}\right)^{\frac{1}{2}-\varrho}\frac{2\pi}{n!}
\frac{\Gamma(2\varrho)}{\Gamma(2-2\varrho)}\frac{\me^{3\pi\mi/4-3\pi a/2}}{s_1^{\infty}g_{21}},
\label{eq:omega1}\\
\hat{w}_{2}=&\left(\tfrac{1}{2}(\varepsilon b)\me^{\pi\mi/2}\right)^{\varrho-\frac{1}{2}}\me^{\pi\mi(\varrho-1/4)}
\frac{\Gamma(2-2\varrho)}{\Gamma(2\varrho)}
\Gamma(2\varrho-n-1)\,2\sinh(\pi a)\,g_{21}.
\label{eq:omega2}
\end{align}
\item[\pmb{$(2)$}]\label{Th4.1case2}
Assume that
$-1<\mathrm{Im}\,a<1$, and define
\begin{equation}\label{eqs:sigma-b1-1}
\sigma=-2\mi a,\quad
b_{1,-1}=-\mi\left(\frac{\varepsilon b}{2}\right)^{1+\mi a}\frac{\pi\me^{\pi a/2}}{\sinh(\pi a)}
\frac{s_1^\infty g_{21}^2}{\big(\Gamma(1+\mi a)\big)^3};
\end{equation}
then,
\begin{align}
u(\tau)\underset{\tau\to0^+}{=}&\;
\frac{\varepsilon b_{1,-1}\tau^{1-\sigma}}{\left(1+\frac{4 b_{1,-1}\tau^{2-\sigma}}{(\sigma-2)^2}\right)^2}
-\frac{b\tau}{2a}+\mathcal{O}\big(\tau^{3-\sigma}\big)+\mathcal{O}\big(\tau^{3}\big),
\label{eq:Asympt0-rhoEq+Ia:2u}\\
\me^{\mi\varphi(\tau)}\underset{\tau\to0^+}{=}&\;
\frac{\me^{\pi a}}{2\pi a g_{21}^2}
\left(\me^{\frac{\pi a}{2}}\Gamma(1-\mi a)s_1^{\infty}g_{21}^2\left(2\tau^2\right)^{\mi a}-
\mi\big(\Gamma(1+\mi a)\big)^2\left(\frac{4}{\varepsilon b}\right)^{\mi a}\right)\nonumber\\
&\times\left(1+\mathcal{O}\big(\tau^{2}\big)+\mathcal{O}\big(\tau^{2+2\mi a}\big)\right).
\label{eq:Asympt0-rhoEq+Ia:2phi}
\end{align}
\item[\pmb{$(3)$}]\label{Th4.1case3}
Assume that
$n-1<-\mathrm{Im}\,a<n$, $n\in\mathbb{N}$, or $\mathrm{Im}\,a=-(n-1)$ and $\mathrm{Re}\,a\neq0$.
Let $\sigma$ and $b_{1,-1}$ be defined by equation~\eqref{eqs:sigma-b1-1}, in particular,
$2(n-1)<-\mathrm{Re}\,\sigma<2n${\rm;} then,
\begin{align}
\varepsilon u(\tau)\underset{\tau\to0^+}{=}&\,
\sum_{k=1}^n b_{2k-1,0}\tau^{2k-1}+ b_{1,-1}\tau^{1-\sigma}+\mathcal{O}\big(\tau^{2n+1}\big),
\label{eq:Asympt0-u-long-b1-1}\\
\me^{\mi\varphi(\tau)}\underset{\tau\to0^+}{=}&\,
-\frac{\mi\me^{\pi a}\big(\Gamma(1+\mi a)\big)^2}{2\pi ag_{21}^2}\left(\frac{\varepsilon b}{4}\right)^{-\mi a}
\exp\left(-\mi\left(P_n(\tau)-\frac{4a^2}{\varepsilon b}b_{1,-1}\frac{\tau^{-\sigma}}{\sigma}
+\mathcal{O}\big(\tau^{2n}\big)\right)\right),
\label{eq:Asympt0-phi-long-b1-1}
\end{align}
where $b_{1,0}=-\frac{\varepsilon b}{2a}$,\footnote{\label{foot:sigmaEQ-4a2} See the second
equation in~\eqref{app:eq:sigma-b0-b1pm1}, where $\sigma^2=-4a^2$.}
the numbers $b_{2l+1,0}$ are the ``middle terms'' of the asymptotic expansion for $u(\tau)$ defined in
Appendix~{\rm\ref{app:subsec:error-correction-terms}}, and the polynomials $P_n(\tau)$ are given by
\begin{gather}
P_n(\tau)=\sum_{N=1}^{n-1}p_N\tau^{2N},\label{eq:Pn-def}\\
p_N=\frac{a}{N}\sum_{k=1}^N\left(\frac{2a}{\varepsilon b}\right)^k\sum_{\{m_1,\ldots,m_{N}\}\in M_{k,N}}
\frac{(m_1+\ldots+m_{N})!}{m_1!\cdot\ldots\cdot m_{N}!}\prod_{l=1}^{N}(b_{2l+1,0})^{m_l},\label{eq:pN}
\end{gather}
where the summation set $M_{k,N}$ consists of the sets of numbers $m_i\in\mathbb{Z}_{\geqslant0}$, $i=1,2,\ldots,N$, that
solve the system
\begin{equation}\label{eqs:MkN}
\begin{gathered}
m_1+\ldots+m_i+\ldots+m_{N}=k,\\
m_1+\ldots+im_i+\ldots+Nm_{N}= N.
\end{gathered}
\end{equation}
\end{enumerate}
\end{theorem}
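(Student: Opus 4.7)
The overall plan is to reduce each of the three cases to a specialization or limit of already-established generic results: Theorem~\ref{th:alt2B1asympt0p}${}^{\mathbf\prime}$ (extended to all $\mathrm{Im}\,a>0$ by Theorem~\ref{th:restriction-on-Ima-removed}) for Case~\pmb{$(1)$}, and the generic small-$\tau$ expansion of Appendix~\ref{app:subsec:error-correction-terms} with the distinguished root $\sigma=-2\mi a$ of \eqref{eq:rho-general} for Cases~\pmb{$(2)$} and~\pmb{$(3)$}. The algebraic derivation of the monodromy data \eqref{eqs:spec-monodromy-th3.1-1} is a direct computation from \eqref{eq:monodromy:s}--\eqref{eq:monodromy:detG}: equation \eqref{eq:monodromy:s} forces $s_0^0=2\mi\cosh(\pi a)$; substituting into \eqref{eq:monodromy:s0} yields the factorization $(g_{11}-\mi\me^{\pi a}g_{21})(g_{11}-\mi\me^{-\pi a}g_{21})=0$, and I fix the branch $g_{11}=\mi\me^{-\pi a}g_{21}$ (the other branch corresponds, via Remark~\ref{rem:symmetry}, to the reflection $\varrho\to1-\varrho$ and yields an equivalent description); the pair $(g_{12},g_{22})$ is then linearly determined by \eqref{eq:monodromy:main} and \eqref{eq:monodromy:detG}, with \eqref{eq:monodromy:s1} automatically satisfied by virtue of \eqref{eq:monodromy:s}. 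Non-vanishing of $g_{21}$ is forced by $\det G=1$.

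For Case~\pmb{$(1)$} I would approach the critical locus $(s_0^{\infty},\varrho)\to(0,1+n+\mi a/2)$ along a one-parameter curve in $\mathscr{M}$, holding $a$ and $s_1^{\infty}$ fixed and parametrising the perturbation by $\eta\to0$ through $g_{11}=\mi\me^{-\pi a}g_{21}(1+\eta)$. A direct computation using \eqref{eq:monodromy:s0} gives $s_0^{\infty}=-2\mi g_{21}^2\sinh(\pi a)\eta+O(\eta^2)$, and then \eqref{eq:rho-general} yields the linear relation $\delta:=\varrho-(1+n+\mi a/2)=\bigl(s_1^{\infty}g_{21}^2\me^{\pi a}/(2\pi)\bigr)\eta+O(\eta^2)$. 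Substituting into \eqref{eq:w1}, the $g$-combination $g_{11}\me^{\pi\mi/4}\me^{-\pi\mi\varrho}+g_{21}\me^{-\pi\mi/4}\me^{\pi\mi\varrho}$ vanishes linearly in $\eta$ at $\eta=0$, while $\Gamma(1-\varrho+\mi a/2)=\Gamma(-n-\delta)$ has a simple pole of residue $(-1)^{n+1}/n!$ in $\delta$; their product has a finite limit in which the auxiliary $\eta$ drops out after applying the $\delta$--$\eta$ relation, and careful tracking of the phases $\me^{\pm\pi\mi/4}$ and $\me^{\pm\pi a/2}$ produces $\hat{w}_1$ of \eqref{eq:omega1}. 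The companion factor $w_2$ is regular at $\delta=0$ and, via the identity $\Gamma(\varrho+\mi a/2)=\Gamma(n+1+\mi a)=\Gamma(2\varrho-n-1)$, simplifies directly to $\hat{w}_2$ of \eqref{eq:omega2}. The error terms $\mathcal{O}\bigl(\tau^{4\mathrm{Re}(\varrho)}\bigr)+\mathcal{O}\bigl(\tau^{4(1-\mathrm{Re}(\varrho))}\bigr)$ transfer verbatim from Theorem~\ref{th:alt2B1asympt0p}${}^{\mathbf\prime}$ since they depend only on $\mathrm{Re}\,\varrho$, and \eqref{eq:Asympt0spec1varphi-} follows from the analogous limit in \eqref{eq:Asympt0varphi+} with the help of Proposition~\ref{prop:equivalence Th31Th31prime}.

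For Cases~\pmb{$(2)$} and~\pmb{$(3)$} I would invoke the generic expansion \eqref{app:eq:0-u-expansion} of Appendix~\ref{app:subsec:error-correction-terms} with the distinguished root $\rho=-\mi a/2$ of \eqref{eq:rho-general}, the unique root compatible with $|\mathrm{Re}\,\rho|<1/2$ in the relevant ranges of $a$. The non-resonant amplitude $b_{1,-1}$ of \eqref{eqs:sigma-b1-1} is determined by the monodromy parametrisation worked out in Appendix~\ref{app:subsec:error-correction-terms}; equivalently, in the overlap region $0<\mathrm{Im}\,a<1$ it can be read off by matching the $\tau^{1-\sigma}$-coefficient with the limit of Case~\pmb{$(1)$} using the reflection formula for $\Gamma$. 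The middle term $-b\tau/(2a)$ in \eqref{eq:Asympt0-rhoEq+Ia:2u} is just $b_{1,0}\tau$ from \eqref{app:eq:sigma-b0-b1pm1}. In Case~\pmb{$(3)$} the same expansion, truncated to order $\tau^{2n+1}$, produces \eqref{eq:Asympt0-u-long-b1-1}; the polynomial $P_n(\tau)$ in \eqref{eq:Asympt0-phi-long-b1-1} arises by substituting \eqref{eq:Asympt0-u-long-b1-1} into \eqref{eq:varphi} and expanding $b/u(\tau)$ as a geometric-type power series in $\sum_{k\geq1}(2a/(\varepsilon b))b_{2k+1,0}\tau^{2k}/b_{1,0}$; collecting the $\tau^{2N}$-coefficients and integrating yields precisely the multinomial identity \eqref{eq:pN}--\eqref{eqs:MkN}, and exponentiation produces the stated form.

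The principal technical obstacle is the bookkeeping in Case~\pmb{$(1)$}: although the cancellation of the simple $\Gamma$-pole against the linearly vanishing $g$-factor is conceptually clean, all intermediate phases and constants must be combined so as to produce exactly the denominator $\me^{3\pi\mi/4-3\pi a/2}/(s_1^{\infty}g_{21})$ of $\hat{w}_1$. Secondary obstacles are the combinatorial verification of the multinomial formula \eqref{eq:pN}--\eqref{eqs:MkN}, and the compatibility check that Case~\pmb{$(1)$} with $n=0$ reproduces Case~\pmb{$(2)$} for $0<\mathrm{Im}\,a<1$; the latter reduces, via the reflection formula $\Gamma(1+\mi a)\Gamma(-\mi a)=-\pi/\sin(\pi\mi a)$, to an algebraic rewriting of the product $(1-2\varrho)^2\hat{w}_1\hat{w}_2$ in terms of $s_1^{\infty}g_{21}^2$.
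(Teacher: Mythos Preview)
Your approach to Case~\pmb{$(1)$} via a limiting procedure on $w_1$ is sound and coincides with the paper's route~(iii); the paper also offers two shortcuts --- using Theorem~\ref{th:alt2B1asympt0p}${}^{\mathbf\prime}$ directly (since $w_3,w_4$ remain finite at the critical locus) and then translating via~\eqref{eqs:w-identities}, or computing $\hat w_2=w_2$ from~\eqref{eq:w2} and recovering $\hat w_1$ from Proposition~\ref{prop:equivalence Th31Th31prime}. One point to correct: your claim that the alternative branch $g_{11}=\mi\me^{\pi a}g_{21}$ is reflection-equivalent is wrong. Combining that branch with~\eqref{eq:monodromy:main} and~\eqref{eq:monodromy:detG} forces $\sinh(\pi a)=0$, contradicting $a\notin\mi\mathbb{Z}$; the branch is uniquely determined, not chosen by convention.

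There is a genuine gap in Cases~\pmb{$(2)$} and~\pmb{$(3)$}. Your claim that $\rho=-\mi a/2$ lies in $|\mathrm{Re}\,\rho|<1/2$ ``in the relevant ranges of $a$'' fails in Case~\pmb{$(3)$} for $n\geq2$: there $|\mathrm{Re}\,\rho|=|\mathrm{Im}\,a|/2\geq1/2$, so the constraint $|\mathrm{Re}\,\sigma|<2$ on the expansion~\eqref{app:eq:0-u-expansion} is violated and Appendix~\ref{app:subsec:error-correction-terms} cannot be invoked directly. Even in Case~\pmb{$(2)$} with $-1<\mathrm{Im}\,a<0$, the monodromy parametrisation of $b_{1,-1}$ rests on the isomonodromy asymptotics of \cite{KitVar2004,KitVar2023}, whose unspecified error $o(\tau^\delta)$ may swamp the $\tau^{1-\sigma}$ term when $\mathrm{Re}\,\sigma<0$; the paper flags this explicitly and closes it by applying the B\"acklund transformation~\eqref{eq:u-} to reduce to the already-established strip $0<\mathrm{Im}\,a<1$ (or alternatively by analytic continuation in $a$, Remark~\ref{rem:analytic Continuation-a}). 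For Case~\pmb{$(3)$} the paper proceeds by induction on $n$ via iterated B\"acklund transformations (carried out in the proof of Theorem~\ref{th:Asympt0-rho-eq-1pn-ia2}), verifying at each step the recurrence for $b_{1,\pm1}(n)$. A repair compatible with your direct approach is the observation (cf.\ Remark~\ref{rem:altproof-b11}) that when $b_{1,1}=0$ --- which here follows from $\sigma^2=-4a^2$ and the last relation in~\eqref{app:eq:sigma-b0-b1pm1} --- the series~\eqref{app:eq:0-u-expansion} contains only exponents $2k-1+m\sigma$ with $m\leq0$, all of positive real part for $\mathrm{Re}\,\sigma\leq0$, so it converges without the restriction $|\mathrm{Re}\,\sigma|<2$ and the parametrisation extends by analyticity; but you need to state and use this, not the generic Appendix result.
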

\begin{proof}
If $s_0^{\infty}=0$, then the relation~\eqref{eq:rho-general} implies the equation for $s_0^0$ given in the
list~\eqref{eqs:spec-monodromy-th3.1-1}.
Equation~\eqref{eq:monodromy:s0} is equivalent to the condition $g_{11}=\mi\me^{\pm\pi a}g_{21}$; but,
equations~\eqref{eq:monodromy:main} and \eqref{eq:monodromy:detG}, together with the assumption $a\neq\mi k$,
$k\in\mathbb{Z}$, exclude the possibility $g_{11}=\mi\me^{\pi a}g_{21}$, so one arrives at the second equation in the
list~\eqref{eqs:spec-monodromy-th3.1-1}.
Note that, because of the aforementioned condition on $g_{11}$ and equation~\eqref{eq:monodromy:detG},
$g_{11}g_{21}\neq0$.
Choose $g_{21}\neq0$ and $s_1^{\infty}g_{21}^2$ as the parameters defining the solution
$(u(\tau),\me^{\mi\varphi(\tau)})$. Substituting $g_{11}=\mi\me^{-\pi a}g_{21}$ into
equation~\eqref{eq:monodromy:detG} and dividing both sides of the resulting equation by $g_{21}$, one gets a linear
equation with respect to $g_{12}$ and $g_{22}$. The second linear equation with respect to these co-ordinates is
obtained analogously via equation~\eqref{eq:monodromy:s1}: using the formula $s_0^0=2\mi\cosh(\pi a)$, the left-hand
side of equation~\eqref{eq:monodromy:s1} can be factorised as two linear forms with respect to $g_{12}$ and $g_{22}$,
wherein one of these forms coincides with the left-hand side of the linear equation with respect to $g_{12}$ and
$g_{22}$ already derived using equation~\eqref{eq:monodromy:detG} and, therefore, can be removed with the help of
this linear equation.
Thus, solving the obtained linear system (its discriminant is non-vanishing provided $a\neq\mi k$, $k\in\mathbb{Z}$),
one arrives at the last two equations in the list~\eqref{eqs:spec-monodromy-th3.1-1}.

For $s_0^{\infty}=0$, it follows that $\varrho=n+1\pm\mi a/2$, $n\in\mathbb{Z}$
(cf. equation~\eqref{eq:rho-general}). As a consequence of the symmetry discussed in Remark~\ref{rem:symmetry},
we can, without loss of generality, assume that
$\varrho=n+1+\mi a/2$.\footnote{\label{foot:varrho-reflection-lost} Note that, because of this choice for $\varrho$,
the reflection symmetry $\varrho\to1-\varrho$ in the asymptotics~\eqref{eq:Asympt0spec1u-} and
\eqref{eq:Asympt0spec1varphi-} (cf. equations~\eqref{eq:omega1} and \eqref{eq:omega2}) is lost.}
Also, note that $\varrho\neq1/2$ because $a\notin\mi\mathbb{Z}$.

We now proceed to the proof of item {\bf(1)} of the theorem, that is, assume $\mathrm{Im}\,a>0$ and
$n\in\mathbb{Z}_{\geqslant0}$.
Substituting $\varrho=n+1+\mi a/2$ and the expression for $g_{11}$ in the list~\eqref{eqs:spec-monodromy-th3.1-1}
into equation~\eqref{eq:w2} for $w_2$, one verifies equation~\eqref{eq:omega2} for $\hat{w}_2$. The derivation of
equation~\eqref{eq:omega1} for $\hat{w}_1$ is not as straightforward, because substituting the same expressions
for $\varrho$ and $g_{11}$ into equation~\eqref{eq:w1} for $w_1$ gives rise to the appearance of the term
$\Gamma(-n)$ as the value of the right-most $\Gamma$-function, which, for $n\in\mathbb{Z}_{\geqslant0}$, is its
valuation at the pole $-n$.
At the same time, though,  the right-most
(parenthetical) term in equation~\eqref{eq:w1}, which consists of the linear combination of the monodromy data,
vanishes for the monodromy data~\eqref{eqs:spec-monodromy-th3.1-1}; thus, we get an indeterminate
expression. This indeterminacy can be resolved in several ways:

\textbf{(i)} the simplest way is to use Theorem~\ref{th:alt2B1asympt0p}\hspace{-1pt}${}^{\mathbf\prime}$.
The key point here is to note that the coefficients $w_3$ and $w_4$ are finite for the monodromy
data~\eqref{eqs:spec-monodromy-th3.1-1}; therefore, for this set of monodromy data and $\mathrm{Im}\,a>0$, one
can use the asymptotics~\eqref{eq:Asympt0u+} and \eqref{eq:Asympt0varphi+} without any modifications. These
asymptotics resemble (as functions of $\tau$) those stated in equations~\eqref{eq:Asympt0spec1u-} and
\eqref{eq:Asympt0spec1varphi-}; however, the coefficients $w_3$ and $w_4$, calculated for the monodromy
data~\eqref{eqs:spec-monodromy-th3.1-1}, look different than the coefficients $\hat{w}_1$ and $\hat{w}_2$ presented
in equations~\eqref{eq:omega1} and \eqref{eq:omega2}. To get the exact correspondence, one has to use the
relations~\eqref{eqs:w-identities}, which have an algebraic nature and hold for any parametrization(s) of the
coefficients $w_k$, $k=1,2,3,4$, in terms of the monodromy data;

\textbf{(ii)} a modification of the approach suggested in item \textbf{(i)}, but using
Theorem~\ref{th:altB1asympt0m} in lieu of Theorem~\ref{th:alt2B1asympt0p}\hspace{-1pt}${}^{\mathbf\prime}$.
As discussed above, the parameter $w_2$ in the asymptotics being calculated for the
monodromy data~\eqref{eqs:spec-monodromy-th3.1-1} coincides with $\hat{w}_2$ given in equation~\eqref{eq:omega2},
while the formula for the parameter $w_1$ for the monodromy data~\eqref{eqs:spec-monodromy-th3.1-1} does not give
rise to a definite result. To find the value of
$w_1$ for the said monodromy data, which is denoted as $\hat{w}_1$ in equation~\eqref{eq:omega1}, one can use,
again, the results of Proposition~\ref{prop:equivalence Th31Th31prime}, where any one of the
relations~\eqref{eqs:w-identities} can be employed for this purpose; and

\textbf{(iii)} a direct resolution of the indeterminacy problem that provides an alternative
proof of equation~\eqref{eq:omega1}. The standard method for
resolving such indeterminacies is to consider a proper limiting procedure, which we now proceed to outline.
Define a small parameter $\delta$ via the equation $\varrho=1+n+\mi a/2-\delta$. Substituting this expression for
$\varrho$ into the argument of the right-most $\Gamma$-function in equation~\eqref{eq:w1}, we get
\begin{equation}\label{eq:Gamma-delta-At-pole}
\Gamma(-n+\delta)=\frac{(-1)^n\pi}{\Gamma(n+1-\delta)\sin(\pi\delta)}.
\end{equation}
We have to take a limit along a curve on the monodromy manifold; therefore, we have to find  infinitesimal
$\mathcal{O}(\delta)$-corrections to the monodromy data at the point~\eqref{eqs:spec-monodromy-th3.1-1}
parametrized by $s_1^{\infty}$ and $g_{21}$. Before doing so, however, we consider the right-most (parenthetical)
term in equation~\eqref{eq:w1}, where we denote by $\kappa$ an infinitesimal correction to the parameter $g_{11}$,
namely, $g_{11}=\mi\me^{-\pi a}g_{21}+\kappa$; then, after a straightforward calculation, we find that
\begin{equation}\label{eq:last()w1-delta-kappa}
g_{11}\me^{\pi\mi/4}\me^{-\pi\mi\varrho}+g_{21}\me^{-\pi\mi/4}\me^{\pi\mi\varrho}=
\kappa\me^{\pi\mi/4+\pi a/2+\pi\mi(n+1)}-2\pi\mi\delta g_{21}\me^{-\pi\mi/4-\pi a/2-\pi\mi(n+1)}+\mathcal{O}(\delta^2).
\end{equation}
To find the $\delta$-dependence of $\kappa$, we have to use
equations~\eqref{eq:monodromy:s}--\eqref{eq:monodromy:detG} defining the monodromy manifold and
equation~\eqref{eq:rho-general} for $\varrho$. First, we find the perturbation of
the Stokes multipliers:
\begin{equation}\label{eqs:stokes-perturbations}
s_0^0=2\mi\cosh(\pi a)-4\pi\delta\sinh(\pi a)+\mathcal{O}(\delta^2),\qquad
s_0^{\infty}=4\pi\mi\sinh(\pi a)\me^{-\pi a}\delta/s_1^{\infty}+\mathcal{O}(\delta^2).
\end{equation}
With the help of equations~\eqref{eqs:stokes-perturbations}, one finds
\begin{equation}\label{eq:kappa-general}
\kappa=\frac{4\pi\mi\delta\sinh(\pi a)\me^{-\pi a}g_{21}^2g_{22}}{1+2\mi\sinh(\pi a)g_{21}g_{22}}+\mathcal{O}(\delta^2).
\end{equation}
Substituting into equation~\eqref{eq:kappa-general} the formula for $g_{22}$ given in
the list~\eqref{eqs:spec-monodromy-th3.1-1}, one gets
\begin{equation}\label{eq:kappa-spec4.1}
\kappa=-\frac{2\pi\mi\delta}{s_1^{\infty}g_{21}}\me^{-2\pi a}+2\pi\delta g_{21}\me^{-\pi a}+\mathcal{O}(\delta^2).
\end{equation}
Substituting the expression for $\kappa$ given in equation~\eqref{eq:kappa-spec4.1} into
equation~\eqref{eq:last()w1-delta-kappa}, we observe that the terms without $s_1^{\infty}$ cancel! Taking this into
account together with equation~\eqref{eq:Gamma-delta-At-pole},
we simplify equation~\eqref{eq:w1}, and, denoting by $\hat{w}_1$ the special value of $w_1$ corresponding to the
monodromy data~\eqref{eqs:spec-monodromy-th3.1-1}, arrive at equation~\eqref{eq:omega1}.

We now turn our attention to the proof of item \pmb{$(2)$} of the theorem. Here, we rely upon our basic result
as formulated in Appendix B of \cite{KitVar2023} and the local expansion studied in
Appendix~\ref{app:sec:full0expansion} below.

Note that the case $0<\mathrm{Im}\,a<1$ has already been considered in item \pmb{$(1)$} of the theorem;
but, the leading terms of the corresponding asymptotics look different, and
the correction term in item \pmb{$(2)$} is more precise than the one in item \pmb{$(1)$}.
So, our goal is to prove that the leading terms coincide, and to justify the correction term stated in
item \pmb{$(2)$}.

The case under consideration corresponds to $n=0$ in the formula for the parameter $\varrho$ given in
item \pmb{$(1)$}, so that $\varrho=1+\mi a/2$; this formula implies that $\mathrm{Re}\,\varrho\in(1/2,1)$, which
means that $\tau^{1-2\varrho}>\tau^{-1+2\varrho}$.
Taking the last fact into account, we compare the asymptotics~\eqref{eq:Asympt0spec1u-} and
\eqref{eq:Asympt0spec1varphi-} with the asymptotics~\eqref{eq:Asympt0-rhoEq+Ia:2u} and
\eqref{eq:Asympt0-rhoEq+Ia:2phi}, respectively. Comparing these formulae, we find that the following relations hold:
$\sigma=4(1-\varrho)$ and $(1-2\varrho)^2\hat{w}_2/\hat{w}_1=b_{1,-1}$.
Both formulae can be validated with the help of equations~\eqref{eq:omega1}--\eqref{eqs:sigma-b1-1}.

The term proportional to $\tau$ in the asymptotics~\eqref{eq:Asympt0spec1u-} and the $\mathcal{O}(1)$ term in
the asymptotics~\eqref{eq:Asympt0spec1varphi-}, which are explicitly written in the
asymptotics~\eqref{eq:Asympt0-rhoEq+Ia:2u} and \eqref{eq:Asympt0-rhoEq+Ia:2phi}, respectively, are concealed in the
correction terms. To justify the correction term in the
asymptotics~\eqref{eq:Asympt0-rhoEq+Ia:2u}, we refer to the expansion~\eqref{app:eq:0-u-expansion}, wherein, due to
the last relation in the list of equations~\eqref{app:eq:sigma-b0-b1pm1} and the fact that $\sigma=-2\mi a$, one has
to set $b_{1,1}=0$. This relation
does not hold for the solutions in item \pmb{$(1)$} with $n\geqslant1$. Since $\mathrm{Re}\,\sigma>0$, the
$\mathcal{O}\big(\tau^3\big)$ correction term in equation~\eqref{eq:Asympt0-rhoEq+Ia:2u} can be omitted; in fact,
the largest correction term, i.e., $\mathcal{O}\big(\tau^{3-2\sigma})$, in the expansion~\eqref{app:eq:0-u-expansion}
is hidden in the denominator of the leading term (cf. the function $A_0(x)$ in
Appendix~\ref{app:subsec:super-generating-function}). For the function
$\me^{\mi\varphi(\tau)}$, we also have a more precise formula for the correction term than the corresponding one
in item \pmb{$(1)$}: the leading term of this formula is special case of the asymptotic formula (B.5) in Theorem B.1
of \cite{KitVar2023}. The error corrections for $\varphi(\tau)$ are obtained by substituting the
expansion~\eqref{app:eq:0-u-expansion} into equation~\eqref{eq:varphi} and integrating the resulting expansion.

Turning to the case $-1<\mathrm{Im}\,a\leqslant0$, we find it convenient to refer to the description of the
asymptotics in terms of the parameter $\rho$. Since both $\rho$ and $\varrho$ solve the same
equation~\eqref{eq:rho-general}, we can present $\rho$ as $\rho=1+n+\mi a/2$ for some $n\in\mathbb{Z}$.
The integer $n=-1$ because $|\mathrm{Re}\,\rho|<1/2$. Comparing the asymptotics given in Theorem B.1 of
\cite{KitVar2023} with the expansion~\eqref{app:eq:0-u-expansion} and taking into account the symmetry of this
expansion with respect to the transformation $\sigma\to-\sigma$, we put $\sigma=-4\rho$ and once again obtain the
relation $\sigma=-2\mi a$. Then, according to the last equation in the list~\eqref{app:eq:sigma-b0-b1pm1}, one finds
that $b_{1,1}b_{1,-1}=0$. Equations (B.4) and (B.6)--(B.8) in Appendix B of \cite{KitVar2023} show that
$b_{1,-1}$ is given by equation~\eqref{eqs:sigma-b1-1}, so that $b_{1,1}=0$. The asymptotic formula for
$\me^{\mi\varphi(\tau)}$ is a special case of the asymptotics (B.5) in Appendix B of \cite{KitVar2023}.
The corrections written in equation~\eqref{eq:Asympt0-rhoEq+Ia:2u}
are based on the local expansion \eqref{app:eq:0-u-expansion}, where we have taken into account that $b_{1,1}=0$, and
therefore $b_{3,k}=0$ for $k=1,2$. We now consider the derivation of these corrections more carefully.

If $\mathrm{Im}\,a=0$, then the first two explicitly written $\mathcal{O}(\tau)$ terms represent the leading term of
asymptotics, while the correction is of the order $\mathcal{O}\big(\tau^3\big)$. In this case, the denominator of
the first fraction in the asymptotics~\eqref{eq:Asympt0-rhoEq+Ia:2u} can be omitted because its contribution is of the order
$\mathcal{O}\big(\tau^3\big)$.

If $-1<\mathrm{Im}\,a<0$, then $\mathrm{Re}\,\sigma<0$; this case, however, is more complicated.
The problem here is related with the error estimate, which is presented as the factor
$\big(1+\mathcal{O}\big(\tau^{\delta}\big)\big)$ that multiplies the leading term of asymptotics of the function
$u(\tau)$ (cf. equation~(45) in Theorem 3.4 of \cite{KitVar2004}
or equation (B.5) in Theorem B.1 of \cite{KitVar2023}), where the parameter $\delta>0$ is not specified. In this case,
the $\mathcal{O}\big(\tau^{1-\sigma}\big)$ term of the asymptotics, which contains the monodromy parameters, may be
vying with the $\mathcal{O}\big(\tau^{1+\delta}\big)$ term (depending on the values of $\delta$ and
$|\mathrm{Re}\,\sigma|$),
so that it is not at all apparent as to whether or not the leading term of asymptotics contains the monodromy
parameters or they are hidden in the correction term. In fact, the $\mathcal{O}\big(\tau^{1-\sigma}\big)$ term
continues to contribute to the leading term of asymptotics, and its parametrization via the monodromy data
given in \cite{KitVar2004,KitVar2023} is correct; however, this requires a separate justification.
There are three approaches for establishing this result: (i) to perform calculations similar to those in Section~5
of \cite{KitVar2004} for correspondingly modified assumptions on the coefficients of the
associated Fuchs-Garnier pair;\footnote{\label{foot:corrections:zapiski2024} See the comments and corrections to
this calculation given in Appendix A of \cite{KitVarZapiski2024}.} (ii) use of B\"acklund transformations; or
(iii) analytic continuation with
respect to the parameter $a$. The calculational scheme of the proof delineated in item (i) requires lengthy
calculations, together with a fairly large array of auxiliary constructions parallel to those carried out
in \cite{KitVar2004}. In the
present proof (see below), we refer to the methodology of item (ii), while the proof proposed in item (iii) is
outlined in Remark~\ref{rem:analytic Continuation-a}. As a matter of fact, we have already used a proof based
on B\"acklund transformations (cf. item (ii) above) in \cite{KitVarZapiski2023} to find asymptotics of $u(\tau)$
for $a=-\mi/2$: the underlying idea of this proof works without modification for the more general
situation $-1<\mathrm{Im}\,a<0$.

The proof consists of the following steps: first, note that if $0<\mathrm{Im}\,a<1$, then $-1<\mathrm{Im}(a-\mi)<0$;
second, one verifies that the monodromy data~\eqref{eqs:conditions-th3.1-limit1}, \eqref{eqs:spec-monodromy-th3.1-1}
are invariant under the action of B\"acklund transformations (cf. equations~\eqref{eq:data+} and \eqref{eq:data-});
and third, apply the B\"acklund transformations~\eqref{eq:u-} and \eqref{eq:varphi-} to the asymptotics~\eqref{eq:Asympt0-rhoEq+Ia:2u} and
\eqref{eq:Asympt0-rhoEq+Ia:2phi} of the functions $u(\tau)$ and $\varphi(\tau)$, respectively, for
$\mathrm{Im}\,a\in(0,1)$. These asymptotics are differentiable, so that, after elementary calculations and a
renotation for the monodromy variables, one arrives at the asymptotics~\eqref{eq:Asympt0-rhoEq+Ia:2u} and
\eqref{eq:Asympt0-rhoEq+Ia:2phi}. Note that, if $\mathrm{Im}\,a\in(-1,0)$, then $\mathrm{Re}\,\sigma<0$, so that
the denominator in the first term of the asymptotics~\eqref{eq:Asympt0-rhoEq+Ia:2u} can be neglected since its
contribution for small enough values of $\tau$ is smaller than the $\mathcal{O}(\tau^3)$ correction term.
The corrections for the asymptotics of the function $\me^{\mi\varphi(\tau)}$ are obtained by integrating
equation~\eqref{eq:varphi} with the help of the expansion~\eqref{app:eq:0-u-expansion}.

The proof for the asymptotics presented in item~\pmb{$(3)$} of the theorem for $n-1<-\mathrm{Im}\,a<n$ is also based
on the application of B\"acklund transformations. It is very similar to the proof of item \pmb{$(3)$} in
Theorem~\ref{th:Asympt0-rho-eq-1pn-ia2} below; therefore, the reader familiar with the proof for the asymptotics
stated in item~\pmb{$(3)$} of Theorem~\ref{th:Asympt0-rho-eq-1pn-ia2} should not expect to encounter any complications
with the proof of item~\pmb{$(3)$} of Theorem~\ref{th:Asympt0-rho-eq-1pn+ia2}.
\end{proof}
\begin{remark}\label{rem:analytic Continuation-a}
In this remark, we outline another proof for the asymptotics of the solution stated in item~\pmb{$(2)$} of
Theorem~\ref{th:Asympt0-rho-eq-1pn+ia2} for $\mathrm{Im}\,a\in(-1,0)$. This proof is based on the analytic
continuation of the solution with respect to the parameter $a$.

In the complex $a$-plane, we denote by $\mathcal{D}$ the strip $|\mathrm{Im}\,a|\leqslant1$ punctured at $a=0$;
in fact, we will work with the compactified strip
\begin{equation*}\label{eq:DepsilonDEF}
\mathcal{D}_{\epsilon}:=\left\{
a\in\mathbb{C}: |\mathrm{Im}\,a|\leqslant1-\epsilon,\,
|a|\geqslant\epsilon,\,
\epsilon\in(0,1/2)
\right\}.
\end{equation*}
Then, for any fixed parameter $s_1^{\infty}g_{21}^2$, we define, with the help of the convergent
series~\eqref{app:eq:0-u-expansion}, where $b_{1,-1}$ and $\sigma$ are given in equations~\eqref{eqs:sigma-b1-1},
the function $u_a(\tau)$.
The compactified domain $\mathcal{D}_{\epsilon}$ is necessary in order to guarantee that all functions
$u_a(\tau)$ (considered as functions of $\tau$) for $a\in\mathcal{D}_{\epsilon}$ have a non-empty common domain
of definition in some cut (along the negative real semi-axis) neighbourhood of $\tau=0$.
Note that the functions $u_a(\tau)$ are single-valued for $a\in\mathcal{D}_{\epsilon}$ because the coefficients of
the expansion~\eqref{app:eq:0-u-expansion} are single-valued in $\mathcal{D}_{\epsilon}$. We denote by
$u_a^{\pm}(\tau)$ the functions $u_a(\tau)$ for $\pm\mathrm{Im}\,a>0$.
As explained in the proof of Theorem~\ref{th:Asympt0-rho-eq-1pn+ia2}, the expansion~\eqref{app:eq:0-u-expansion} is
different for the functions $u_a^{\pm}(\tau)$, but, for $\mathrm{Im}\,a=0$, these expansions coincide. According to
the Principle of Analytic Continuation, the functions $u_a^{\pm}(\tau)$ are analytic continuations of one another;
however, for the function $u_a^{+}(\tau)$, we proved that it corresponds to the monodromy data
\eqref{eqs:conditions-th3.1-limit1}, \eqref{eqs:spec-monodromy-th3.1-1}; therefore, the same conclusion follows for
the function $u_a^{-}(\tau)$.
\hfill$\blacksquare$\end{remark}
\begin{theorem}\label{th:Asympt0-rho-eq-1pn-ia2}
Let $(u(\tau), \varphi(\tau))$ be a solution of the system~\eqref{eq:dp3}, \eqref{eq:varphi}
corresponding to the monodromy data $(a,s_{0}^{0},s_{0}^{\infty},s_{1}^{\infty},g_{11},g_{12},g_{21},g_{22})$.
Suppose that
\begin{equation}\label{eqs:conditions-th31p-limit1}
a\neq\mi k,\quad
k\in\mathbb{Z},\qquad
s_1^{\infty}=0,\quad
\mathrm{and}\quad
s_0^{\infty}\neq0;
\end{equation}
then, $g_{12}\in\mathbb{C}\setminus\{0\}$, and the remaining monodromy data are given by the following equations:
\begin{equation}\label{eqs:spec-monodromy-th3.1p-1}
s_0^0=2\mi\cosh(\pi a),\quad
g_{11}=\frac{s_0^{\infty}g_{12}^2\me^{-\pi a}-\mi}{2\sinh(\pi a) g_{12}},\quad
g_{21}=-\frac{\me^{\pi a}+\mi s_0^{\infty}g_{12}^2\me^{-2\pi a}}{2\sinh(\pi a)g_{12}},\quad
g_{22}=-\mi\me^{-\pi a}g_{12}.
\end{equation}
\begin{enumerate}
\item[\pmb{$(1)$}]
Assume that
$\mathrm{Im}\,a<0$, and define---uniquely---numbers $\varrho\in\mathbb{C}$, with $\mathrm{Re}\,\varrho\in(0,1)$, and
$n\in\mathbb{Z}_{\geqslant0}$ such that $\varrho=1+n-\mi a/2$, i.e.,
$\lfloor{\mathrm{Im}\,a/2}\rfloor=-n-1$ and $\mathrm{Re}\,\varrho=\{\mathrm{Im}\,a/2\}$, where $\lfloor\cdot\rfloor$ and
$\{\cdot\}$ denote, respectively, the floor and the fractional part of the real number; then,
\begin{align}
u(\tau)\underset{\tau\to0^{+}}{=}
&\frac{\varepsilon(1-2\varrho)^2\hat{w}_3\hat{w}_4\big(1+\mathcal{O}\big(\tau^{4\mathrm{Re}(\varrho)}\big)
+\mathcal{O}\big(\tau^{4(1-\mathrm{Re}(\varrho))}\big)\big)}
{\tau\big(\hat{w}_3\tau^{1-2\varrho}+\hat{w}_4\tau^{-1+2\varrho}\big)^2},\label{eq:Asympt0spec1u+}\\
\me^{\mi\varphi (\tau)} \underset{\tau \to 0^{+}}{=}
&\me^{\frac{3\pi\mi}{2}}\me^{\frac{\pi a}{2}}\frac{\hat{w}_3\hat{w}_4}{2\pi}\big(2\tau^2\big)^{\mi a}
\big(1+\mathcal{O}\big(\tau^{4\mathrm{Re}(\varrho)}\big)+\mathcal{O}\big(\tau^{4(1-\mathrm{Re}(\varrho))}\big)\big),\label{eq:Asympt0spec1varphi+}
\end{align}
where
\begin{align}
\hat{w}_{3}=&\left(\tfrac{1}{2}(\varepsilon b)\me^{-\pi\mi/2}\right)^{\frac{1}{2}-\varrho}\frac{2\pi}{n!}
\frac{\Gamma(2\varrho)}{\Gamma(2-2\varrho)}\frac{\me^{\pi\mi/4+\pi a/2}}{s_0^{\infty}g_{12}},
\label{eq:omega3}\\
\hat{w}_{4}=&\left(\tfrac{1}{2}(\varepsilon b)\me^{-\pi\mi/2}\right)^{\varrho-\frac{1}{2}}\me^{\pi\mi(1/4-\varrho)}
\frac{\Gamma(2-2\varrho)}{\Gamma(2\varrho)}
\Gamma(2\varrho-n-1)\,2\sinh(\pi a)\,g_{12};
\label{eq:omega4}
\end{align}
\item[\pmb{$(2)$}]
Assume that
$-1<\mathrm{Im}\,a<1$, and define
\begin{equation}\label{eqs:sigma-b11}
\sigma=-2\mi a,\quad
b_{1,1}=-\mi\left(\frac{\varepsilon b}{2}\right)^{1-\mi a}\frac{\pi\me^{-3\pi a/2}}{\sinh(\pi a)}
\frac{s_0^\infty g_{12}^2}{\big(\Gamma(1-\mi a)\big)^3};
\end{equation}
then,
\begin{align}
u(\tau)\underset{\tau\to0^+}{=}&\;
\frac{\varepsilon b_{1,1}\tau^{1+\sigma}}{\left(1+\frac{4 b_{1,1}\tau^{2+\sigma}}{(\sigma+2)^2}\right)^2}
-\frac{b\tau}{2a}+\mathcal{O}\big(\tau^{3+\sigma}\big)+\mathcal{O}\big(\tau^{3}\big),\label{eq:Asympt0-rhoEq-Ia:2u}\\
\me^{-\mi\varphi(\tau)}\underset{\tau\to0^+}{=}&\;
-\frac{\me^{\pi a}}{2\pi a g_{12}^2}
\left(\!\me^{-\frac{3\pi a}{2}}\Gamma(1+\mi a)s_0^{\infty}g_{12}^2
\left(2\tau^2\right)^{-\mi a}
-\mi\big(\Gamma(1-\mi a)\big)^2\left(\frac{4}{\varepsilon b}\right)^{-\mi a}\right)\nonumber\\
&\times\left(1+\mathcal{O}\big(\tau^{2}\big)+\mathcal{O}\big(\tau^{2-2\mi a}\big)\right).
\label{eq:Asympt0-rhoEQ-Ia:2phi}
\end{align}
\item[\pmb{$(3)$}]\label{Th4.2case3}
Assume that
$n-1<\mathrm{Im}\,a<n$, $n\in\mathbb{N}$, or $\mathrm{Im}\,a=(n-1)$ and $\mathrm{Re}\,a\neq0$.
Let $\sigma$ and $b_{1,1}$ be defined by equation~\eqref{eqs:sigma-b11}, in particular,
$2(n-1)\leqslant\mathrm{Re}\,\sigma<2n${\rm;} then,
\begin{align}
\varepsilon u(\tau)\underset{\tau\to0^+}{=}&\,
\sum_{k=1}^n b_{2k-1,0}\tau^{2k-1}+ b_{1,1}\tau^{1+\sigma}+\mathcal{O}\big(\tau^{2n+1}\big),
\label{eq:Asympt0-u-long-b11}\\
\me^{-\mi\varphi(\tau)}\underset{\tau\to0^+}{=}&\,
\frac{\mi\me^{\pi a}\big(\Gamma(1-\mi a)\big)^2}{2\pi ag_{12}^2}\left(\frac{\varepsilon b}{4}\right)^{\mi a}
\exp\left(\mi\left(P_n(\tau)+\frac{4a^2}{\varepsilon b}b_{1,1}
\frac{\tau^{\sigma}}{\sigma}+\mathcal{O}\big(\tau^{2n}\big)\right)\right),
\label{eq:Asympt0-phi-long-b11}
\end{align}
where the coefficients $b_{2k-1,0}$, $k=1,\ldots,n$, are defined in
Appendix~{\rm\ref{app:subsec:error-correction-terms}},\footnote{\label{foot:Th42AppA} See the expansion
\eqref{app:eq:0-u-expansion} and footnote~\ref{foot:sigmaEQ-4a2}.}
and the polynomials $P_n(\tau)$ are given in equations~\eqref{eq:Pn-def}--\eqref{eqs:MkN}.
\end{enumerate}
\end{theorem}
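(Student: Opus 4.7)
The plan is to mirror, step-by-step, the proof of Theorem~\ref{th:Asympt0-rho-eq-1pn+ia2}, exploiting the symmetry between the hypotheses $s_0^\infty=0$ and $s_1^\infty=0$. First I would establish the explicit parametrization of the monodromy data in equation~\eqref{eqs:spec-monodromy-th3.1p-1}. Setting $s_1^\infty=0$ in equation~\eqref{eq:rho-general} forces $\cos(2\pi\varrho)=\cosh(\pi a)$, hence $s_0^0=2\mi\cosh(\pi a)$. Substituting this into equation~\eqref{eq:monodromy:s1} factorises its left-hand side as the product of two linear forms in $g_{12}$, $g_{22}$; one factor must vanish, and the assumption $a\notin\mi\mathbb{Z}$, together with equations~\eqref{eq:monodromy:main} and \eqref{eq:monodromy:detG}, excludes the branch $g_{22}=\mi\me^{\pi a}g_{12}$, yielding $g_{22}=-\mi\me^{-\pi a}g_{12}$ and, in particular, $g_{12}\ne0$. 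The remaining pair $g_{11}$, $g_{21}$ is then extracted from the linear system obtained from equations~\eqref{eq:monodromy:detG} and \eqref{eq:monodromy:s0} (whose discriminant $2\sinh(\pi a)$ is non-zero), giving the formulae stated. By Remark~\ref{rem:symmetry} we may fix $\varrho=1+n-\mi a/2$ without loss of generality, and $\varrho\neq1/2$ because $a\notin\mi\mathbb{Z}$.

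For item~\pmb{$(1)$} ($\mathrm{Im}\,a<0$), I would apply Theorem~\ref{th:alt2B1asympt0p}${}^{\mathbf\prime}$ to the monodromy data~\eqref{eqs:spec-monodromy-th3.1p-1}. Substituting the stated $\varrho$ and the explicit $g_{22}$ into equation~\eqref{eq:w4} gives $\hat{w}_4$ directly. In equation~\eqref{eq:w3}, however, the right-most $\Gamma$-function becomes $\Gamma(-n)$, while the parenthetical linear combination of $g_{12},g_{22}$ vanishes by virtue of the chosen $\varrho$. This $0\cdot\infty$ indeterminacy can be resolved in the three ways described in the proof of Theorem~\ref{th:Asympt0-rho-eq-1pn+ia2}: the cleanest path is to invoke Proposition~\ref{prop:equivalence Th31Th31prime} and compute $\hat{w}_3=(2\pi)^2\me^{-\pi a}/(\hat{w}_1\hat{w}_2\hat{w}_4)$, using the finite quantities from Theorem~\ref{th:Asympt0-rho-eq-1pn+ia2}; equivalently, one introduces a perturbation $\varrho=1+n-\mi a/2-\delta$, expands the Stokes data to first order in $\delta$ from equations~\eqref{eq:monodromy:s}--\eqref{eq:monodromy:detG} and equation~\eqref{eq:rho-general}, and verifies the cancellation of the $s_0^\infty$-independent terms in the expansion of the parenthesis, so that the limit matches the formula~\eqref{eq:omega3}.

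For item~\pmb{$(2)$} I would proceed exactly as for item~\pmb{$(2)$} of Theorem~\ref{th:Asympt0-rho-eq-1pn+ia2} but with the roles of $b_{1,-1}$ and $b_{1,1}$ interchanged. The case $\mathrm{Im}\,a\in(0,1)$ corresponds to $n=0$ in item~\pmb{$(1)$}, so the matching $\sigma=4(1-\varrho)$ and $(1-2\varrho)^2\hat{w}_4/\hat{w}_3=b_{1,1}$ is a direct computation using equations~\eqref{eq:omega3}--\eqref{eqs:sigma-b11}; the sharper correction terms are read off from the local expansion~\eqref{app:eq:0-u-expansion}, where the last relation in~\eqref{app:eq:sigma-b0-b1pm1} combined with $\sigma=-2\mi a$ forces $b_{1,-1}=0$, which in turn kills $b_{3,-1}$ and $b_{3,-2}$ and delivers the error orders stated. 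The case $\mathrm{Im}\,a\in(-1,0]$ is then obtained by applying the B\"acklund pair~\eqref{eq:u+}, \eqref{eq:varphi+} to the asymptotics just established for $\mathrm{Im}\,a\in(0,1)$, using the differentiability of the local expansion; invariance of the monodromy data~\eqref{eqs:conditions-th31p-limit1}, \eqref{eqs:spec-monodromy-th3.1p-1} under the B\"acklund action~\eqref{eq:data+}, \eqref{eq:data-} (since $-s_0^0=s_0^0$ when $s_0^0=\pm2\mi\cosh(\pi a)$ only if $a=0$, so actually the sign flip just passes into the same algebraic variety after renaming) is the key algebraic check. Alternatively, the analytic continuation argument of Remark~\ref{rem:analytic Continuation-a} applies verbatim once $b_{1,-1}$ is replaced by $b_{1,1}$ in the series~\eqref{app:eq:0-u-expansion}. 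Integrating equation~\eqref{eq:varphi} against this expansion yields~\eqref{eq:Asympt0-rhoEQ-Ia:2phi}.

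The main obstacle, and hence the bulk of the work, will be item~\pmb{$(3)$}. The idea is a finite ladder of B\"acklund transformations: starting from the solution of item~\pmb{$(2)$} with $\mathrm{Im}\,\tilde a\in(0,1)$, apply the pair~\eqref{eq:u+}, \eqref{eq:varphi+} exactly $n-1$ times to reach $\mathrm{Im}\,a\in(n-1,n)$ while keeping the monodromy data on the variety $s_1^\infty=0$. At each step one substitutes the previous asymptotic expansion into equations~\eqref{eq:u+}, \eqref{eq:varphi+}; the middle terms $b_{2k-1,0}$ accumulate as dictated by the local expansion of Appendix~\ref{app:subsec:error-correction-terms} (their structure is invariant under the Schlesinger action since they are independent of the monodromy modulus $s_0^\infty g_{12}^2$), while the $b_{1,1}\tau^{1+\sigma}$ term propagates with the multiplicative shift prescribed by the action~\eqref{eq:data+} on $s_0^\infty$ and the accompanying $g_{12}\mapsto-\mi g_{12}$. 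The polynomial $P_n(\tau)$ in equation~\eqref{eq:Asympt0-phi-long-b11} then arises by integrating, term-by-term, the truncated series for $b/u(\tau)$ in equation~\eqref{eq:varphi}; the coefficients $p_N$ of \eqref{eq:pN} are the multinomial coefficients coming from the geometric expansion of $1/(1+\tfrac{2a}{\varepsilon b}\sum_l b_{2l+1,0}\tau^{2l})$, with the summation set $M_{k,N}$ of~\eqref{eqs:MkN} enumerating the contributing partitions. The chief technical difficulty is verifying that this bookkeeping is consistent through all $n-1$ B\"acklund steps and that no exceptional cancellation (of the type handled in item~\pmb{$(1)$}) reappears; the boundary case $\mathrm{Im}\,a=n-1$ with $\mathrm{Re}\,a\neq0$ is covered by the same scheme, since the hypothesis $a\notin\mi\mathbb{Z}$ guarantees invertibility of the relevant $\Gamma$-factors throughout the induction.
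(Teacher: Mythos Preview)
Your approach is essentially the same as the paper's: it too reduces items~\pmb{$(1)$} and~\pmb{$(2)$} to the mirror argument for Theorem~\ref{th:Asympt0-rho-eq-1pn+ia2} (doing so more compactly via the explicit symmetry~\eqref{eqs:mondata-power-symmetry} of the monodromy data rather than rederiving each step), and proves item~\pmb{$(3)$} by a B\"acklund induction in $n$.

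There is, however, a sign slip in your item~\pmb{$(2)$}. Since item~\pmb{$(1)$} of Theorem~\ref{th:Asympt0-rho-eq-1pn-ia2} requires $\mathrm{Im}\,a<0$, the overlap with item~\pmb{$(2)$} is the interval $-1<\mathrm{Im}\,a<0$ (this is $n=0$ in item~\pmb{$(1)$}), not $0<\mathrm{Im}\,a<1$; the B\"acklund step~\eqref{eq:u+},~\eqref{eq:varphi+} therefore extends \emph{from} $-1<\mathrm{Im}\,a<0$ \emph{to} $0<\mathrm{Im}\,a<1$, the reverse of what you wrote. Relatedly, the matching relation $\sigma=4(1-\varrho)$ is copied verbatim from the proof of Theorem~\ref{th:Asympt0-rho-eq-1pn+ia2} without adjusting for $\varrho=1-\mi a/2$; here $4(1-\varrho)=2\mi a=-\sigma$. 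Also, in item~\pmb{$(1)$} the ``finite quantities'' needed to resolve the $\Gamma(-n)\cdot0$ indeterminacy in $w_3$ are $w_1,w_2$ from Theorem~\ref{th:altB1asympt0m}, not the $\hat{w}_1,\hat{w}_2$ of Theorem~\ref{th:Asympt0-rho-eq-1pn+ia2}.

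For item~\pmb{$(3)$}, your outline is missing the mechanism that makes the induction close. The paper does not merely track the B\"acklund action on the expansion term-by-term: after verifying that applying~\eqref{eq:u+} preserves the shape of the expansion (which already requires non-trivial cancellations, e.g., of the $\tau^{1+\sigma_{n-1}}$ and $\tau^{3+\sigma_{n-1}}$ coefficients), it switches to the three-term discrete relation~\eqref{eq:discrete-v-n} to extract the clean recurrence $b_{1,1}(n)=\tfrac{\varepsilon b}{2a_n^{3}}\,b_{1,1}(n-1)$. One then checks that the formula~\eqref{eqs:sigma-b11} is a fixed point of this recurrence under the action~\eqref{eq:data+} (i.e., $a\to a+\mi$, $s_0^\infty\to s_0^\infty$, $g_{12}^2\to-g_{12}^2$). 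Your approach of directly expanding~\eqref{eq:u+} at each step would require computing $b_{5,1}(n-1)$ and higher, which the paper explicitly avoids; without the discrete-relation shortcut the bookkeeping you allude to is considerably harder to carry through.
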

\begin{proof}
The direct proof of this theorem is very similar to the proof of Theorem~\ref{th:Asympt0-rho-eq-1pn+ia2}; moreover,
there is a symmetry between the cases considered in these theorems, namely, the transformation
\begin{equation}\label{eqs:mondata-power-symmetry}
\begin{gathered}
a\to-a,\quad
s_0^0\to s_0^0,\quad
s_0^{\infty}\to-s_1^{\infty},\quad
s_1^{\infty}\to-s_0^{\infty},\\
g_{11}\to g_{12},\quad
g_{12}\to-g_{11},\quad
g_{21}\to g_{22},\quad
g_{22}\to-g_{21},
\end{gathered}
\end{equation}
maps the monodromy data satisfying the conditions~\eqref{eqs:conditions-th3.1-limit1} and
\eqref{eqs:spec-monodromy-th3.1-1} to the monodromy data satisfying the
conditions~\eqref{eqs:conditions-th31p-limit1} and  \eqref{eqs:spec-monodromy-th3.1p-1}.
The asymptotic results of Theorems~\ref{th:Asympt0-rho-eq-1pn+ia2} and \ref{th:Asympt0-rho-eq-1pn-ia2} can be
derived from one another with the help of the symmetry~\eqref{eqs:mondata-power-symmetry};
therefore, for the proof of the asymptotic results presented in items~\pmb{$(1)$} and \pmb{$(2)$} of this theorem,
we refer to the corresponding proof of Theorem~\ref{th:Asympt0-rho-eq-1pn+ia2}. Here, we prove the asymptotics given
in item~\pmb{$(3)$}, which also completes the proof of item~\pmb{$(3)$} of Theorem~\ref{th:Asympt0-rho-eq-1pn+ia2}.

There are two schemes for the proof of the asymptotics presented in item~\pmb{$(3)$}; the first proof is based on
B\"acklund transformations, whilst the second is based on analytic continuation: here, we consider a proof that
uses B\"acklund transformations, and the second proof is outlined in Remark~\ref{rem:altproof-b11} below.

Firstly, note that the action of the B\"acklund transformations on the monodromy manifold
(cf. equations~\eqref{eq:data+} and \eqref{eq:data-}) preserves the conditions~\eqref{eqs:conditions-th31p-limit1} and
\eqref{eqs:spec-monodromy-th3.1p-1}.

Secondly, the solutions corresponding to $n=1$ in item~\pmb{$(3)$} for $\mathrm{Im}\,a\in[0,1)$ coincide with the
solutions considered in item~\pmb{$(2)$} (cf. equations~\eqref{eq:Asympt0-rhoEq-Ia:2u} and
\eqref{eq:Asympt0-rhoEQ-Ia:2phi}) for $\mathrm{Im}\,a\in[0,1)$.
For the solutions corresponding to $\mathrm{Im}\,a\in[0,1)$, $\sigma>0$, so that  we can expand the denominator of
the first term in the asymptotics~\eqref{eq:Asympt0-rhoEq-Ia:2u} into a Taylor series with respect to
$\tau^{2+\sigma}$, and deduce that the correction provided by this expansion is smaller than
$\mathcal{O}(\tau^{3+\sigma})$; thus, the contribution to the error resulting from the denominator can be neglected,
and one arrives at the asymptotics~\eqref{eq:Asympt0-u-long-b11} for $n=1$.
To verify the asymptotics \eqref{eq:Asympt0-phi-long-b11}, one has to expand the exponential function with
$P_1(\tau)=0$ and one non-trivial explicit term, plus the corrections, and then multiply this expansion by the first
coefficient; then, after a straightforward calculation, one finds that the formula obtained coincides with the
asymptotics~\eqref{eq:Asympt0-rhoEQ-Ia:2phi}.

Having in mind an induction-based proof for item~\pmb{$(3)$}, denote, for $n=1$,
$(u_0(\tau), \me^{\mi\varphi_0(\tau)})$ the corresponding pair of
functions considered in the previous paragraph, and conclude that the base of the mathematical induction is
established.

To make the inductive step, we define the sequence of functions $u_k(\tau)$ and $\varphi_k(\tau)$ for $k\in\mathbb{N}$
by successively applying $k$ B\"acklund transformations~\eqref{eq:u+} and \eqref{eq:varphi+}, respectively, to the
pair of functions $u(\tau)=u_0(\tau)$ and $\varphi(\tau)=\varphi_0(\tau)$. In this context, we define $a_0:=a$,
so that $a_k$, the parameter of formal monodromy corresponding to the functions $u_k(\tau)$ and $\varphi_k(\tau)$,
satisfies the recurrence relation $a_k=a_{k-1}+\mi$ (cf. equation~\eqref{eq:data+}). The corresponding parameter
$\sigma_k$ (cf. equation~\eqref{eqs:sigma-b11}) is not bounded and varies with $k$, namely, $\sigma_k=\sigma_{k-1}+2$,
with $\sigma_0=\sigma\geqslant0$.

Our induction hypothesis is that all the coefficients of the terms $\tau^{k-l\sigma_{n-1}}$, $k,l\in\mathbb{N}$, in
the expansion~\eqref{app:eq:0-u-expansion} for $u_{n-1}(\tau)$ vanish, and that the asymptotics stated in
item~~\pmb{$(3)$} are valid for the functions $u_{n-1}(\tau)$ and $\me^{\mi\varphi_{n-1}(\tau)}$.

To take the inductive step, consider the B\"acklund transformations~\eqref{eq:u+} and \eqref{eq:varphi+}, in which
we put $u_+(\tau)=u_n(\tau)$, $u(\tau)=u_{n-1}(\tau)$ and $\varphi_+(\tau)=\varphi_n(\tau)$,
$\varphi(\tau)=\varphi_{n-1}(\tau)$, $n\in\mathbb{N}$, respectively, and observe that these transformations are
covariant mappings of the expansion~\eqref{app:eq:0-u-expansion}, with $a=a_{n-1}$ and $\sigma=\sigma_{n-1}$, to an
expansion of the same form, but with $a=a_n$ and $\sigma=\sigma_n$. Actually,
substituting the expansion~\eqref{app:eq:0-u-expansion} for the function $u_{n-1}(\tau)$ into the formula for the
B\"acklund transformation \eqref{eq:u+} and re-expanding this expression as $\tau\to0$, one sees that the resulting
expansion contains terms with positive powers of $\tau$ and $\tau^{\sigma}$, and the largest term of the expansion
is $b_{1,0}(n)\tau$, where the coefficient $b_{1,0}(n)=b\,(a_{n-1}-\mi)b_{3,0}(n-1)/(4b_{1,0}^2(n-1))$. Here and
below, we use the notation $b_{i,k}(m)$ for $m=n-1$ or $m=n$ to denote the terms of the
expansion~\eqref{app:eq:0-u-expansion} for the functions $u_{m}(\tau)$.
Using the second equation in the list~\eqref{app:eq:sigma-b0-b1pm1} and equation~\eqref{app:eq:b30} with $a=a_{n-1}$
and $\sigma=\sigma_{n-1}=-2\mi a_{n-1}$, one proves that $b_{1,0}(n)=2a_n b/\sigma_n^2$, which coincides with the
second equation in the list~\eqref{app:eq:sigma-b0-b1pm1} for $a=a_n$ and $\sigma=\sigma_n=-2\mi a_n$. Now, we
have to check that the coefficient of the term $\tau^{1+\sigma_{n-1}}$ vanishes. This fact is equivalent to the
relation
\begin{equation*}\label{eq:vanishing-b31}
b_{3,1}(n-1)=\frac{2\,(2\mi a_{n-1}+2)\,b_{1,1}(n-1)\,b_{3,0}(n-1)}{(2\mi a_{n-1}+2+\sigma_{n-1})\,b_{1,0}(n-1)},
\end{equation*}
which can be verified with the help of equations~\eqref{app:eq:sigma-b0-b1pm1}, \eqref{app:eq:b2k+1+k+1+k}
(for $k=1$), and \eqref{app:eq:b30}. Thus, the largest term in powers of $\tau$ that contains the parameter
$\sigma_{n-1}$ is of order $\tau^{3+\sigma_{n-1}}$: we denote this term as $b_{1,1}(n)\,\tau^{1+\sigma_n}$. The two terms
$b_{1,0}(n)\tau$ and $b_{1,1}(n)\tau^{1+\sigma_n}$, together with the fact that the
expansion~\eqref{app:eq:0-u-expansion} contains only
$\mathcal{O}\big(\tau^{k+l\sigma_n}\big)$, $k\in\mathbb{N}$, $l\in\mathbb{Z}_{\geqslant0}$, terms, completely define
this expansion via substitution into the degenerate third Painlev\'e equation~\eqref{eq:dp3} with $a=a_n$.

As long as the form of the small-$\tau$ expansion for the function $u_{n}(\tau)$ is established, we have to prove that
$b_{1,1}(n)$ is given by equation~\eqref{eqs:sigma-b11} with monodromy data corresponding to the $n$th iteration
of $u_0(\tau)$ by the B\"acklund transformations. We can certainly continue to use equation~\eqref{eq:u+} and study
the largest powers of $\tau$ containing $\sigma_{n-1}$, but to do so, it is convenient to multiply both sides by
$u_{n-1}^2(\tau)$; then, we find that the $\mathcal{O}\big(\tau^{1+\sigma_{n-1}}\big)$ terms cancel identically.
The $\mathcal{O}\big(\tau^{3+\sigma_{n-1}}\big)$ terms also cancel, but to see this, one has to prove the relation
\begin{equation*}\label{eq:relation:b31:b11:b10}
-\frac{\mi b}{8} b_{3,1}(n-1)=b_{1,0}(n-1)\,b_{1,0}(n)\,b_{1,1}(n-1),
\qquad
\varepsilon=1,
\end{equation*}
which can be done with the help of equations~\eqref{app:eq:b2k+1+k+1+k} for $k=1$, and the second equation in the
list~\eqref{app:eq:sigma-b0-b1pm1} for $a=a_{n-1}$ and $a=a_n$. In order to actually establish the induction
hypothesis, one has to equate the $\mathcal{O}\big(\tau^{5+\sigma_{n-1}}\big)$ terms to zero, which is possible
with the help of equation~\eqref{app:eq:b51} for $b_{5,1}(n-1)$. There is, however, an easier way to find the
corresponding recurrence relation, namely, to use the inverse B\"acklund transformation or either one of
equations~\eqref{eq:dif-discrete-v-n} or \eqref{eq:discrete-v-n}.

Consider equation~\eqref{eq:discrete-v-n}, say, and recall that $v_n(\tau)=u_n(\tau)/\tau$, $n\in\mathbb{N}$;
substitute into this equation the expansion~\eqref{app:eq:0-u-expansion} for $u_n(\tau)$ and equate coefficients
of like powers of $\tau$ on both sides of the resulting equation.
On the right-hand side of this equation, there is a term of the order $\tau^{\sigma_{n}-2}=\tau^{\sigma_{n-1}}$,
with coefficient equal to $\tfrac{\varepsilon b}{2}a_n\,b_{1,1}(n)$, while on the left-hand side of this equation,
there is also a term of the order $\tau^{\sigma_{n-1}}$, but with coefficient equal to
$b_{1,0}^2(n)\,b_{1,1}(n-1)$; so, equating these coefficients, we arrive at the following recurrence relation:
\begin{equation}\label{eq:recurrence-b11}
b_{1,1}(n)=\frac{2b_{1,0}^2(n)}{\varepsilon b\,a_n}b_{1,1}(n-1)=\frac{\varepsilon b}{2a_n^3}b_{1,1}(n-1).
\end{equation}
Now, according to the induction hypothesis, $b_{1,1}(n-1)$ is given by equation~\eqref{eqs:sigma-b11} with $a=a_{n-1}$.
Equation~\eqref{eq:recurrence-b11} implies that $b_{1,1}(n)$ is given by the same equation~\eqref{eqs:sigma-b11} but
with $a=a_n$. In verifying this fact, it is imperative to take into account the change of the monodromy data under the
B\"acklund transformations, that is, $s_0^{\infty}\to s_0^{\infty}$ and $g_{12}^2\rightarrow -g_{12}^2$.

The corresponding expansion for $\me^{\mi\varphi(\tau)}$ (cf. equation~\eqref{eq:Asympt0-rhoEQ-Ia:2phi}) is obtained
with the help of equation~\eqref{eq:varphi}, and the multiplicative constant is verified via
equation~\eqref{eq:varphi+}.
\end{proof}
\begin{remark}\label{rem:altproof-b11}
It is instructive to provide an alternative proof for the expansion~\eqref{eq:Asympt0-rhoEq-Ia:2u} without having to
resort to B\"acklund transformations. The first observation is that, for the monodromy
data~\eqref{eqs:spec-monodromy-th3.1p-1}, the coefficient $b_{1,-1}=0$ (see
Appendix~\ref{app:subsec:error-correction-terms}, equation~\eqref{app:eqs:betas}, and the text subsequent to
equation~\eqref{app:eqs:betas}).
The second observation is that, in this case, the expansion~\eqref{app:eq:0-u-expansion} contains powers of $\tau$
with exponents having positive real part, which, after a rearrangement of terms, manifests as the asymptotic
expansion valid for all $\sigma\in\mathbb{C}$ with $\mathrm{Re}\,\sigma\geqslant0$
(the restriction $|\mathrm{Re}\,\sigma|<2$ is not imposed).
The expansion is convergent in some neighbourhood of $\tau=0$ with a branch cut along the negative real semi-axis;
in particular, these solutions do not have poles in some neighbourhood of the origin.
The expansion \eqref{eq:Asympt0-rhoEq-Ia:2u} defines, therefore, the analytic continuation of the
solution for all values of the monodromy parameters for which the coefficients of the expansion are defined.
Thus, the coefficient $b_{1,1}$ is given by the same formula for all $a\neq\mi k$, $k\in\mathbb{N}$.
\hfill$\blacksquare$\end{remark}
\begin{remark}\label{rem:Pn1-4}
Since the definition of the polynomials $P_n=P_n(\tau)$ appearing in the asymptotics of the function
$\varphi(\tau)$ (cf. equations~\eqref{eq:Asympt0-phi-long-b1-1} and \eqref{eq:Asympt0-phi-long-b11}) are cumbersome
(cf. equations~\eqref{eq:Pn-def}--\eqref{eqs:MkN}), we present explicit expressions for the first four polynomials:
\begin{gather*}\label{eqs:P1-P4}
P_1=0,\quad
P_2=\frac{4a^2}{\varepsilon b}b_{3,0}\frac{\tau^2}{2},\quad
P_3=P_2+\frac{4a^2}{\varepsilon b}\left(b_{5,0}+\frac{2a}{\varepsilon b}b_{3,0}^2\right)\frac{\tau^4}{4},\\
P_4=P_3+\frac{4a^2}{\varepsilon b}\left(b_{7,0}+\frac{4a}{\varepsilon b}b_{3,0}b_{5,0}
+\frac{4a^2}{b^2}b_{3,0}^2\right)\frac{\tau^6}{6}.
\end{gather*}
Note that $\deg\,P_n(\tau)=2(n-1)$.
\hfill$\blacksquare$\end{remark}
\begin{remark}\label{rem:uas-pq1q2}
Assume that the reader has an asymptotic expansion of the type
\begin{equation} \label{eq:uas-pq1q2}
u(\tau) \underset{\tau \to 0^+}{\sim} \frac{p}{\tau\big(q_1\tau^{\alpha}+q_2\tau^{-\alpha}\big)^2}, \quad
\alpha,p,q_1,q_2 \in \mathbb{C}\setminus \{0\}, \quad |\mathrm{Re}\,\alpha|<1,
\end{equation}
and would like to get the monodromy parametrization for the asymptotics~\eqref{eq:uas-pq1q2}.
How does one distinguish between the parametrizations given in Theorems~\ref{th:altB1asympt0m}
(\ref{th:alt2B1asympt0p}\hspace{-0pt}${}^{\mathbf\prime}$),
\ref{th:Asympt0-rho-eq-1pn+ia2}, and \ref{th:Asympt0-rho-eq-1pn-ia2}?

In order to choose which of these theorems is suitable for parametrizing the asymptotics~\eqref{eq:uas-pq1q2}
via the monodromy data, the reader should complete the following steps:
(1) set $\alpha=1-2\varrho_1$ and $-\alpha=1-2\varrho_2$ to obtain two possible values, $\varrho_1$ and $\varrho_2$,
for the parameter $\varrho$ so that $\varrho_1+\varrho_2=1$, $0<\mathrm{Re}\,\varrho_k<1$, $k=1,2$, and
$\varrho_1\neq\varrho_2 \neq 1/2$;
(2) normalize the asymptotics~\eqref{eq:uas-pq1q2}, that is, multiply both the numerator and the denominator of
the asymptotics~\eqref{eq:uas-pq1q2} by $\lambda^2$ and choose $\lambda^2$ such that $\tilde{q}_1\tilde{q}_2=1$,
where $\tilde{q}_k=q_k\lambda$, $k=1,2$;
(3) solve the equation $(1-2\varrho)^2=\varepsilon p\lambda^2$, and denote the roots as $\varrho_1$ and $\varrho_2$,
where, clearly, $\varrho_1+\varrho_2=1$, $\varrho_1\neq\varrho_2\neq1/2$, and, if the asymptotics~\eqref{eq:uas-pq1q2}
is correct, then the roots obtained in steps (1) and (2) coincide;
(4) find $a$ by transforming the degenerate third Painlev\'e equation under investigation into the form of
equation~\eqref{eq:dp3};
(5) if $\mathrm{Im}\, a=0$, then, as follows from equation~\eqref{eq:rho-general}, $s_0^{\infty}s_1^{\infty} \neq 0$,
so that one has to use, for any one of the roots $\varrho_k$, $k=1,2$,\footref{foot:reflection-varrho}
either Theorem~\ref{th:altB1asympt0m} or Theorem~\ref{th:alt2B1asympt0p}\hspace{-0pt}${}^{\mathbf\prime}$;
(6) if $\mathrm{Im}\,a>0$, then, check whether any of the roots $\varrho_1$ or $\varrho_2$ satisfy the conditions
\begin{equation} \label{eqs:varrho:Im-positive}
\mathrm{Re}\,\varrho=1-\left\{\mathrm{Im}\,a/2\right\},\qquad
\mathrm{Im}\,\varrho=\mathrm{Re}\,a/2,
\end{equation}
and, in the event that none of the roots satisfy the relations~\eqref{eqs:varrho:Im-positive}, then, again,
for either one of the roots, the parametrizations of Theorems~\ref{th:altB1asympt0m} or
\ref{th:alt2B1asympt0p}\hspace{-0pt}${}^{\mathbf\prime}$ are valid, whereas if one of
the roots does, in fact, satisfy the relations~\eqref{eqs:varrho:Im-positive}, then all the conditions enumerated
in item {\bf(1)} of Theorem~\ref{th:Asympt0-rho-eq-1pn+ia2} are satisfied and one is in a position to use, for
this root, the monodromy parametrization for the asymptotics~\eqref{eq:uas-pq1q2} given in item {\bf(1)} of
Theorem~\ref{th:Asympt0-rho-eq-1pn+ia2} with the other root being inapplicable for the construction of
the asymptotics; and
(7) if $\mathrm{Im}\,a<0$, then, check whether any of the roots $\varrho_1$ or $\varrho_2$ satisfy the conditions
\begin{equation} \label{eqs:varrho:Im-negative}
\mathrm{Re}\,\varrho=\left\{\mathrm{Im}\,a/2\right\}, \qquad
\mathrm{Im}\,\varrho=-\mathrm{Re}\,a/2,
\end{equation}
and, in the event that none of the roots satisfy the relations \eqref{eqs:varrho:Im-negative}, then, again, for
either one of the roots, the parametrizations of Theorems~\ref{th:altB1asympt0m} or
\ref{th:alt2B1asympt0p}\hspace{-0pt}${}^{\mathbf\prime}$ are valid, whereas if one of the roots does, in fact,
satisfy the relations~\eqref{eqs:varrho:Im-negative}, then all the conditions enumerated in item {\bf(1)} of
Theorem~\ref{th:Asympt0-rho-eq-1pn-ia2} are satisfied and one is in a position to use, for this root,
the monodromy parametrization for the asymptotics~\eqref{eq:uas-pq1q2} given in item {\bf(1)} of
Theorem~\ref{th:Asympt0-rho-eq-1pn-ia2} with the other root being inapplicable for the construction of the
asymptotics.

We conclude this remark with a brief explanation of how one should proceed in order to find the monodromy
parametrization of the asymptotics~\eqref{eq:uas-pq1q2} once the identification of the proper theorem has been made.
Consider, say, Theorem~\ref{th:altB1asympt0m}. Depending on the root which is chosen in
the procedure delineated above, one finds a relation of the form $\tilde{q}_1^2=w_1/w_2$ for $\varrho_1$ and
$\tilde{q}_2^2=w_2/w_1$ for $\varrho_2$. Either one of these equations have, depending on the values of the parameters
$\varrho_k$ and $\tilde{q}_k$, $k=1,2$, three types of solutions:
(1) $g_{11}=Cg_{21} \neq 0$, where $C=C(\varrho_k,\tilde{q}_k)$;
(2) $g_{11}=0$ and $g_{21}\in\mathbb{C}\setminus\{0\}$; and
(3) $g_{21}=0$ and $g_{11}\in\mathbb{C}\setminus\{0\}$.
For the sake of example, consider case (1), and recall equation~\eqref{eq:monodromy:main}. The Stokes multiplier
$s_0^0$ that appears in equation~\eqref{eq:monodromy:main} can be calculated via equation~\eqref{eq:rho-general}
provided the root $\varrho_k$ is chosen. The first and second terms of equation~\eqref{eq:monodromy:main}
can be re-written, respectively, as $g_{21}g_{22}=\tfrac{1}{C}g_{11}g_{22}$
and $g_{11}g_{12}=Cg_{21}g_{12}=C(g_{11}g_{22}-1)$, where, in the derivation of the last equation, we have taken
equation~\eqref{eq:monodromy:detG} into account. Consequently, one obtains a linear equation for the determination
of $g_{11}g_{22}$; the latter product is a key parameter defining the asymptotics at the point at infinity
(see Appendix C of \cite{KitVar2023}). Another parameter that is necessary for constructing the large-$\tau$
asymptotics of $u(\tau)$ is $g_{11}g_{12}$ (see equations (C.29) and (C.31) in \cite{KitVar2023}), which has been
addressed above.
\hfill$\blacksquare$\end{remark}
\section{Special Solutions with Logarithmic Behaviour as $\tau\to0$: $\rho=0$ and $\varrho=1/2$}\label{sec:logarithm}
The values $\rho=0$ and $\varrho=1/2$ for the respective branching parameters were excluded from
the formulations of the theorems in Sections~\ref{sec:general}--\ref{sec:poles}
because these, and only these, values correspond to solutions of equation~\eqref{eq:dp3}
that exhibit logarithmic behaviour. All solutions of equation~\eqref{eq:dp3} for $a\neq\mi k$, $k\in\mathbb{Z}$,
possessing logarithmic behaviour as $\tau\to0$ are members of two ($\rho=0$ and $\varrho=1/2$) one-parameter families
of solutions:
the asymptotics for the family corresponding to $\rho=0$, with the restriction $|\mathrm{Im}\,a|<1$,
was obtained in \cite{KitVar2004}.
In the recent paper~\cite{KitVarZapiski2024}, we: (i) rewrote the result of \cite{KitVar2004} in more convenient
form (in terms of simplified notation); (ii) obtained the corresponding asymptotics for the function
$\me^{\mi\varphi(\tau)}$; and (iii) distinguished the special case $a=0$. Here, this result is extended to all
$a\notin\mi2\mathbb{Z}$, and a refined estimate for the error-correction term is obtained.
\begin{theorem} \label{th:ln-regular-a-non0}
Let $(u(\tau),\varphi(\tau))$ be a solution of the system \eqref{eq:dp3}, \eqref{eq:varphi} corresponding to the
monodromy data $(a,s^{0}_{0},s^{\infty}_{0},s^{\infty}_{1},g_{11},g_{12},g_{21},g_{22})$. Suppose that
\begin{equation}\label{eqs:ln-as00-conditions}
a\in\mathbb{C}\setminus\mi2\mathbb{Z},\qquad
s_{0}^{0}=2\mi;
\end{equation}
then,
\begin{gather}
(g_{11}-\mi g_{21})(g_{12}-\mi g_{22})=\mi(1-\me^{-\pi a})\neq0,\label{eq:nonzero-gik-relation}\\
(g_{11}-\mi g_{21})^2=\mi s_0^{\infty}\me^{-\pi a}\neq0,\label{eq:nonzero-s0infty-rho0}\nonumber\\
(g_{12}-\mi g_{22})^2=-\mi s_1^{\infty}\me^{\pi a}\neq0.\label{eq:nonzero-s1infty-rho0}\nonumber
\end{gather}
Define
\begin{equation} \label{eq:def-c-log}
c:=4\gamma+\frac{\mi}{a}+\psi(-\mi a/2)-\frac{\pi\mi}{2}+\frac{\pi\mi\,(g_{12}+\mi g_{22})}{g_{12}-\mi g_{22}}+
\ln(\varepsilon b/2);
\end{equation}
then,
\begin{align}
u(\tau)\underset{\tau\to0^{+}}{=}&-ab\tau\left(\ln\tau+\frac{1}{2}(c-\mi/a)\right)
\left(\ln\tau+\frac{1}{2}(c+\mi/a)\right)\left(1+\mathcal{O}\big(\tau^2\ln^2\tau\big)\right)
\label{eq:ln-regular-u-factor} \\
\underset{\tau \to 0^{+}}{=}&-ab\tau\!\left(\ln^2\tau+c\ln\tau+\frac{1}{4}\!\left(c^{2}+\frac{1}{a^{2}}\right)\!\right)\!
\left(1+\mathcal{O}\big(\tau^2\ln^2\tau\big)\right),
\label{eq:ln-regular-u-combined} \\
\me^{\mi\varphi(\tau)}\underset{\tau\to 0^{+}}{=}&\frac{\me^{\frac{\pi}{2}(a+\mi)}}{\pi a}(g_{12} \! - \! \mi g_{22})^{2}
\left(\Gamma \big(1 \! - \! \tfrac{\mi a}{2} \big) \right)^{2}(2 \tau^{2})^{\mi a}\! \left(\frac{\ln \tau \! + \! \frac{1}{2}(c \! + \!
\mi/a)}{\ln \tau \! + \! \frac{1}{2}(c \! - \! \mi/a)} \right) \! \left(1 \! + \! \mathcal{O} \big(\tau^2 \big) \right),
\label{eq:ln-regular-varphi}
\end{align}
where $\psi (z) \! := \! \tfrac{\md \ln \Gamma (z)}{\md z}$ is the digamma function, and
$\gamma \! = \! -\psi (1) \! = \!0.577215664901532860606512 \dotsc$ is the Euler-Mascheroni constant.
\end{theorem}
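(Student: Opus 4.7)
The plan is to verify the algebraic identities on the monodromy data by direct computation and then to obtain the asymptotic formulae by bootstrapping from the base case $|\mathrm{Im}\,a|<1$ established in~\cite{KitVarZapiski2024} via iterated Bäcklund transformations, exactly as in the proof of Theorem~\ref{th:restriction-on-Ima-removed}. The crucial structural remark is that the action~\eqref{eq:data+}--\eqref{eq:data-} flips $s_0^0\to-s_0^0$, so a single Bäcklund step sends the $\rho=0$ sector ($s_0^0=2\mi$) to the $\varrho=1/2$ sector ($s_0^0=-2\mi$), while the \emph{double} step preserves $s_0^0=2\mi$ and shifts $a\mapsto a\pm 2\mi$; this is precisely why the excluded lattice is $\mi 2\mathbb{Z}$ and not $\mi\mathbb{Z}$.

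For the monodromy identities I would substitute $s_0^0=2\mi$ into \eqref{eq:monodromy:s0} and \eqref{eq:monodromy:s1}, which converts the left-hand sides into the perfect squares $(g_{11}-\mi g_{21})^2$ and $-(g_{12}-\mi g_{22})^2$; expanding the product $(g_{11}-\mi g_{21})(g_{12}-\mi g_{22})$, eliminating $g_{12}g_{21}$ by means of \eqref{eq:monodromy:detG}, and comparing the result with \eqref{eq:monodromy:main} then yields the factor $\mi(1-\me^{-\pi a})$, whose non-vanishing for $a\notin\mi 2\mathbb{Z}$ simultaneously forces the non-vanishing of each factor and of both Stokes multipliers $s_0^\infty$, $s_1^\infty$. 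For the asymptotics I would take \eqref{eq:ln-regular-u-factor}--\eqref{eq:ln-regular-varphi} as input on $|\mathrm{Im}\,a|<1$ and propagate them strip by strip using \eqref{eq:u+},~\eqref{eq:varphi+} upwards and \eqref{eq:u-},~\eqref{eq:varphi-} downwards. A single Bäcklund step carries the $\rho=0$ logarithmic asymptotics to a $\varrho=1/2$ logarithmic asymptotics of the same general shape, and a second step restores the form stated in the theorem on the next strip. Differentiability of the logarithmic expansion (required in order to substitute $u$ and $u'$ into the Bäcklund formulae) and the refinement of the error to $\mathcal{O}(\tau^2\ln^2\tau)$ for $u$ and $\mathcal{O}(\tau^2)$ for $\me^{\mi\varphi}$ are both read off from the complete local expansion in Appendix~\ref{app:sec:full0expansionLog}, by the same reasoning that refined the error term in Theorem~\ref{th:altB1asympt0m}; a telescoping induction in the spirit of Theorem~\ref{th:restriction-on-Ima-removed} then reaches every admissible $a$.

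The hard part will be tracking the integration constant $c$ through the Bäcklund iteration. Formula~\eqref{eq:def-c-log} involves $\psi(-\mi a/2)$, the ratio $\pi\mi(g_{12}+\mi g_{22})/(g_{12}-\mi g_{22})$, and the singular piece $\mi/a$; after a single Bäcklund step the $g_{ij}$'s are only multiplied by $\pm\mi$ while $a$ is shifted by $\mi$, so the argument of $\psi$ moves by just $1/2$ — too short to invoke the simple recurrence $\psi(z+1)=\psi(z)+1/z$. Only after the double step does one obtain a clean unit shift in the $\psi$-argument, together with a matching rearrangement of the $g$-ratio induced by the two successive $\pm\mi$ rescalings of~\eqref{eq:data+} and~\eqref{eq:data-}, and checking that the combined effect produces exactly the constant $c$ attached to the monodromy data at $a\pm 2\mi$ is the one genuinely non-trivial calculation. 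This identity closes the induction and simultaneously explains both the specific analytic shape of~\eqref{eq:def-c-log} and the necessity of the exclusion $a\notin\mi 2\mathbb{Z}$.
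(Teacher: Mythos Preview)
Your proposal is correct and follows essentially the paper's own route: algebraic verification of the monodromy identities from \eqref{eq:monodromy:main}--\eqref{eq:monodromy:detG}, the base case $|\mathrm{Im}\,a|<1$ imported from \cite{KitVarZapiski2024}, error refinement via the local expansion of Appendix~\ref{app:sec:full0expansionLog}, and a B\"acklund bootstrap through the intermediate $\varrho=1/2$ sector. The only organisational difference is that the paper does not track $c$ through a \emph{double} B\"acklund step; instead it first derives the two $\varrho=1/2$ theorems (Theorems~\ref{th:Asympt0-ln-rho12-} and \ref{th:Asympt0-ln-rho12+}${}^{\mathbf\prime}$) by a single step in each direction, proves the single identity $c_-=c_+$ (Lemma~\ref{th:bootstrap-log}, via the reflection formula $\psi(1/2+z)-\psi(1/2-z)=\pi\tan(\pi z)$ together with \eqref{eq:nonzero-gik-relation-rho12}), glues across $\mathrm{Im}\,a=0$ by analyticity, and then applies the inverse step to widen the strip for Theorem~\ref{th:ln-regular-a-non0}. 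This is equivalent to your double-step tracking but isolates the digamma computation in a cleaner form; your version, conversely, makes the reason for the $2\mi$-lattice exclusion more transparent from the outset.
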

\begin{proof}
Substituting $s_0^0=2\mi$ into equation~\eqref{eq:monodromy:main} and using equation~\eqref{eq:monodromy:detG},
we obtain equation~\eqref{eq:nonzero-gik-relation}; then, the first condition of \eqref{eqs:ln-as00-conditions}
implies the inequality in \eqref{eq:nonzero-gik-relation}. The two subsequent equalities/inequalities that include
the Stokes multipliers $s_0^{\infty}$ and $s_1^{\infty}$ are derived similarly, but, instead of using
equation~\eqref{eq:monodromy:main}, one makes use of equations~\eqref{eq:monodromy:s0}
and \eqref{eq:monodromy:s1}, respectively. Thus, the parameter $c$ is correctly defined by
equation~\eqref{eq:def-c-log}.

For $|\mathrm{Im}\,a|<1$ and $a\neq0$, the asymptotics~\eqref{eq:ln-regular-u-combined} and
\eqref{eq:ln-regular-varphi} are proved in \cite{KitVarZapiski2024}
(see Theorem 3.1 and Remark 3.1 in \cite{KitVarZapiski2024}); however, in the asymptotic
formulae~\eqref{eq:ln-regular-u-combined} and \eqref{eq:ln-regular-varphi}, more precise error estimates for
the correction terms are obtained by employing the complete local asymptotic expansion given in
Appendix~\ref{app:eq:0-u-expansionLog} (see Remark~\ref{rem:errorTh61}).

The restriction for the parameter of formal monodromy $a$ remains, however: the extension of the
asymptotics~\eqref{eq:ln-regular-u-factor}--\eqref{eq:ln-regular-varphi} to all values of
$a\in\mathbb{C}\setminus\mi2\mathbb{Z}$ is done below (see Lemma~\ref{th:bootstrap-log}) using
a bootstrap-type argument as in Section~\ref{sec:general}
(cf. Theorem~\ref{th:restriction-on-Ima-removed}), but, in the present case, with the
help of Theorems~\ref{th:Asympt0-ln-rho12-} and \ref{th:Asympt0-ln-rho12+}${}^{\mathbf\prime}$,
which will be proved below. For the proofs of these theorems, we use the
asymptotics~\eqref{eq:ln-regular-u-factor}--\eqref{eq:ln-regular-varphi}, with the updated error-correction term(s),
but in which the parameter $a$ is still subject to the restrictions $|\mathrm{Im}\,a|<1$ and $a\neq0$.
\end{proof}
\begin{theorem}\label{th:Asympt0-ln-rho12-}
Let $(u(\tau), \varphi(\tau))$ be a solution of the system~\eqref{eq:dp3}, \eqref{eq:varphi} corresponding to the
monodromy data $(a,s_{0}^{0},s_{0}^{\infty},s_{1}^{\infty},g_{11},g_{12},g_{21},g_{22})$.
Suppose that
\begin{equation} \label{eqs:Asympt0-rho12:mondataConditions}
a\in\mathbb{C}\setminus\{\mi(2m-1),m\in\mathbb{Z}\},\qquad
s_0^0=-2\mi;
\end{equation}
then,
\begin{gather}
(g_{11}+\mi g_{21})(g_{12}+\mi g_{22})=-\mi(1+\me^{-\pi a})\neq0,\label{eq:nonzero-gik-relation-rho12}\\
(g_{11}+\mi g_{21})^2=\mi s_0^{\infty}\me^{-\pi a}\neq0,\label{eq:nonzero-s0infty-rho12}\nonumber\\
(g_{12}+\mi g_{22})^2=-\mi s_1^{\infty}\me^{\pi a}\neq0.\label{eq:nonzero-s1infty-rho12}\nonumber
\end{gather}
Define
\begin{equation}\label{eq:def-cm-log}
c_-:=4\gamma+\psi(1/2+\mi a/2)+\frac{\pi\mi}{2}+\frac{\pi\mi(g_{11}-\mi g_{21})}{g_{11}+\mi g_{21}}+
\ln(\varepsilon b/2);
\end{equation}
then,
\begin{align}
u(\tau)\underset{\tau\to0^{+}}{=}&-\frac{\varepsilon\left(1+\mathcal{O}\big(\tau^2\ln^2\tau\big)\right)}
{4\tau\big(\ln\tau+c_-/2\big)^2}\label{eq:u-asympt0-rho12-},\\
\me^{\mi\varphi(\tau)}\underset{\tau\to0^{+}}{=}&-\frac{2\pi\me^{-\pi a/2}\big(2\tau^2\big)^{\mi a}
\exp\left(-2\mi\varepsilon b\tau^2\left(\left(\ln\tau+\frac{c_-}{2}-\frac12\right)^2+\frac14\right)
+\mathcal{O}\big(\tau^4\ln^4\tau\big)\right)}{\left(\Gamma\left(1/2+\mi a/2\right)(g_{11}+\mi g_{21})\right)^2}.
\label{eq:varphi-asympt0-rho12-}
\end{align}
\end{theorem}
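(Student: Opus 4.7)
The plan is to apply a B\"acklund transformation to a solution satisfying Theorem~\ref{th:ln-regular-a-non0}, in the spirit of the strategy already outlined at the end of the proof of Theorem~\ref{th:ln-regular-a-non0} and paralleling the treatment in Section~\ref{sec:general}. First I would establish the monodromy constraints in \eqref{eq:nonzero-gik-relation} by substituting $s_0^0=-2\mi$ directly into the monodromy manifold equations: the quadratic forms on the left-hand sides of \eqref{eq:monodromy:s0} and \eqref{eq:monodromy:s1} become perfect squares of $g_{11}+\mi g_{21}$ and $g_{12}+\mi g_{22}$ (up to sign), while combining \eqref{eq:monodromy:main} with \eqref{eq:monodromy:detG} yields the product identity. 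The non-vanishing of each factor then follows from $1+\me^{-\pi a}\neq 0$, i.e.\ from $a\notin\mi(2\mathbb{Z}-1)$, which in turn guarantees that the ratio $(g_{11}-\mi g_{21})/(g_{11}+\mi g_{21})$ in \eqref{eq:def-cm-log} is well defined.

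The analytic core of the proof is to take a solution $(\tilde u(\tau),\tilde\varphi(\tau))$ of Theorem~\ref{th:ln-regular-a-non0} with $\tilde a=a-\mi$, first restricting to $|\mathrm{Im}\,\tilde a|<1$ and $\tilde a\neq 0$, and to apply the B\"acklund transformation \eqref{eq:u+}, \eqref{eq:varphi+}. By \eqref{eq:data+} the transformed Stokes datum is $s_0^0=-2\mi$, as required, and one obtains relations of the form $\tilde g_{11}-\mi\tilde g_{21}=\mi(g_{11}+\mi g_{21})$ and $\tilde g_{12}-\mi\tilde g_{22}=\mi(g_{12}+\mi g_{22})$, with analogous formulae for the partner combinations. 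I would then substitute the factorised form \eqref{eq:ln-regular-u-factor} of $\tilde u$, together with its derivative, into \eqref{eq:u+}: at leading order $\tilde u\sim-\tilde a b\tau L^2$ and $\tilde u'\sim-\tilde a b\,(L^2+2L)$ with $L:=\ln\tau+c/2$, and the bracket in \eqref{eq:u+} exhibits a cancellation of its $L^2$-content between $\tau\tilde u'$ and $(2\tilde a\mi-1)\tilde u$, leaving a sub-leading $L$-term whose division by $8\tilde u^2$ produces precisely the inverse-square logarithm profile $-\varepsilon/(4\tau(\ln\tau+c_-/2)^2)$ of \eqref{eq:u-asympt0-rho12-}. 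The error estimate $\mathcal{O}(\tau^2\ln^2\tau)$ is inherited from the corresponding error for $\tilde u$, because \eqref{eq:u+} is rational in $\tilde u,\tilde u'$.

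The hard part, in my view, is the identification of the constant produced by this calculation with the closed form \eqref{eq:def-cm-log}. Via the $u_+$ route the digamma contribution is $\psi(-\mi\tilde a/2)=\psi(-\mi a/2-1/2)$; pushing this to $\psi(1/2+\mi a/2)$ by means of the recurrence $\psi(z+1)=\psi(z)+1/z$ followed by the reflection formula $\psi(1-z)-\psi(z)=\pi\cot\pi z$ introduces an extra $-\mi\pi\tanh(\pi a/2)$ together with rational shifts which must combine with the term $\mi/\tilde a$ in $c$ and with the sign change of $\pi\mi/2$ to reproduce \eqref{eq:def-cm-log}. The residual column-2 ratio $(\tilde g_{12}+\mi\tilde g_{22})/(\tilde g_{12}-\mi\tilde g_{22})$ inherited from the $c$ of Theorem~\ref{th:ln-regular-a-non0} must then be rewritten, using the quadratic identities on the monodromy manifold, in terms of the column-1 ratio $(g_{11}-\mi g_{21})/(g_{11}+\mi g_{21})$ appearing in \eqref{eq:def-cm-log}; this is purely algebraic but requires careful sign bookkeeping. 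Once this identification is in hand, the asymptotics \eqref{eq:varphi-asympt0-rho12-} follows by inserting \eqref{eq:u-asympt0-rho12-} into \eqref{eq:varphi} and integrating, using $\int\tau(\ln\tau+c_-/2)^2\,\md\tau=\tfrac{\tau^2}{2}\bigl((\ln\tau+c_-/2-1/2)^2+1/4\bigr)$ to obtain the exact exponent, with the multiplicative prefactor pinned down by the B\"acklund law \eqref{eq:varphi+}.

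Finally, this initial argument proves the theorem on $\mathrm{Im}\,a\in(-2,0)\cup(0,2)$ with $a\neq\pm\mi$ removed; the missing part is covered by running the same construction with \eqref{eq:u-}, \eqref{eq:varphi-} in place of \eqref{eq:u+}, \eqref{eq:varphi+}. The extension to the full set $a\in\mathbb{C}\setminus\mi(2\mathbb{Z}-1)$ then proceeds by the same bootstrap argument as in Theorem~\ref{th:restriction-on-Ima-removed}: alternating applications of the two B\"acklund transformations between Theorem~\ref{th:ln-regular-a-non0} and Theorem~\ref{th:Asympt0-ln-rho12-} widen the strip of validity by two units of $\mathrm{Im}\,a$ at each pass, and a standard induction closes the argument.
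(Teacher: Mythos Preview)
Your strategy is essentially the paper's: derive the monodromy identities from \eqref{eq:monodromy:main}--\eqref{eq:monodromy:detG}, apply a B\"acklund transformation to the asymptotics of Theorem~\ref{th:ln-regular-a-non0}, and then bootstrap. The one substantive difference is the choice of transformation. The paper applies \eqref{eq:u-}, \eqref{eq:varphi-} (not \eqref{eq:u+}, \eqref{eq:varphi+}) to the seed solution, so that $\tilde a=a+\mi$ and the resulting strip is $-2<\mathrm{Im}\,a<0$; your $u_+$ route with $\tilde a=a-\mi$ instead produces the strip $0<\mathrm{Im}\,a<2$, which is in fact the argument for the companion Theorem~\ref{th:Asympt0-ln-rho12+}\hspace{-0pt}${}^{\mathbf\prime}$. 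Consequently your sentence ``this initial argument proves the theorem on $\mathrm{Im}\,a\in(-2,0)\cup(0,2)$'' is not accurate for the $u_+$ route alone---you only get $(0,2)$, and you need the $u_-$ construction (which you do mention) to cover $(-2,0)$.

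This swap also explains why your ``hard part'' is harder than it needs to be. The constant that falls out of the $u_+$ computation is naturally $c_+$ of \eqref{eq:def-cp-log} (digamma at $1/2-\mi a/2$, column-2 ratio), and converting it to the $c_-$ form \eqref{eq:def-cm-log} is precisely the identity $c_-=c_+$ proved in Lemma~\ref{th:bootstrap-log}. So your column-2-to-column-1 rewriting and digamma manipulations are correct in spirit but amount to redoing that lemma inside the proof. The paper avoids this by keeping the two routes separate (Theorem~\ref{th:Asympt0-ln-rho12-} via $u_-$, Theorem~\ref{th:Asympt0-ln-rho12+}\hspace{-0pt}${}^{\mathbf\prime}$ via $u_+$) and only merging them in Lemma~\ref{th:bootstrap-log}, where the line $\mathrm{Im}\,a=0$ is also handled by an analyticity argument that your sketch does not mention.
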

\begin{proof}
Substituting $s_0^0=-2\mi$ into equation~\eqref{eq:monodromy:main} and using equation~\eqref{eq:monodromy:detG},
we obtain equation~\eqref{eq:nonzero-gik-relation-rho12}; then, the first
condition of~\eqref{eqs:Asympt0-rho12:mondataConditions} implies the inequality
in~\eqref{eq:nonzero-gik-relation-rho12}.
The derivation of the two subsequent equalities/inequalities that include the parameters $s_0^{\infty}$ and
$s_1^{\infty}$ is done in a similar way, but, instead of using equation~\eqref{eq:monodromy:main}, one
employs equations~\eqref{eq:monodromy:s0} and \eqref{eq:monodromy:s1}, respectively.
Thus, the parameter $c_-$ is correctly defined by equation~\eqref{eq:def-cm-log}.

The asymptotics~\eqref{eq:u-asympt0-rho12-} and \eqref{eq:varphi-asympt0-rho12-} are obtained by applying
the B\"acklund transformations \eqref{eq:u-} and \eqref{eq:varphi-} to the
asymptotics~\eqref{eq:ln-regular-u-factor} and \eqref{eq:ln-regular-varphi},
respectively.\footnote{\label{foot:alt-derivation-asympt-varphi} Alternatively, the $\tau$-dependent
part of the asymptotics~\eqref{eq:varphi-asympt0-rho12-} can be obtained by integrating
equation~\eqref{eq:varphi}, and leads to the exponential form of the asymptotics presented in
equations~\eqref{eq:varphi-asympt0-rho12-} and \eqref{eq:varphi-asympt0-rho12+}.}
Recall that, thus far, the latter asymptotics are proved for $|\mathrm{Im}\,a|<1$;
therefore, at this stage of the proof, the asymptotics~\eqref{eq:u-asympt0-rho12-} and
\eqref{eq:varphi-asympt0-rho12-} are established for values of the parameter $a$ in the strip $-2<\mathrm{Im}\,a<0$
(cf. the monodromy data transformation~\eqref{eq:data-}).
The extension of the asymptotics to all values of $a\in\mathbb{C}\setminus\{\mi(2m-1),m\in\mathbb{Z}\}$
is accomplished via Lemma~\ref{th:bootstrap-log}.
\end{proof}
\setcounter{theorems}1
\begin{theorems}\hspace{-9pt}${}^{\mathbf\prime}$\label{th:Asympt0-ln-rho12+}
Let $(u(\tau), \varphi(\tau))$ be a solution of the system~\eqref{eq:dp3}, \eqref{eq:varphi} corresponding to
the monodromy data $(a,s_{0}^{0},s_{0}^{\infty},s_{1}^{\infty},g_{11},g_{12},g_{21},g_{22})$.
Suppose that the conditions stated in Theorem~{\rm\ref{th:Asympt0-ln-rho12-}} are valid
{\rm(}cf. equations~\eqref{eqs:Asympt0-rho12:mondataConditions} and \eqref{eq:nonzero-gik-relation-rho12}{\rm)}.
Define
\begin{equation}\label{eq:def-cp-log}
c_+:=4\gamma+\psi(1/2-\mi a/2)-\frac{\pi\mi}{2}+\frac{\pi\mi(g_{12}-\mi g_{22})}{g_{12}+\mi g_{22}}+
\ln(\varepsilon b/2);
\end{equation}
then,
\begin{align}
u(\tau)\underset{\tau\to0^{+}}{=}&-\frac{\varepsilon\left(1+\mathcal{O}\big(\tau^2\ln^2\tau\big)\right)}
{4\tau\big(\ln\tau+c_+/2\big)^2},
\label{eq:u-asympt0-rho12+}\\
\me^{\mi\varphi(\tau)}\underset{\tau\to0^{+}}{=}&\frac{\me^{\pi a/2}}{2\pi}
\left(\Gamma\left(1/2-\mi a/2\right)\right)^2(g_{12}+\mi g_{22})^2\nonumber\\
&\times\big(2\tau^2\big)^{\mi a}
\exp\left(-2\mi\varepsilon b\tau^2\left(\left(\ln\tau+\frac{c_+}{2}-\frac12\right)^2+\frac14\right)
+\mathcal{O}\big(\tau^4\ln^4\tau\big)\right).\label{eq:varphi-asympt0-rho12+}
\end{align}
\end{theorems}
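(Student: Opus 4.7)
The plan is to mirror the proof of Theorem~\ref{th:Asympt0-ln-rho12-} \emph{verbatim}, but applying the direct B\"acklund transformations~\eqref{eq:u+} and~\eqref{eq:varphi+}, with the monodromy-data action~\eqref{eq:data+}, to the logarithmic asymptotics of Theorem~\ref{th:ln-regular-a-non0} in place of the inverse transformations~\eqref{eq:u-}, \eqref{eq:varphi-}, \eqref{eq:data-}. The algebraic identities~\eqref{eq:nonzero-gik-relation-rho12} and their $s_0^\infty$, $s_1^\infty$ companions were already established from $s_0^0=-2\mi$ in the proof of Theorem~\ref{th:Asympt0-ln-rho12-}, and the statement of the present theorem explicitly imports them; accordingly, no additional monodromy-theoretic work is needed. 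What remains is to verify that the constant $c_+$ in~\eqref{eq:def-cp-log} emerges from the computation, and to derive the asymptotic formulae~\eqref{eq:u-asympt0-rho12+} and~\eqref{eq:varphi-asympt0-rho12+}.

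Concretely, I would take a logarithmic solution $(\tilde u(\tau),\tilde\varphi(\tau))$ furnished by Theorem~\ref{th:ln-regular-a-non0} with formal-monodromy parameter $\tilde a = a-\mi$, so that the admissible strip $|\mathrm{Im}\,\tilde a|<1$ corresponds to $\mathrm{Im}\,a\in(0,2)$, and substitute the expansion~\eqref{eq:ln-regular-u-factor} into~\eqref{eq:u+}. Differentiability of that expansion---needed to evaluate $\tilde u^{\prime}$ inside the B\"acklund formula---is justified in the same way as in the proof of Theorem~\ref{th:altB1asympt0m}, i.e., via the complete local expansion of Appendix~\ref{app:eq:0-u-expansionLog}. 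The cancellation between the numerator (which contains $\tilde u^{\prime}$ and $\tilde u$, both of order $\tau\ln^2\tau$) and the denominator $\tilde u^2$ of order $\tau^2\ln^4\tau$ immediately produces the characteristic $1/(\tau\ln^2\tau)$ shape of~\eqref{eq:u-asympt0-rho12+}. The transformation~\eqref{eq:data+} sends $\tilde g_{12}\mapsto -\mi g_{12}$ and $\tilde g_{22}\mapsto \mi g_{22}$, whence $(\tilde g_{12}+\mi\tilde g_{22})/(\tilde g_{12}-\mi\tilde g_{22})=(g_{12}-\mi g_{22})/(g_{12}+\mi g_{22})$, accounting for the ratio in~\eqref{eq:def-cp-log}. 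The remaining contributions to the constant $c$ of Theorem~\ref{th:ln-regular-a-non0} (namely $4\gamma$, $\mi/\tilde a$, $\psi(-\mi\tilde a/2)$, $-\pi\mi/2$, and $\ln(\varepsilon b/2)$) must be combined with the algebraic residue of the explicit $(2\tilde a\mi-1)$ factor in~\eqref{eq:u+}, and then collapsed via the recurrence $\psi(z+1)=\psi(z)+1/z$ so as to turn $\psi(-\mi\tilde a/2)=\psi(-1/2-\mi a/2)$ into $\psi(1/2-\mi a/2)$; the resulting assembly reproduces $c_+$.

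For $\me^{\mi\varphi(\tau)}$, I would use~\eqref{eq:varphi+} in its exponentiated form $\me^{\mi\varphi(\tau)}=\me^{\mi\tilde\varphi(\tau)}(-\tilde u(\tau)u(\tau)/(\varepsilon b\tau^2))$, identifying the prefactor $\me^{\pi a/2}(\Gamma(1/2-\mi a/2))^{2}(g_{12}+\mi g_{22})^{2}/(2\pi)$ in~\eqref{eq:varphi-asympt0-rho12+} from the Euler reflection identity for $\Gamma$ applied to $\Gamma(1-\mi\tilde a/2)$, together with the monodromy-data substitution derived in the previous paragraph. The compact exponential phase $-2\mi\varepsilon b\tau^2((\ln\tau+c_+/2-1/2)^{2}+1/4)$ is, however, easiest obtained not by propagating the ratio-of-logarithms of~\eqref{eq:ln-regular-varphi} through the B\"acklund map, but by directly integrating equation~\eqref{eq:varphi} against the leading asymptotic~\eqref{eq:u-asympt0-rho12+} and completing the square in $\ln\tau+c_+/2$. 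This step is the principal computational obstacle, as the $b/u(\tau)$ integrand must yield precisely the stated quadratic-in-$\ln\tau$ polynomial, with controlled remainder $\mathcal{O}(\tau^4\ln^4\tau)$; a careful use of the logarithmic local expansion of Appendix~\ref{app:eq:0-u-expansionLog} is what guarantees both the form of the phase and the sharpness of the error estimate. Having established the theorem on the strip $0<\mathrm{Im}\,a<2$, the full parameter range $a\in\mathbb{C}\setminus\{\mi(2m-1):m\in\mathbb{Z}\}$ is recovered by the bootstrap extension of Lemma~\ref{th:bootstrap-log}, exactly as invoked at the end of the proof of Theorem~\ref{th:Asympt0-ln-rho12-}.
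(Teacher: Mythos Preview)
Your proposal is correct and follows essentially the same route as the paper: apply the B\"acklund transformations~\eqref{eq:u+}, \eqref{eq:varphi+} (with the monodromy-data action~\eqref{eq:data+}) to the logarithmic asymptotics of Theorem~\ref{th:ln-regular-a-non0}, obtaining the result on the strip $0<\mathrm{Im}\,a<2$, and then invoke Lemma~\ref{th:bootstrap-log} for the extension. Your suggestion to recover the exponential phase in~\eqref{eq:varphi-asympt0-rho12+} by directly integrating~\eqref{eq:varphi} against~\eqref{eq:u-asympt0-rho12+} is exactly the alternative the paper records in its footnote, and your more detailed bookkeeping of the constant $c_+$ (via $\psi(z+1)=\psi(z)+1/z$ and the $g_{ij}$-ratio computation) fleshes out steps the paper leaves implicit.
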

\begin{proof}
The proof is similar to that given for Theorem~\ref{th:Asympt0-ln-rho12-}.
Since $s_0^0=-2\mi$, the relation~\eqref{eq:nonzero-gik-relation-rho12} also holds for the monodromy data corresponding
to the solutions studied in this theorem; thus, the parameter $c_+$ is correctly
defined by equation~\eqref{eq:def-cp-log}.

In this case, we apply to the asymptotics stated in Theorem~\ref{th:ln-regular-a-non0} the B\"acklund
transformations~\eqref{eq:u+} and \eqref{eq:varphi+} in order to arrive at the asymptotics \eqref{eq:u-asympt0-rho12+}
and \eqref{eq:varphi-asympt0-rho12+}, respectively.\footref{foot:alt-derivation-asympt-varphi}
It is important to note that the B\"acklund-transformation argument provides us with the proof of the
asymptotics~\eqref{eq:u-asympt0-rho12+} and \eqref{eq:varphi-asympt0-rho12+} for values of the parameter $a$
restricted to the strip $0<\mathrm{Im}\,a<2$. As in Theorems~\ref{th:ln-regular-a-non0} and
\ref{th:Asympt0-ln-rho12-}, the extension of these asymptotics to all values of
$a\in\mathbb{C}\setminus\{\mi(2m-1),m\in\mathbb{Z}\}$ is completed upon invoking Lemma~\ref{th:bootstrap-log}.
\end{proof}
\begin{remark}\label{rem:rho0-varrho1/2}
Theorem~\ref{th:ln-regular-a-non0} corresponds to the value $\rho=0$ (cf. equations~\eqref{eq:rho-general} and
\eqref{eqs:ln-as00-conditions}), whilst Theorems~\ref{th:Asympt0-ln-rho12-} and
\ref{th:Asympt0-ln-rho12-}${}^{\mathbf\prime}$
are related to the value $\varrho=1/2$ (cf. equations~\eqref{eq:rho-general} and
\eqref{eqs:Asympt0-rho12:mondataConditions}).
\hfill$\blacksquare$\end{remark}
\begin{lemma}\label{th:bootstrap-log}
{\bf Completion of the proofs of Theorems~{\rm{\bf\ref{th:ln-regular-a-non0}}, {\bf\ref{th:Asympt0-ln-rho12-}}},
and {\rm{\bf\ref{th:Asympt0-ln-rho12-}}${}^{\mathbf\prime}$}}. These theorems
are valid for all values of the parameter $a$ stated therein.
\end{lemma}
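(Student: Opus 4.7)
The plan is to mimic the bootstrap argument used in the proof of Theorem~\ref{th:restriction-on-Ima-removed}, but now alternating between the two ``logarithmic'' settings $s_0^0=2\mi$ (Theorem~\ref{th:ln-regular-a-non0}) and $s_0^0=-2\mi$ (Theorems~\ref{th:Asympt0-ln-rho12-} and \ref{th:Asympt0-ln-rho12-}${}^{\mathbf\prime}$). The key observation is the interaction between the Bäcklund action~\eqref{eq:data+}, \eqref{eq:data-} on the monodromy data and these two logarithmic regimes: each application of $\pm\mi$ shifts $a\mapsto a\pm\mi$ and flips $s_0^0\mapsto -s_0^0$, so one logarithmic family is sent into the other, with the strip of validity for $\mathrm{Im}\,a$ translated by $\pm1$.

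The three base cases have already been established: Theorem~\ref{th:ln-regular-a-non0} on $\{|\mathrm{Im}\,a|<1\}\setminus\{0\}$ (this is what is proved in \cite{KitVarZapiski2024} and refined by the local expansion of Appendix~\ref{app:eq:0-u-expansionLog}); Theorem~\ref{th:Asympt0-ln-rho12-} on $\mathrm{Im}\,a\in(-2,0)$; and Theorem~\ref{th:Asympt0-ln-rho12-}${}^{\mathbf\prime}$ on $\mathrm{Im}\,a\in(0,2)$ (these two being the immediate output of applying \eqref{eq:u-}--\eqref{eq:varphi-} and \eqref{eq:u+}--\eqref{eq:varphi+} to the first base case). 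The bootstrap step then consists of applying \eqref{eq:u+}, \eqref{eq:varphi+} (respectively \eqref{eq:u-}, \eqref{eq:varphi-}) directly to the \emph{output} asymptotics of the $s_0^0=-2\mi$ theorems valid on $\mathrm{Im}\,a\in(0,2)$ (respectively $(-2,0)$), which produces asymptotics for the original $s_0^0=2\mi$ setting on $\mathrm{Im}\,a\in(1,3)$ (respectively $(-3,-1)$), extending the validity strip of Theorem~\ref{th:ln-regular-a-non0} to $\{|\mathrm{Im}\,a|<3\}\setminus\{0,\pm 2\mi\}$. Feeding this back into the two $s_0^0=-2\mi$ theorems extends their strips to $\mathrm{Im}\,a\in(-4,0)$ and $(0,4)$ minus the isolated points $\pm\mi$, and so on. Iterating this telescoping procedure and invoking mathematical induction covers every horizontal strip, yielding validity for all $a$ in the stated punctured domains.

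The computational content of each step is to verify that the Bäcklund transformation acts \emph{covariantly} on the logarithmic ansatz: substituting \eqref{eq:ln-regular-u-combined} or \eqref{eq:u-asympt0-rho12-} into \eqref{eq:u+}/\eqref{eq:u-} must reproduce an asymptotic of the same shape, with the new constants $c$, $c_{\pm}$ (and the new prefactors in \eqref{eq:varphi-asympt0-rho12-}, \eqref{eq:varphi-asympt0-rho12+}) obtained from the old ones precisely through the monodromy transformations \eqref{eq:data+}, \eqref{eq:data-}. The arguments of the digamma functions $\psi(-\mi a/2)\to \psi(1/2\pm\mi a/2)\to\psi(1-\mi a/2)$, etc., shift correctly by $\pm 1/2$ at each step because of the recurrence $\psi(z+1)=\psi(z)+1/z$, and the ratios of linear forms in $g_{ij}$ entering \eqref{eq:def-c-log}, \eqref{eq:def-cm-log}, \eqref{eq:def-cp-log} transform cleanly under the $\mathrm{diag}(\pm\mi,\mp\mi)$ twist of the connection matrix recorded in \eqref{eq:data+}--\eqref{eq:data-}. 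Differentiability of the asymptotics, which is needed because \eqref{eq:u+}, \eqref{eq:u-} involve $u'(\tau)$, is inherited from the differentiable local expansion of Appendix~\ref{app:eq:0-u-expansionLog} (exactly as in the proof of Theorem~\ref{th:altB1asympt0m}).

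The main obstacle, and the reason this is phrased as a separate lemma, is controlling the error term $\mathcal{O}(\tau^2\ln^2\tau)$ (and $\mathcal{O}(\tau^4\ln^4\tau)$ for $\varphi$) through the Bäcklund map: one has to check that dividing by $u^2$ in \eqref{eq:u+}/\eqref{eq:u-} (which contains a $\tau^{-1}\ln^{-4}\tau$ singularity) and subtracting the divergent quantities $\tau(u'\pm\mi b)$ and $(2a\mi\mp 1)u$ does not degrade the error to a power of $\ln\tau$ lower than stated. This is verified term-by-term by using the complete asymptotic expansion of Appendix~\ref{app:eq:0-u-expansionLog}, where the sub-leading structure is explicit, so that the cancellations responsible for the quoted error orders can be checked directly, and are clearly preserved under a shift $a\to a\pm\mi$ provided $a$ avoids the excluded lattice points. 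The exclusions $a\in\mi 2\mathbb{Z}$ (respectively $\mi(2\mathbb{Z}-1)$) appear exactly because at those points some coefficient in \eqref{eq:def-c-log} or \eqref{eq:def-cm-log}/\eqref{eq:def-cp-log} picks up a pole of $\psi$ or a zero of one of the linear forms in the $g_{ij}$'s, and they are preserved by the Bäcklund shift, so they remain the only points excluded after the bootstrap terminates.
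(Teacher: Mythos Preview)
Your overall strategy---the bootstrap via B\"acklund transformations alternating between the $s_0^0=2\mi$ and $s_0^0=-2\mi$ regimes---is indeed the one used in the paper. However, there is a genuine gap in your argument that the paper handles explicitly and you omit.

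The issue is the crossing of the line $\mathrm{Im}\,a=0$ for Theorems~\ref{th:Asympt0-ln-rho12-} and \ref{th:Asympt0-ln-rho12+}${}^{\mathbf\prime}$. In your iteration you write that after one round the strips become ``$\mathrm{Im}\,a\in(-4,0)$ and $(0,4)$'', i.e.\ you keep the two theorems on \emph{disjoint} half-planes. Iterating that scheme, you would only ever establish Theorem~\ref{th:Asympt0-ln-rho12-} on $\mathrm{Im}\,a<0$ and Theorem~\ref{th:Asympt0-ln-rho12+}${}^{\mathbf\prime}$ on $\mathrm{Im}\,a>0$, which falls short of the lemma's claim that \emph{each} of them is valid on the full punctured plane $\mathbb{C}\setminus\{\mi(2m-1)\}$. (In fact, applying \eqref{eq:u-} to Theorem~\ref{th:ln-regular-a-non0} on $(-3,3)$ gives Theorem~\ref{th:Asympt0-ln-rho12-} on $(-4,2)$, not $(-4,0)$; but to then claim Theorem~\ref{th:Asympt0-ln-rho12-} is also valid on $(2,4)$ you must identify it with Theorem~\ref{th:Asympt0-ln-rho12+}${}^{\mathbf\prime}$.)

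The paper closes this gap by first proving the algebraic identity $c_-=c_+$ directly (using $\psi(1/2+z)-\psi(1/2-z)=\pi\tan(\pi z)$, equation~\eqref{eq:nonzero-gik-relation-rho12}, and~\eqref{eq:monodromy:detG}), and checking separately that the multiplicative constants in front of the $\me^{\mi\varphi}$ asymptotics agree. This shows that Theorems~\ref{th:Asympt0-ln-rho12-} and \ref{th:Asympt0-ln-rho12+}${}^{\mathbf\prime}$ are literally the same statement, so their domains of validity can be merged. The line $\mathrm{Im}\,a=0$ itself is then handled by an analyticity-in-$a$ argument: both the leading term and the error $E(\tau)=u(\tau)-u_{as}(\tau)$ (controlled via Appendix~\ref{app:subsec:coefficients-expansionLog2}) are analytic in $a$ across that line. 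Only after this merger is done does the paper launch the telescoping bootstrap, which then legitimately widens the common strip to $(-4,4)$, $(-6,6)$, etc. Your ``shift correctly via $\psi(z+1)=\psi(z)+1/z$'' remark gestures at the right kind of computation, but the specific identity $c_-=c_+$ (which instead uses the \emph{reflection} formula for $\psi$ together with the monodromy relations) is what actually makes the bootstrap close, and it is not something that falls out automatically from the covariance of the B\"acklund map.
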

\begin{proof}
The results presented in Theorems~\ref{th:Asympt0-ln-rho12-} and
\ref{th:Asympt0-ln-rho12+}\hspace{-0pt}${}^{\mathbf\prime}$ are obtained for the parameter of formal monodromy, $a$,
belonging to the disjoint strips $-2<\mathrm{Im}\,a<0$ and  $0<\mathrm{Im}\,a<2$, respectively, of the complex plane.
The formulae for the respective asymptotics are similar, but the coefficients seem to be different, which is not
surprising, since, by construction, the parameter $a$ belongs to different strips.
In fact, if we assume that $\mathrm{Im}\,a\in(-2,2)$ and $a\neq\pm\mi$, then both results coincide. To prove this for
the asymptotics of the function $u(\tau)$, consider the difference $c_--c_+$ (cf. equations~\eqref{eq:def-cm-log}
and \eqref{eq:def-cp-log}):
\begin{equation}\label{eq:cm-cp}
\begin{gathered}
c_--c_+=\psi(1/2+\mi a/2)-\psi(1/2-\mi a/2)+\pi\mi+\pi\mi\left(\frac{g_{11}-\mi g_{21}}{g_{11}+\mi g_{21}}
-\frac{g_{12}-\mi g_{22}}{g_{12}+\mi g_{22}}\right)\\
=\pi\mi\left(\tanh(\pi a/2)+1+\frac{2\mi(g_{11}g_{22}-g_{12}g_{21}}{-\mi(1+\me^{-\pi a})}\right)
=\pi\mi\left(\frac{2\me^{\pi a/2}}{\me^{\pi a/2}+\me^{-\pi a/2}}-\frac{2}{1+\me^{-\pi a}}\right)=0.
\end{gathered}
\end{equation}
In the calculation~\eqref{eq:cm-cp}, the identity $\psi(1/2+z)-\psi(1/2-z)=\pi\tan(\pi z)$ and
equations~\eqref{eq:nonzero-gik-relation-rho12} and \eqref{eq:monodromy:detG} were used.

To confirm the coincidence of the asymptotics~\eqref{eq:varphi-asympt0-rho12-} and \eqref{eq:varphi-asympt0-rho12+}
for the function $\me^{\mi\varphi(\tau)}$, we must, in addition, verify that the corresponding multiplicative
constants are equal; this is done by considering their ratio:
\begin{equation*}\label{eq:ratio-multconst-log}
\begin{gathered}
\left(\frac{-2\pi\me^{-\pi a/2}}{\Gamma^2\left(\frac12+\frac{\mi a}{2}\right)(g_{11}+\mi g_{21})^2}\right):
\left(\frac{\me^{\pi a/2}}{2\pi}\Gamma^2\left(\frac12-\frac{\mi a}{2}\right)(g_{12}+\mi g_{22})^2\right)\\
=-\left(\frac{2\pi\me^{-\pi a/2}}{\Gamma\left(\frac12+\frac{\mi a}{2}\right)\Gamma\left(\frac12-\frac{\mi a}{2}\right)
(g_{11}+\mi g_{21})(g_{12}+\mi g_{22})}
\right)^2
=-\left(\frac{\cosh(\pi a/2)\,2\pi\,\me^{-\pi a/2}}{\pi(-\mi\,(1+\me^{-\pi a}))}\right)^2=1.
\end{gathered}
\end{equation*}
The line $\mathrm{Im}\,a=0$ requires separate consideration. Firstly, note that the proof presented above does not
sense the presence of this line, that is, the leading terms of asymptotics of the functions $u(\tau)$ and
$\me^{\mi\varphi(\tau)}$,
which can be defined by any one of the pair of formulae~\eqref{eq:u-asympt0-rho12-} and
\eqref{eq:varphi-asympt0-rho12-}
or \eqref{eq:u-asympt0-rho12+} and \eqref{eq:varphi-asympt0-rho12+}, and denoted henceforth as $u_{as}(\tau)$ and
$\me^{\mi\varphi_{as}(\tau)}$, respectively, are analytic functions of the monodromy data and,
in particular, the formal monodromy parameter $a$ in the strip $|\mathrm{Im}\,a|<1$. As a solution of
the differential equation~\eqref{eq:dp3}, the function $u(\tau)$ is also an analytic function of $a$; correspondingly,
$\me^{\mi\varphi(\tau)}$ is also an analytic function of $a$.
Secondly, note that the difference $u(\tau)-u_{as}(\tau):=E(\tau)$ is an analytic function of $a$, since it is
the difference of the analytic functions. Its small-$\tau$ asymptotics is studied in
Appendix~\ref{app:subsec:coefficients-expansionLog2}, where it is proved that the asymptotics of $E(\tau)$ does not
have any singularities for any values of $a$, so that the asymptotics of the function $u(\tau)$ stated in
Theorems~\ref{th:Asympt0-ln-rho12-} and \ref{th:Asympt0-ln-rho12+}\hspace{-0pt}${}^{\mathbf\prime}$ is true in the
entire strip $\mathrm{Im}\,a\in(-2,2)$ punctured at $a=\pm\mi$. This fact implies the validity of the same statement
for the asymptotics of the function $\me^{\mi\varphi(\tau)}$.

We are now in a position to employ the bootstrap argument. In the previous paragraph, the validity of each of the
Theorems~\ref{th:Asympt0-ln-rho12-} and \ref{th:Asympt0-ln-rho12+}\hspace{-0pt}${}^{\mathbf\prime}$
was extended to the strip $\mathrm{Im}\,a\in(-2,2)$ punctured at the two points $a=\pm\mi$.
One applies to the solution and the corresponding asymptotics stated in
Theorems~\ref{th:Asympt0-ln-rho12-} and \ref{th:Asympt0-ln-rho12+}\hspace{-0pt}${}^{\mathbf\prime}$
the inverses of the B\"acklund transformations that were used to obtain these asymptotic results;
then, one arrives at the asymptotics formulated in Theorem~\ref{th:ln-regular-a-non0}, but now with the validity of
these results extended to the wider strip $\mathrm{Im}\,a\in(-3,3)=(-3,1)\cup(-1,3)$ punctured at the three points
$a=-2\mi,0,2\mi$.

At the next stage of the bootstrap argument, we apply the B\"acklund transformations~\eqref{eq:u-} and
\eqref{eq:varphi-} and arrive at the asymptotics stated in Theorem~\ref{th:Asympt0-ln-rho12-}, and apply the
B\"acklund transformations~\eqref{eq:u+} and \eqref{eq:varphi+} to obtain the results stated in
Theorem~\ref{th:Asympt0-ln-rho12+}\hspace{-0pt}${}^{\mathbf\prime}$, but now with the validity of both theorems
extended to the strip $\mathrm{Im}\,a\in(-4,4)=(-4,2)\cup(-2,4)$ punctured at the four points
$a=\pm3\mi,\pm\mi$.

Finally, an inductive argument completes the proof that the asymptotics stated in Theorem~\ref{th:ln-regular-a-non0}
are valid for $a\in\mathbb{C}\setminus\mi2\mathbb{Z}$, and the results of Theorems~\ref{th:Asympt0-ln-rho12-} and
\ref{th:Asympt0-ln-rho12+}\hspace{-0pt}${}^{\mathbf\prime}$ hold for
$a\in\mathbb{C}\setminus\lbrace\mi(2m-1),m\in\mathbb{Z}\rbrace$.
\end{proof}
\begin{remark}\label{rem:c+c-=0}
{}From the qualitative point of view, the asymptotic behaviours of the solutions described in
Theorems~\ref{th:Asympt0-ln-rho12-} and \ref{th:Asympt0-ln-rho12+}\hspace{-0pt}${}^{\mathbf\prime}$, which
correspond to the case $c_+=c_-=:\tilde{c}_{-1,3}=0$, do not exibit any special lineaments when compared to the
case for non-vanishing values of this parameter. A study of the complete asymptotic expansion
(see Appendix~\ref{app:sec:full0expansionLog2}), however, shows that solutions of equation~\eqref{eq:dp3}
corresponding to $\tilde{c}_{-1,3}=0$ are the only ones for which the \emph{levels} (see
Appendix~\ref{app:subsec:coefficients-expansionLog2} for the definition of levels) of the complete asymptotic
expansions are represented by truncated logarithmic series; in Corollary~\ref{cor:c+=c-=0} below, we specify
such solutions in terms of the monodromy data.
\hfill$\blacksquare$\end{remark}
\begin{corollary}\label{cor:c+=c-=0}
Let the pair of functions $(u(\tau),\me^{\mi\varphi(\tau)})$ correspond to the monodromy data specified in
Theorem~{\rm\ref{th:Asympt0-ln-rho12-};} then, the parameter $c_-=c_+=0$ in the asymptotic formulae
\eqref{eq:u-asympt0-rho12-}, \eqref{eq:varphi-asympt0-rho12-} and \eqref{eq:u-asympt0-rho12+},
\eqref{eq:varphi-asympt0-rho12+} iff the monodromy data satisfy, \pmb{in addition}, one of the following three
conditions:
\begin{enumerate}
\item[\pmb{$(1)$}]
$G_+G_-\neq0$,
\begin{gather}
g_{11}g_{22}=-\frac{G_+G_-}{4(1+\me^{-\pi a})},\quad
g_{11}g_{12}=\frac{(2+\mi G_-)G_-}{4(1+\me^{-\pi a})},\quad
g_{22}g_{21}=\frac{(2-\mi G_+)G_+}{4(1+\me^{-\pi a})},\label{eq:gikG+G-generic}\\
g_{11}(g_{12}+\mi g_{22})=\frac{G_-}2,\qquad
g_{22}(g_{11}+\mi g_{21})=\frac{\mi G_+}2,\label{eq:gikG+G-forStokes}
\end{gather}
\item[\pmb{$(2)$}]
$G_-=0,\quad G_+=2\mi\me^{-\pi a}$,
\begin{equation}\label{eq:G-=0}
g_{11}=0,\qquad
g_{22}\in\mathbb{C}\setminus\{0\},\qquad
g_{12}g_{21}=-1,\qquad
g_{22}g_{21}=\mi\me^{-\pi a},
\end{equation}
\item[\pmb{$(3)$}]
$G_+=0,\quad G_-=-2\mi\me^{-\pi a}$,
\begin{equation}\label{eq:G+=0}
g_{22}=0,\qquad
g_{11}\in\mathbb{C}\setminus\{0\},\qquad
g_{12}g_{21}=-1,\qquad
g_{11}g_{12}=-\mi\me^{-\pi a},
\end{equation}
\end{enumerate}
where
\begin{equation}\label{eq:defG+-}
G_{\pm}=\frac{1}{\pi}\big(1+\me^{-\pi a}\big)\Big(\ln(\varepsilon b/2)+4\gamma
+\frac12\big(\psi(1/2+\mi a/2)+\psi(1/2-\mi a/2)\big)\Big)\pm\mi\me^{-\pi a}.
\end{equation}
\end{corollary}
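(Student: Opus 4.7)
The plan is to convert the conditions $c_\pm=0$ into linear constraints on the entries $(g_{11},g_{21})$ and $(g_{12},g_{22})$ of the connection matrix. Rewriting \eqref{eq:def-cm-log}, the equation $c_-=0$ becomes, after clearing the denominator $g_{11}+\mi g_{21}$ and using the definition of $G_-$ in \eqref{eq:defG+-} together with the digamma reflection identity $\psi(1/2+\mi a/2)-\psi(1/2-\mi a/2)=\pi\mi\tanh(\pi a/2)$ employed already in \eqref{eq:cm-cp}, the linear constraint
\begin{equation*}
(g_{11}+\mi g_{21})\,\mi G_-=2g_{11}(1+\me^{-\pi a}).
\end{equation*}
An entirely parallel calculation shows that $c_+=0$ is equivalent to
\begin{equation*}
(g_{12}+\mi g_{22})G_+=-2g_{22}(1+\me^{-\pi a}).
\end{equation*}
The identity $c_-\equiv c_+$ proved in \eqref{eq:cm-cp} during the proof of Lemma~\ref{th:bootstrap-log} guarantees that these two constraints are in fact equivalent modulo the monodromy-manifold equations, so only one of them needs to be imposed as an independent condition.

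I would next branch on whether $G_+$ or $G_-$ vanishes; the two cannot vanish simultaneously because $G_+-G_-=2\mi\me^{-\pi a}\neq0$. If $G_-=0$, the first constraint above forces $g_{11}=0$; combining this with the determinant relation \eqref{eq:monodromy:detG} gives $g_{12}g_{21}=-1$, and substituting $g_{11}=0$ into the $AB$-identity \eqref{eq:nonzero-gik-relation-rho12} yields $g_{21}g_{22}=\mi\me^{-\pi a}$, which are the relations of case~\pmb{$(2)$}, together with $G_+=2\mi\me^{-\pi a}$ from the difference relation. Case~\pmb{$(3)$} is obtained symmetrically starting from $G_+=0$.

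In the generic subcase $G_+G_-\neq0$ I would solve the two linear constraints for $g_{12}+\mi g_{22}$ and $g_{11}+\mi g_{21}$ in terms of $g_{22}$ and $g_{11}$, respectively, and then substitute these expressions back into \eqref{eq:nonzero-gik-relation-rho12} to obtain both relations of \eqref{eq:gikG+G-forStokes}. Multiplying the two relations of \eqref{eq:gikG+G-forStokes} and invoking \eqref{eq:nonzero-gik-relation-rho12} once more yields $g_{11}g_{22}=-G_+G_-/[4(1+\me^{-\pi a})]$. Expanding the first relation of \eqref{eq:gikG+G-forStokes} and substituting the just-derived expression for $g_{11}g_{22}$ produces
\begin{equation*}
g_{11}g_{12}=\frac{G_-}{2}+\frac{\mi G_+G_-}{4(1+\me^{-\pi a})}=\frac{G_-\bigl[2(1+\me^{-\pi a})+\mi G_+\bigr]}{4(1+\me^{-\pi a})},
\end{equation*}
and the bridging identity $2(1+\me^{-\pi a})+\mi G_+=2+\mi G_-$ (itself a direct consequence of $G_+-G_-=2\mi\me^{-\pi a}$) reduces this to the stated form in \eqref{eq:gikG+G-generic}. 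The formula for $g_{22}g_{21}$ follows in the same way from the second relation of \eqref{eq:gikG+G-forStokes}, using the dual identity $2(1+\me^{-\pi a})-\mi G_-=2-\mi G_+$. The reverse (sufficiency) direction amounts to direct substitution of the listed monodromy data back into \eqref{eq:def-cm-log} and \eqref{eq:def-cp-log}.

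The principal obstacle is not computational but the recognition of the correct bridging identity $2(1+\me^{-\pi a})+\mi G_+=2+\mi G_-$, which collapses the asymmetric combinations of $G_+$ and $G_-$ arising naturally in the derivation into the clean single-index expressions $G_-(2+\mi G_-)$ and $G_+(2-\mi G_+)$ appearing in \eqref{eq:gikG+G-generic}; without this observation the three cases of the corollary would look considerably more opaque. One should also verify throughout that the non-degeneracy conditions \eqref{eq:nonzero-gik-relation-rho12} are preserved in each case and that the denominators $g_{11}+\mi g_{21}$ and $g_{12}+\mi g_{22}$ appearing in \eqref{eq:def-cm-log} and \eqref{eq:def-cp-log} remain non-zero throughout the argument.
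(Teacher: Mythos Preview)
Your argument is correct and takes a genuinely different route from the paper's. The paper works with the \emph{sum} $c_-+c_+=0$: adding \eqref{eq:def-cm-log} and \eqref{eq:def-cp-log} produces a single scalar relation $g_{11}g_{12}+g_{21}g_{22}=(1+\me^{-\pi a})\Psi$ for the symmetric combination $\Psi=\tfrac1\pi\bigl(4\gamma+\ln(\varepsilon b/2)+\tfrac12(\psi(1/2+\mi a/2)+\psi(1/2-\mi a/2))\bigr)$; this is combined with \eqref{eq:monodromy:main} (specialised to $s_0^0=-2\mi$) to form a $2\times2$ linear system for $g_{11}g_{12}$ and $g_{21}g_{22}$, and multiplying the two resulting equations together with \eqref{eq:monodromy:detG} yields a relation quadratic in $g_{11}g_{22}$ whose quadratic terms cancel, giving the first formula in \eqref{eq:gikG+G-generic}; $G_\pm$ enter only at this final simplification stage, and \eqref{eq:gikG+G-forStokes} is obtained afterwards as linear combinations of \eqref{eq:gikG+G-generic}.

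Your approach instead uses the reflection identity for $\psi$ at the outset to rewrite each of $c_-=0$ and $c_+=0$ \emph{separately} as a linear constraint already phrased in terms of $G_-$ and $G_+$; the case split \pmb{(2)}/\pmb{(3)} then drops out immediately (when $G_\mp=0$ the constraint forces $g_{11}=0$ or $g_{22}=0$), and in the generic case the product of the two constraints with \eqref{eq:nonzero-gik-relation-rho12} gives $g_{11}g_{22}$ directly without passing through a quadratic. This is cleaner and more transparent about why $G_\pm$ are the natural parameters. One small narrative slip: substituting the solved constraints into \eqref{eq:nonzero-gik-relation-rho12} gives the formula for $g_{11}g_{22}$ first, and \emph{then} multiplying each linear constraint by the appropriate $g_{jj}$ yields \eqref{eq:gikG+G-forStokes}; your text has these two steps in the reverse order, but the computation is unaffected.
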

\begin{proof}
We outline the proof for item~\pmb{$(1)$}. The derivations of the formulae in items~\pmb{$(2)$} and \pmb{$(3)$}
are much simpler, and are thus left to the interested reader.

Assume that $c_-=c_+=0$, then $c_-+c_+=0$, consequently, adding equations~\eqref{eq:def-cm-log} and
\eqref{eq:def-cp-log}, taking the two fractions consisting of the monodromy data $g_{ij}$ to a common denominator,
and substituting the identity~\eqref{eq:nonzero-gik-relation-rho12} for the common denominator, one shows that the
resulting equation can be written as
\begin{equation}\label{eq:gik-T}
g_{11}g_{12}+g_{21}g_{22}=(1+\me^{-\pi a})\Psi,\quad
\Psi=\frac1\pi\Big(4\gamma+\ln(\varepsilon b/2)+\frac12\big(\psi(1/2+\mi a/2)+\psi(1/2-\mi a/2)\Big).
\end{equation}
Since $s_0^0=-2\mi$, it follows from equation~\eqref{eq:monodromy:main} that
\begin{equation}\label{eq:monodromy:main:s00eq-2i}
g_{21}g_{22}-g_{11}g_{12}=\mi(2g_{11}g_{22}+\me^{-\pi a});
\end{equation}
thus, adding and subtracting equations~\eqref{eq:gik-T} and \eqref{eq:monodromy:main:s00eq-2i}, one arrives at the
system
\begin{equation}\label{sys:gik}
\begin{gathered}
2g_{21}g_{22}=(1+\me^{-\pi a})\Psi+\mi(2g_{11}g_{22}+\me^{-\pi a}),\\
2g_{12}g_{11}=(1+\me^{-\pi a})\Psi-\mi(2g_{11}g_{22}+\me^{-\pi a}).
\end{gathered}
\end{equation}
Multiplying equations~\eqref{sys:gik} and taking equation~\eqref{eq:monodromy:detG} into account, one gets
\begin{equation}\label{eq:g11g22-quadratic}
4g_{11}g_{22}(g_{11}g_{22}-1)=(1+\me^{-\pi a})^2\Psi^2+(2g_{11}g_{22}+\me^{-\pi a})^2.
\end{equation}
Expanding the right-most (parenthetical) term in equation~\eqref{eq:g11g22-quadratic}, cancelling the quadratic
terms $4(g_{11}g_{22})^2$, and introducing $G_{\pm}$ as in equation~\eqref{eq:defG+-}, we obtain the first
equation in the list~\eqref{eq:gikG+G-generic}; the remaining two equations in the list~\eqref{eq:gikG+G-generic}
are obtained upon substituting the first equation into the expressions on the right-hand sides of the
system~\eqref{sys:gik}.
The equations in \eqref{eq:gikG+G-forStokes} are obtained by taking linear combinations of the
equations in \eqref{eq:gikG+G-generic}. Conversely, substitute equations~\eqref{eq:gikG+G-generic} into, say,
the definition of $c_-$ (cf. equation~\eqref{eq:def-cm-log}) to prove that $c_-=0$.
\end{proof}
\begin{remark}\label{rem:G+/-=0}
In Corollary~\ref{cor:c+=c-=0}, the quantities $G_{\pm}$ are functions of the formal monodromy,
$a$, and the scaling parameter $\varepsilon b$; therefore, if $a\neq\mi(2m-1)$, $m\in\mathbb{Z}$, then
one, and only one, solution $u(\tau)$ with the asymptotics given in Theorem~\ref{th:Asympt0-ln-rho12-} for $c_-=0$
exists. One of the monodromy parameters, either $g_{11}\neq0$ or $g_{22}\neq0$, defines the
``constant of integration'' of the function $\varphi(\tau)$ in equation~\eqref{eq:varphi}, so that the function
$\me^{\mi\varphi(\tau)}$ is also unique modulo this multiplicative $\tau$-independent parameter.
\hfill$\blacksquare$\end{remark}
\begin{remark}\label{rem:truncated}
Items~\pmb{$(2)$} and \pmb{$(3)$} of Corollary~\ref{cor:c+=c-=0} correspond to the case when one of the monodromy
parameters, either $g_{11}$ or $g_{22}$, vanishes. According to Theorem~\ref{th:Asympt0-ln-rho12-},
such special values for $g_{11}$ or $g_{22}$ give rise to
small-$\tau$ asymptotics that are qualitatively similar to the small-$\tau$ asymptotics of solutions with
monodromy data described in item~\pmb{$(1)$} of Corollary~\ref{cor:c+=c-=0}. The large-$\tau$ asymptotics
of solutions with monodromy data given in items~\pmb{$(2)$} and \pmb{$(3)$} of Corollary~\ref{cor:c+=c-=0},
however, are more interesting, because they are the only solutions that have the small-$\tau$
logarithmic behaviour presented in Theorem~\ref{th:Asympt0-ln-rho12-} having truncated asymptotics as
$\tau\to+\infty$.
In this context, the following intriguing question manifests:
do there exist, for any scaling $\varepsilon b$, values of the formal monodromy parameter $a$ that solve the equation
$G_{\pm}=0$, and, if so, how many solutions exist? We haven't yet studied this question. For $\varepsilon b=2$,
numerical studies using \textsc{Maple} give a series (infinite?) of solutions for each equation $G_{\pm}=0$,
namely,
$a=a^1_{\pm}=0.2381378288\ldots\mp\mi0.6358442252\ldots$, $a=a^2_{\pm}=0.1144878083\ldots\mp\mi1.714583576\ldots$,
$a=a^3_{\pm}=0.09349464758\ldots\mp\mi2.744016682\ldots$, etc. If the monodromy parameter $a$ is a solution of
the equation $G_{\pm}=0$, then, for this parameter value, there exists a unique solution $u(\tau)$ of
equation~\eqref{eq:dp3} and a corresponding function $\me^{\mi\varphi(\tau)}$ which is uniquely defined modulo a
multiplicative $\tau$-independent non-vanishing parameter that is defined in terms of $g_{12}$ or $g_{21}$.
\hfill$\blacksquare$\end{remark}
\section{Solutions with Poles Accumulating at the Origin: $\mathrm{Re}(\varrho)=1/2$}\label{sec:poles}
The reader may have noted that, although the respective denominators of the asymptotic formulae for $u(\tau)$
given in equations~\eqref{eq:Asympt0u-} and \eqref{eq:Asympt0u+} vanish for an infinite sequence of points
$\tau=\tau_p\to0$, $p\in\mathbb{N}$, for $\varrho=1/2+\mi\varkappa$, $\varkappa\in\mathbb{R}\setminus\{0\}$, such
values of the branching parameter were not excluded from the formulations of
Theorems~\ref{th:alt2B1asympt0p} and \ref{th:alt2B1asympt0p}\hspace{-1pt}${}^{\mathbf\prime}$; more precisely, since
\begin{equation*}\label{eq:asymptotic-poles-derivation}
w_k\tau^{-2\mi\varkappa}+w_{k+1}\tau^{2\mi\varkappa}=
2\sqrt{w_k}\,\sqrt{w_{k+1}}\cos\Big(2\varkappa\ln\tau+\mi\ln\big(\sqrt{w_k}/\sqrt{w_{k+1}}\big)\Big),\qquad
k=1,3,
\end{equation*}
the asymptotic formulae have second-order poles at the points
\begin{equation}\label{eq:asymptotic-poles}
\tau_p=\exp\left(-\frac{\pi p}{2|\varkappa|}+\frac{\pi}{4\varkappa}
-\frac{\mi}{4\varkappa}\ln\frac{w_k}{w_{k+1}}\right),\quad
k=1,3.
\end{equation}
Note that the value of $k$ (equal to $1$ or $3$) is not important because of the second identity in
\eqref{eqs:w-identities}, and the choice of the branch of the $\ln$-function in equation~\eqref{eq:asymptotic-poles}
is also not essential because its selection is a mere redefinition (shift) of $p$, which has the sense
of a parameter tending to $+\infty$. Hereafter, we assume that the branch of the $\ln$-function is fixed.

Note that all the points $\tau_p$ belong to the ray in $\mathbb{C}$ that is defined by the complex number $\tau_0$;
therefore, if $\arg\tau_0=0$ and the notation $\tau\to0^+$ is understood in the standard sense, then the asymptotics
of the function $u(\tau)$ is considered for real positive values of $\tau$ approaching the origin, and we encounter
a problem related with an infinite number of poles of the asymptotics located along the way as the origin is
approached. One can surmise that, somewhere in a neighbourhood of the poles of the asymptotics, are located poles
of $u(\tau)$; if, however, we understand $\tau\to0^+$ in the standard way, then we are restrained from going around
the poles $\tau_p$, so that we stop at the first pole, and the sense of such asymptotics is unclear.

In fact, such a “problem” with the asymptotic formulae occurs for all the Painlev\'e equations, and has a standard
solution. In our case, for example, the notation $\tau\to0^+$ means that we take $\arg\,\tau=0$ on the positive real
semi-axis, and approach the origin in a wider domain $\mathcal{S}\in\mathbb{C}$.
For the regular singular point of $u(\tau)$ at $\tau=0$, the domain $\mathcal{S}$
is a full neighbourhood of the origin cut along the negative real semi-axis; however, in this case, we must also
take into account those points $\tau_p$ for which $\arg\tau_p$ is arbitrary, which implies that the definition
of $\mathcal{S}$ should be supplemented by deleting from it an infinite number of discs centred at the points $\tau_p$.
Below, we complete the definition of the
discs, and note here that, due to the Painlev\'e property of the function $u(\tau)$, one can take an arbitrary path
to the origin in the multiply-connected domain $\mathcal{S}$ along which the
asymptotics of the function $u(\tau)$ is considered, since both $u(\tau)$ and its asymptotics are uniquely defined
in $\mathcal{S}$.

We now proceed with the definition of the discs.
The points $\tau_p$ are located on the ray with the origin at $\tau=0$; therefore, the distance between the
neighbouring points $\tau_p$ is $| \tau_{p+1}-\tau_{p}|=J|\tau_p|$, where
$J=1-\exp\left(-\pi/(2|\varkappa|)\right)$.
Next, we consider the discs $\mathcal{D}_p$ centred at $\tau_p$ with radius $R_p=J|\tau_p|^{1+\delta_d}$,
where $\delta_d\in[0,2]$ is the same for all $p\in\mathbb{N}$ and will be specified later.
Clearly, for $\delta_d\in(0,2]$ and small enough $\tau$,
$\mathcal{D}_p\cap\mathcal{D}_{p^{\prime}}=\varnothing$ $\forall$ $p\neq p^{\prime}\in\mathbb{N}$. To ensure this
property for $\delta_d=0$, we have to reduce the coefficient $J$ to, say, $J/3$, because the connectedness of the
intersection of the domain $\mathcal{S}$ with any small enough neighbourhood of the origin is important.
The formal definition of the domain $\mathcal{S}$ reads:
\begin{equation}\label{eq:S-D-def}
\mathcal{S}:=\{\tau\in\mathbb{C}:\;|\arg\tau|<\pi\}
\setminus\underset{\scriptscriptstyle p\in\mathbb{N}}\cup\mathcal{D}_p,\quad
\mathcal{D}_p:=\left\{\tau\in\mathbb{C}:\;|\tau-\tau_p|<R_p=J|\tau_p|^{1+\delta_d}\right\},
\end{equation}
with $\delta_d$ and $J$ defined above.
\begin{remark}\label{rem:tau0plusM}
We are now ready to clarify the notation $\tau\to0^+$ (cf. Remark~\ref{rem:tau0plus}) appearing
in the asymptotic formulae of solutions that have sequences of poles accumulating at the origin; for such solutions,
the notation $\tau\to0^+$ is equivalent to $\tau\in\mathcal{S}$ and $\tau\to0$: for brevity, we write
$\mathcal{S}\ni\tau\to0$.
\hfill$\blacksquare$\end{remark}
Our calculations of the monodromy data in \cite{KitVar2004,KitVarZapiski2024} uphold this modification
of the sense of the notation $\tau\to0^+$ because they deal with estimates of functions with power-like
behaviour. This fact implies that Theorems~\ref{th:alt2B1asympt0p} and \ref{th:alt2B1asympt0p}${}^{\mathbf\prime}$
also sustain this modification. The error estimates in these theorems remain unchanged as the origin is approached
``far'' from the boundaries of the discs $\mathcal{D}_{p}$, or, when $\delta_{d}=0$; it is clear, however, that the
error of the approximations increases as the points $\tau_{p}$ are approached, that is, when the parameter
$\delta_{d}>0$.
Here, we formulate the special case of Theorems~\ref{th:alt2B1asympt0p} and
\ref{th:alt2B1asympt0p}${}^{\mathbf\prime}$ for $\varrho=1/2+\mi\varkappa$ that is applicable to situations related
with the existence of sequences of poles accumulating at $\tau=0$.
\begin{theorem}\label{th:through-poles}
Let $(u(\tau),\varphi(\tau))$ be a solution of the system \eqref{eq:dp3}, \eqref{eq:varphi}
corresponding to the monodromy data $(a,s_{0}^{0},s_{0}^{\infty},s_{1}^{\infty},g_{11},g_{12},g_{21},g_{22})$.
Suppose that
\begin{equation}\label{eqs:s00s0infs1inf-poles-gen}
s_0^0=-2\mi\cosh(2\pi\varkappa),\quad
\varkappa\in\mathbb{R}\setminus\{0\},\qquad
s_0^\infty s_1^\infty\neq0.
\end{equation}
The conditions~\eqref{eqs:s00s0infs1inf-poles-gen} imply that
\begin{equation}\label{eq:conditions-a-varkappa}
a\neq 2\varkappa+\mi(2k-1),\qquad
k\in\mathbb{Z},
\end{equation}
\begin{equation}\label{eqs:conditions-gik-varkappa}
g_{11}\me^{-\pi\mi/4}\me^{\pm\pi\varkappa}+g_{21}\me^{\pi\mi/4}\me^{\mp\pi\varkappa}\neq0,\quad
g_{12}\me^{-\pi\mi/4}\me^{\pm\pi\varkappa}+g_{22}\me^{\pi\mi/4}\me^{\mp\pi\varkappa}\neq0,
\end{equation}
where either the upper signs or the lower signs, respectively, are taken.

Define
\begin{equation}\label{eq:Avarkappa}
\widehat{\mathscr{A}}(\varkappa):=\me^{\frac{\pi\varkappa}{2}}\left(\frac{\varepsilon b}{2}\right)^{-\mi\varkappa}
\frac{\Gamma(1+2\mi\varkappa)}{\Gamma(1-2\mi\varkappa)}\,\Gamma\!\left(\frac12+\frac{\mi a}{2}-\mi\varkappa\right)
\left(g_{11}\me^{-\frac{\pi\mi}{4}}\me^{\pi\varkappa}+g_{21}\me^{\frac{\pi\mi}{4}}\me^{-\pi\varkappa}\right),
\end{equation}
and assume that $\mathcal{S}$ is defined by \eqref{eq:S-D-def} with $\delta_d\in[0,2)${\rm;}
then,
\begin{align}
u(\tau)\underset{\scriptscriptstyle\mathcal{S}\ni\tau\to0}{=}&\,\frac{4\varepsilon\varkappa^{2}
\widehat{\mathscr{A}}(\varkappa)\widehat{\mathscr{A}}(-\varkappa)\big(1+\mathcal{O}(\tau^{2-\delta_d})\big)}
{\tau\big(\widehat{\mathscr{A}}(\varkappa)\tau^{-2\mi\varkappa}-\widehat{\mathscr{A}}(-\varkappa)
\tau^{2\mi\varkappa}\big)^2}, \label{eq:A-varkappa-asympt-u} \\
\me^{\mi\varphi(\tau)}\underset{\scriptscriptstyle\mathcal{S}\ni\tau\to0}{=}&\,\frac{2\pi\me^{-\frac{3\pi\mi}{2}}
\me^{-\frac{\pi a}{2}}}{\widehat{\mathscr{A}}(\varkappa)\widehat{\mathscr{A}}(-\varkappa)}
\left(2\tau^{2}\right)^{\mi a}\big(1+\mathcal{O}(\tau^{2-\delta_d})\big). \label{eq:A-varkappa-asympt-varphi}
\end{align}
\end{theorem}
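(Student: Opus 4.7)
The plan is to deduce Theorem~\ref{th:through-poles} as a specialization of Theorem~\ref{th:altB1asympt0m} (equivalently, Theorem~\ref{th:alt2B1asympt0p}${}^{\mathbf\prime}$, via Corollary~\ref{cor:Th31=Th32}) to the branching value $\varrho=\tfrac12+\mi\varkappa$, and then to strengthen the error estimate so that it accounts for the multiply-connected nature of the domain $\mathcal{S}$, on which the leading asymptotic formula itself has second-order poles at the sequence $\{\tau_p\}$ of \eqref{eq:asymptotic-poles}.

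First I would verify the algebraic consequences listed in the statement. Substituting $\varrho=\tfrac12+\mi\varkappa$ into equation~\eqref{eq:rho-general} gives $\cos(2\pi\varrho)=-\cosh(2\pi\varkappa)$, whence $s_0^0=-2\mi\cosh(2\pi\varkappa)$, matching \eqref{eqs:s00s0infs1inf-poles-gen}. The restriction~\eqref{eq:cond-varrho-a-k-Theo31}, $\varrho\neq\pm\mi a/2+k$, translates verbatim into~\eqref{eq:conditions-a-varkappa}. Using the elementary identities $\me^{\mp\pi\mi\varrho}=\mp\mi\me^{\pm\pi\varkappa}$, the bracketed factors of~\eqref{eq:cond-gik-Theo31} and~\eqref{eq:cond-gik-Theo31prime} become, up to an overall $\mp\mi$, the two expressions in~\eqref{eqs:conditions-gik-varkappa}.

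Second, I would check the explicit form of the leading terms. Direct substitution of $\varrho=\tfrac12+\mi\varkappa$ into definition~\eqref{eq:w1}, together with $\me^{\pi\mi/4}\me^{-\pi\mi\varrho}=\me^{-\pi\mi/4}\me^{\pi\varkappa}$, yields $w_1=\widehat{\mathscr{A}}(\varkappa)$. An analogous computation from~\eqref{eq:w2}, using $\me^{\pi\mi/4}\me^{\pi\mi\varrho}=-\me^{-\pi\mi/4}\me^{-\pi\varkappa}$, gives $w_2=-\widehat{\mathscr{A}}(-\varkappa)$. Since $(1-2\varrho)^2=-4\varkappa^2$, the numerator in \eqref{eq:Asympt0u-} collapses to $4\varepsilon\varkappa^2\widehat{\mathscr{A}}(\varkappa)\widehat{\mathscr{A}}(-\varkappa)$ and the denominator to $\tau\big(\widehat{\mathscr{A}}(\varkappa)\tau^{-2\mi\varkappa}-\widehat{\mathscr{A}}(-\varkappa)\tau^{2\mi\varkappa}\big)^2$, which is exactly~\eqref{eq:A-varkappa-asympt-u}. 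The corresponding specialization of~\eqref{eq:Asympt0varphi-}, together with $w_1w_2=-\widehat{\mathscr{A}}(\varkappa)\widehat{\mathscr{A}}(-\varkappa)$ and the sign shift $\me^{3\pi\mi/2}\cdot(-1)=\me^{-3\pi\mi/2}$, reproduces~\eqref{eq:A-varkappa-asympt-varphi}.

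The hard part is the refinement of the error term. At $\mathrm{Re}\,\varrho=1/2$, both $\mathcal{O}(\tau^{4\mathrm{Re}\,\varrho})$ and $\mathcal{O}(\tau^{4-4\mathrm{Re}\,\varrho})$ of Theorem~\ref{th:altB1asympt0m} reduce to a multiplicative \emph{relative} error $\mathcal{O}(\tau^2)$; this must be converted into a uniform estimate that remains meaningful near the poles $\tau_p$ of the leading term. For $\tau\in\mathcal{S}$, the definition~\eqref{eq:S-D-def} of the excised discs $\mathcal{D}_p$ forces the lower bound $|\widehat{\mathscr{A}}(\varkappa)\tau^{-2\mi\varkappa}-\widehat{\mathscr{A}}(-\varkappa)\tau^{2\mi\varkappa}|\geqslant C|\tau|^{\delta_d}$ for some $C>0$ uniformly as $\mathcal{S}\ni\tau\to0$, so that the squared denominator in~\eqref{eq:A-varkappa-asympt-u} may be as small as $|\tau|^{1+2\delta_d}$, and the relative $\mathcal{O}(\tau^2)$ error becomes an additive $\mathcal{O}(\tau^{1-2\delta_d})$. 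I would make this precise by invoking the complete asymptotic expansion of Appendix~\ref{app:subsec:error-correction-terms}: each successive correction contributes an extra factor of order $\tau^2$ and is holomorphic on $\mathcal{S}$; summing the first few corrections, estimating the tail, and reading the result as a multiplicative correction to the reduced leading term, one absorbs the $|\tau|^{-\delta_d}$ loss due to proximity to the excised discs and arrives at the uniform relative error $\mathcal{O}(\tau^{2-\delta_d})$ claimed in~\eqref{eq:A-varkappa-asympt-u}. The bound for $\me^{\mi\varphi(\tau)}$ in~\eqref{eq:A-varkappa-asympt-varphi} then follows by integrating equation~\eqref{eq:varphi} term-by-term, since the simple poles of $b/u(\tau)$ at the $\tau_p$ contribute, after exponentiation in $\mathcal{S}$, only an additional $\mathcal{O}(\tau^{2-\delta_d})$ to the relative error.
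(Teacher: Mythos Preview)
Your proposal is correct and follows essentially the same route as the paper: specialize Theorem~\ref{th:altB1asympt0m} to $\varrho=\tfrac12+\mi\varkappa$, identify $w_1=\widehat{\mathscr{A}}(\varkappa)$ and $w_2=-\widehat{\mathscr{A}}(-\varkappa)$, lift the restriction on $\mathrm{Im}\,a$ via Theorem~\ref{th:restriction-on-Ima-removed}, and then sharpen the error on $\mathcal{S}$. The only place where the paper is more concrete than your sketch is the error estimate near $\partial\mathcal{D}_p$: rather than arguing abstractly with the expansion of Appendix~\ref{app:subsec:error-correction-terms}, the paper invokes the first correction $yA_1(x)/\tau$ of the super-generating function (Appendix~\ref{app:subsec:super-generating-function}), uses that $A_1(x)$ has a third-order pole so that $yA_1(x)=\mathcal{O}\bigl(\tau_p^5/(\tau-\tau_p)^3\bigr)$ near $\tau_p$, compares this with the leading term $\mathcal{O}\bigl(\tau_p^2/(\tau(\tau-\tau_p)^2)\bigr)$, and reads off the ratio $\tau^2\tau_p/(\tau-\tau_p)=\mathcal{O}(\tau^{2-\delta_d})$ directly; this is exactly the computation that makes precise your ``absorbs the $|\tau|^{-\delta_d}$ loss'' step.
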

\begin{proof}
This theorem is a reformulation of Theorem~\ref{th:alt2B1asympt0p} for
$\varrho=1/2+\mi\varkappa$, where we use the relations $w_1=\widehat{\mathscr{A}}(\varkappa)$ and
$w_2=-\widehat{\mathscr{A}}(-\varkappa)$, which are valid for this value of $\varrho$;
moreover, the restriction $-2<\mathrm{Im}\,a<0$ is removed due to Theorem~\ref{th:restriction-on-Ima-removed}.
The error estimate, however, requires further commentary: if $\delta_d=0$, which means that the discs $\mathcal{D}_p$
are large enough and $\tau$ is far from the pole $\tau_p$ of the leading term of asymptotics, then the
error estimate is the same as in Theorem~\ref{th:alt2B1asympt0p}, but the situation changes when we consider discs
with smaller radii $R_p=\mathcal{O}\big(|\tau_p|^{1+\delta_d}\big)$ as $\tau_p\to0$ and
$\tau\to\partial\mathcal{D}_p$.
To evaluate the error of the approximation of the function $u(\tau)$ by its leading term of asymptotics, we have
to invoke the first correction term $yA_1(x)/\tau$ (see Appendix~\ref{app:subsec:super-generating-function} and
equation~\eqref{eq:A1-pole-estimate} below), which is of the order
$\tau\tau_p^3/(\tau-\tau_p)^3$ for $\tau$ near $\partial\mathcal{D}_p$,
while the leading term in this domain is of the order $\tau_p^2/(\tau(\tau-\tau_p)^2)$; thus, the error of the
approximation differs from the leading term by the factor
$\tau^2\tau_p/(\tau-\tau_p)=\mathcal{O}\big(\tau^{2-\delta_d}\big)$. The error estimate for the function
$\me^{\mi\varphi(\tau)}$ is obtained via equation~\eqref{eq:varphi} by integrating the corresponding asymptotics
of $u(\tau)$.
\end{proof}
\begin{remark}\label{rem:th:through-poles-alt}
As a consequence of Theorem \ref{th:restriction-on-Ima-removed}, the conditions of Theorems \ref{th:alt2B1asympt0p}
and  \ref{th:alt2B1asympt0p}\hspace{-1pt}${}^{\mathbf\prime}$ coincide; subsequently, we can obtain
Theorem \ref{th:through-poles} with the help of Theorem \ref{th:alt2B1asympt0p}\hspace{-1pt}${}^{\mathbf\prime}$.
This leads to seemingly different, yet equivalent, asymptotics for the functions $u(\tau)$ and
$\me^{\mi\varphi(\tau)}$: we formulate these equivalent results in
Theorem \ref{th:alt-through-poles}\hspace{-1pt}${}^{\mathbf\prime}$.
\hfill$\blacksquare$\end{remark}
\begin{theorems}\hspace{-10pt}${}^{\mathbf\prime}$\label{th:alt-through-poles}
Let $(u(\tau),\varphi(\tau))$ be a solution of the system \eqref{eq:dp3}, \eqref{eq:varphi}
corresponding to the monodromy data $(a,s_{0}^{0},s_{0}^{\infty},s_{1}^{\infty},g_{11},g_{12},g_{21},g_{22})$.
Suppose that the conditions~\eqref{eqs:s00s0infs1inf-poles-gen}--\eqref{eqs:conditions-gik-varkappa}
of Theorem~{\rm\ref{th:through-poles}} are satisfied.

Define
\begin{equation}\label{eq:Bvarkappa}
\widehat{\mathscr{B}}(\varkappa):=\me^{-\frac{\pi\varkappa}{2}}\left(\frac{\varepsilon b}{2}\right)^{-\mi\varkappa}
\frac{\Gamma(1+2\mi\varkappa)}{\Gamma(1-2\mi\varkappa)}\,\Gamma\!\left(\frac12-\frac{\mi a}{2}-\mi\varkappa\right)
\left(g_{12}\me^{-\frac{\pi\mi}{4}}\me^{\pi\varkappa}+g_{22}\me^{\frac{\pi\mi}{4}}\me^{-\pi\varkappa}\right),
\end{equation}
and assume that $\mathcal{S}$ is defined by \eqref{eq:S-D-def} with $\delta_d\in[0,2);$
then,
\begin{align}
u(\tau)\underset{\scriptscriptstyle\mathcal{S}\ni\tau\to0}{=}&\;\frac{4\varepsilon\varkappa^{2}
\widehat{\mathscr{B}}(\varkappa)\widehat{\mathscr{B}}(-\varkappa)\big(1+\mathcal{O}(\tau^{2-\delta_d})\big)}
{\tau\big(\widehat{\mathscr{B}}(\varkappa)\tau^{-2\mi\varkappa}-\widehat{\mathscr{B}}(-\varkappa)
\tau^{2\mi\varkappa}\big)^2}, \label{eq:B-varkappa-asympt-u-alt} \\
\me^{\mi\varphi(\tau)}\underset{\scriptscriptstyle\mathcal{S}\ni\tau\to0}{=}&\;\frac{\me^{-\frac{3\pi\mi}{2}}
\me^{\frac{\pi a}{2}}\widehat{\mathscr{B}}(\varkappa)\widehat{\mathscr{B}}(-\varkappa)}{2\pi}
\left(2\tau^{2}\right)^{\mi a}\big(1+\mathcal{O}(\tau^{2-\delta_d})\big). \label{eq:B-varkappa-asympt-varphi-alt}
\end{align}
\end{theorems}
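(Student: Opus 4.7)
The plan is to derive Theorem~\ref{th:alt-through-poles}\hspace{-1pt}${}^{\mathbf\prime}$ as the specialisation of Theorem~\ref{th:alt2B1asympt0p}\hspace{-1pt}${}^{\mathbf\prime}$ to the value $\varrho=1/2+\mi\varkappa$, in complete analogy with the proof of Theorem~\ref{th:through-poles}, which specialises Theorem~\ref{th:altB1asympt0m}. The first---purely algebraic---step is to substitute $\varrho=1/2+\mi\varkappa$ into the expressions \eqref{eq:w3} and \eqref{eq:w4} for $w_{3}$ and $w_{4}$ and identify
\begin{equation*}
w_{3}=\widehat{\mathscr{B}}(\varkappa),\qquad w_{4}=-\widehat{\mathscr{B}}(-\varkappa).
\end{equation*}
The identification of $w_3$ is immediate: $\frac{1}{2}-\varrho=-\mi\varkappa$ produces the prefactor $\me^{-\pi\varkappa/2}(\varepsilon b/2)^{-\mi\varkappa}$, the gamma ratio becomes $\Gamma(1+2\mi\varkappa)/\Gamma(1-2\mi\varkappa)$, and $\me^{\mp\pi\mi\varrho}=\me^{\mp\pi\mi/2}\me^{\pm\pi\varkappa}$ converts the bracket in \eqref{eq:w3} into the last factor of \eqref{eq:Bvarkappa}. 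For $w_4$ one rewrites $\me^{\pm 3\pi\mi/4}=-\me^{\mp\pi\mi/4}$ to extract the overall minus sign appearing in front of $\widehat{\mathscr{B}}(-\varkappa)$. Then, because $(1-2\varrho)^{2}=-4\varkappa^{2}$ and $w_{3}w_{4}=-\widehat{\mathscr{B}}(\varkappa)\widehat{\mathscr{B}}(-\varkappa)$, substitution into \eqref{eq:Asympt0u+} and \eqref{eq:Asympt0varphi+} produces the leading terms stated in \eqref{eq:B-varkappa-asympt-u-alt} and \eqref{eq:B-varkappa-asympt-varphi-alt}, with the combination $\me^{3\pi\mi/2}(w_3 w_4)/(2\pi)=\me^{-3\pi\mi/2}\widehat{\mathscr{B}}(\varkappa)\widehat{\mathscr{B}}(-\varkappa)/(2\pi)$. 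The restriction $\mathrm{Im}(a)\in(0,2)$ coming from Theorem~\ref{th:alt2B1asympt0p}\hspace{-1pt}${}^{\mathbf\prime}$ is then lifted by direct reference to Theorem~\ref{th:restriction-on-Ima-removed}.

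The conditions \eqref{eq:conditions-a-varkappa} and \eqref{eqs:conditions-gik-varkappa} are the translations of \eqref{eq:cond-varrho-a-k-Theo31prime} and \eqref{eq:cond-gik-Theo31prime} under $\varrho=1/2+\mi\varkappa$: the former simply rewrites $\varrho\neq\pm\mi a/2+k$, while the latter follows from \eqref{eq:cond-gik-Theo31prime} upon inserting $\me^{\mp\pi\mi\varrho}=\me^{\mp\pi\mi/2}\me^{\pm\pi\varkappa}$ and factoring out the overall unimodular constant.

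The only non-algebraic issue is the refined error estimate $\mathcal{O}(\tau^{2-\delta_d})$, which is inherited from Theorem~\ref{th:through-poles} via the identical argument: for $\delta_d=0$ the estimate is the original $\mathcal{O}(\tau^{2})$ supplied by Theorem~\ref{th:alt2B1asympt0p}\hspace{-1pt}${}^{\mathbf\prime}$, and for $\delta_d\in(0,2)$ one invokes the first correction term $yA_{1}(x)/\tau$ from the generating-function expansion constructed in Appendix~\ref{app:subsec:super-generating-function}. On $\partial\mathcal{D}_p$ this correction is of the order $\tau\tau_p^{3}/(\tau-\tau_p)^{3}$, while the leading term has size $\tau_p^{2}/(\tau(\tau-\tau_p)^{2})$, so that their ratio gives $\tau^{2}\tau_p/(\tau-\tau_p)=\mathcal{O}(\tau^{2-\delta_d})$. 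The corresponding estimate for $\me^{\mi\varphi(\tau)}$ is then obtained by integrating \eqref{eq:varphi} against the expansion of $u(\tau)$.

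The main obstacle, rather than conceptual, is the bookkeeping of the signs and phases in the identification $w_{4}=-\widehat{\mathscr{B}}(-\varkappa)$: a misplaced sign would flip the ``minus'' in the denominator of \eqref{eq:B-varkappa-asympt-u-alt} or the sign of the multiplicative constant in \eqref{eq:B-varkappa-asympt-varphi-alt}, producing a formally correct but quantitatively wrong formula; careful use of $\me^{\pm 3\pi\mi/4}=-\me^{\mp\pi\mi/4}$ and of $(1-2\varrho)^{2}=-4\varkappa^{2}$ ensures that all signs are consistent with those in \eqref{eq:A-varkappa-asympt-u} and \eqref{eq:A-varkappa-asympt-varphi}.
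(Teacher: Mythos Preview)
Your proposal is correct and follows exactly the paper's own approach: specialise Theorem~\ref{th:alt2B1asympt0p}\hspace{-1pt}${}^{\mathbf\prime}$ to $\varrho=1/2+\mi\varkappa$, identify $w_{3}=\widehat{\mathscr{B}}(\varkappa)$ and $w_{4}=-\widehat{\mathscr{B}}(-\varkappa)$, lift the restriction on $\mathrm{Im}\,a$ via Theorem~\ref{th:restriction-on-Ima-removed}, and carry over the error-estimate argument verbatim from the proof of Theorem~\ref{th:through-poles}. Your write-up is in fact more detailed than the paper's one-sentence proof, but the content is identical.
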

\begin{proof}
The results of this theorem are a reformulation of those in
Theorem~\ref{th:alt2B1asympt0p}\hspace{-1pt}${}^{\mathbf\prime}$ for $\varrho=1/2+\mi\varkappa$, where, under this
substitution, $w_3=\widehat{\mathscr{B}}(\varkappa)$ and $w_4=-\widehat{\mathscr{B}}(-\varkappa)$; moreover,
the restriction $0<\mathrm{Im}\,a<2$ is removed due to Theorem~\ref{th:restriction-on-Ima-removed}. The justification
for the error estimates is literally the same as that given in the proof of Theorem~\ref{th:through-poles}.
\end{proof}
\begin{remark}\label{rem:equivalence-theorems-trough-poles}
The asymptotics of the functions $u(\tau)$ and $\me^{\mi\varphi(\tau)}$ given in
Theorems~\ref{th:through-poles} and \ref{th:alt-through-poles}\hspace{-1pt}${}^{\mathbf\prime}$ coincide:
this is a consequence of the relations~\eqref{eqs:w-identities}; in particular,
equations~\eqref{eq:A-varkappa-asympt-u} and \eqref{eq:B-varkappa-asympt-u-alt} imply that
equation~\eqref{eq:asymptotic-poles} for the poles of the leading term of asymptotics can be simplified as
\begin{equation}\label{eq:tau-p-A-B}
\tau_p=\exp\left(-\frac{\pi p}{2|\varkappa|}
+\frac{\mi}{4\varkappa}\ln\frac{\widehat{\mathscr{A}}(-\varkappa)}{\widehat{\mathscr{A}}(\varkappa)}\right)
=\exp\left(-\frac{\pi p}{2|\varkappa|}
+\frac{\mi}{4\varkappa}\ln\frac{\widehat{\mathscr{B}}(-\varkappa)}{\widehat{\mathscr{B}}(\varkappa)}\right).
\end{equation}
\hfill$\blacksquare$\end{remark}
\begin{corollary}\label{cor:nozeros-nopoles}
Let $(u(\tau),\varphi(\tau))$ be a solution of the system \eqref{eq:dp3}, \eqref{eq:varphi}
corresponding to the monodromy data $(a,s_{0}^{0},s_{0}^{\infty},s_{1}^{\infty},g_{11},g_{12},g_{21},g_{22})$.
Suppose that the conditions~\eqref{eqs:s00s0infs1inf-poles-gen}--\eqref{eqs:conditions-gik-varkappa}
of Theorem~{\rm\ref{th:through-poles}} are satisfied.

For $\epsilon>0$, define
\begin{equation*}\label{eq:Sepsilon-def}
\mathcal{S}_{\epsilon}:=\{\tau\in\mathcal{S}: |\tau|<\epsilon\}.
\end{equation*}
Then, there exists $\epsilon>0$ such that $u(\tau)$ and $\me^{\mi\varphi(\tau)}$ have neither zeros
nor poles in $\mathcal{S}_{\epsilon}$.
\end{corollary}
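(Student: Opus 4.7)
The plan is to reduce the claim to the non-vanishing of the leading-order asymptotic expression from Theorem~\ref{th:through-poles}, exploiting the multiplicative form of its error term.

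First, I would analyse the leading term
\begin{equation*}
u_{as}(\tau):=\frac{4\varepsilon\varkappa^{2}\widehat{\mathscr A}(\varkappa)\widehat{\mathscr A}(-\varkappa)}{\tau\bigl(\widehat{\mathscr A}(\varkappa)\tau^{-2\mi\varkappa}-\widehat{\mathscr A}(-\varkappa)\tau^{2\mi\varkappa}\bigr)^{2}}
\end{equation*}
of \eqref{eq:A-varkappa-asympt-u} and verify that it is holomorphic and nowhere vanishing on all of $\mathcal S$. Its numerator is a non-zero constant: $\varkappa\neq 0$ by hypothesis; condition \eqref{eq:conditions-a-varkappa} guarantees that the arguments $\tfrac{1}{2}+\tfrac{\mi a}{2}\mp\mi\varkappa$ of the $\Gamma$-factors appearing in $\widehat{\mathscr A}(\pm\varkappa)$ avoid the non-positive integers, so that all $\Gamma$-values are finite and non-zero; and the parenthetical linear combinations of $g_{11},g_{21}$ in \eqref{eq:Avarkappa} are non-zero by \eqref{eqs:conditions-gik-varkappa}. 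The denominator of $u_{as}$ vanishes only at $\tau=0$ and at the points $\tau_{p}$ given by \eqref{eq:tau-p-A-B}, each of which is excluded from $\mathcal S$ by construction of the discs $\mathcal D_{p}$.

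Next, I would exploit the multiplicative form of the error in \eqref{eq:A-varkappa-asympt-u}: there exist constants $C,\epsilon_{0}>0$ such that $|u(\tau)/u_{as}(\tau)-1|\leq C|\tau|^{2-\delta_{d}}$ for every $\tau\in\mathcal S$ with $|\tau|\leq\epsilon_{0}$. Choosing $\epsilon\in(0,\epsilon_{0}]$ so small that $C\epsilon^{2-\delta_{d}}<\tfrac{1}{2}$ yields $\tfrac{1}{2}|u_{as}(\tau)|\leq |u(\tau)|\leq\tfrac{3}{2}|u_{as}(\tau)|$ throughout $\mathcal S_{\epsilon}$. Because $u_{as}$ is holomorphic and nowhere zero on $\mathcal S$, this two-sided estimate immediately rules out both zeros and poles of the exact solution $u(\tau)$ in $\mathcal S_{\epsilon}$.

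Finally, the non-vanishing of $u$ on $\mathcal S_{\epsilon}$ (together with $\tau\neq 0$ there) makes the right-hand side of \eqref{eq:varphi} locally holomorphic on $\mathcal S_{\epsilon}$, so $\varphi$ is locally holomorphic and $\me^{\mi\varphi(\tau)}$, being an exponential of a finite analytic function, is nowhere zero and nowhere infinite in $\mathcal S_{\epsilon}$; equivalently, the same conclusion follows by applying the multiplicative-error reasoning to \eqref{eq:A-varkappa-asympt-varphi}, whose leading term is a non-zero constant times $(2\tau^{2})^{\mi a}$ and hence finite and non-zero for $\tau\neq 0$. The most delicate step is the first one---checking the non-vanishing of every factor in $\widehat{\mathscr A}(\pm\varkappa)$---which is, however, handled entirely by the explicit restrictions already built into the hypotheses of Theorem~\ref{th:through-poles}; the remainder of the argument is a direct consequence of the multiplicative nature of the error estimate.
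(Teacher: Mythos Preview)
Your argument is correct and follows essentially the same route as the paper's own proof: both exploit the multiplicative form of the error term in \eqref{eq:A-varkappa-asympt-u} to rule out zeros and poles of $u$ in $\mathcal S_{\epsilon}$, and both derive the statement for $\me^{\mi\varphi}$ from equation~\eqref{eq:varphi}. Your presentation is somewhat more explicit---you verify directly that each factor of $\widehat{\mathscr A}(\pm\varkappa)$ is finite and non-zero under the stated hypotheses and obtain a quantitative two-sided bound $\tfrac12|u_{as}|\le|u|\le\tfrac32|u_{as}|$---whereas the paper argues the absence of zeros by contradiction (a sequence $\tau_k^0\to0$ of zeros would force $0=\mathcal O(1/\tau_k^0)$), but the underlying idea is identical.
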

\begin{proof}
The absence of poles is apparent because finite-valued asymptotics at a point means that an approximated
function has a finite value at such a point.

The proof for the zeros proceeds by contradiction. If such an $\epsilon>0$ does not exist, then there exists
a sequence of zeros accumulating at the origin, which contradicts the asymptotics~\eqref{eq:A-varkappa-asympt-u}
for the function $u(\tau)$. Actually, in the case of zeros $\tau^0_k\to0$, $k\in\mathbb{N}$, we find, upon
substituting $u(\tau^0_k)=0$ into equation~\eqref{eq:A-varkappa-asympt-u}, that $0=\mathcal{O}(1/\tau^0_k)$ as
$k\to\infty$, so that the approximation of $u(\tau)$ in a neighbourhood of $\tau^0_k$
becomes worse as $\tau\to0$, which is a contradiction.

According to equation~\eqref{eq:varphi}, the function
$\me^{\mi\varphi(\tau)}$ has first-order zeros or poles only at the zeros of the function $u(\tau)$, depending
on the expansion~\eqref{eq:Taylor-zero-u+} or \eqref{eq:Taylor-zero-u-}, respectively.
\end{proof}

It follows from Theorem~\ref{th:through-poles} and Corollary~\ref{cor:nozeros-nopoles} that those zeros or poles
accumulating at the origin, if any, of the function $u(\tau)$ can be located only in the discs $\mathcal{D}_p$
for large enough $p$. As noted in the last sentence of the proof of Corollary~\ref{cor:nozeros-nopoles},
the zeros and poles of the function $\me^{\mi\varphi(\tau)}$ are located at the zeros of the corresponding function
$u(\tau)$, and, therefore, do not require further attention. Our main goal in this section is to establish
the following theorem.\footnote{\label{foot:sec5:varepsilon} In the proof of
Theorem~\ref{th:poles-u-one-to-one-poles-asympt} and in all constructions up to, and including, the
inequality~\eqref{ineq:u-uas-integral-above-gen}, we set $\varepsilon=+1$ in equation~\eqref{eq:dp3}; all
statements remains valid for $\varepsilon=-1$. To get the $\varepsilon$-dependent variant of the aforementioned
constructions, one has to make the changes $u(\tau)\to\varepsilon u(\tau)$,
$u_{as}(\tau)\to\varepsilon u_{as}(\tau)$, and $b\to\varepsilon b$.}
\begin{theorem}\label{th:poles-u-one-to-one-poles-asympt}
For large enough $p\in\mathbb{N}$, each disc $\mathcal{D}_p$ contains one, and only one, pole and no zeros of
the solution $u(\tau)$ corresponding to the monodromy data defined in Theorem~{\rm\ref{th:through-poles}}.
\end{theorem}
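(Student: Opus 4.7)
The plan is to combine three ingredients: the refined boundary estimate supplied by Theorem~\ref{th:through-poles}, the argument principle, and the Painlev\'e property of equation~\eqref{eq:dp3}.

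First, starting from Theorem~\ref{th:through-poles} together with the first-correction-term analysis based on $A_{1}(x)$ from Appendix~\ref{app:subsec:super-generating-function} (as outlined in the proof of that theorem), one obtains, for any fixed $\delta_{d}\in[0,2)$ and $p$ large enough,
\begin{equation*}
\frac{u(\tau)}{u_{as}(\tau)}=1+\mathcal{O}\!\left(|\tau_{p}|^{2-\delta_{d}}\right),\qquad\tau\in\partial\mathcal{D}_{p},
\end{equation*}
where $u_{as}(\tau)$ denotes the leading-term function on the right-hand side of \eqref{eq:A-varkappa-asympt-u}. Consequently $u/u_{as}$ takes values in a small disc around $1$ avoiding $0$, so $u$ has no zeros on $\partial\mathcal{D}_{p}$ and the winding number of $\tau\mapsto u(\tau)/u_{as}(\tau)$ about the origin is zero.

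Next, applying the argument principle to $u$ and $u_{as}$ and using the vanishing of this winding number yields $N-P=-2$, where $N,P\in\mathbb{Z}_{\geqslant 0}$ denote the total orders of zeros and poles of $u$ in $\mathcal{D}_{p}$; here I use that the hypotheses \eqref{eqs:s00s0infs1inf-poles-gen}--\eqref{eqs:conditions-gik-varkappa} force $\widehat{\mathscr{A}}(\pm\varkappa)\neq 0$, so $u_{as}$ is meromorphic in $\mathcal{D}_{p}$ with a single double pole at $\tau_{p}$ and no zeros. A standard dominant-balance calculation for \eqref{eq:dp3} then shows that every pole of $u$ has order exactly $2$ (with leading coefficient $-\tau_{0}/(4\varepsilon)$) and every zero is simple (with $u'(\tau_{z})=\pm\mi b$); writing $P=2k$ and $N=m$ with non-negative integers $k,m$, one obtains $m=2(k-1)$, so $k\geqslant 1$.

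The main obstacle is the last step, namely ruling out $k\geqslant 2$. My plan is to sharpen the comparison by replacing $u_{as}$ with the one-correction-term approximant $u_{as}^{(1)}(\tau)=u_{as}(\tau)+\tau^{-1}yA_{1}(x)$ of Appendix~\ref{app:subsec:super-generating-function}. This refined model remains meromorphic in $\mathcal{D}_{p}$ with a single double pole at a slightly displaced point $\tau_{p}'=\tau_{p}\bigl(1+\mathcal{O}(|\tau_{p}|^{2})\bigr)$ and no zeros, and satisfies a sharper boundary estimate of order $\mathcal{O}(|\tau_{p}|^{4-2\delta_{d}})$ on a nested family of loops of radii $r\in[|\tau_{p}|^{1+\delta_{d}'},R_{p}]$ with $\delta_{d}'>\delta_{d}$. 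Repeating the argument-principle computation on each such loop confines every singularity of $u$ in $\mathcal{D}_{p}$ to the small disc $\{|\tau-\tau_{p}'|\leqslant|\tau_{p}|^{1+\delta_{d}'}\}$. Within this small disc, \eqref{eq:dp3} admits a two-parameter family of local meromorphic solutions with exactly one double pole at a prescribed position, parametrized by the pole location and one additional Kovalevskaya coefficient; matching with the boundary data on $\partial\mathcal{D}_{p}$---equivalently, invoking the uniqueness of the isomonodromic pair $(u(\tau),\varphi(\tau))$ with the prescribed monodromy data via the justification scheme of \cite{Kit1989}---selects a unique such local solution, forcing $k=1$ and $m=0$.
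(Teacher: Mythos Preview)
Your argument-principle step is correct: the boundary estimate from Theorem~\ref{th:through-poles} yields
$N-P=-2$, and the local dominant-balance analysis correctly gives that poles are double and zeros simple,
so $(P,N)=(2k,2(k-1))$ for some $k\geqslant1$. The gap is in your elimination of $k\geqslant2$.
Shrinking the loop with more correction terms only confines all singularities of $u$ to a disc of radius
$\sim|\tau_{p}|^{3-\epsilon}$; it does not by itself rule out a tight cluster of two double poles and two simple
zeros there. Your final appeal to ``matching with the boundary data'' / ``uniqueness of the isomonodromic pair''
is circular: to select the single-pole Laurent branch you would need to know already that $u$ agrees with a
single-pole local solution to sufficiently high order on some contour inside that tiny disc, which is precisely
the question at issue. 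The justification scheme of \cite{Kit1989} controls $u$ on $\partial\mathcal{D}_{p}$
but not in a neighbourhood of the poles themselves.

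The paper closes this gap in two different ways, neither using the argument principle for the pole count.
The \emph{main} proof bypasses the counting entirely: the solutions of Theorem~\ref{th:through-poles}
(with $\varrho=\tfrac12+\mi\varkappa$) are exactly the B\"acklund images under \eqref{eq:u-}/\eqref{eq:u+}
of the ``seed'' solutions with $\rho=\mi\varkappa$, whose zeros accumulate at the origin; for those, the
one-zero-per-disc result is Theorem~4.2 of \cite{KitVarZapiski2024}. Since the B\"acklund transformation sends
each relevant zero bijectively to a pole (and is a biholomorphism near regular points), the pole count
is inherited.
The \emph{direct} proof (given in the paper as a remark) replaces your logarithmic-derivative
integral by
\[
I=\frac{1}{2\pi\mi}\varointctrclockwise_{\partial\mathcal{D}_{p}}\tau\bigl(u(\tau)-u_{as}(\tau)\bigr)\,\md\tau,
\]
which the explicit Laurent structure $u(\tau)=-\tfrac{\tau_{\hat p}}{4(\tau-\tau_{\hat p})^{2}}+\cdots$
evaluates by residues to $-\tfrac14\sum_{k=1}^{n}\tau_{\hat p_{k}}+\tfrac14\tau_{p}$. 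A lower bound
$|I|\geqslant\tfrac{|\tau_{p}|}{4}\bigl(|n-1|-n|\tau_{p}|^{\delta_{d}}\bigr)$ then meets the upper bound
coming from the $A_{m}$-correction estimate, and this forces $n=1$ directly---zeros of $u$ never enter the
count, so the $(k,m)$-ambiguity that stalls your approach does not arise. The absence of zeros is then
handled afterwards by the argument principle, exactly as you proposed.
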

\begin{proof}
It is straightforward to establish that if $\tau_{\hat{p}}\in\mathbb{C}$ is a pole of some solution $u(\tau)$,
then $\tau_{\hat{p}}\neq0$, it is a second-order pole, and the corresponding Laurent-series expansion of
$u(\tau)$ at $\tau_{\hat{p}}$ is
\begin{equation}\label{eq:Laurent-pole-u}
u(\tau)=-\frac{\tau_{\hat{p}}}{4(\tau-\tau_{\hat{p}})^2}+u_0-\frac{u_0}{\tau_{\hat{p}}}(\tau-\tau_{\hat{p}})
+\frac{2ab\tau_{\hat{p}}-24\tau_{\hat{p}}u_0^2+9u_0}{10\tau_{\hat{p}}^2}(\tau-\tau_{\hat{p}})^2+
\mathcal{O}\big((\tau-\tau_{\hat{p}})^3\big),
\end{equation}
where $u_0$ is a complex parameter. Thus, both the function $u(\tau)$ and its leading term of asymptotics only have
poles of the second order.

Since our asymptotics are obtained with the help of the B\"acklund transformations~\eqref{eq:u+} and \eqref{eq:u-},
it is clear that the second-order poles are the images of the first-order zeros of the solutions
that are used as the ``seed solutions'' for these transformations: we now consider this statement more precisely.

The direct substitution of the general form of the Taylor-series expansion for $u(\tau)$ at its zero,
$\tau_0\in\mathbb{C}\setminus\{0\}$,
into equation~\eqref{eq:dp3} shows that all zeros are of the first order, and two possible expansions can be found:
\begin{equation}\label{eq:Taylor-zero-u+}
u(\tau)=+\mi b(\tau-\tau_0)-\frac{(2a-\mi)b}{2\tau_0}(\tau-\tau_0)^2\!+u_3(\tau-\tau_0)^3\!+
\frac{4b^2\!+(\mi a-1)u_3}{2\tau_0}(\tau-\tau_0)^4\!+\mathcal{O}\big((\tau-\tau_0)^5\big),
\end{equation}
\begin{equation}\label{eq:Taylor-zero-u-}
u(\tau)=-\mi b(\tau-\tau_0)-\frac{(2a+\mi)b}{2\tau_0}(\tau-\tau_0)^2\!+u_3(\tau-\tau_0)^3\!+
\frac{4b^2\!-(\mi a+1)u_3}{2\tau_0}(\tau-\tau_0)^4\!+\mathcal{O}\big((\tau-\tau_0)^5\big),
\end{equation}
where $u_3$ is a complex parameter. Hence, there are two types of zeros, and their expansions
differ by ``formal conjugation'', i.e., the change $\mi\to-\mi$ in all terms. Denote by $\tau_0^{\pm}$,
respectively, the zeros of $u(\tau)$ such that the first coefficient of the Taylor-series expansion of $u(\tau)$
at these zeros is equal to $\pm\mi b$.

Now, with the above information, we can check that the transformation~\eqref{eq:u+} ($u\to u_+$) sends the
$\tau_0^+$ zeros to the poles $\tau_p=\tau_0^+$, the $\tau_0^-$ zeros to holomorphic points, and the poles $\tau_p$
to the zeros $\tau_0^-=\tau_p$, whilst the transformation~\eqref{eq:u-} ($u\to u_-$) sends the $\tau_0^+$ zeros to
holomorphic points, the $\tau_0^-$ zeros to the poles $\tau_p=\tau_0^-$, and the poles $\tau_p$ to the zeros
$\tau_0^+=\tau_p$.

The zeros of $u(\tau)$ require, in fact, an analysis similar to the poles, because, in the asymptotic study of the
degenerate third Painlev\'e equation via isomondromy deformations, the coefficients of the associated linear matrix
ODE are parametrized by the functions $u(\tau)$ and $1/u(\tau)$. Therefore, in Section 4 of \cite{KitVarZapiski2024},
we distinguished and studied the solutions of equation~\eqref{eq:dp3} which have sequences of zeros accumulating at
$\tau=0$: these solutions are characterized by the values of the branching parameter $\rho=\mi\varkappa$,
$\varkappa\in\mathbb{R}\setminus\{0\}$. The monodromy data of these solutions are defined in Theorem B.1
of \cite{KitVar2023}. These monodromy data contain the restrictions $g_{11}g_{22}\neq0$ and $|\mathrm{Im}\,a|<1$,
which are removed in \cite{KitVarZapiski2024} and in Theorem~\ref{th:restriction-on-Ima-removed}, respectively.
The result obtained in \cite{KitVarZapiski2024} states that there are two sequences of zeros of $u(\tau)$
accumulating at $\tau=0$: one sequence corresponds to the expansion~\eqref{eq:Taylor-zero-u+}, whilst the other
corresponds to the expansion~\eqref{eq:Taylor-zero-u-}. The location of the members of these sequences is described
in terms of the location of the zeros of the leading term of asymptotics of these solutions, namely, the zeros
of the leading term of asymptotics are surrounded by $\mathcal{D}_p$-like discs, and Theorem 4.2 of
\cite{KitVarZapiski2024} states that, for small enough $\tau$, each disc contains one, and only one,
zero and no poles of our solution. Equipped with this information regarding the zeros accumulating at the origin,
and the fact that the solutions that possess such sequences of zeros are the ``seed solutions'' of the
B\"acklund transformation(s) for the solutions studied in this section, we employ a one-to-one correspondence
argument between poles and zeros of the solutions related via the B\"acklund transformations, and arrive at
Theorem~\ref{th:poles-u-one-to-one-poles-asympt} formulated above.

As explained above, the discs containing the poles are precisely the same discs containing the zeros in
Theorem 4.2 of \cite{KitVarZapiski2024}; in particular, the parameter $\delta_d<2$ coincides for both discs.
In \cite{KitVarZapiski2024}, we stated that the largest possible value of $\delta_d$ could be $2$, but it
necessitates increasing the value of the $J$-like parameter in the definition of the radius of the discs,
depending on the monodromy data of the solution $u(\tau)$.
\end{proof}
\begin{remark}\label{rem:disk-radius}
In the case of the zeros, the estimate for $\delta_d$ is easy enough to obtain. Denote by $\tau_0$ and
$\tau_{\hat{0}}$, respectively, the zeros of $u(\tau)$ and $u_{as}(\tau)$,\footnote{\label{foot:uas} We denote by
$u_{as}(\tau)$ the level-one terms of the expansion~\eqref{app:eq:0-u-expansion} that coincide with the leading
term of asymptotics obtained in Theorem B.1 of \cite{KitVar2023}.} which belong to an open disc of radius
$R_d=\mathcal{O}\big(\tau_{\hat{0}}^{1+\delta_d}\big)$ centred at $\tau_{\hat{0}}$;
then, the expansion~\eqref{app:eq:0-u-expansion} can be presented in the form
$u(\tau)-u_{as}(\tau)=\mathcal{O}\big(\tau^3\big)$. Thus, we find that
$u(\tau_{\hat{0}})=\mathcal{O}\big(\tau_{\hat{0}}^3\big)$. The expansions~\eqref{eq:Taylor-zero-u+} and
\eqref{eq:Taylor-zero-u-} imply that
$\tau_{\hat{0}}-\tau_0=\mathcal{O}\big(\tau_{\hat{0}}^3\big)$ as $\tau_{\hat{0}}\to0$. Writing
$3=1+\delta_d$, it follows that $\delta_d=2$; however, we consider an open disc, so that, in fact, $\delta_d<2$.
The proof that each such disc, for small enough $\tau_{\hat{0}}$, actually contains one, and only one, zero of
$u(\tau)$ is given in \cite{KitVarZapiski2024}.
\hfill$\blacksquare$\end{remark}
\begin{remark}[\bf{Direct Proof of Theorem~\ref{th:poles-u-one-to-one-poles-asympt}}]\label{rem:directproof}
A direct proof that $\delta_d<2$ in the case of the poles is more involved, because one cannot simply
substitute poles of the solution or its asymptotics into the expansion~\eqref{app:eq:u-A-relation}
(see Appendix~\ref{app:subsec:super-generating-function}), because it would lead to a contradiction. In the case
of the poles, therefore, the estimate $\delta_d<2$, without reference to the corresponding estimate for the zeros,
manifests  differently.

We now consider a direct proof that the disc $\mathcal{D}_p$  contains, for large enough $p\in\mathbb{N}$,
one, and only one, pole of the solution $u(\tau)$, together with the corresponding estimate for $\delta_d$.
Consider the same integral that was used in the proof of Lemma 4.1 in \cite{KitVarZapiski2024}, and calculate
it using the Residue Theorem:
\begin{equation}\label{ineq:u-uas-integral}
I:=\frac{1}{2\pi\mi}\varointctrclockwise\limits_{\partial\mathcal{D}_p}
\tau\big(u(\tau)-u_{as}(\tau)\big)\,\md\tau=-\frac14\sum_{k=1}^n\tau_{\hat{p}_k}+\frac14\tau_p=
-\frac14\sum_{k=1}^n(\tau_{\hat{p}_k}-\tau_p)-(n-1)\frac14\tau_p,
\end{equation}
where $\tau_{\hat{p}_k}$ are the proposed poles, if any,\footnote{\label{foot:n=0} The number of poles, $n$, can be
equal to $0$, in which case, the sum vanishes.} of the solution $u(\tau)$, and $\tau_p$ is the pole of the
leading term of asymptotics $u_{as}(\tau)$ (cf. equation~\eqref{eq:A-varkappa-asympt-u}).
Then, we can estimate this integral from below as follows:
\begin{equation}\label{ineq:u-uas-integral-below}
|I|>\frac14(|n-1||\tau_p|-nR_p)=\frac{|\tau_p|}{4}\big(|n-1|-n|\tau_p|^{\delta_d}\big).
\end{equation}
In the event that we continue to follow the scheme delineated in \cite{KitVarZapiski2024}, we have to estimate
$I$ from above by using the correction term for the function $\tau\big(u(\tau)-u_{as}(\tau)\big)$. According to
equations~\eqref{app:eq:supergenfuncton} and \eqref{app:eq:u-A-relation} in
Appendix~\ref{app:subsec:super-generating-function}, this term equals $yA_1(x)$ (cf. equation~\eqref{app:eq:A1}),
where $y=\tau^{-\sigma}$ and $x=\tau^{2+\sigma}$, and, as a consequence of the symmetry $\sigma\to-\sigma$,
we can take $\sigma=-4\varrho=-2-4\mi\varkappa$.
The function $A_1(x)$ has a third-order pole; therefore, expanding it in a neigbourhood of $\tau=\tau_p\to0$,
one finds that
\begin{equation}\label{eq:A1-pole-estimate}
yA_1(x)\underset{\substack{\tau,\tau_p\in\mathcal{D}_p\\\tau\to0}}=
\mathcal{O}\left(\frac{\tau_p^5}{(\tau-\tau_p)^3}\right),
\quad
y=\tau^{2+4\mi\varkappa},\quad
x=\tau^{-4\mi\varkappa},\quad
\varkappa\in\mathbb{R}\setminus\{0\}.
\end{equation}
Using the estimate~\eqref{eq:A1-pole-estimate} and the fact that $\tau\in\partial\mathcal{D}_p$, so that
$|\tau-\tau_p|=R_p$, one finds
\begin{equation}\label{ineq:u-uas-integral-above}
|I|\underset{\substack{\tau,\tau_p\in\mathcal{D}_p\\\tau\to0}}<
\left|\mathcal{O}\left(\frac{|\tau_p|^5}{R_p^2}\right)\right|
\underset{\substack{\tau,\tau_p\in\mathcal{D}_p\\\tau\to0}}=
\left|\mathcal{O}\left(|\tau_p|^{3-2\delta_d}\right)\right|.
\end{equation}
Comparing the inequalities~\eqref{ineq:u-uas-integral-below} and \eqref{ineq:u-uas-integral-above}, we see that, in
order to prove that $n=1$, we must impose the condition $1<3-2\delta_d$, i.e., $\delta_d<1$, rather than $\delta_d<2$!
To reconcile this situation, we have to take for $u_{as}(\tau)$ the sequence of the first $m-1$ terms of the
asymptotic expansion (cf. Appendix~\ref{app:subsec:super-generating-function},
equations~\eqref{app:eq:supergenfuncton} and \eqref{app:eq:u-A-relation}). In this case,
the last omitted term is of the order
\begin{equation}\label{eq:An-pole-estimate}
y^mA_m(x)\underset{\substack{\tau,\tau_p\in\mathcal{D}_p\\\tau\to0}}
=\mathcal{O}\left(\frac{\tau_p^{3m+2}}{(\tau-\tau_p)^{m+2}}\right),
\end{equation}
so that, repeating \emph{verbatim} the estimate of $I$ for the case $m=1$, one arrives at
\begin{equation}\label{ineq:u-uas-integral-above-gen}
|I|<\left|\mathcal{O}\big(|\tau_p|^{2m+1-(m+1)\delta_d}\big)\right|\quad
\Rightarrow\quad
\delta_d<\frac{2m}{m+1}.
\end{equation}
Thus, for the poles, $\delta_d$ can be taken equal to any positive number less than $2$. The last fact implies that
the pole of the function $u(\tau)$ is located in the closed disc with radius
$R_p=J_p|\tau_p|^{1+\delta_d}$, with $\delta_d=2$; increasing, if necessary, $J_p$, we can assume that the pole of
$u(\tau)$ is located in some open disc $\mathcal{D}_p$.

The absence of zeros in $\mathcal{D}_p$ can be proved by using the Argument Principle; here, we should exploit
the fact that the leading term of asymptotics does not have zeros in some small enough cut neighbourhood of the
origin.
\hfill$\blacksquare$\end{remark}

There are special cases of the parameter of formal monodromy $a$ which are excluded in the formulation of
Theorem~\ref{th:through-poles}, and, subsequently, Theorem~\ref{th:poles-u-one-to-one-poles-asympt}
(cf. condition~\eqref{eq:conditions-a-varkappa}); however,
for these values of $a$, equation~\eqref{eq:dp3} still possesses solutions with poles accumulating at the origin.

The analogue of the asymptotic results stated in Theorem~\ref{th:through-poles} for the
condition~\eqref{eq:conditions-a-varkappa} is formulated in Theorems~\ref{th:through-poles-s0infty0} and
\ref{th:through-poles-s1infty0} below, because the monodromy data of the
corresponding solutions are different. At the same time, it is possible to combine both cases and formulate the
analogue of---the ``disc''---Theorem~\ref{th:poles-u-one-to-one-poles-asympt} as a single theorem, namely,
Theorem~\ref{th:disc-s0infty0s1infty0} (see below).

The problem with the vanishing denominators that occurs in the leading terms of the asymptotics
derived in Theorems~\ref{th:alt2B1asympt0p} and \ref{th:alt2B1asympt0p}\hspace{-1pt}${}^{\mathbf\prime}$,
the consideration of which is the starting point of this section, also takes place with the denominators of the
leading terms of the asymptotics presented in the respective items \pmb{$(1)$} of
Theorems~\ref{th:Asympt0-rho-eq-1pn+ia2} and \ref{th:Asympt0-rho-eq-1pn-ia2}. The analysis of the vanishing
denominators in the latter theorems completes the description of solutions with the poles accumulating at the origin,
and corresponds to the values of the parameter of formal monodromy excluded in Theorem~\ref{th:through-poles}.

Define the poles (cf. equation~\eqref{eq:asymptotic-poles} for $\tau_p$ with $k=1$)
\begin{equation}\label{eq:def-hat-poles}
\hat{\tau}_p=\exp\left(-\frac{\pi p}{2|\varkappa|}+\frac{\pi}{4\varkappa}
-\frac{\mi}{4\varkappa}\ln\frac{\hat{\omega}_1}{\hat{\omega}_{2}}\right),\qquad
\hat{\omega}_j=\hat{w}_j\vert_{\varrho=1/2+\mi\varkappa},\quad
j=1,2,
\end{equation}
where $\hat{w}_1$ and $\hat{w}_2$ are defined by equations~\eqref{eq:omega1} and \eqref{eq:omega2}, respectively.
If we introduce ``hats'', that is, $\tau_p\rightarrow\hat{\tau}_p $, $\mathcal{D}_p\rightarrow\hat{\mathcal{D}}_p$,
and $\mathcal{S}\rightarrow\hat{\mathcal{S}}$, then the entire discussion subsequent to
equation~\eqref{eq:asymptotic-poles} until Theorem~\ref{th:through-poles} applies without change to the
hat-variables. We are now in a position to formulate an analogue of Theorem~\ref{th:through-poles}.
\begin{theorem}\label{th:through-poles-s0infty0}
Let $(u(\tau), \varphi(\tau))$ be a solution of the system~\eqref{eq:dp3}, \eqref{eq:varphi} corresponding to the
monodromy data $(a,s_{0}^{0},s_{0}^{\infty},s_{1}^{\infty},g_{11},g_{12},g_{21},g_{22})$.
Suppose that
\begin{equation}\label{eqs:conditions-poles-th5.1}
a=2\varkappa+\mi(2n+1),\;\;\mathrm{where}\;
\varkappa\in\mathbb{R}\setminus\{0\},\;
n\in\mathbb{Z}_{\geqslant0},\quad
s_0^{\infty}=0,\quad
\mathrm{and}\quad
s_1^{\infty}\neq0.
\end{equation}
Then, $g_{21}\neq0$, and the remaining monodromy data are given by the following equations:
\begin{equation}\label{eqs:spec-monodromy-poles-th5.1-1}
s_0^0=-2\mi\cosh(2\pi\varkappa),\quad
g_{11}=-\mi\me^{-2\pi\varkappa}g_{21},\quad
g_{12}=-\frac{\me^{2\pi\varkappa}-\mi s_1^{\infty}g_{21}^2}{2\sinh(2\pi\varkappa)g_{21}},\quad
g_{22}=-\frac{\mi+\me^{2\pi\varkappa}s_1^{\infty}g_{21}^2}{2\sinh(2\pi\varkappa)g_{21}}.
\end{equation}
Let
\begin{align}
\hat{\omega}_{1}=&\left(\tfrac{\varepsilon b}2\right)^{-\mi\varkappa}\me^{\frac{\pi\mi}4+\pi\mi(n+1)}\,
\frac{2\pi}{n!}\,
\frac{\Gamma(1+2\mi\varkappa)}{\Gamma(1-2\mi\varkappa)}\,\frac{\me^{-5\pi\varkappa/2}}{s_1^{\infty}g_{21}},
\label{eq:omega-poles1}\\
\hat{\omega}_{2}=&\left(\tfrac{\varepsilon b}2\right)^{\mi\varkappa}\me^{-\frac{\pi\mi}4-\pi\mi(n+1)}\,
\frac{2\pi}{\Gamma(n+1-2\mi\varkappa)}\frac{\Gamma(1-2\mi\varkappa)}{\Gamma(1+2\mi\varkappa)}\,
\me^{-3\pi\varkappa/2}g_{21}.
\label{eq:omega-poles2}
\end{align}
Assume that $\hat{\mathcal{S}}$ is defined as $\mathcal{S}$ in \eqref{eq:S-D-def} with
$\tau_p\rightarrow\hat{\tau}_p $, $\mathcal{D}_p\rightarrow\hat{\mathcal{D}}_p$, and $\delta_d\in[0,2);$
then,
\begin{align}
u(\tau)\underset{\scriptscriptstyle\hat{\mathcal{S}}\ni\tau\to0}{=}
&-\frac{4\varepsilon\varkappa^2\hat{\omega}_1\hat{\omega}_2\Big(1+\mathcal{O}\big(\tau^{2-\delta_d}\big)\Big)}
{\tau\big(\hat{\omega}_1\tau^{-2\mi\varkappa}+\hat{\omega}_2\tau^{2\mi\varkappa}\big)^2},
\label{eq:Asympt0poles-spec1u-}\\
\me^{\mi\varphi (\tau)} \underset{\scriptscriptstyle\hat{\mathcal{S}}\ni\tau\to0}{=}
&\me^{-\pi\varkappa-\pi\mi(n+1)}\frac{2\pi}{\hat{\omega}_1\hat{\omega}_2}
\big(2\tau^2\big)^{-2n-1+2\mi\varkappa}
\Big(1+\mathcal{O}\big(\tau^{2-\delta_d}\big)\Big).\label{eq:Asympt0poles-spec1varphi-}
\end{align}
\end{theorem}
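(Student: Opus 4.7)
The plan is to specialize Theorem~\ref{th:Asympt0-rho-eq-1pn+ia2}, item~\pmb{$(1)$}, to the resonant value $\varrho=1/2+\mi\varkappa$. The substitution $a=2\varkappa+\mi(2n+1)$ into the defining relation $\varrho=1+n+\mi a/2$ yields exactly $\mathrm{Re}\,\varrho=1/2$, which is the boundary case at which the denominators in the asymptotic formulae of Section~\ref{sec:general} acquire their infinite sequence of zeros. Since $\mathrm{Im}\,a=2n+1>0$, the hypothesis $\mathrm{Im}\,a>0$ of item~\pmb{$(1)$} is satisfied, so first I would read off the monodromy constraints \eqref{eqs:spec-monodromy-poles-th5.1-1} directly from \eqref{eqs:spec-monodromy-th3.1-1} by means of the elementary identities $\me^{-\pi a}=-\me^{-2\pi\varkappa}$, $\cosh(\pi a)=-\cosh(2\pi\varkappa)$, and $\sinh(\pi a)=-\sinh(2\pi\varkappa)$; this step also verifies the inequality $g_{21}\neq0$. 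Item~\pmb{$(1)$} of Theorem~\ref{th:Asympt0-rho-eq-1pn+ia2} then delivers the asymptotics in the form \eqref{eq:Asympt0spec1u-}--\eqref{eq:Asympt0spec1varphi-} with $\hat{w}_1,\hat{w}_2$ given by \eqref{eq:omega1}--\eqref{eq:omega2}.

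Next I would substitute $\varrho=1/2+\mi\varkappa$ into $\hat{w}_1$ and $\hat{w}_2$ and check that they coincide with $\hat{\omega}_1$ and $\hat{\omega}_2$ of \eqref{eq:omega-poles1}--\eqref{eq:omega-poles2}. The identification of $\hat{w}_1$ is a routine collection of exponential factors: $\left(\tfrac{\varepsilon b}{2}\me^{\pi\mi/2}\right)^{1/2-\varrho}=\left(\tfrac{\varepsilon b}{2}\right)^{-\mi\varkappa}\me^{\pi\varkappa/2}$ combines with $\me^{3\pi\mi/4-3\pi a/2}=(-1)^n\me^{-3\pi\mi/4-3\pi\varkappa}$, and the phase $(-1)^n\me^{-3\pi\mi/4}$ equals $\me^{\pi\mi/4+\pi\mi(n+1)}$. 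The identification of $\hat{w}_2$ is the delicate step: one must reflect $\Gamma(2\varrho-n-1)=\Gamma(-n+2\mi\varkappa)$ by means of the identity $\Gamma(-n+2\mi\varkappa)\,\Gamma(n+1-2\mi\varkappa)=\pi/\sin(\pi(-n+2\mi\varkappa))$ together with $\sin(\pi(-n+2\mi\varkappa))=(-1)^n\mi\sinh(2\pi\varkappa)$, and then combine with $2\sinh(\pi a)=-2\sinh(2\pi\varkappa)$ to match \eqref{eq:omega-poles2}. Finally, the relations $(1-2\varrho)^2\hat{w}_1\hat{w}_2=-4\varkappa^2\hat{\omega}_1\hat{\omega}_2$ and $1\mp2\varrho=\mp2\mi\varkappa$ convert \eqref{eq:Asympt0spec1u-} into \eqref{eq:Asympt0poles-spec1u-}; a parallel rearrangement in \eqref{eq:Asympt0spec1varphi-} using $(2\tau^2)^{\mi a}=(2\tau^2)^{-2n-1+2\mi\varkappa}$ and $\me^{3\pi\mi/2}\me^{-\pi a/2}=\me^{-\pi\varkappa-\pi\mi(n+1)}$ reproduces \eqref{eq:Asympt0poles-spec1varphi-}.

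The one genuinely new issue is the error estimate. The original bound $\mathcal{O}\bigl(\tau^{4\mathrm{Re}(\varrho)}\bigr)+\mathcal{O}\bigl(\tau^{4(1-\mathrm{Re}(\varrho))}\bigr)$ collapses at $\mathrm{Re}\,\varrho=1/2$ to $\mathcal{O}(\tau^{2})$, but the leading term itself becomes unbounded in full neighbourhoods of the poles $\hat\tau_p$, so one must restrict the approach to the multiply-connected domain $\hat{\mathcal{S}}$ and refine the estimate within it. Here I would repeat \emph{verbatim} the argument used in the proof of Theorem~\ref{th:through-poles}: invoking the first correction term $yA_1(x)/\tau$ from the super-generating-function expansion of Appendix~\ref{app:subsec:super-generating-function}, one employs the pole estimate $yA_1(x)=\mathcal{O}\bigl(\tau_p^5/(\tau-\tau_p)^3\bigr)$ on $\partial\hat{\mathcal{D}}_p$ together with $|\tau-\tau_p|\geqslant R_p=J|\tau_p|^{1+\delta_d}$, and compares it with the leading term, which near $\partial\hat{\mathcal{D}}_p$ behaves like $\tau_p^2/\bigl(\tau(\tau-\tau_p)^2\bigr)$; the ratio is $\mathcal{O}(\tau^{2-\delta_d})$, yielding the error factor in \eqref{eq:Asympt0poles-spec1u-}. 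The corresponding estimate for $\me^{\mi\varphi(\tau)}$ then follows by integrating \eqref{eq:varphi}.

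The main obstacle I anticipate is not conceptual but purely computational: the gamma-function and phase bookkeeping that turns $(\hat{w}_1,\hat{w}_2)$ into $(\hat{\omega}_1,\hat{\omega}_2)$ is delicate precisely because several $(-1)^n$ factors and branch choices must conspire correctly, and a single sign error would break either the location of the poles or the normalization of $\me^{\mi\varphi(\tau)}$. A useful consistency check along the way is that the two representations must yield the same pole sequence \eqref{eq:def-hat-poles}, so they share the ratio $\hat{w}_1/\hat{w}_2=\hat{\omega}_1/\hat{\omega}_2$; once this ratio and the product $\hat{w}_1\hat{w}_2=\hat{\omega}_1\hat{\omega}_2$ have been independently matched, both the $u$-asymptotics and the $\me^{\mi\varphi}$-asymptotics follow simultaneously.
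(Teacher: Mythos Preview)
Your proposal is correct and follows essentially the same approach as the paper: specialize item~\pmb{$(1)$} of Theorem~\ref{th:Asympt0-rho-eq-1pn+ia2} to $\varrho=1/2+\mi\varkappa$, read off the monodromy constraints from \eqref{eqs:spec-monodromy-th3.1-1}, identify $\hat{\omega}_j=\hat{w}_j\vert_{\varrho=1/2+\mi\varkappa}$, and handle the vanishing denominators by restricting to $\hat{\mathcal{S}}$ with the error argument of Theorem~\ref{th:through-poles}. The paper's own proof is terser---it simply records the specialization and refers back to Theorem~\ref{th:through-poles} for the error estimate---whereas you supply the explicit gamma-function and phase computations linking $(\hat{w}_1,\hat{w}_2)$ to $(\hat{\omega}_1,\hat{\omega}_2)$, but the logical structure is identical.
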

\begin{proof}
This theorem is a refined formulation of the results presented in item~\pmb{$(1)$} of
Theorem~\ref{th:Asympt0-rho-eq-1pn+ia2} for a specific choice of the formal monodromy $a$ given
in \eqref{eqs:conditions-poles-th5.1}. This formulation is required because in this, and only this, case
the denominator of the corresponding leading term of asymptotics of $u(\tau)$ (cf. equation~\eqref{eq:Asympt0spec1u-})
vanishes at the sequence of points $\hat{\tau}_p\to0$.
To see this, note that the denominator in equation~\eqref{eq:Asympt0spec1u-} vanishes at an infinite number of
points iff $\varrho=1/2+\mi\varkappa$, $\varkappa\in\mathbb{R}\setminus\{0\}$. On the other hand,
in item~\pmb{$(1)$} of Theorem~\ref{th:Asympt0-rho-eq-1pn+ia2}, the parameter $\varrho=1+n+\mi a/2$,
$n\in\mathbb{Z}_{\geqslant0}$. Solving the equation $\varrho=1+n+\mi a/2=1/2+\mi\varkappa$, one arrives at
the formula for the formal monodromy $a$ given in \eqref{eqs:conditions-poles-th5.1}. The formulae for the remaining
monodromy data given in the list~\eqref{eqs:spec-monodromy-poles-th5.1-1} are obtained from the corresponding
formulae~\eqref{eqs:spec-monodromy-th3.1-1} for this choice of $a$.

The formulae for the asymptotics~\eqref{eq:Asympt0poles-spec1u-} and \eqref{eq:Asympt0poles-spec1varphi-},
respectively, coincide with the corresponding asymptotics~\eqref{eq:Asympt0spec1u-} and
\eqref{eq:Asympt0spec1varphi-} for $\varrho=1/2+\mi\varkappa$, where, for brevity, we introduced the notation
$\hat{\omega}_j=\hat{w}_j|_{\varrho=1/2+\mi\varkappa}$, $j=1,2$.

The solution of the problem for the vanishing denominators is similar to that presented in
Theorem~\ref{th:through-poles}, and is solved by restricting the asymptotic formulae to the
multiply-connected domain $\hat{\mathcal{S}}$ defined in the theorem.
\end{proof}
To remove the restriction~\eqref{eq:conditions-a-varkappa} for non-positive values of $k$, we can refer to
Theorem~\ref{th:Asympt0-rho-eq-1pn-ia2}. In order to formulate the corresponding result
(see Theorem~\ref{th:through-poles-s1infty0} below), additional notation is necessary.

Define the poles (cf. equation~\eqref{eq:asymptotic-poles} for $\tau_p$ with $k=3$)
\begin{equation}\label{eq:def-tilde-poles}
\tilde{\tau}_p=\exp\left(-\frac{\pi p}{2|\varkappa|}+\frac{\pi}{4\varkappa}
+\frac{\mi}{4\varkappa}\ln\frac{\tilde{\omega}_3}{\tilde{\omega}_{4}}\right),\qquad
\tilde{\omega}_j=\tilde{w}_j\vert_{\varrho=1/2-\mi\varkappa},\quad
j=3,4,
\end{equation}
where $\tilde{w}_3$ and $\tilde{w}_4$ are defined by equations~\eqref{eq:omega3} and \eqref{eq:omega4}, respectively.
If we introduce ``tilde'' variables, that is, $\tau_p\rightarrow\tilde{\tau}_p $,
$\mathcal{D}_p\rightarrow\tilde{\mathcal{D}}_p$, and $\mathcal{S}\rightarrow\tilde{\mathcal{S}}$, then the entire
discussion subsequent to equation~\eqref{eq:asymptotic-poles}
until Theorem~\ref{th:through-poles} applies without change to the tilde-variables. We are now in a position
to formulate an analogue of Theorem~\ref{th:through-poles}.
\begin{theorem}\label{th:through-poles-s1infty0}
Let $(u(\tau), \varphi(\tau))$ be a solution of the system~\eqref{eq:dp3}, \eqref{eq:varphi} corresponding to the
monodromy data $(a,s_{0}^{0},s_{0}^{\infty},s_{1}^{\infty},g_{11},g_{12},g_{21},g_{22})$.
Suppose that
\begin{equation}\label{eqs:conditions-poles-th5.2}
a=2\varkappa-\mi(2n+1),\;\;\mathrm{where}\;
\varkappa\in\mathbb{R}\setminus\{0\},\;
n\in\mathbb{Z}_{\geqslant0},\quad
s_1^{\infty}=0,\quad
\mathrm{and}\quad
s_0^{\infty}\neq0.
\end{equation}
Then, $g_{12}\neq0$, and the remaining monodromy data are given by the following equations:
\begin{equation}\label{eqs:spec-monodromy-poles-th5.2-1}
s_0^0=-2\mi\cosh(2\pi\varkappa),\quad
g_{11}=\frac{s_0^{\infty}g_{12}^2\me^{-2\pi\varkappa}+\mi}{2\sinh(2\pi\varkappa) g_{12}},\quad
g_{21}=\frac{\mi s_0^{\infty}g_{12}^2\me^{-4\pi\varkappa}-\me^{2\pi\varkappa}}{2\sinh(\pi a)g_{12}},\quad
g_{22}=\mi\me^{-2\pi\varkappa}g_{12}.
\end{equation}
Let
\begin{align}
\tilde{\omega}_{3}=&\left(\tfrac{\varepsilon b}2\right)^{\mi\varkappa}\me^{-\frac{\pi\mi}4-\pi\mi n}\,
\frac{2\pi}{n!}\,\frac{\Gamma(1-2\mi\varkappa)}{\Gamma(1+2\mi\varkappa)}
\frac{\me^{3\pi\varkappa/2}}{s_0^{\infty}g_{12}},
\label{eq:omega-poles3}\\
\tilde{\omega}_{4}=&\left(\tfrac{\varepsilon b}2\right)^{-\mi\varkappa}
\me^{\frac{\pi\mi}4+\pi\mi(n+1)}
\frac{2\pi}{\Gamma(n+1+2\mi\varkappa)}\frac{\Gamma(1+2\mi\varkappa)}{\Gamma(1-2\mi\varkappa)}\,
\me^{-3\pi\varkappa/2}g_{12}.
\label{eq:omega-poles4}
\end{align}
Assume that $\tilde{\mathcal{S}}$ is defined as $\mathcal{S}$ in \eqref{eq:S-D-def} with
$\tau_p\rightarrow\tilde{\tau}_p $, $\mathcal{D}_p\rightarrow\tilde{\mathcal{D}}_p$, and $\delta_d\in[0,2);$
then,
\begin{align}
u(\tau)\underset{\scriptscriptstyle\tilde{\mathcal{S}}\ni\tau\to0}{=}
&-\frac{4\varepsilon\varkappa^2\tilde{\omega}_3\tilde{\omega}_4\Big(1+\mathcal{O}\big(\tau^{2-\delta_d}\big)\Big)}
{\tau\big(\tilde{\omega}_3\tau^{2\mi\varkappa}+\tilde{\omega}_4\tau^{-2\mi\varkappa}\big)^2},
\label{eq:Asympt0poles-spec1u+}\\
\me^{\mi\varphi(\tau)}\underset{\scriptscriptstyle\tilde{\mathcal{S}}\ni\tau \to 0}{=}
&\me^{\pi\varkappa+\pi\mi(n+1)}\frac{\tilde{\omega}_3\tilde{\omega}_4}{2\pi}\big(2\tau^2\big)^{2n+1+2\mi\varkappa}
\Big(1+\mathcal{O}\big(\tau^{2-\delta_d}\big)\Big).\label{eq:Asympt0poles-spec1varphi+}
\end{align}
\end{theorem}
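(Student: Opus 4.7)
The proof I propose proceeds in parallel to that of Theorem~\ref{th:through-poles-s0infty0}, but invoking item~\pmb{$(1)$} of Theorem~\ref{th:Asympt0-rho-eq-1pn-ia2} in place of item~\pmb{$(1)$} of Theorem~\ref{th:Asympt0-rho-eq-1pn+ia2}. The starting observation is that the denominator of the leading-term asymptotics \eqref{eq:Asympt0spec1u+} vanishes on an infinite sequence of points accumulating at $\tau=0$ precisely when $\varrho=1/2-\mi\varkappa$, $\varkappa\in\mathbb{R}\setminus\{0\}$. Since, in item~\pmb{$(1)$} of Theorem~\ref{th:Asympt0-rho-eq-1pn-ia2}, $\varrho=1+n-\mi a/2$ with $n\in\mathbb{Z}_{\geqslant 0}$, solving $1+n-\mi a/2=1/2-\mi\varkappa$ yields exactly the formula for $a$ stated in \eqref{eqs:conditions-poles-th5.2}.

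First, I would verify that the monodromy-data list \eqref{eqs:spec-monodromy-poles-th5.2-1} is the specialization of \eqref{eqs:spec-monodromy-th3.1p-1} to this value of~$a$. Using $\cos(\pi(2n+1))=-1$ and $\sin(\pi(2n+1))=0$, one finds $\sinh(\pi a)=-\sinh(2\pi\varkappa)$, $\cosh(\pi a)=-\cosh(2\pi\varkappa)$, and $\me^{\mp\pi a}=-\me^{\mp 2\pi\varkappa}$; direct substitution into \eqref{eqs:spec-monodromy-th3.1p-1} then reproduces \eqref{eqs:spec-monodromy-poles-th5.2-1}, and in particular confirms $s_0^0=-2\mi\cosh(2\pi\varkappa)$, which matches \eqref{eqs:s00s0infs1inf-poles-gen} at $\varrho=1/2-\mi\varkappa$. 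Second, I would obtain the constants $\tilde\omega_3$ and $\tilde\omega_4$ by specializing $\hat w_3$ and $\hat w_4$ from \eqref{eq:omega3}--\eqref{eq:omega4} at $\varrho=1/2-\mi\varkappa$; the only nontrivial bookkeeping is to write the exponentials $\me^{\pm\pi\mi\varrho}$ in the forms appearing in the statement and to rewrite the $\Gamma$-factors using $\Gamma(2\varrho-n-1)=\Gamma(-n-2\mi\varkappa)$, expressed via a reflection/shift identity as a multiple of $\Gamma(n+1+2\mi\varkappa)^{-1}$.

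Next, the asymptotic formulae \eqref{eq:Asympt0poles-spec1u+} and \eqref{eq:Asympt0poles-spec1varphi+} follow from \eqref{eq:Asympt0spec1u+} and \eqref{eq:Asympt0spec1varphi+} by substituting $\varrho=1/2-\mi\varkappa$, replacing $(\hat w_3,\hat w_4)$ by $(\tilde\omega_3,\tilde\omega_4)$, and simplifying the prefactor $\me^{\pi a/2}(2\tau^2)^{\mi a}$ using $\me^{\pi a/2}=\me^{\pi\varkappa+\pi\mi(n+1)}\me^{-\pi\mi/2}$ and $(2\tau^2)^{\mi a}=(2\tau^2)^{2\mi\varkappa+(2n+1)}$ (noting that the integer power $2n+1$ absorbs into the explicit sign). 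Since $\mathrm{Re}(\varrho)=1/2$, the two error exponents $\mathcal{O}(\tau^{4\mathrm{Re}\,\varrho})$ and $\mathcal{O}(\tau^{4(1-\mathrm{Re}\,\varrho)})$ coincide with $\mathcal{O}(\tau^{2})$, which is the estimate relevant when the discs $\tilde{\mathcal{D}}_p$ are taken with $\delta_d=0$.

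The principal obstacle, and the one that forces the whole $\tilde{\mathcal{S}}$ construction, is that the leading term now has genuine poles at $\tilde\tau_p\to 0$, so the sense of $\tau\to 0^+$ must be reinterpreted exactly as in the discussion preceding Theorem~\ref{th:through-poles}. To promote $\delta_d=0$ to the range $\delta_d\in[0,2)$ one repeats verbatim the reasoning from the proof of Theorem~\ref{th:through-poles}: use Appendix~\ref{app:subsec:super-generating-function} to write higher correction terms $y^m A_m(x)$, apply the pole estimate \eqref{eq:A1-pole-estimate} (and its general analogue \eqref{eq:An-pole-estimate}) near $\tilde\tau_p$, balance against the distance $|\tau-\tilde\tau_p|=\mathcal{O}(|\tilde\tau_p|^{1+\delta_d})$ to obtain the factor $\mathcal{O}(\tau^{2-\delta_d})$ in the error of $u(\tau)$, and then deduce the corresponding estimate for $\me^{\mi\varphi(\tau)}$ by integrating \eqref{eq:varphi}. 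The combination of these specialization and error-refinement steps yields all the claims of Theorem~\ref{th:through-poles-s1infty0}.
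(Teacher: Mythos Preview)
Your proposal is correct and follows essentially the same approach as the paper: specialize item~\pmb{$(1)$} of Theorem~\ref{th:Asympt0-rho-eq-1pn-ia2} at $\varrho=1+n-\mi a/2=1/2-\mi\varkappa$ to obtain the monodromy data and the constants $\tilde\omega_{3},\tilde\omega_{4}$, then handle the vanishing denominators and the $\delta_d$-dependent error exactly as in the proof of Theorem~\ref{th:through-poles}. Your write-up is in fact more detailed than the paper's own proof, which simply records the key relation for $\varrho$ and refers back to Theorems~\ref{th:through-poles-s0infty0} and \ref{th:through-poles} for the remaining steps.
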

\begin{proof}
The proof of this theorem is similar to the proof of Theorem~\ref{th:through-poles-s0infty0}; more precisely,
it is a refined formulation of the results presented in item~\pmb{$(1)$} of
Theorem~\ref{th:Asympt0-rho-eq-1pn-ia2} for the formal monodromy $a$ given in \eqref{eqs:conditions-poles-th5.2}.
In this case, the parameter $\varkappa$ is defined via the relation
$\varrho=1+n-\mi a/2=1/2-\mi\varkappa$, $\varkappa\in\mathbb{R}\setminus\{0\}$.


The formulae for the asymptotics~\eqref{eq:Asympt0poles-spec1u+} and \eqref{eq:Asympt0poles-spec1varphi+}, respectively,
coincide with the corresponding asymptotics~\eqref{eq:Asympt0spec1u+} and
\eqref{eq:Asympt0spec1varphi+} for $\varrho=1/2-\mi\varkappa$, where, for brevity, we introduced the notation
$\tilde{\omega}_j=\tilde{w}_j|_{\varrho=1/2-\mi\varkappa}$, $j=3,4$.

The solution of the problem for the vanishing denominators is similar to
that presented in Theorem~\ref{th:through-poles}, and is solved by restricting the asymptotic formulae to the
multiply-connected domain $\tilde{\mathcal{S}}$ defined in the theorem.
\end{proof}

We conclude this section by formulating two statements regarding the properties of the zeros and poles of the
solutions considered in Theorems~\ref{th:through-poles-s0infty0} and \ref{th:through-poles-s1infty0}.
These properties are precisely the same as those formulated in Corollary~\ref{cor:nozeros-nopoles} and
Theorem~\ref{th:poles-u-one-to-one-poles-asympt} for the solutions presented in Theorem~\ref{th:through-poles}.
The proofs for the statements formulated below do not rely on any particular parametrization(s) for the zeros and poles
in terms of the monodromy data, and, therefore, coincide with the proofs of the corresponding statements given above.
\begin{corollary}\label{cor:nozeros-nopoles-spec}
Let $(u(\tau),\varphi(\tau))$ be a solution of the system~\eqref{eq:dp3}, \eqref{eq:varphi}
corresponding to the monodromy data $(a,s_{0}^{0},s_{0}^{\infty},s_{1}^{\infty},g_{11},g_{12},g_{21},g_{22})$.
Suppose that the conditions~\eqref{eqs:conditions-poles-th5.1} of Theorem~{\rm\ref{th:through-poles-s0infty0}} or
the conditions~\eqref{eqs:conditions-poles-th5.2} of Theorem~{\rm\ref{th:through-poles-s1infty0}} are satisfied,
which imply the equations~\eqref{eqs:spec-monodromy-poles-th5.1-1} or
\eqref{eqs:spec-monodromy-poles-th5.2-1}, respectively.

For $\epsilon>0$, define
\begin{equation*}\label{eq:hat-tilde-Sepsilon-def}
\hat{\mathcal{S}}_{\epsilon}:=\left\{\tau\in\hat{\mathcal{S}}: |\tau|<\epsilon\right\}\quad\mathrm{and}\quad
\tilde{\mathcal{S}}_{\epsilon}:=\left\{\tau\in\tilde{\mathcal{S}}: |\tau|<\epsilon\right\}.
\end{equation*}
Then, there exists $\epsilon>0$ such that $u(\tau)$ and $\me^{\mi\varphi(\tau)}$ have neither  zeros nor poles
in $\hat{\mathcal{S}}_{\epsilon}$ or $\tilde{\mathcal{S}}_{\epsilon}$, respectively.
\end{corollary}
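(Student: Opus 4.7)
The proof follows the same template as Corollary~\ref{cor:nozeros-nopoles}, with only cosmetic modifications reflecting that the denominators of the asymptotic leading terms now involve $\hat{\omega}_1,\hat{\omega}_2$ (respectively $\tilde{\omega}_3,\tilde{\omega}_4$) and that the punctured domain is $\hat{\mathcal{S}}$ (respectively $\tilde{\mathcal{S}}$) instead of $\mathcal{S}$. As noted in the paragraph preceding the corollary, the argument does not depend on the explicit parametrization of the coefficients in terms of the monodromy data, but only on the structural form of the asymptotics~\eqref{eq:Asympt0poles-spec1u-} and \eqref{eq:Asympt0poles-spec1u+} (power-like in $\tau$ away from the excised discs).

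The plan is to prove the two cases simultaneously by arguing on the structure common to \eqref{eq:Asympt0poles-spec1u-} and \eqref{eq:Asympt0poles-spec1u+}. First, the absence of poles of $u(\tau)$ in $\hat{\mathcal{S}}_{\epsilon}$ (resp.\ $\tilde{\mathcal{S}}_{\epsilon}$) for some $\epsilon>0$ is immediate: by construction of $\hat{\mathcal{S}}$ (resp.\ $\tilde{\mathcal{S}}$) via removal of the discs $\hat{\mathcal{D}}_p$ (resp.\ $\tilde{\mathcal{D}}_p$) centred at the zeros $\hat{\tau}_p$ (resp.\ $\tilde{\tau}_p$) of the denominator, the leading term of the asymptotic formula is bounded on this multiply-connected domain, and the $\mathcal{O}(\tau^{2-\delta_d})$ correction does not alter boundedness for small enough $|\tau|$. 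Since an asymptotic formula that remains finite at a sequence of points cannot approximate a function possessing poles there, no pole of $u(\tau)$ can lie in $\hat{\mathcal{S}}_{\epsilon}$ (resp.\ $\tilde{\mathcal{S}}_{\epsilon}$) once $\epsilon$ is chosen small enough.

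Second, the absence of zeros is established by the same contradiction argument used for Corollary~\ref{cor:nozeros-nopoles}. Assume, for contradiction, that in every neighbourhood of the origin there exists a zero $\tau^0_k\in\hat{\mathcal{S}}$ (or $\tilde{\mathcal{S}}$) of $u(\tau)$, with $\tau^0_k\to0$; substituting $u(\tau^0_k)=0$ into, say, equation~\eqref{eq:Asympt0poles-spec1u-}, yields the relation
\begin{equation*}
0 = \frac{4\varepsilon\varkappa^2 \hat{\omega}_1\hat{\omega}_2}{\tau^0_k\big(\hat{\omega}_1 (\tau^0_k)^{-2\mi\varkappa}+\hat{\omega}_2 (\tau^0_k)^{2\mi\varkappa}\big)^2}\Big(1+\mathcal{O}\big((\tau^0_k)^{2-\delta_d}\big)\Big).
\end{equation*}
Because $\tau^0_k\in\hat{\mathcal{S}}$, the denominator is bounded away from zero relative to $\tau^0_k$ in the sense that $|\hat{\omega}_1 (\tau^0_k)^{-2\mi\varkappa}+\hat{\omega}_2 (\tau^0_k)^{2\mi\varkappa}|$ is bounded above and below by positive constants, so the leading term is of order $|\tau^0_k|^{-1}$. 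Therefore the relation above forces $|\tau^0_k|^{-1}=\mathcal{O}(1)$, which is absurd as $k\to\infty$. The same argument applies \emph{verbatim} to equation~\eqref{eq:Asympt0poles-spec1u+} with $\tilde{\omega}_3,\tilde{\omega}_4$ in place of $\hat{\omega}_1,\hat{\omega}_2$.

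Finally, the statement for $\me^{\mi\varphi(\tau)}$ reduces to the statement for $u(\tau)$: from equation~\eqref{eq:varphi}, the logarithmic derivative of $\me^{\mi\varphi(\tau)}$ equals $2a/\tau+b/u(\tau)$, so $\me^{\mi\varphi(\tau)}$ can only develop a zero or pole at a point where $u(\tau)$ vanishes (the order being dictated by the leading coefficients in \eqref{eq:Taylor-zero-u+} or \eqref{eq:Taylor-zero-u-}). Since $u(\tau)$ has no zeros in $\hat{\mathcal{S}}_{\epsilon}$ (or $\tilde{\mathcal{S}}_{\epsilon}$), neither does $\me^{\mi\varphi(\tau)}$. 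No step should present any genuine difficulty; the main technical point — the same one already faced in Corollary~\ref{cor:nozeros-nopoles} — is justifying that the asymptotic estimate is uniformly valid on the multiply-connected domain up to its inner boundary $\partial\hat{\mathcal{D}}_p$ (resp.\ $\partial\tilde{\mathcal{D}}_p$), which is ensured by the admissibility $\delta_d\in[0,2)$ as in Theorems~\ref{th:through-poles-s0infty0} and \ref{th:through-poles-s1infty0}.
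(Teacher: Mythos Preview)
Your argument is correct and follows essentially the same route as the paper: the paper explicitly states that the proof of this corollary ``coincides'' with that of Corollary~\ref{cor:nozeros-nopoles}, and your three steps (finiteness of the leading term on $\hat{\mathcal{S}}$/$\tilde{\mathcal{S}}$ rules out poles; the $0=\mathcal{O}(1/\tau^0_k)$ contradiction rules out zeros; zeros/poles of $\me^{\mi\varphi}$ sit only at zeros of $u$ via~\eqref{eq:varphi}) reproduce that proof line for line, with the appropriate substitution of $\hat{\omega}_j$, $\tilde{\omega}_j$ for $\widehat{\mathscr{A}}(\pm\varkappa)$.
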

\begin{theorem}\label{th:disc-s0infty0s1infty0}
For large enough $p\in\mathbb{N}$, each disc $\hat{\mathcal{D}}_p$ or $\tilde{\mathcal{D}}_p$ contains one,
and only one, pole and no zeros of the solution $u(\tau)$ corresponding to the monodromy data defined in
Theorem~{\rm\ref{th:through-poles-s0infty0}} or Theorem~{\rm\ref{th:through-poles-s1infty0}}, respectively.
\end{theorem}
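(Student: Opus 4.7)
The plan is to adapt, nearly \emph{verbatim}, the proof of Theorem~\ref{th:poles-u-one-to-one-poles-asympt}, since the only substantive change in the current setting is the parametrization of the leading term of asymptotics via the monodromy data, not the local analytic structure of the solutions nor the pole/zero bookkeeping under the B\"acklund transformations. The structural facts I would invoke without change are: (i) every pole $\tau_{\hat p}\neq 0$ of any solution of \eqref{eq:dp3} is of the second order with the Laurent expansion \eqref{eq:Laurent-pole-u}; and (ii) every zero is simple and is of one of the two types $\tau_0^{\pm}$ with Taylor expansions \eqref{eq:Taylor-zero-u+}--\eqref{eq:Taylor-zero-u-}. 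Both statements are obtained by direct substitution into \eqref{eq:dp3} and are independent of the monodromy data. Consequently, the pole and the leading asymptotic pole $\hat\tau_p$ (resp.\ $\tilde\tau_p$) are both of second order, so their difference \emph{can} be measured by a contour integral of the residue type.

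First I would observe that the one-step correspondence $\tau_0^+\mapsto\text{pole}$ under $u\to u_+$ and $\tau_0^-\mapsto\text{pole}$ under $u\to u_-$ (with poles mapping to zeros of the opposite type) carries over unchanged. Moreover, the special monodromy conditions \eqref{eqs:conditions-poles-th5.1} and \eqref{eqs:conditions-poles-th5.2} are preserved, up to the index shifts $n\to n\mp1$, by the transformations \eqref{eq:data+} and \eqref{eq:data-}. Iterating the appropriate B\"acklund transformation $n+1$ times, any solution covered by Theorem~\ref{th:through-poles-s0infty0} (resp.\ Theorem~\ref{th:through-poles-s1infty0}) becomes the image of a seed solution $\tilde u(\tau)$ whose branching parameter $\tilde\rho=\mi\varkappa$ is purely imaginary and non-zero, i.e.\ one of the solutions with an infinite sequence of zeros accumulating at $\tau=0$ studied in Section~4 of \cite{KitVarZapiski2024}. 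Theorem~4.2 of \cite{KitVarZapiski2024} asserts that, for small enough $\tau$, each disc centred at a zero of the leading term of $\tilde u(\tau)$ contains exactly one zero and no poles of $\tilde u(\tau)$; mapping these zeros forward by the chain of B\"acklund transformations, and using Corollary~\ref{cor:nozeros-nopoles-spec} to exclude zeros of $u(\tau)$ inside $\hat{\mathcal S}_\epsilon$ (resp.\ $\tilde{\mathcal S}_\epsilon$), one obtains for large $p$ exactly one pole of $u(\tau)$ in each $\hat{\mathcal D}_p$ (resp.\ $\tilde{\mathcal D}_p$) and no zeros.

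For a self-contained confirmation independent of \cite{KitVarZapiski2024}, I would run the residue/contour argument outlined in Remark~\ref{rem:directproof} in the present setting. Writing $u_{as}(\tau)$ for the leading asymptotic of \eqref{eq:Asympt0poles-spec1u-} (resp.\ \eqref{eq:Asympt0poles-spec1u+}) and computing
\begin{equation*}
I=\frac{1}{2\pi\mi}\varointctrclockwise_{\partial\hat{\mathcal D}_p}\tau\bigl(u(\tau)-u_{as}(\tau)\bigr)\md\tau
\end{equation*}
by the Residue Theorem on one side, and by truncating the super-generating function \eqref{app:eq:supergenfuncton} at an $m$th-order correction and using \eqref{eq:A1-pole-estimate} (more generally \eqref{eq:An-pole-estimate}) on the other, yields the same two-sided estimate as in \eqref{ineq:u-uas-integral-below}--\eqref{ineq:u-uas-integral-above-gen}. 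Choosing $m$ large enough forces the number $n$ of poles of $u(\tau)$ in $\hat{\mathcal D}_p$ to equal $1$ and permits any $\delta_d<2$; the absence of zeros then follows from an Argument Principle application exactly as at the end of Remark~\ref{rem:directproof}.

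The main obstacle I anticipate is verifying that the super-generating function $y^m A_m(x)$ of Appendix~\ref{app:subsec:super-generating-function} retains the pole-order estimate \eqref{eq:An-pole-estimate} for the special monodromy data of Theorems~\ref{th:through-poles-s0infty0} and \ref{th:through-poles-s1infty0}, for which \emph{one} of the expansion coefficients $b_{1,\pm1}$ is forced to vanish (cf.\ the remark immediately after \eqref{eqs:sigma-b11} and Remark~\ref{rem:altproof-b11}). Since the construction of $A_m(x)$ is algebraic in $b_{1,1}$ and $b_{1,-1}$, a careful bookkeeping of the orders of pole/zero cancellations in $A_m$ is required when one of these coefficients is zero; I expect no genuine degeneracy, because the symmetry $\sigma\to-\sigma$ used in Remark~\ref{rem:directproof} already accommodates such a degeneration, but this point must be verified to complete the direct proof.
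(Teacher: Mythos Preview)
Your proposal is correct and follows the same approach as the paper: the paper simply observes that the proofs of Corollary~\ref{cor:nozeros-nopoles} and Theorem~\ref{th:poles-u-one-to-one-poles-asympt} do not depend on the particular monodromy parametrization of the poles $\tau_p$ and therefore carry over verbatim to $\hat{\mathcal D}_p$ and $\tilde{\mathcal D}_p$.

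The ``main obstacle'' you raise at the end, however, is not a genuine one and stems from conflating two different $\sigma$-parametrizations. The vanishing of one of $b_{1,\pm1}$ that you cite (Remark~\ref{rem:altproof-b11} and the discussion after~\eqref{eqs:sigma-b11}) occurs for the parametrization $\sigma=-2\mi a$ used in items~\pmb{(2)} and \pmb{(3)} of Theorems~\ref{th:Asympt0-rho-eq-1pn+ia2} and \ref{th:Asympt0-rho-eq-1pn-ia2}; but the solutions in Theorems~\ref{th:through-poles-s0infty0} and \ref{th:through-poles-s1infty0} are special cases of item~\pmb{(1)} of those theorems, where the relevant expansion parameter is $\varrho=1/2\pm\mi\varkappa$, i.e.\ $\sigma=-2-4\mi\varkappa$ (exactly as in Remark~\ref{rem:directproof}). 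In that parametrization the leading-term coefficients are $\hat\omega_1,\hat\omega_2$ (resp.\ $\tilde\omega_3,\tilde\omega_4$), all of which are manifestly non-zero by \eqref{eq:omega-poles1}--\eqref{eq:omega-poles2} (resp.\ \eqref{eq:omega-poles3}--\eqref{eq:omega-poles4}) since $s_1^\infty g_{21}\neq0$ (resp.\ $s_0^\infty g_{12}\neq0$). Hence $A_0(x)$ has the same second-order pole at $x=\hat\tau_p^{-4\mi\varkappa}$ as in the generic case, the pole-order estimates \eqref{eq:A1-pole-estimate} and \eqref{eq:An-pole-estimate} hold without any bookkeeping of degenerations, and the contour-integral argument of Remark~\ref{rem:directproof} applies unchanged.
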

\section{Meromorphic Solutions}\label{sec:Meromorphic}
Substituting the pole-like expansion for $u(\tau)$ into equation~\eqref{eq:dp3}, one immediately observes that
solutions of this equation cannot have a pole at the origin; thus, all meromorphic solutions admit a Taylor-series
expansion centred at $\tau=0$.
The first three theorems of this section concern the parametrization via the monodromy data of meromorphic solutions
vanishing at the origin, while the fourth theorem deals with non-vanishing meromorphic solutions.
\begin{theorem}\label{th:vanishing-meromorphic-s0s1EQ0}
Assume that $s_0^{\infty}=s_1^{\infty}=0;$ then, $a\neq\mi k$, $k\in\mathbb{Z}$, and
\begin{equation}\label{eq:mondata:s0s1EQ0}
s_0^0=2\mi\cosh(\pi a),\quad
g_{11}=\mi\me^{-\pi a}g_{21},\quad
g_{22}=-\mi\me^{-\pi a}g_{12},\quad
g_{12}g_{21}=-\frac{\me^{\pi a}}{2\sinh(\pi a)}.
\end{equation}
The corresponding functions $u(\tau)$ and $\varphi(\tau)$ are meromorphic with the following Taylor-series expansions
centred at $\tau=0$,
\begin{align}
\varepsilon u(\tau)=&\sum_{k=1}^{\infty}\tilde{b}_{2k-1,0}\tau^{2k-1},\label{eq:u-Taylor-odd}\\
\me^{-\mi\varphi(\tau)}=&\,
\frac{\mi\me^{\pi a}\Gamma^2(1-\mi a)}{2\pi ag_{12}^2}\left(\frac{\varepsilon b}{4}\right)^{\mi a}
\exp\left(\mi\sum_{N=1}^{\infty}\tilde{p}_N\tau^{2N}\right),\label{eq:phi-Taylor-odd}
\end{align}
where $\tilde{b}_{2k-1,0}=b_{2k-1,0}|_{\sigma=-2\mi a}$ and $\tilde{p}_N=p_N|_{\sigma=-2\mi a}$, with the coefficients
$b_{2k-1,0}$ and $p_N$ defined in Appendix~{\rm\ref{app:sec:full0expansion}} and
equations~\eqref{eq:pN}, \eqref{eqs:MkN}, respectively.
\end{theorem}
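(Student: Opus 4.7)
The plan is to derive the monodromy identities~\eqref{eq:mondata:s0s1EQ0} first, and then to identify $(u(\tau),\me^{\mi\varphi(\tau)})$ as the common specialisation of the solutions described in item~\pmb{$(3)$} of Theorems~\ref{th:Asympt0-rho-eq-1pn+ia2} and \ref{th:Asympt0-rho-eq-1pn-ia2} in which the two branching coefficients $b_{1,-1}$ and $b_{1,1}$ simultaneously vanish. First, substituting $s_0^{\infty}s_1^{\infty}=0$ into \eqref{eq:rho-general} immediately yields $s_0^0=2\mi\cosh(\pi a)$. With this value, \eqref{eq:monodromy:s0} with $s_0^{\infty}=0$ factorises as $(g_{11}-\mi\me^{\pi a}g_{21})(g_{11}-\mi\me^{-\pi a}g_{21})=0$, and analogously \eqref{eq:monodromy:s1} with $s_1^{\infty}=0$ provides two sign branches for $g_{22}/g_{12}$. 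Testing the branch $g_{11}=\mi\me^{\pi a}g_{21}$ together with either choice for $g_{22}$ against \eqref{eq:monodromy:main} and \eqref{eq:monodromy:detG} forces $\me^{2\pi a}=1$, which is excluded by hypothesis; hence $g_{11}=\mi\me^{-\pi a}g_{21}$ and, by the parallel argument, $g_{22}=-\mi\me^{-\pi a}g_{12}$. Substituting into \eqref{eq:monodromy:detG} gives $g_{12}g_{21}(\me^{-2\pi a}-1)=1$, which is precisely the last identity in \eqref{eq:mondata:s0s1EQ0}; the presence of $\sinh(\pi a)$ in the denominator simultaneously necessitates $a\notin\mi\mathbb{Z}$.

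Next, to obtain \eqref{eq:u-Taylor-odd}, the plan is to observe that the conditions $s_0^{\infty}=0$ and $s_1^{\infty}=0$ force, respectively, $b_{1,1}=0$ in \eqref{eqs:sigma-b11} and $b_{1,-1}=0$ in \eqref{eqs:sigma-b1-1}, since the right-hand sides of those formulae depend linearly on the respective Stokes multipliers. In the local expansion~\eqref{app:eq:0-u-expansion}, only the odd-power ``middle terms'' $\tilde{b}_{2k-1,0}\tau^{2k-1}$ then survive, producing precisely the series~\eqref{eq:u-Taylor-odd}. Convergence of this Taylor series in a neighbourhood of $\tau=0$---and hence the holomorphy of $u(\tau)$ there, equivalent (by the Painlev\'e property) to the asserted global meromorphicity---is exactly the content of the observation made in Remark~\ref{rem:altproof-b11}: when both branching contributions are absent, the remaining power series satisfies a recurrence whose solution has positive radius of convergence.

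Finally, the expansion~\eqref{eq:phi-Taylor-odd} is produced by integrating \eqref{eq:varphi}. Since $\tilde{b}_{1,0}=-\varepsilon b/(2a)$ (see footnote~\ref{foot:sigmaEQ-4a2}), the combination $\varphi'(\tau)+2a/\tau=b/u(\tau)+2a/\tau$ is a convergent series in odd powers of $\tau$, so that $\varphi(\tau)+2a\ln\tau$ is, modulo an additive constant of integration, an even-power series in $\tau$. The coefficients $\tilde{p}_N=p_N|_{\sigma=-2\mi a}$ are then produced by exactly the same combinatorics \eqref{eq:Pn-def}--\eqref{eqs:MkN} as in item~\pmb{$(3)$} of Theorem~\ref{th:Asympt0-rho-eq-1pn+ia2}, because those formulae depend only on the $b_{2l+1,0}$. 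The multiplicative constant displayed in \eqref{eq:phi-Taylor-odd} is fixed by specialising \eqref{eq:Asympt0-phi-long-b11}, in which the $\tau^{\sigma}/\sigma$ summand in the exponent vanishes once $b_{1,1}=0$, leaving only the prefactor. The hard part is the convergence statement, but that has already been secured by Remark~\ref{rem:altproof-b11}; every other step is algebraic bookkeeping on the monodromy manifold and a degenerate reading of the asymptotics already established in Section~\ref{sec:special cases}.
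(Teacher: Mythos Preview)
Your approach is essentially correct and close in spirit to the paper's, with two points worth flagging.

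First, the phrase ``which is excluded by hypothesis'' when dismissing the branch $g_{11}=\mi\me^{\pi a}g_{21}$ is circular: the condition $a\notin\mi\mathbb{Z}$ is a \emph{conclusion} of the theorem, not an assumption. You do recover it correctly at the end (from the $\sinh(\pi a)$ denominator in the determinant relation), but the branch-selection step should be rewritten so as not to assume it prematurely.

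Second, and more substantively, your route to $b_{1,\pm1}=0$ invokes formulas \eqref{eqs:sigma-b1-1} and \eqref{eqs:sigma-b11}, but those are established in Theorems~\ref{th:Asympt0-rho-eq-1pn+ia2} and \ref{th:Asympt0-rho-eq-1pn-ia2} under the explicit hypotheses $s_1^\infty\neq0$ and $s_0^\infty\neq0$, respectively---both of which you are now violating. The paper instead goes one level upstream: it uses the generic parametrization \eqref{app:eqs:betas} together with the explicit $\varpi_n(\pm\rho)$ formulas from \cite{KitVar2004,KitVar2023}, which hold without the nonvanishing restrictions, and substituting the monodromy data \eqref{eq:mondata:s0s1EQ0} yields $b_{1,\pm1}=0$ directly. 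Similarly, for the multiplicative constant in \eqref{eq:phi-Taylor-odd} the paper invokes Theorem~B.1 of \cite{KitVar2023} rather than specialising \eqref{eq:Asympt0-phi-long-b11}. Your argument can be repaired by an explicit continuity/limit argument in the monodromy parameters (the solutions and their expansion coefficients depend analytically on these data), but as written it cites results outside their stated domain of validity.
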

\begin{proof}
Substituting the conditions $s_0^{\infty}=s_1^{\infty}=0$ into
equations~\eqref{eq:monodromy:s}--\eqref{eq:monodromy:detG}, one finds that $a\neq\mi k$, $k\in\mathbb{Z}$, and
shows that the monodromy data satisfy the conditions~\eqref{eq:mondata:s0s1EQ0}.
We now address the expansion~\eqref{app:eq:0-u-expansion} and equations~\eqref{app:eqs:betas} for $b_{1,\pm1}$.
Using the explicit expressions for $\varpi_n(\pm\rho)$, $n=1,2$, given in \cite{KitVar2004}
(see, also, \cite{KitVar2023}), we get $b_{1,\pm1}=0$; thus, we arrive at the expansion~\eqref{eq:u-Taylor-odd}.
The value for $\me^{-\mi\varphi(0)}$ in equation~\eqref{eq:phi-Taylor-odd} is obtained from the general asymptotics
as $\tau\to0$ for $\me^{-\mi\varphi(\tau)}$ given in Appendix B, Theorem B.1 of \cite{KitVar2023}, and the
Taylor series in the argument of the exponential function in equation~\eqref{eq:phi-Taylor-odd} is derived with
the aid of equation~\eqref{eq:varphi}.
\end{proof}
\begin{theorem}\label{th:vanishing-meromorphicIMa>0}
Assume that $s_1^{\infty}=0$, $a=\mi(n-1/2)$, $n\in\mathbb{N}$, $s_0^{\infty}\in\mathbb{C}$, and
$g_{12}\in\mathbb{C}\setminus\{0\};$ then,
\begin{equation}\label{eq:mondata:s1EQ0}
s_0^0=0,\quad
g_{11}=\frac{(-1)^n-s_0^{\infty}g_{12}^2}{2g_{12}},\quad
g_{21}=-\frac{1+(-1)^ns_0^{\infty}g_{12}^2}{2g_{12}},\quad
g_{22}=(-1)^ng_{12}\neq0.
\end{equation}
The corresponding functions $u(\tau)$ and $\varphi(\tau)$ are meromorphic with the following Taylor-series
expansions centred at $\tau=0$,
\begin{align}
\varepsilon u(\tau)&\underset{\tau\to0}{=}\sum_{m=1}^{\infty}c_m\tau^m=
\sum_{k=1}^n \hat{b}_{2k-1,0}\tau^{2k-1}+\hat{b}_{1,1}\tau^{2n}+\mathcal{O}\big(\tau^{2n+1}\big),
\label{eq:Taylor-u-0at0-s0infty}\\
\me^{-\mi\varphi(\tau)}&\underset{\tau\to0}{=}\frac{(-1)^{n}\,\mi\,
\big((2n-1)!!\big)^2}{2(\varepsilon b)^{n-1/2}g_{12}^2(2n-1)}
\exp\left(-(2n-1)\sum_{n=1}^{\infty}\xi_n\frac{\tau^n}{n}\right),
\label{eq:Taylor-phi-u-0at0-s0infty}
\end{align}
where
\begin{gather*}
c_m=\sum_{\substack{2k-1+l(2n-1)=m\\
k\geqslant1,\;0\leqslant l\leqslant k}} \hat{b}_{2k-1,l},\qquad\quad\left.
\hat{b}_{2k-1,l}=b_{2k-1,l} \vphantom{M^{M^{M^{M}}}}
\right\vert_{\begin{subarray}{l} \scriptscriptstyle{a=\mi(n-1/2)}\\
\scriptscriptstyle{\sigma=2n-1}\end{subarray}} \, \, , \label{eq:u-cm-meromorphic-Ima-positive}\\
\xi_n=\sum_{k=1}^n\left(\frac{2a}{\varepsilon b}\right)^k\sum_{m_i\in M_{k,n}}
\frac{(m_1 + \dotsb + m_n)!}{m_1! \, \dotsb \, m_n!}\prod_{i=1}^n(c_{i+1})^{m_i},
\label{eq:phi-xi-meromorphic-Ima-positive}
\end{gather*}
where the summation set $M_{k,n}$ coincides with $M_{k,N}$ for $N=n$ {\rm(}cf. equations~\eqref{eqs:MkN}{\rm)},
and the coefficients $b_{2k-1,l}$ are defined in Appendix~{\rm\ref{app:subsec:error-correction-terms}}. For
$1\leqslant l\leqslant k$, these coefficients depend on $b$ and $b_{1,1};$ the parameter $b$ and the coefficient
$b_{1,1}$ must be modified as follows: $b\to\varepsilon b$ and $b_{1,1}\to\hat{b}_{1,1}$, where
\begin{equation}\label{eq:hat-b11}
\hat{b}_{1,1}=\me^{\frac{3\pi\mi}{4}}\me^{-\frac{\pi\mi n}{2}}\big(\varepsilon b\big)^{n+\frac12}
\frac{2^{2n}s_0^{\infty}g_{12}^2}{\sqrt{2\pi}\big((2n-1)!!\big)^3}.
\end{equation}
In fact, $\hat{b}_{1,1}$ is the first coefficient in the Taylor series that depends on the monodromy data.
\end{theorem}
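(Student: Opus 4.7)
The plan is to deduce Theorem~\ref{th:vanishing-meromorphicIMa>0} as the specialisation of item~\pmb{$(3)$} of Theorem~\ref{th:Asympt0-rho-eq-1pn-ia2} at the resonant value $a=\mi(n-1/2)$, where the branching exponent $\sigma=-2\mi a=2n-1$ becomes a positive odd integer and the formally branched expansion~\eqref{app:eq:0-u-expansion} collapses into a genuine Taylor series, the convergence of which is supplied by the Painlev\'e property of equation~\eqref{eq:dp3}. For the monodromy data one computes $\me^{-\pi a}=\mi(-1)^n$ and $\me^{-2\pi a}=-1$, so that substituting $s_1^{\infty}=0$ into~\eqref{eq:monodromy:s} immediately forces $s_0^0=0$; then~\eqref{eq:monodromy:s1} collapses to $g_{22}^2=g_{12}^2$, and the two sign choices $g_{22}=\pm g_{12}$ are tested against~\eqref{eq:monodromy:main} together with~\eqref{eq:monodromy:detG}: the $+$ choice is consistent only for $n$ even and the $-$ choice only for $n$ odd, which is summarised as $g_{22}=(-1)^n g_{12}$. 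Once this sign is fixed, equations~\eqref{eq:monodromy:s0} and~\eqref{eq:monodromy:detG} form a linear system in $g_{11},g_{21}$ whose unique solution is the one recorded in~\eqref{eq:mondata:s1EQ0}; the hypothesis $g_{12}\neq 0$ is, in fact, forced by $\det G=1$.

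\textbf{Taylor expansion of $u$ and the coefficient $\hat{b}_{1,1}$.} Because $\mathrm{Im}\,a=n-1/2\in(n-1,n)$, item~\pmb{$(3)$} of Theorem~\ref{th:Asympt0-rho-eq-1pn-ia2} applies and furnishes the first $n+1$ explicit terms in~\eqref{eq:Taylor-u-0at0-s0infty}. The complete local expansion~\eqref{app:eq:0-u-expansion} consists of monomials $\tau^{(2k-1)+l\sigma}$ with $k\geqslant 1$ and $0\leqslant l\leqslant k$; at $\sigma=2n-1$ every such exponent is a positive integer, the formal series becomes a Taylor series, and collecting contributions sharing a common exponent $m=(2k-1)+l(2n-1)$ reproduces the stated formula for $c_m$. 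The Painlev\'e property of~\eqref{eq:dp3} then guarantees that a formal solution free of fractional powers and logarithms at the regular singular point is the Taylor expansion of an honest holomorphic solution, establishing meromorphicity. The explicit value of $\hat{b}_{1,1}$ in~\eqref{eq:hat-b11} follows by substituting $a=\mi(n-1/2)$ into~\eqref{eqs:sigma-b11}, using $\sinh(\pi a)=-\mi(-1)^n$, $\Gamma(n+1/2)=\sqrt{\pi}(2n-1)!!/2^n$, and $\me^{-3\pi a/2}\equiv\me^{3\pi\mi/4}\me^{-\pi\mi n/2}(-1)^n\pmod{2\pi\mi}$; the factor $(-1)^n$ cancels against $1/\sinh(\pi a)$.

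\textbf{Expansion of $\me^{-\mi\varphi(\tau)}$ and main obstacle.} Writing $\varepsilon u(\tau)=c_1\tau\,(1+\sum_{i\geqslant 1}d_i\tau^i)$ with $d_i=c_{i+1}/c_1$ and $c_1=\hat{b}_{1,0}=-\varepsilon b/(2a)=\mi\varepsilon b/(2n-1)$, multinomial inversion yields $b/u(\tau)=(\varepsilon b/c_1)\tau^{-1}(1+\sum_{m\geqslant 1}e_m\tau^m)$, and the identity $\varepsilon b/c_1=-2a=-\mi(2n-1)$ ensures that the simple pole of $b/u$ at $\tau=0$ cancels exactly against $2a/\tau$ in~\eqref{eq:varphi}; integration of the resulting analytic integrand produces $\varphi(\tau)-\varphi(0)=-2a\sum_{m\geqslant 1}e_m\tau^m/m$, so the exponent in~\eqref{eq:Taylor-phi-u-0at0-s0infty} emerges with $\xi_m=e_m$, and a routine rearrangement of the multinomial series, using $-1/c_1=2a/(\varepsilon b)$, delivers~\eqref{eq:phi-xi-meromorphic-Ima-positive}. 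The multiplicative constant $\me^{-\mi\varphi(0)}$ is read off as the $\tau\to 0$ limit of the prefactor in item~\pmb{$(3)$} of Theorem~\ref{th:Asympt0-rho-eq-1pn-ia2}, whose specialisation at $a=\mi(n-1/2)$ (using $\Gamma(1-\mi a)=\Gamma(n+1/2)$ and $(\varepsilon b/4)^{\mi a}=2^{2n-1}/(\varepsilon b)^{n-1/2}$) reproduces the stated prefactor. The most delicate step is the promotion of the formal Taylor expansion to a convergent one, since item~\pmb{$(3)$} provides only a truncated remainder; one must verify, starting from the structure imposed in Appendix~\ref{app:sec:full0expansion}, that the constraint $0\leqslant l\leqslant k$ and the absence of secular or logarithmic corrections persist at the resonant value $\sigma=2n-1$, whereupon invoking the Painlev\'e property closes the argument.
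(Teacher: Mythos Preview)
Your proof is correct and follows exactly the approach the paper takes: the paper's own proof consists of the single sentence ``This is a special case of Theorem~\ref{th:Asympt0-rho-eq-1pn-ia2}, item~\pmb{$(3)$} for $\sigma=-2\mi a$ and $a=\mi(n-1/2)$.'' You have simply unpacked this specialization in full detail---verifying the monodromy relations~\eqref{eq:mondata:s1EQ0} by substituting $\me^{-\pi a}=\mi(-1)^n$ into~\eqref{eqs:spec-monodromy-th3.1p-1}, checking~\eqref{eq:hat-b11} against~\eqref{eqs:sigma-b11}, and carrying out the integration of~\eqref{eq:varphi} explicitly---whereas the paper leaves all of this implicit. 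Your remark that the promotion from the truncated asymptotics of item~\pmb{$(3)$} to a convergent Taylor series requires an appeal to the convergence results for~\eqref{app:eq:0-u-expansion} cited in Appendix~\ref{app:subsec:error-correction-terms} (together with the vanishing of the $b_{2k-1,l}$ for $l<0$ forced by $b_{1,-1}=0$) is a point of care the paper does not spell out.
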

\begin{proof}
This is a special case of Theorem~\ref{th:Asympt0-rho-eq-1pn-ia2}, item~\pmb{$(3)$}~
for $\sigma=-2\mi a$ and $a=\mi(n-1/2)$.
\end{proof}
\begin{theorem}\label{th:vanishing-meromorphicIMa<0}
Assume that $s_0^{\infty}=0$, $a=-\mi(n-1/2)$, $n\in\mathbb{N}$, $s_1^{\infty}\in\mathbb{C}$, and
$g_{21}\in\mathbb{C}\setminus\{0\};$ then,
\begin{equation}\label{eq:mondata:s0EQ0}
s_0^0=0,\quad
g_{22}=\frac{(-1)^n-s_1^{\infty}g_{21}^2}{2g_{21}},\quad
g_{12}=-\frac{1+(-1)^ns_1^{\infty}g_{21}^2}{2g_{21}},\quad
g_{11}=(-1)^ng_{21}\neq0.
\end{equation}
The corresponding functions $u(\tau)$ and $\varphi(\tau)$ are meromorphic with the following Taylor-series
expansions centred at $\tau=0$,
\begin{align}
\varepsilon u(\tau)&\underset{\tau\to0}{=}\sum_{m=1}^{\infty}d_m\tau^m=
\sum_{k=1}^n \check{b}_{2k-1,0}\tau^{2k-1}+\check{b}_{1,-1}\tau^{2n}+\mathcal{O}\big(\tau^{2n+1}\big),
\label{eq:Taylor-u-0at0-s1infty}\\
\me^{\mi\varphi(\tau)}&\underset{\tau\to0}{=}
\frac{(-1)^{n}\,\mi\,\big((2n-1)!!\big)^2}{2(\varepsilon b)^{n-1/2}g_{21}^2(2n-1)}
\exp\left(-(2n-1)\sum_{n=1}^{\infty}\nu_n\frac{\tau^n}{n}\right),
\label{eq:Taylor-phi-u-0at0-s1infty}
\end{align}
where
\begin{gather*}
d_m=\sum_{\substack{2k-1+l(2n-1)=m \\
k\geqslant1,\;0\leqslant l\leqslant k}}\check{b}_{2k-1,-l},\qquad\quad\left.
\check{b}_{2k-1,l}=b_{2k-1,l} \vphantom{M^{M^{M^{M}}}}
\right\vert_{\begin{subarray}{l}\scriptscriptstyle{a=-\mi(n-1/2)}\\
\scriptscriptstyle{\sigma=-(2n-1)}\end{subarray}} \, \, ,\label{eq:u-dm-meromorphic-Ima-negative}\\
\nu_n=\sum_{k=1}^n\left(\frac{2a}{\varepsilon b}\right)^k\sum_{m_i\in M_{k,n}}
\frac{(m_1+\ldots+m_n)!}{m_1!\cdot\ldots\cdot m_n!}\prod_{i=1}^n(d_{i+1})^{m_i},
\label{eq:phi-nu-meromorphic-Ima-negative}
\end{gather*}
where the summation set $M_{k,n}$ coincides with $M_{k,N}$ for $N=n$ {\rm(}cf. equations~\eqref{eqs:MkN}{\rm)},
and the coefficients $b_{2k-1,-l}$ are defined in Appendix~{\rm\ref{app:subsec:error-correction-terms}}.
For $1\leqslant l\leqslant k$, these coefficients depend on $b$ and $b_{1,-1};$ the parameter $b$ and
the coefficient $b_{1,-1}$ must be modified as follows: $b\to\varepsilon b$ and $b_{1,-1}\to\check{b}_{1,-1}$,
where
\begin{equation}\label{eq:check-b1-1}
\check{b}_{1,-1}=\me^{-\frac{3\pi\mi}{4}}\me^{\frac{\pi\mi n}{2}}\big(\varepsilon b\big)^{n+\frac12}
\frac{2^{2n}s_1^{\infty}g_{21}^2}{\sqrt{2\pi}\big((2n-1)!!\big)^3}.
\end{equation}
In fact, $\check{b}_{1,-1}$ is the first coefficient in the Taylor series that depends on the monodromy data.
\end{theorem}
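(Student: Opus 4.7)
The plan is to prove Theorem \ref{th:vanishing-meromorphicIMa<0} by specialization of item~\pmb{$(3)$} of Theorem~\ref{th:Asympt0-rho-eq-1pn+ia2}, choosing $a=-\mi(n-1/2)$ and $\sigma=-2\mi a=-(2n-1)$; this entirely parallels the way Theorem~\ref{th:vanishing-meromorphicIMa>0} is obtained from item~\pmb{$(3)$} of Theorem~\ref{th:Asympt0-rho-eq-1pn-ia2}. With these choices, the hypothesis $n-1<-\mathrm{Im}(a)=n-1/2<n$ of item~\pmb{$(3)$} holds, and the exponent $1-\sigma$ in the expansion~\eqref{eq:Asympt0-u-long-b1-1} becomes $2n$, so the $b_{1,-1}\tau^{1-\sigma}$-term becomes the first monodromy-dependent coefficient in~\eqref{eq:Taylor-u-0at0-s1infty}. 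Alternatively, the same result follows by applying the discrete symmetry~\eqref{eqs:mondata-power-symmetry} (which maps $a\to-a$ and swaps $s_0^{\infty}\leftrightarrow-s_1^{\infty}$, $g_{11}\leftrightarrow g_{12}$, $g_{21}\leftrightarrow g_{22}$, up to signs) to Theorem~\ref{th:vanishing-meromorphicIMa>0}, but I would carry out the direct specialization since it makes the identification of constants more transparent.

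The first step is the derivation of~\eqref{eq:mondata:s0EQ0}: equation~\eqref{eq:monodromy:s} combined with $s_0^{\infty}=0$ gives $s_0^0=2\mi\cosh(\pi a)=2\mi\cos(\pi(n-1/2))=0$; equation~\eqref{eq:monodromy:s0} then reduces to $g_{11}^{2}=g_{21}^{2}$, and the sign $g_{11}=(-1)^{n}g_{21}$ is forced by consistency with the general formula $g_{11}=\mi\me^{-\pi a}g_{21}$ from Theorem~\ref{th:Asympt0-rho-eq-1pn+ia2}, since $\mi\me^{-\pi a}=\mi\me^{\pi\mi(n-1/2)}=(-1)^{n}$. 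Substituting $\sinh(\pi a)=\mi(-1)^{n}$ and $\me^{\pi a}=\mi(-1)^{n}$ into the formulae for $g_{12}$ and $g_{22}$ of Theorem~\ref{th:Asympt0-rho-eq-1pn+ia2} and simplifying recovers the remaining entries of~\eqref{eq:mondata:s0EQ0}. The second step is to check that $\check{b}_{1,-1}$ in~\eqref{eq:check-b1-1} is the value of $b_{1,-1}$ from~\eqref{eqs:sigma-b1-1} at $a=-\mi(n-1/2)$; this is a direct computation using $\Gamma(1+\mi a)=\Gamma(n+1/2)=(2n-1)!!\sqrt{\pi}/2^{n}$ together with the phases $\me^{\pi a/2}$ and $\sinh(\pi a)$ listed above.

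The third step is to upgrade the finite-order expansion~\eqref{eq:Asympt0-u-long-b1-1} to the convergent Taylor series in~\eqref{eq:Taylor-u-0at0-s1infty}. Since $s_0^{0}=0\neq\pm 2\mi$, no logarithms appear, and the bi-indexed local expansion~\eqref{app:eq:0-u-expansion} from Appendix~\ref{app:subsec:error-correction-terms} takes the form $u(\tau)=\sum_{k\ge 1,\,l\ge 0}b_{2k-1,-l}\tau^{2k-1+l(2n-1)}$; the triangular support $0\le l\le k$ reflects the standard fact that the power $\tau^{-\sigma}=\tau^{2n-1}$ (driven by $b_{1,-1}$) can appear in the $k$-th block at most $k$ times, as inherited from the Appendix recursion. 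Since every exponent $2k-1+l(2n-1)$ is a positive integer, the sum rearranges as an ordinary Taylor series, whose $\tau^{m}$-coefficient is exactly the finite sum $d_{m}$ defined in the statement; analyticity (i.e., convergence of this series) then follows from the Painlev\'e property applied at the regular point $\tau=0$ combined with the fact that $u(\tau)$ has no pole at $\tau=0$ (its leading term is $\varepsilon b_{1,0}\tau$ with $b_{1,0}=-\varepsilon b/(2a)\neq 0$).

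Finally, for the $\varphi$-expansion~\eqref{eq:Taylor-phi-u-0at0-s1infty}, the key analytic input is that the $2a/\tau$-term in~\eqref{eq:varphi} is cancelled exactly by the leading $-2a/\tau$ of $b/u(\tau)$ (because $b_{1,0}=-\varepsilon b/(2a)$), so $\varphi'$ is analytic at the origin, $\varphi$ differs from an antiderivative of $b/u(\tau)+2a/\tau$ by a constant, and $\me^{\mi\varphi(0)}$ is finite and non-zero. The multiplicative constant is obtained by specializing the prefactor in~\eqref{eq:Asympt0-phi-long-b1-1} at $a=-\mi(n-1/2)$, again via $\Gamma(1+\mi a)=(2n-1)!!\sqrt{\pi}/2^{n}$; the argument of the exponential is identified by expanding $b/u(\tau)+2a/\tau$ as a power series using~\eqref{eq:Taylor-u-0at0-s1infty} and integrating, producing the combinatorial coefficients $\nu_{m}$ in complete analogy with the derivation of $p_{N}$ in~\eqref{eq:pN}, with the factor $-(2n-1)=2\mi a$ coming from $\me^{\mi\varphi}$ picking up $\tau^{2\mi a}$ from the $2a/\tau$-integration before cancellation. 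The main obstacle is this last combinatorial step: one must verify that the successive powers of $b/u$ entering the expansion of $\ln(\me^{\mi\varphi})$ assemble into the closed form $\sum_{m}\nu_{m}\tau^{m}/m$ with $\nu_{m}$ given by~\eqref{eq:phi-nu-meromorphic-Ima-negative}; however, this identity is identical (after the symmetry $a\to-a$) to the one that underlies~\eqref{eq:Taylor-phi-u-0at0-s0infty} in Theorem~\ref{th:vanishing-meromorphicIMa>0}, so the verification transfers without modification.
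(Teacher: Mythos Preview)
Your proposal is correct and follows exactly the approach of the paper: the paper's proof is the single sentence ``This is a special case of Theorem~\ref{th:Asympt0-rho-eq-1pn+ia2}, item~\pmb{$(3)$}, for $\sigma=-2\mi a$ and $a=-\mi(n-1/2)$,'' and you have carried out precisely this specialization while filling in the computational details (the monodromy formulae~\eqref{eq:mondata:s0EQ0}, the evaluation of $\check{b}_{1,-1}$ via $\Gamma(n+1/2)$, and the upgrade to a convergent Taylor series).
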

\begin{proof}
This is a special case of Theorem~\ref{th:Asympt0-rho-eq-1pn+ia2}, item~\pmb{$(3)$}~
for $\sigma=-2\mi a$ and $a=-\mi(n-1/2)$.
\end{proof}
\begin{remark}\label{rem:meromorphic}
The solutions presented in Theorem~\ref{th:vanishing-meromorphic-s0s1EQ0} were considered in \cite{KanekoOhyama2013}.
Theorems~\ref{th:vanishing-meromorphic-s0s1EQ0}--\ref{th:vanishing-meromorphicIMa<0} describe all
meromorphic solutions of equation~\eqref{eq:dp3} for $a\in\mathbb{C}$ that vanish at the origin.
This fact follows from the local analysis
presented in Lemma 2.1 of \cite{KitSIGMA2019}. For the case $a\neq\mi k$, $k\in\mathbb{Z}$, this can be
deduced independently from the results presented in Sections~\ref{sec:general}--\ref{sec:logarithm}.
The monodromy data for the vanishing solutions presented in
Theorem~\ref{th:vanishing-meromorphic-s0s1EQ0} were calculated by another method---based on the odd symmetry of
the solutions---in \cite{KitSIGMA2019} (see Proposition 7.1 in \cite{KitSIGMA2019}); however, the corresponding
asymptotic formula for the function $\varphi(\tau)$ was not obtained in \cite{KitSIGMA2019}.

For $n=1$, the monodromy data for the one-parameter family of solutions $u(\tau)$ vanishing at the origin presented
in Theorems~\ref{th:vanishing-meromorphicIMa>0} and \ref{th:vanishing-meromorphicIMa<0} were calculated by an alternative
method (via B\"acklund transformations applied to non-vanishing meromorphic solutions) in \cite{KitVar2023}; however,
the corresponding asymptotics for $\varphi(\tau)$ was not obtained in \cite{KitVar2023}.
\hfill$\blacksquare$\end{remark}
\begin{theorem}\label{th:nonvanishing-meromorphic}
If $u(\tau)$ is a meromorphic solution of equation~\eqref{eq:dp3} with $u(0)\neq0$, then
$\sigma=4\rho=1$, and thus $s_0^0=0$.
Depending on the values of the Stokes multipliers at the point at infinity, the Taylor-series expansion centred at
$\tau=0$ of the function $u(\tau)$ and the corresponding asymptotics of $\varphi(\tau)$ can be parametrized in terms
of the monodromy data in one of the following ways:
\begin{enumerate}
\item[\pmb{$(1)$}]
$s_0^{\infty}s_1^{\infty}\neq0\;\;\Rightarrow\;\;g_{11}\neq\pm g_{21}$, $g_{22}\neq\pm g_{12}$, and
$a\neq\mi(k+1/2)$, $k\in\mathbb{Z};$ thus,
\begin{gather}
\varepsilon u(\tau)\underset{\tau\to0}{=}\sum_{m=0}^{\infty}f_m\tau^m
=\tilde{b}_{1,-1}+(\tilde{b}_{1,0}+\tilde{b}_{3,-2})\tau+(\tilde{b}_{1,1}+\tilde{b}_{3,-1}
+\tilde{b}_{5,-3})\tau^2+\mathcal{O}\big(\tau^3\big),\label{eq:non-vanishing-meromorphic-u-general}\\
f_m=\sum_{\substack{2k-1+l=m\\-k\leqslant l\leqslant k}}\tilde{b}_{2k-1,l},\qquad
\tilde{b}_{2k-1,l}=b_{2k-1,l}\lvert_{\sigma=1,b\to\varepsilon b},\quad
k\geqslant1,\;\; |l|\leqslant k, \label{eq:fm-def}
\end{gather}
where the coefficients $b_{2k-1,l}$ are defined in Appendix~{\rm\ref{app:sec:full0expansion};} in particular,
one finds that
\begin{equation}\label{eq:b10-b11-nonvanishing-meromorphic-gen}
\tilde{b}_{1,0}=2a\varepsilon b\;\;\mathrm{and}\;\;\tilde{b}_{1,1}=b^2(a^2+1/4)/\tilde{b}_{1,-1}.
\end{equation}
The monodromy parametrization of the leading coefficient reads:
\begin{equation}\label{eq:b1-1-nonvanishing-meromorphic-gen}
\tilde{b}_{1,-1}=\left(\frac{\varepsilon b}{2}\right)^{1/2}\frac{\me^{\frac{\pi a}{2}}}{2\pi}
\Gamma\left(\frac34-\mi\frac{a}{2}\right)\Gamma\left(\frac34+\mi\frac{a}{2}\right)
(g_{11}+g_{21})(g_{12}+g_{22}),
\end{equation}
\begin{gather}
\me^{\mi\varphi(\tau)}\underset{\tau\to0}{=}\me^{-\frac{3\pi\mi}4}
\frac{\Gamma\left(\frac34-\frac{\mi a}{2}\right)}{\Gamma\left(\frac34+\frac{\mi a}{2}\right)}
\left(\frac{g_{12}+g_{22}}{g_{11}+g_{21}}\right)(2\tau^2)^{\mi a}
\exp\left(\mi\frac{\varepsilon b}{f_0}\left(\tau+\sum_{n=1}^{\infty}\eta_n\frac{\tau^{n+1}}{n+1}\right)\right),
\label{eq:varphi-nonvanishing-meromorphic-gen}\\
\eta_n=\sum_{k=1}^n\frac{(-1)^k}{f_0^k}\sum_{m_i\in M_{k,n}}
\frac{(m_1+\ldots+m_n)!}{m_1!\cdot\ldots\cdot m_n!}\prod_{i=1}^n(f_i)^{m_i},
\label{eq:eta-n-nonvanishing-meromorphic-gen}
\end{gather}
where the summation set $M_{k,n}$ coincides with $M_{k,N}$ for $N=n$ {\rm(}cf. equations~\eqref{eqs:MkN}{\rm)}, and
the numbers $f_i$ are defined by the second equality in equation~\eqref{eq:non-vanishing-meromorphic-u-general}{\rm;}
in particular, $f_0=\tilde{b}_{1,-1}$.
\item[\pmb{$(2)$}]
$s_0^{\infty}=0$,\quad $a=\mi(m-1/2)$, $m\in\mathbb{N}$.
\subitem\pmb{$(2.1)$} $a=\mi(2n+3/2)$, $n\in\mathbb{Z}_{\geqslant0}$,\quad
$s_1^{\infty},g_{21}\in\mathbb{C}\setminus\{0\}$,
\begin{equation*}\label{eqs:mondata-non-vanishing-meromorphic-s0inftyEQ0type1}
g_{11}=-g_{21},\quad
g_{12}=\frac{s_1^{\infty}g_{21}^2-1}{2g_{21}},\quad
g_{22}=-\frac{s_1^{\infty}g_{21}^2+1}{2g_{21}}.
\end{equation*}
The function $\varepsilon u(\tau)$ is given by---the same---equations~\eqref{eq:non-vanishing-meromorphic-u-general},
\eqref{eq:fm-def}, and \eqref{eq:b10-b11-nonvanishing-meromorphic-gen}, with $a=\mi(2n+3/2)$.
Equations~\eqref{eq:b1-1-nonvanishing-meromorphic-gen} and \eqref{eq:varphi-nonvanishing-meromorphic-gen} should be
changed to
\begin{gather*}
\tilde{b}_{1,-1}=-\frac{\sqrt{2\pi}}{4}\frac{\sqrt{\varepsilon b}}{s_1^{\infty}g_{21}^2}\frac{(2n+1)!!}{(2n)!!},
\label{eq:b1-1-nonvanishing-meromorphic-s0inftyEQ0}\\
\me^{\mi\varphi(\tau)}=\me^{-\frac{\pi\mi}{4}}\frac{(-1)^n(2n+1)!s_1^{\infty}}{\sqrt{2\pi}(2\tau)^{4n+3}}
\exp\left(\mi\frac{\varepsilon b}{f_0}\left(\tau+\sum_{n=1}^{\infty}\eta_n\frac{\tau^{n+1}}{n+1}\right)\right),
\label{eq:phi-nonvanishing-meromorphic-s0inftyEQ0}
\end{gather*}
where the coefficients $\eta_n$ are calculated via equation~\eqref{eq:eta-n-nonvanishing-meromorphic-gen}, and
the coefficients $f_m$ are given by equation~\eqref{eq:fm-def} accompanied with the same specialization for
$\tilde{b}_{2k-1,l}$ explained in the previous sentence, i.e., $a=\mi(2n+3/2)$.
\subitem\pmb{$(2.2)$}
$a=\mi(2n+1/2)$, $n\in\mathbb{Z}_{\geqslant0}$,\quad
$s_1^{\infty},g_{21}\in\mathbb{C}\setminus\{0\}$,
\begin{equation*}\label{eqs:mondata-non-vanishing-meromorphic-s0inftyEQ0type2}
g_{11}=g_{21},\quad
g_{12}=-\frac{1+s_1^{\infty}g_{21}^2}{2g_{21}},\quad
g_{22}=\frac{1-s_1^{\infty}g_{21}^2}{2g_{21}}.
\end{equation*}
The function $\varepsilon u(\tau)$ is given by---the same---equations~\eqref{eq:non-vanishing-meromorphic-u-general},
\eqref{eq:fm-def}, and \eqref{eq:b10-b11-nonvanishing-meromorphic-gen}, with $a=\mi(2n+1/2)$.
Equations~\eqref{eq:b1-1-nonvanishing-meromorphic-gen} and \eqref{eq:varphi-nonvanishing-meromorphic-gen} should be
changed to~\footnote{\label{foot:-1!!=0}
In equations~\eqref{eq:b1-1-nonvanishing-meromorphic-s0inftyEQ0type2} and
\eqref{eq:b1-1-nonvanishing-meromorphic-s1inftyEQ0type} for the case $n=0$, it is assumed that $(-1)!!=1$.}
\begin{gather}
\tilde{b}_{1,-1}=-4\me^{\frac{\pi\mi}{4}}\frac{\sqrt{\varepsilon b}}{\sqrt{2\pi}}\frac{(2n)!!}{(2n-1)!!}s_1^{\infty}g_{21}^2,
\label{eq:b1-1-nonvanishing-meromorphic-s0inftyEQ0type2}\\
\me^{\mi\varphi(\tau)}=\me^{\frac{\pi\mi}{4}}\frac{(-1)^n(2n)!s_1^{\infty}}{\sqrt{2\pi}(2\tau)^{4n+1}}
\exp\left(\mi\frac{\varepsilon b}{f_0}\left(\tau+\sum_{n=1}^{\infty}\eta_n\frac{\tau^{n+1}}{n+1}\right)\right),
\label{eq:phi-nonvanishing-meromorphic-s0inftyEQ0type2}\nonumber
\end{gather}
where the coefficients $\eta_n$ are calculated via equation~\eqref{eq:eta-n-nonvanishing-meromorphic-gen}, and
the coefficients $f_m$ are given by equation~\eqref{eq:fm-def} accompanied
with the same specialization for $\tilde{b}_{2k-1,l}$ explained in the previous sentence, i.e., $a=\mi(2n+1/2)$.
\item[\pmb{$(3)$}]
$s_1^{\infty}=0$,\quad $a=-\mi(m-1/2)$, $m\in\mathbb{N}$.
\subitem\pmb{$(3.1)$}
$a=-\mi(2n+3/2)$, $n\in\mathbb{Z}_{\geqslant0}$,\quad
$s_0^{\infty},g_{12}\in\mathbb{C}\setminus\{0\}$,
\begin{equation*}\label{eqs:mondata-non-vanishing-meromorphic-s1inftyEQ0type1}
g_{22}=-g_{12},\quad
g_{21}=\frac{s_0^{\infty}g_{12}^2-1}{2g_{12}},\quad
g_{11}=-\frac{s_0^{\infty}g_{12}^2+1}{2g_{12}}.
\end{equation*}
The function $\varepsilon u(\tau)$ is given by---the same---equations~\eqref{eq:non-vanishing-meromorphic-u-general},
\eqref{eq:fm-def}, and \eqref{eq:b10-b11-nonvanishing-meromorphic-gen}, with $a=-\mi(2n+3/2)$.
Equations~\eqref{eq:b1-1-nonvanishing-meromorphic-gen} and \eqref{eq:varphi-nonvanishing-meromorphic-gen} should be
changed to
\begin{gather*}
\tilde{b}_{1,-1}=\me^{-\frac{\pi\mi}{4}}\frac{\sqrt{2\pi}}{4}\frac{\sqrt{\varepsilon b}}{s_0^{\infty}g_{12}^2}
\frac{(2n+1)!!}{(2n)!!}, \label{eq:b1-1-nonvanishing-meromorphic-s1inftyEQ0}\\
\me^{\mi\varphi(\tau)}=\me^{-\frac{\pi\mi}{4}}\frac{\sqrt{2\pi}(-1)^n(2\tau)^{4n+3}}{(2n+1)!s_0^{\infty}}
\exp\left(\mi\frac{\varepsilon b}{f_0}\left(\tau+\sum_{n=1}^{\infty}\eta_n\frac{\tau^{n+1}}{n+1}\right)\right),
\label{eq:phi-nonvanishing-meromorphic-s1inftyEQ0}
\end{gather*}
where the coefficients $\eta_n$ are calculated via equation~\eqref{eq:eta-n-nonvanishing-meromorphic-gen}, and
the coefficients $f_m$ are given by equation~\eqref{eq:fm-def} accompanied
with the same specialization for $\tilde{b}_{2k-1,l}$ explained in the previous sentence, i.e., $a=-\mi(2n+3/2)$.
\subitem\pmb{$(3.2)$}
$a=-\mi(2n+1/2)$, $n\in\mathbb{Z}_{\geqslant0}$,\quad
$s_0^{\infty},g_{12}\in\mathbb{C}\setminus\{0\}$,
\begin{equation*}\label{eqs:mondata-non-vanishing-meromorphic-s1inftyEQ0type2}
g_{22}=g_{12},\quad
g_{21}=-\frac{1+s_0^{\infty}g_{12}^2}{2g_{12}},\quad
g_{11}=\frac{1-s_0^{\infty}g_{12}^2}{2g_{12}}.
\end{equation*}
The function $\varepsilon u(\tau)$ is given by---the same---equations~\eqref{eq:non-vanishing-meromorphic-u-general},
\eqref{eq:fm-def}, and \eqref{eq:b10-b11-nonvanishing-meromorphic-gen}, with $a=-\mi(2n+1/2)$.
Equations~\eqref{eq:b1-1-nonvanishing-meromorphic-gen} and \eqref{eq:varphi-nonvanishing-meromorphic-gen} should be
changed to~\footref{foot:-1!!=0}
\begin{gather}
\tilde{b}_{1,-1}=\me^{\frac{3\pi\mi}{4}}\frac{\sqrt{\varepsilon b}}{\sqrt{2\pi}}
\frac{(2n)!!}{(2n-1)!!}s_0^{\infty}g_{12}^2,
\label{eq:b1-1-nonvanishing-meromorphic-s1inftyEQ0type}\\
\me^{\mi\varphi(\tau)}=\me^{\frac{\pi\mi}{4}}\frac{\sqrt{2\pi}(2\tau)^{4n+1}}{(-1)^n(2n)!s_0^{\infty}}
\exp\left(\mi\frac{\varepsilon b}{f_0}\left(\tau+\sum_{n=1}^{\infty}\eta_n\frac{\tau^{n+1}}{n+1}\right)\right),
\nonumber
\end{gather}
where the coefficients $\eta_n$ are calculated via equation~\eqref{eq:eta-n-nonvanishing-meromorphic-gen}, and
the coefficients $f_m$ are given by equation~\eqref{eq:fm-def} accompanied
with the same specialization for $\tilde{b}_{2k-1,l}$ explained in the previous sentence, i.e., $a=-\mi(2n+1/2)$.
\end{enumerate}
\end{theorem}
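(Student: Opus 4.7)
The plan is to reduce this four-branch classification to the asymptotic machinery already established in Sections~\ref{sec:general}--\ref{sec:special cases}, using the local expansion of Appendix~\ref{app:subsec:error-correction-terms} to extract the explicit Taylor coefficients. I would proceed in three stages.

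\textbf{Stage 1 (the universal constraints $\sigma=1$ and $s_0^0=0$).} The universal local expansion~\eqref{app:eq:0-u-expansion} has terms of the shape $\tau^{(2k-1)+l\sigma}$ with $k\in\mathbb{N}$ and $|l|\leqslant k$. For $u(\tau)$ meromorphic with $u(0)\neq0$, a nonzero constant term can originate only from the $(k,l)=(1,-1)$ slot $b_{1,-1}\tau^{1-\sigma}$, which forces $\sigma=1$ (the value $\sigma=-1$ being identified with $\sigma=1$ via the symmetry $\sigma\to-\sigma$ used throughout the paper). The logarithmic alternative $\sigma=0$ of Section~\ref{sec:logarithm} produces no meromorphic solution, and $|\mathrm{Re}\,\sigma|<2$ excludes any other integer. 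Substituting $\rho=\sigma/4=1/4$ into~\eqref{eq:rho-general} then immediately yields $s_0^0=0$.

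\textbf{Stage 2 (Case~\pmb{$(1)$}: $s_0^\infty s_1^\infty\neq0$).} With $\varrho=3/4$, Theorems~\ref{th:altB1asympt0m} and \ref{th:alt2B1asympt0p}${}^{\mathbf\prime}$ both apply. Evaluating conditions~\eqref{eq:cond-gik-Theo31} and \eqref{eq:cond-gik-Theo31prime} at $\varrho=3/4$ collapses them to $g_{11}\neq\pm g_{21}$ and $g_{22}\neq\pm g_{12}$, while~\eqref{eq:cond-varrho-a-k-Theo31} becomes $a\neq\mi(k+1/2)$. The Taylor form~\eqref{eq:non-vanishing-meromorphic-u-general}--\eqref{eq:fm-def} is then obtained by rearranging~\eqref{app:eq:0-u-expansion} at $\sigma=1$, collecting the monomials $\tau^{(2k-1)+l}$ by integer degree $m=(2k-1)+l$; the identities~\eqref{eq:b10-b11-nonvanishing-meromorphic-gen} are immediate consequences of the recurrences in Appendix~\ref{app:subsec:error-correction-terms}. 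For the monodromy parametrisation~\eqref{eq:b1-1-nonvanishing-meromorphic-gen} of $\tilde{b}_{1,-1}=\varepsilon u(0)$, I would evaluate the leading term of Theorem~\ref{th:altB1asympt0m} at $\tau=0$ to get $\tilde{b}_{1,-1}=w_2/(4w_1)$, couple this with the symmetric identity $\tilde{b}_{1,-1}=w_4/(4w_3)$ coming from Theorem~\ref{th:alt2B1asympt0p}${}^{\mathbf\prime}$ via Proposition~\ref{prop:equivalence Th31Th31prime} to read off $\tilde{b}_{1,-1}^2=\pi^2\me^{-\pi a}/(4w_1w_3)^2$, and simplify using the product relation $(g_{11}+g_{21})(g_{12}+g_{22})(g_{21}-g_{11})(g_{22}-g_{12})=s_0^\infty s_1^\infty=-2\me^{-\pi a}\cosh(\pi a)$ (an immediate consequence of~\eqref{eq:monodromy:s}--\eqref{eq:monodromy:detG} at $s_0^0=0$) together with the $\Gamma$-reflection identity $\Gamma(\tfrac14+\tfrac{\mi a}{2})\Gamma(\tfrac14-\tfrac{\mi a}{2})\Gamma(\tfrac34+\tfrac{\mi a}{2})\Gamma(\tfrac34-\tfrac{\mi a}{2})=2\pi^2/\cosh(\pi a)$. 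The expansion~\eqref{eq:varphi-nonvanishing-meromorphic-gen} then follows by integrating~\eqref{eq:varphi}, expanding $b/u(\tau)$ as the multinomial geometric series prescribed by~\eqref{eq:eta-n-nonvanishing-meromorphic-gen}, and fixing the multiplicative constant via the $\me^{\mi\varphi(\tau)}$-asymptotics of Theorem~\ref{th:altB1asympt0m}.

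\textbf{Stage 3 (Cases~\pmb{$(2)$} and \pmb{$(3)$}).} The excluded half-integer values of $a$ fall into the regime of Theorems~\ref{th:Asympt0-rho-eq-1pn+ia2} and \ref{th:Asympt0-rho-eq-1pn-ia2}. Solving $1+n+\mi a/2\in\{1/4,3/4\}$ for $n\in\mathbb{Z}_{\geqslant0}$ forces $a\in\{\mi(2n+3/2),\mi(2n+1/2)\}$, which give sub-cases~\pmb{$(2.1)$} and~\pmb{$(2.2)$} respectively; the tabulated monodromy relations are obtained by inserting these values into~\eqref{eqs:spec-monodromy-th3.1-1}. The explicit Taylor and $\varphi$-expansions then follow from item~\pmb{$(3)$} of Theorem~\ref{th:Asympt0-rho-eq-1pn+ia2} at $\sigma=1$, with the half-integer gamma products simplified via Legendre duplication and $\Gamma(k+1/2)=(2k-1)!!\sqrt{\pi}/2^k$. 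Case~\pmb{$(3)$} is derived from case~\pmb{$(2)$} via the monodromy symmetry~\eqref{eqs:mondata-power-symmetry} ($a\to-a$, exchanging $s_0^\infty\leftrightarrow-s_1^\infty$), which by construction preserves every asymptotic statement.

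\textbf{Principal obstacle.} The structural identification of the exceptional values $a\in\mi(\mathbb{N}-1/2)$ is forced by the local analysis and causes little trouble; the main technical bottleneck will be verifying that the separate formulae for $\tilde{b}_{1,-1}$ in cases~\pmb{$(1)$}, \pmb{$(2)$}, and \pmb{$(3)$} are mutually consistent in the limit $a\to\mi(k+1/2)$, where the generic formula~\eqref{eq:b1-1-nonvanishing-meromorphic-gen} degenerates (the gamma pole has to cancel against the vanishing linear combination of monodromy data) while the half-integer formulae remain finite. This requires the delicate limiting analysis analogous to the one carried out in Theorem~\ref{th:Asympt0-rho-eq-1pn+ia2}, item~\pmb{$(1)$}, step~(iii).
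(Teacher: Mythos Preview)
Your overall decomposition and Stage~1 match the paper exactly. Two points of divergence are worth noting.

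For Case~\pmb{$(1)$}, the paper does not go through Theorems~\ref{th:altB1asympt0m}/\ref{th:alt2B1asympt0p}${}^{\mathbf\prime}$ and the $w_k$-identities at all; it reads off $\tilde{b}_{1,-1}$ directly from the formula~\eqref{app:eqs:betas} for $\beta_{1,-1}$ at $\rho=1/4$, using the explicit expressions for $\varpi_n(\pm\rho)$ given in Theorem~B.1 of \cite{KitVar2023}, and obtains the $\me^{\mi\varphi}$ constant the same way. Your route via $w_2/(4w_1)$, Proposition~\ref{prop:equivalence Th31Th31prime}, and the auxiliary product $(g_{11}+g_{21})(g_{12}+g_{22})(g_{21}-g_{11})(g_{22}-g_{12})=s_0^\infty s_1^\infty$ would also work, but it introduces an unnecessary detour through machinery (Theorems~\ref{th:altB1asympt0m} and \ref{th:alt2B1asympt0p}${}^{\mathbf\prime}$) that was itself derived from the very Theorem~B.1 you could invoke directly.

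For Cases~\pmb{$(2)$} and~\pmb{$(3)$}, your identification of the $\varrho$-values via $\varrho=1+n+\mi a/2\in\{1/4,3/4\}$ is correct and agrees with the paper, but this is the formula from item~\pmb{$(1)$} of Theorem~\ref{th:Asympt0-rho-eq-1pn+ia2} (valid for $\mathrm{Im}\,a>0$), not item~\pmb{$(3)$} (valid for $\mathrm{Im}\,a<0$, with $\sigma=-2\mi a$ which would give $\sigma=4n+3$, not $1$). The paper explicitly invokes item~\pmb{$(1)$} with $\varrho=1/4$ for~\pmb{$(2.1)$} and $\varrho=3/4$ for~\pmb{$(2.2)$}; you should do likewise.

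Finally, your ``principal obstacle''---checking that the generic formula~\eqref{eq:b1-1-nonvanishing-meromorphic-gen} limits correctly to the special ones as $a\to\mi(k+1/2)$---is not part of the paper's argument. The paper treats the three cases as logically independent specialisations of already-proved theorems; no limiting consistency check is required (though it would of course hold).
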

\begin{proof}
Begin with the proof of the asymptotics for the meromorphic solutions presented in item~\pmb{$(1)$} of the theorem.
Assume that $u(\tau)$ has a Taylor-series expansion centred at $\tau=0$ with $u(0)\neq0$; this expansion should
coincide with the general asymptotic expansion~\eqref{app:eq:0-u-expansion} for $\sigma=\pm1$.
Due to the symmetry $\sigma\to-\sigma$ discussed in Subsection~\ref{app:subsec:error-correction-terms}, the
expansions with $\sigma=\pm1$ correspond to the same monodromy data; thus, we can put $\sigma=4\rho=1$, so that
$s_0^0=0$ (cf. equation~\eqref{eq:rho-general}). Equation~\eqref{eq:rho-general}
also implies that, if we require $s_0^{\infty}s_1^{\infty}\ne0$, then $a\neq\mi(k+1/2)$, $k\in\mathbb{Z}$. The other
conditions on the monodromy data are a consequence of equations~\eqref{eq:monodromy:s0} and \eqref{eq:monodromy:s1}.

The series~\eqref{eq:non-vanishing-meromorphic-u-general}, with the coefficients~\eqref{eq:fm-def} and
\eqref{eq:b10-b11-nonvanishing-meromorphic-gen}, is a rearrangement of the series~\eqref{app:eq:0-u-expansion} for
$\sigma=1$. The formula \eqref{eq:b1-1-nonvanishing-meromorphic-gen} for
$\tilde{b}_{1,-1}$ is calculated via equation~\eqref{app:eqs:betas}, where $\beta_{1,-1}=\tilde{b}_{1,-1}$, and
explicit expressions for $\varpi_n(\pm\rho)$, $n=1,2$, are given in Appendix B, Theorem B.1 of \cite{KitVar2023};
the latter theorem also allows one to calculate the leading term of asymptotics as $\tau\to0$ of the function
$\me^{\mi\varphi(\tau)}$ presented in equation~\eqref{eq:varphi-nonvanishing-meromorphic-gen}. The Taylor series
in the argument of the exponential function
in equation~\eqref{eq:varphi-nonvanishing-meromorphic-gen} is obtained with the help of equation~\eqref{eq:varphi}.

The proofs of the results presented in items~\pmb{$(2)$} and ~\pmb{$(3)$} are similar; item~\pmb{$(2)$} is a
special case of Theorem~\ref{th:Asympt0-rho-eq-1pn+ia2} (cf. subitem~\pmb{$(2.1)$} for $\varrho=1/4$ and
subitem~\pmb{$(2.2)$} for $\varrho=3/4$), and item~\pmb{$(3)$} is a special case of
Theorem~\ref{th:Asympt0-rho-eq-1pn-ia2} (cf. subitem~\pmb{$(3.1)$} for $\varrho=1/4$ and
subitem~\pmb{$(3.2)$} for $\varrho=3/4$).
\end{proof}
\appendix
\section{Appendix. The Complete Small-$\tau$ Asymptotic Expansion of the General Solution $u(\tau)$}
\label{app:sec:full0expansion}
\subsection{Error Correction Term of the Power-Like Isomonodromy Asymptotics as $\tau\to0$}
\label{app:subsec:error-correction-terms}
The local expansion of the general solution $u(\tau)$ of equation~\eqref{eq:dp3} with
$\varepsilon=1$
in a neighbourhood of $\tau=0$ can be presented in the form of the following convergent (for small enough $\tau$) series:
\begin{equation}\label{app:eq:0-u-expansion}
u(\tau)=\sum_{k=1}^{\infty}\tau^{2k-1}\sum_{m=-k}^{k}b_{2k-1,m}\tau^{m\sigma},
\end{equation}
where $\sigma\neq0$ and the coefficients $b_{2k-1,m}$ are $\tau$-independent complex
numbers.\footnote{\label{foot:epsilon}
To get the $\varepsilon$-dependent variant of this expansion, one has to introduce $\varepsilon$ on both sides of all the
equations in this appendix according to the rule $u\to\varepsilon u$ and $b\to\varepsilon b$.}
The parameters $\sigma$ and $b_{1,\pm1}$ satisfy the
following conditions:
\begin{equation}\label{app:eq:sigma-b0-b1pm1}
|\mathrm{Re}\,\sigma|<2,\qquad
b_{1,0}=\frac{2ab}{\sigma^2},\qquad
b_{1,1}b_{1,-1}=\frac{b^2(4a^2+\sigma^2)}{4\sigma^4};
\end{equation}
otherwise, they can be taken arbitrarily.
The remaining coefficients $b_{2k-1,m}$, $k\geqslant2$, $|m|\leqslant k$, can be determined uniquely in terms of
$\sigma$ and one of the parameters $b_{1,\pm1}$ upon substitution of the series~\eqref{app:eq:0-u-expansion} into
equation \eqref{eq:dp3}.

There are several methodologies for proving the existence of such local expansions; in the context of the Painlev\'e
equations, we refer to the papers \cite{Shimomura1982-2,Shimomura1982-2+,KimuraH1983,TakanoK1986,GILy2013}.
In this appendix, we do not consider the formal proof of the expansion~\eqref{app:eq:0-u-expansion}, but, rather,
focus our attention on its computational aspects.

To determine the coefficients $b_{2k-1,m}$, substitute the expansion~\eqref{app:eq:0-u-expansion} into
equation~\eqref{eq:dp3}, and find that, for $k=1,2,3,\ldots$,
\begin{gather}\label{app:eq:b2k+1+k+1+k}
b_{2k+1,k+1}=(-1)^{k}\,\frac{2^{2k}(k+1)b_{1,1}^{k+1}}{(\sigma+2)^{2k}},\quad
b_{2k+1,k}=(-1)^{k}\,\frac{2^{2k+2}(2k+2+k\sigma)^2ab\,b_{1,1}^{k}}{\sigma^2(\sigma+4)^2(\sigma+2)^{2k-1}},
\end{gather}
\begin{gather}\label{app:eq:b2k+1-k-1-k}
b_{2k+1,-k-1}=(-1)^{k}\,\frac{2^{2k}(k+1)b_{1,-1}^{k+1}}{(\sigma-2)^{2k}},\quad
b_{2k+1,-k}=(-1)^{k-1}\,\frac{2^{2k+2}(2k+2-k\sigma)^2ab\,b_{1,-1}^{k}}{\sigma^2(\sigma-4)^2(\sigma-2)^{2k-1}}.
\end{gather}
We define the \emph{level} of the coefficient $b_{2k-1,m}$ to be the number $k$ in its first subscript; thus,
the total number of coefficients at level $k$ is $2k+1$.
As an example, we present the remaining coefficients for the levels 2 and 3:
\begin{equation}\label{app:eq:b30}
b_{3,0}=4b^2\frac{(20a^2\sigma^2+3\sigma^4-48a^2-4\sigma^2)}{\sigma^4(\sigma+2)^2(\sigma-2)^2}
=-\frac{12a^2b^2}{\sigma^4}+(a^2+1)b^2\left(\frac{\sigma^2+4}{(\sigma^2-4)^2}-\frac{1}{\sigma^2}\right),
\end{equation}
\begin{equation}\label{app:eq:b51}
\begin{gathered}
b_{5,1}=4b^2b_{1,1}((32\sigma^5+8\sigma^4-748\sigma^3-1120\sigma^2+1680\sigma+2880)a^2-
12\sigma^6-71\sigma^5\\-80\sigma^4+84\sigma^3+144\sigma^2)/
((\sigma-2)^2(\sigma+4)(\sigma+2)^4\sigma^4),
\end{gathered}
\end{equation}
\begin{equation}\label{app:eq:b5-1}
\begin{gathered}
b_{5,-1}=4b^2b_{1,-1}((32\sigma^5-8\sigma^4-748\sigma^3+1120\sigma^2+1680\sigma-2880)a^2+
12\sigma^6-71\sigma^5\\+80\sigma^4+84\sigma^3-144\sigma^2)/
((\sigma+2)^2(\sigma-4)(\sigma-2)^4\sigma^4),
\end{gathered}
\end{equation}
\begin{equation}\label{app:eq:b50}
b_{5,0}=\frac{192ab^3(7\sigma^6+36a^2\sigma^4-100\sigma^4-560a^2\sigma^2+192\sigma^2+1280a^2)}
{\sigma^6(\sigma-4)^2(\sigma+4)^2(\sigma-2)^2(\sigma+2)^2}.
\end{equation}

The expansion~\eqref{app:eq:0-u-expansion} is symmetric with respect to the change $\sigma\to-\sigma$ and
$b_{2k-1,m}\to b_{2k-1,-m}$. Note that equation~\eqref{eq:dp3} depends neither on the parameter $\sigma$ nor on any
of the coefficients $b_{2k-1,m}$, that is, it is also symmetric with respect to the change of variables
indicated above. This means that the coefficients $b_{2k-1,\pm m}$ are related to each other
by the change $\sigma\to-\sigma$ and $b_{1,1}\to b_{1,-1}$. This property can be observed upon comparing the
left- and right-most equations, respectively, in \eqref{app:eq:b2k+1+k+1+k} and \eqref{app:eq:b2k+1-k-1-k},
and \eqref{app:eq:b51} with \eqref{app:eq:b5-1}.

We computed the coefficients $b_{2k-1,m}$ up to the level $9$; therefore,
equations~\eqref{app:eq:b2k+1+k+1+k} and \eqref{app:eq:b2k+1-k-1-k} are verified only
for $k=1,2,\ldots,9$.\footnote{\label{app:footnote:computation-bkm}
On an eleven-year-old notebook (4Gb RAM, i7 processor, 4th generation), \textsc{Maple} 15 computed the coefficients
of four levels, from 2 to 5, in approximately 17 seconds; in contrast, on a notebook with 16Gb RAM, i7 processor,
12th generation, \textsc{Maple} 17 executes the same computation in roughly 2.5 seconds. The latter notebook computed
the coefficients of the subsequent 4 levels, from 6 to 9, in 333 seconds.
These calculations were carried out without taking into account the symmetry between the coefficients discussed above:
by employing the stated symmetry, the computational time could be reduced by $30$ to $35$ percent.
In order to illustrate the increase in complexity of the calculation, we refer, say, to equation~\eqref{app:eq:b51},
which takes less than two lines of text to display; in contrast, the coefficient $b_{17,1}$, being presented in the
same, explicit way, would require at least 132 lines of text to display!}
In Subsection~\ref{app:subsec:super-generating-function} below, these formulae are proved for all $k\in\mathbb{N}$.
The complexity of the coefficients, together with their number, increases quickly with the growth of the level;
therefore, the reader should not be overly optimistic about the prospect of obtaining as many of the coefficients as
necessary for the achievement of the required degree of accuracy.

The simplest application of the expansion~\eqref{app:eq:0-u-expansion} is the calculation of the error-correction term
for the small-$\tau$ asymptotics obtained in \cite{KitVar2004}. Recall that the small-$\tau$ asymptotic formula obtained
in \cite{KitVar2004} reads
\begin{equation}\label{app:eq:Asympt0:u2004}
u(\tau)\underset{\tau\to0^+}{=} \, \frac{\tau b \me^{\pi a/2}}{16 \pi}\!
\left(\varpi_{1}(\rho)\tau^{2\rho}+\varpi_{1}(-\rho)\tau^{-2\rho}\right)\!
\left(\varpi_{2}(\rho)\tau^{2\rho}+\varpi_{2}(-\rho)\tau^{-2\rho}\right)\!
\left(1+o\big(\tau^{\delta})\right),
\end{equation}
where $\delta>0$, and the $\tau$-independent coefficients $\varpi_{n}(\rho)$, $n=1,2$, depend on the branching parameter,
$\rho$, and the monodromy data: their explicit formulae are given in \cite{KitVar2004,KitVar2023}.
Since equation~\eqref{app:eq:Asympt0:u2004} is symmetric with respect to the reflection $\rho\to -\rho$, we assume that
$\mathrm{Re}\,\rho\geqslant0$; then, expanding the brackets on the right-hand side of \eqref{app:eq:Asympt0:u2004}, one arrives at
 \begin{equation}\label{app:eq:Asympt0:u2004expanded}
u(\tau)\underset{\tau\to0^+}{=}\beta_{1,-1}\tau^{1-4\rho}+\beta_{1,0}\tau+\beta_{1,1}\tau^{1+4\rho}+
o\big(\tau^{1-4\rho+\delta}\big),
\end{equation}
where
\begin{equation}\label{app:eqs:betas}
\beta_{1,\pm1}=\frac{b\me^{\pi a/2}}{16 \pi}\varpi_{1}(\pm\rho)\varpi_{2}(\pm\rho),\qquad
\beta_{1,0}=\frac{b\me^{\pi a/2}}{16 \pi}
\big(\varpi_{1}(\rho)\varpi_{2}(-\rho)+\varpi_{1}(-\rho)\varpi_{2}(\rho)\big).
\end{equation}
Now, using the explicit expressions for $\varpi_{n}(\pm\rho)$, which can be taken from either one of the works
\cite{KitVar2004} or \cite{KitVar2023}, one proves that the coefficients $\beta_{1,m}$, $m=-1,0,1$, satisfy the same
equations~\eqref{app:eq:sigma-b0-b1pm1} (with $\sigma^2=(4\rho)^2$) as the coefficients $b_{1,m}$ with the
corresponding subscripts. Comparing the expansions~\eqref{app:eq:Asympt0:u2004expanded} and
\eqref{app:eq:0-u-expansion} and taking into account the symmetry $\sigma\to-\sigma$, we can set $\sigma=4\rho$ and
$\beta_{k,m}=b_{k,m}$, $m=-1,0,1$. This comparison allows us to derive a more precise evaluation for the parameter
$\delta$ in the correction term of equation~\eqref{app:eq:Asympt0:u2004}.

\textcolor[rgb]{0,0,1}{
The error estimate to the level 1 terms in the expansion \eqref{app:eq:0-u-expansion} can be written as $\mathcal{O} \big(
\tau^{\delta_1} \big)$, where $\delta_1=3-2\mathrm{Re}\,\rho$. If we assume that $1-4\mathrm{Re}\,\rho+\delta \geqslant \delta_1$,
then the first three explicit terms in the asymptotics \eqref{app:eq:Asympt0:u2004expanded} are larger than the corresponding
error estimate. The solution of equation \eqref{eq:dp3} with such asymptotics does not exist because substituting this
expansion into \eqref{eq:dp3} we get a term which cannot be cancelled by the term generated {}from the correction
$o(\tau^{1-4\rho+\delta})$.\footnote{\label{foot:app:small-o-terms-asymptotics} \textcolor[rgb]{0,0,1}{
The substitution is simpler if the multiplicative
form of the asymptotics \eqref{app:eq:Asympt0:u2004} is exploited, and equation \eqref{eq:dp3} should be integrated {}from
some finite value $\tau_0$ to $\tau \to 0$. The small-$\tau$ isomonodromy asymptotics is differentiable with respect to $\tau$,
that is, the asymptotics of the derivative $u^{\prime}(\tau)$ is the formal derivative of the asymptotics of $u(\tau)$.}} There is an
alternative argument which does not require direct substituton; in case $1-4\mathrm{Re}\,\rho+\delta\geqslant\delta_1$, then, we
have two different solutions: one defined by the isomonodromy asymptotics, and the other by the full asymptotic expansion; however,
all points of the monodromy manifold are already assigned to the ``isomonodromy'' solutions via their asymptotics, so the monodromy
manifold does not have any ``space'' for the solutions defined via the asymptotic expansion \eqref{app:eq:0-u-expansion}.}

\textcolor[rgb]{0,0,1}{
We now assume that $1-4\mathrm{Re}\,\rho+\delta<\delta_1$; then, the asymptotics defined by the expansion \eqref{app:eq:0-u-expansion}
satisfies the condition for the isomonodromy asymptotics \eqref{app:eq:Asympt0:u2004expanded}. If we equate the largest terms of these
asymptotics, then, for both solutions defined by these asymptotics, we get, via a direct solution of the monodromy problem \cite{KitVar2004},
the same monodromy data, which means that they coincide, and we see that the correction $o\big(\tau^{1-4\rho+\delta}\big)$ in the expansion
\eqref{app:eq:Asympt0:u2004expanded} can, in fact, be strengthened to $\mathcal{O}\big(\tau^{1-4\rho+\delta}\big)$ since $\delta>0$ is not
fixed; subsequently, comparing the latter estimate with the largest term of the second level in the expansion \eqref{app:eq:0-u-expansion}, we
obtain $1-4\mathrm{Re}\,\rho+\delta=3-2\mathrm{Re}\,\sigma$, where $\sigma=4\rho$, which implies $\delta=2-4\mathrm{Re}\,\rho$. If one does
not assume that $\mathrm{Re}\,\rho>0$ and reverts back to the symmetric form of the asymptotics \eqref{app:eq:Asympt0:u2004}, then one gets
$\delta=2-4|\mathrm{Re}\,\rho|$.}

In certain special cases, the evaluation of $\delta$ can be improved; assume, say, that $\mathrm{Re}\,\sigma<0$, and consider the degenerate
case $b_{1,1}=0$. (Note that this does not necessarily imply that $b_{1,-1}=0$.) In this case, however, the coefficients $b_{2k-1,m}=0$ for all
$m \in \mathbb{N}$; it is enough, in fact, to know that $b_{3,2}=b_{3,1}=0$ (cf. equations~\eqref{app:eq:b2k+1+k+1+k} and \eqref{app:eq:b2k+1-k-1-k}),
so that the largest non-vanishing correction term is $b_{3,0} \tau^3$, hence $\delta=2$.
\subsection{Super-Generating Function}\label{app:subsec:super-generating-function}
The formal construction of the
super-generating function for the coefficients of the expansion~\eqref{app:eq:0-u-expansion} is defined as
\begin{equation}\label{app:eq:supergenfuncton}
A(x,y)=\sum_{n=0}^{\infty}y^nA_{n}(x),
\end{equation}
where the coefficients $A_n(x)$, $n\in\mathbb{Z}_{\geqslant0}$, are generating functions for the coefficients
$b_{2k-1,k-n}$,
\begin{gather}
A_0(x)=\sum_{k=1}^{\infty}b_{2k-1,k}x^k,\label{app:eq:A0-series}\\
A_n(x)=\sum_{k=\lfloor(n-1)/2\rfloor+1}^{\infty}b_{2k-1,k-n}x^k,\qquad
n\geqslant1,\label{app:eq:An-series}
\end{gather}
where $\lfloor\cdot\rfloor$ denotes the floor of the real number.
It is easy to see that each coefficient $b_{2k-1,m}$, $k\in\mathbb{N}$, $m=-k,\ldots,k$, belongs to one, and only
one, function $A_n(x)$ for some $n\in\mathbb{Z}_{\geqslant0}$.

Define the linear differential operator $D$ acting in the space of formal power series of two variables $x$ and $y$ as
\begin{equation}\label{app:eq:D-definition}
D:=(2+\sigma)x\frac{\partial}{\partial x}-\sigma y\frac{\partial}{\partial y};
\end{equation}
then, the function $A\equiv A(x,y)$ solves the PDE
\begin{equation}\label{app:eq:A-PDE}
D^2(\ln A)=-8A+2a\frac{bxy}{A}+\left(\frac{bxy}{A}\right)^2.
\end{equation}
Note that equation~\eqref{app:eq:A-PDE} has the symmetry $x\leftrightarrow y$,
$-\sigma\leftrightarrow 2+\sigma$. This symmetry, however, cannot be interpreted in terms of the coefficients
$b_{2k-1,m}$ because the symmetry has ``renotational sense'', that is, $x$ plays the role of $y$ and
\emph{vice versa} (see equation~\eqref{app:eq:u-A-relation} below).
To justify equation~\eqref{app:eq:A-PDE}, one has to use the following relation between the super-generating function
$A(x,y)$ (cf. equation~\eqref{app:eq:supergenfuncton}) and the solution $u(\tau)$ (cf.
equation~\eqref{app:eq:0-u-expansion}):
\begin{equation}\label{app:eq:u-A-relation}
u(\tau)=\frac1{\tau} A(\tau^{2+\sigma},\tau^{-\sigma}).
\end{equation}

Now, we apply equation~\eqref{app:eq:A-PDE} for the calculation of the generating functions $A_n(x)$: substitute
the series~\eqref{app:eq:supergenfuncton} into equation~\eqref{app:eq:A-PDE}, take the numerator of the resulting
equation, and equate to zero the coefficients of powers of $y^n$. For $n=0$, we get the following second-order ODE,
\begin{equation}\label{app:eq:A0}
D_x^2\ln A_0(x)=-8A_0(x),
\end{equation}
where
\begin{equation}\label{app:eq:Dx}
D_x:=(2+\sigma)\,x\frac{\md}{\md x}
\end{equation}
is the $x$-part of the operator $D$ (cf. equation~\eqref{app:eq:D-definition}).
Equation~\eqref{app:eq:A0} has the following solutions:
\begin{equation}\label{app:eq:A0solution}
A_{0,gen}(x)=\frac{(2+\sigma)^2\,C_1^2\,C_2x^{C_1}}{4(x^{C_1}+C_2)^2},\qquad
A_{0, spec}(x)=-\frac{(\sigma+2)^2}{4\ln^2(C_2x)},
\end{equation}
where $C_1$ and $C_2$ are constants of integration. To get the solution that is consistent with the definition of
$A_0(x)$, one must set $C_1=1$ and $C_2=(\sigma+2)^2/(4b_{1,1})$, so that
\begin{equation}\label{app:eq:A0-explicit}
A_0(x)=\frac{b_{1,1}x}{(1+4b_{1,1}x/(\sigma+2)^2)^2}=\frac{(\sigma+2)^2z}{4(1+z)^2},\qquad
z=\frac{4b_{1,1}x}{(\sigma+2)^2}.
\end{equation}
Expanding the function $A_0(x)$ in equation~\eqref{app:eq:A0-explicit} into a power series in $x$ and comparing this
expansion with equation~\eqref{app:eq:A0-series}, one proves the left-most equation in \eqref{app:eq:b2k+1+k+1+k}
and, due to the symmetry $\sigma\to-\sigma$ and $b_{1,1}\to b_{1,-1}$ (cf.
Subsection~\ref{app:subsec:error-correction-terms}), also the left-most equation in \eqref{app:eq:b2k+1-k-1-k}.

By continuing this process of determining the generating functions described above, one obtains the following
ODE for the function $A_1(x)$:
\begin{equation}\label{app:eq:A1}
\left(\left(D_x-\sigma\right)^2+8A_0(x)\right)A_{1,0}(x)=\frac{2abx}{A_0(x)},\qquad
A_{1,0}(x):=\frac{A_1(x)}{A_0(x)}.
\end{equation}
The homogeneous part of equation~\eqref{app:eq:A1} is a degenerate hypergeometric equation, all of whose solutions
have, for $\sigma\neq-2+2/n_1$, $n_1\in\mathbb{N}$, a branching point at $x=0$. Since the right-hand side of
equation~\eqref{app:eq:A1} is a rational function of $x$, it follows that, for generic values of $\sigma$ and
$b_{11}\neq0$, there exists a unique rational solution $A_1(x)$ of this equation which, in terms of the variable $z$
(cf. equation~\eqref{app:eq:A0-explicit}), reads
\begin{equation}\label{app:eq:A1z-explicit}
A_1(x)=\frac{ab(2+\sigma)^2z(z\sigma-\sigma-4)(z^2\sigma+2z(\sigma^2+4\sigma+2)-\sigma-4)}
{2\sigma^2(4+\sigma)^2b_{1,1}(z+1)^3 }.
\end{equation}
For the special values of $\sigma$ mentioned above, we also have the same solution~\eqref{app:eq:A1z-explicit},
because adding to $A_1(x)$ the rational part of the solution of the homogenous equation invalidates the
expansion~\eqref{app:eq:An-series}, that is, it leads to the appearence of non-positive powers of $x$ in the
expansion.
Reverting back to the original variable $x$ and developing $A_1(x)$ into a power series in $x$, one finds that
\begin{equation}\label{app:eq:A1x-series}
A_1(x)=\frac{2abx}{\sigma^2}-\frac{16abb_{1,1}x^2}{\sigma^2(2+\sigma)}+
\frac{256ab(3+\sigma)^2b_{1,1}^2x^3}{\sigma^2(4+\sigma)^2(2+\sigma)^3}-
\frac{256ab(8+3\sigma)^2b_{1,1}^3x^4}{\sigma^2(4+\sigma)^2(2+\sigma)^5}+\mathcal{O}(x^5).
\end{equation}
Comparing the expansions~\eqref{app:eq:A1x-series} and \eqref{app:eq:An-series} term-by-term, one verifies the second
relation given in \eqref{app:eq:sigma-b0-b1pm1} and the right-most formula in \eqref{app:eq:b2k+1+k+1+k} for
$k=1,2,3$; the formula for arbitrary $k\in\mathbb{N}$ given there can be proven by decomposing $A_1(x)$ into a
sum of partial fractions.

Even though the procedure for the construction of the generating functions $A_n(x)$ is straightforward, it requires
rather cumbersome calculations, if done by hand. We checked that \textsc{Maple} was able to handle
these calculations for $n=2,3,4$ in a few seconds; but thus far we haven't found a compact presentation for
the corresponding results.

Consider, for example, the calculation of the generating function $A_2(x)$. This calculation shows, on the one hand,
the increased complexity of the coefficients, and, on the other hand, that it is general enough to estimate the
complexity of successive calculations for the generating functions $A_n(x)$.
The function $A_2(x)$ solves the following ODE:
\begin{equation}\label{app:eq:A2}
\left(\left(D_x-2\sigma\right)^2+8A_0\right)A_{2,0}=
\frac12\left(D_x-2\sigma\right)^2A_{1,0}^2-2abx\frac{A_{1,0}}{A_0}+\left(\frac{bx}{A_0}\right)^2,\quad
A_{2,0}:=\frac{A_2(x)}{A_0(x)}.
\end{equation}
This equation is similar to equation~\eqref{app:eq:A0}; however, its right-hand side is more complicated, thus
resulting in a substantially more involved explicit formula for the solution. Every solution that is single-valued
at $x=0$ is a rational function of $x$. If one takes into account that the solution should have a first-order zero at
$x=0$ (cf. equation~\eqref{app:eq:An-series} for $n=2$), then one arrives at the following partial-fraction
decomposition of $A_2(x)$, presented, again, in terms of the variable $z$ (cf. the right-most equation
in \eqref{app:eq:A0-explicit}):
\begin{equation}\label{app:eq:A2-partfractions}
A_2(x)=\sum_{k=0}^3\xi_kz^k+\sum_{k=1}^4\frac{\xi_{-k}}{(z+1)^k},
\end{equation}
where the coefficients $\xi_k$, $k=-4,-3,-2,-1,0,1,2,3$, satisfy the condition
\begin{equation}\label{app:eq:ksi-relation}
\xi_0+\sum_{k=1}^4\xi_{-k}=0,
\end{equation}
and depend only on $a$ and $\sigma$. Explicit formulae for the coefficients $\xi_k$ read:
\begin{align*}
\xi_3=&\frac{b^2(2+\sigma)^2((4+\sigma)^2+4a^2)}{16b_{1,1}^2(4+\sigma)^4},\quad\\
\xi_2=&\frac{b^2\left(2+\sigma)^2(4(5\sigma^2+40\sigma+68)a^2+(3\sigma^2+24\sigma+44\right)(4+\sigma)^2)}
{4b_{1,1}^2(4+\sigma)^4(6+\sigma)^2},\quad
\end{align*}
\begin{align*}
\xi_1=&\frac{b^2(2+\sigma)^2}{16b_{1,1}^2\sigma(4+\sigma)^4(6+\sigma)^2}
\left(4(8\sigma^5+158\sigma^4+1061\sigma^3+2964\sigma^2+3412\sigma+1152)a^2\right.\\
+&\left.\sigma(12\sigma^3+121\sigma^2+380\sigma+388)(4+\sigma)^2\right),\\
\xi_0=&-\frac{b^2(2+\sigma)^5}{b_{1,1}^2\sigma^2(4+\sigma)^4(6+\sigma)^2(2-\sigma)^2}
\left(2(8\sigma^5+95\sigma^4+184\sigma^3-584\sigma^2-96\sigma+576)a^2\right.\\
+&\left.3\sigma^2(\sigma^2+4\sigma-6)(4+\sigma)^2\right),
\end{align*}
\begin{align*}
\xi_{-1}=&\frac{3b^2(2+\sigma)^5(2+3\sigma)^2}{4b_{1,1}^2\sigma^4(4+\sigma)^4(6+\sigma)^2(2-\sigma)^2}
\left(2(4\sigma^5+45\sigma^4+72\sigma^3-344\sigma^2-96\sigma+576)a^2\right.\\
+&\left.\sigma^2(\sigma^2+4\sigma-6)(4+\sigma)^2\right),\\
\xi_{-2}=&-\frac{b^2(2+\sigma)^6}{4b_{1,1}^2\sigma^4(4+\sigma)^4(6+\sigma)^2(2-\sigma)^2}
\left(2(148\sigma^6+1657\sigma^5+2898\sigma^4-12584\sigma^3\right.\\
-&\left.11792\sigma^2+22656\sigma+19584)a^2+3\sigma^2(6+7\sigma)(\sigma^2+4\sigma-6)(4+\sigma)^2\right),
\end{align*}
\begin{align*}
\xi_{-3}=&\frac{b^2(2+\sigma)^7}{2b_{1,1}^2\sigma^4(4+\sigma)^4(6+\sigma)^2(2-\sigma)^2}
\left(2(48\sigma^5+463\sigma^4+248\sigma^3\right.\\
-&\left.4808\sigma^2+1056\sigma+7488)a^2+3\sigma^2(\sigma^2+4\sigma-6)(4+\sigma)^2\right),\\
\xi_{-4}=&-\frac{12b^2(2+\sigma)^8a^2}{b_{1,1}^2\sigma^4(4+\sigma)^4}.
\end{align*}
One now verifies that equation~\eqref{app:eq:ksi-relation} is satisfied. Expanding the function $A_2(x)$
(cf. equation \eqref{app:eq:A2-partfractions})
into a power series in $z$, taking into account the relation between $z$ and $x$,
and comparing this with the expansion~\eqref{app:eq:An-series},
one obtains an explicit formula for the coefficients $b_{2k-1,k-2}$,
\begin{equation}\label{app:eq:A2power-few}
b_{2k-1,k-2}=\frac{(-4)^kb_{1,1}^k}{(2+\sigma)^{2k}}\sum_{p=1}^4\binom{k+p-1}{p-1}\xi_{-p},\qquad
k=4,5,\ldots,
\end{equation}
where $\tbinom{m}{k}=\tfrac{m!}{k!(m-k)!}$ is the binomial coefficient, and, the first three off-set coefficients,
\begin{equation}\label{app:eq:b2k-1=k-2}
b_{2k-1,k-2}=\frac{(-4)^kb_{1,1}^k}{(2+\sigma)^{2k}}\left((-1)^k\xi_k+\sum_{p=1}^4\binom{k+p-1}{p-1}\xi_{-p}\right),
\qquad
k=1,2,3.
\end{equation}
Substituting $k=1,2,3$ into equation \eqref{app:eq:b2k-1=k-2}, one reproduces $b_{1,-1}$, $b_{3,0}$, and $b_{5,1}$
given, respectively, by the third equation in \eqref{app:eq:sigma-b0-b1pm1}, and equations \eqref{app:eq:b30} and
\eqref{app:eq:b51}.

The calculation of the generating functions $A_0(x)$, $A_1(x)$, and $A_2(x)$ with the help of \textsc{Maple} on
a generic laptop takes but a few seconds; therefore, from the practical point of view, one can continue such
calculations to obtain successive generating functions.
These functions for $n\geqslant3$ satisfy the following inhomogeneous degenerate hypergeometric equation,
\begin{equation}\label{app:eq:An}
\begin{aligned}
\left(\left(D_x-n\sigma\right)^2+8A_0\right)A_{n,0}=&
\sum_{k=2}^n\frac{(-1)^k}{k}
\sum_{\substack{i_1+\ldots+i_k=n\\i_j\in\mathbb{N}}}\left(D_x-n\sigma\right)^2A_{i_1,0}A_{i_2,0}\ldots A_{i_k,0}\\
+&\frac{2abx}{A_0}\sum_{k=1}^{n-1}(-1)^k\sum_{\substack{i_1+\ldots+i_k=n-1\\i_j\in\mathbb{N}}}A_{i_1,0}A_{i_2,0}
\ldots A_{i_k,0}\\
+&\left(\frac{8bx}{A_0}\right)^2\,\sum_{k=1}^{n-2}(-1)^k(k+1)\sum_{\substack{i_1+\ldots+i_k=n-2\\i_j\in\mathbb{N}}}
A_{i_1,0}A_{i_2,0}\ldots A_{i_k,0},
\end{aligned}
\end{equation}
where, for any natural $k$, $A_{k,0}:=A_k(x)/A_0(x)$. Equation~\eqref{app:eq:An} is, in fact, valid starting from
$n=1$, provided one starts the summations in the last two sums from $k=0$ and agrees to abide by the standard rules
for interpreting the $\sum$-operator in such degenerate situations, e.g., $\sum^{-1}_{k=0}:=0$. To complete the
definition of $A_n(x)$, we have to state that, for all $n\in\mathbb{N}$, $A_n(x)$ is the unique rational solution of
equation~\eqref{app:eq:An} with the first term of its Taylor expansion at $x=0$ of the order $x^N$, where
$N=\lfloor\tfrac{n-1}2\rfloor+1$ (cf. equation~\eqref{app:eq:An-series}). The last condition is necessary in order to
remove an ambiguity of the rational solution for some special values of $\sigma$: for generic values of $\sigma$,
this condition is satisfied automatically.

The right-hand side of equation~\eqref{app:eq:An} becomes progressively more complicated for increasing values of $n$.
This fact, however, does not encumber the procedure for finding explicitly the generating functions $A_n(x)$:
we have performed calculations for $n=3$ and $4$ without encountering a visible increase of the time of calculations.
The main problem is the presentation of the results in an observable form: the partial-fraction decomposition
of $A_n(x)$ helps, to some extent, because, structurally, $A_n(x)$ in terms of $z$ has poles only at $z=-1$
and $z=\infty$; however, the number and complexity of the corresponding coefficients is incresing rapidly.
Computationally, the procedure is quite simple for \textsc{Maple} to execute, so that, ultimately, it is the
lack of available memory that halts the calculations.

\subsection{A Symmetric Power-Like Asymptotic Expansion as $\tau\to0$}\label{app:subsec:asympt-universal}
As mentioned in Subsection~\ref{app:subsec:error-correction-terms}, the original
expansion~\eqref{app:eq:0-u-expansion} possesses the symmetry $\sigma\to-\sigma$, $b_{1,1}\leftrightarrow b_{1,-1}$.
In the construction of the super-generating function $A(x,y)$ in
Subsection~\ref{app:subsec:super-generating-function}, this symmetry is lost because of the ``non-symmetric''
definition of $A(x,y)$. We can, however, using the symmetry, define a symmetric reflection of the function $A(x,y)$,
namely, the function $\bar{A}(\bar{x},\bar{y})$, by making in the definitions~\eqref{app:eq:supergenfuncton},
\eqref{app:eq:A0-series}, and \eqref{app:eq:An-series} the changes $A_n(x)\to\bar{A}_n(\bar{x})$ for
$n\in\mathbb{Z}_{\geqslant0}$, $b_{2k-1,m}\to b_{2k-1,-m}$ for all $m\in\mathbb{Z}$,
$x\to\bar{x}$, and $y\to\bar{y}$. In the subsequent constructions of
Subsection~\ref{app:subsec:super-generating-function}, we have to change, additionally, $\sigma\to-\sigma$,
so that, in particular, the operator $D\to\bar{D}$, where
\begin{equation*}\label{app:eq:barD}
\bar{D}=(2-\sigma)\bar{x}\frac{\partial}{\partial\bar{x}}+\sigma\bar{y}\frac{\partial}{\partial\bar{y}}.
\end{equation*}
In the formula expressing $u(\tau)$ in terms of $A(x,y)$, we must substitute $x=\tau^{2+\sigma}$ and
$y=\tau^{-\sigma}$, and, in the corresponding formula in terms of $\bar{A}(\bar{x},\bar{y})$, the conjugated
variables $\bar{x}$ and $\bar{y}$ should be replaced by
$\bar{x}=\tau^{2-\sigma}$ and $\bar{y}=\tau^{\sigma}$, so that this formula reads
\begin{equation}\label{app:eq:u-bar-A-relation}
u(\tau)=\frac1{\tau}\bar{A}(\tau^{2-\sigma},\tau^{\sigma}).
\end{equation}
Adding equations~\eqref{app:eq:u-A-relation} and \eqref{app:eq:u-bar-A-relation}, we arrive at the symmetized
form for the function $u(\tau)$:
\begin{equation}\label{app:eq:u-A-bar-A-relation}
u(\tau)=\frac1{2\tau}\big(A(\tau^{2+\sigma},\tau^{-\sigma})+\bar{A}(\tau^{2-\sigma},\tau^{\sigma})\big).
\end{equation}

As an application of the ideas considered herein, we obtain the uniform
(with respect to $\sigma$) leading term of the power-like small-$\tau$ asymptotics of the function $u(\tau)$.
Using the definition of $A_0(x)$ (cf. equation~\eqref{app:eq:A0series}) and its symmetry conjugate
$\bar{A}_0(\bar{x})$, we can rewrite the expansion~\eqref{app:eq:0-u-expansion} as follows:
\begin{equation}\label{app:eq:u:Asympt-uniformA}
u(\tau)\underset{\tau\to0^+}=\frac{1}{\tau}\big(A_0(x)+\bar{A}_0(\bar{x})+b_{1,0}\tau^2\big)
+\mathcal{O}\big(\tau^{3-|\mathrm{Re}\,\sigma|}\big),\quad
x=\tau^{2+\sigma},\;\;
\bar{x}=\tau^{2-\sigma}.
\end{equation}
Taking into account the explicit expression for $A_0(x)$ (cf. equation~\eqref{app:eq:A0-explicit}), and the
corresponding expression for $\bar{A}_0(\bar{x})$ obtained via the symmetry described above, one finds that
\begin{equation}\label{app:eq:u:Asympt-uniform}
u(\tau)\underset{\tau\to0^+}=
\tau\left(\frac{b_{1,1}\tau^{\sigma}}{\left(1+\frac{4b_{1,1}\tau^{2+\sigma}}{(\sigma+2)^2}\right)^2}+
\frac{b_{1,-1}\tau^{-\sigma}}{\left(1+\frac{4b_{1,-1}\tau^{2-\sigma}}{(\sigma-2)^2}\right)^2}
+\frac{2ab}{\sigma^2}\right)+\mathcal{O}\!\left(\tau^{3-|\mathrm{Re}\,\sigma|}\right).
\end{equation}
The asymptotics~\eqref{app:eq:u:Asympt-uniform} is valid for all values of the parameter $\sigma$ such that
$\mathrm{Re}\,\sigma\in[-2,2]$, excluding the values $\sigma=0,\pm2$.\footnote{\label{app:foot:sigma0pm2}
As a matter of fact, this formula also works for $\sigma=0,\pm2$ in the sense of the proper limiting procedures
$\sigma\to0$ or $\sigma\to\pm2$. To evaluate the limits, one must use the monodromy parametrizations of $b_{1,\pm1}$.}
For the latter values of $\sigma$, the asymptotics is constructed in terms of logarithmic functions: these cases
are considered in Appendices~\ref{app:sec:full0expansionLog} and \ref{app:sec:full0expansionLog2} below.
Of course, it is assumed that the asymptotics~\eqref{app:eq:u:Asympt-uniform} is valid for those solutions $u(\tau)$
corresponding to monodromy data for which the parametrizations of $b_{1,\pm1}$ via these data make
sense (see the definition of this parametrization in the paragraph following equation~\eqref{app:eqs:betas}).
Clearly, in some domains of $\sigma$, the asymptotics~\eqref{app:eq:u:Asympt-uniform} can be simplified
(some terms can be omitted); in Section~\ref{sec:general}, say, we derived asymptotics for $u(\tau)$
in terms of the parameter $\varrho$, with $\sigma=4\varrho$: this formula is, in fact, valid in the strip
$0<\mathrm{Re}\,\sigma<4$ punctured at $\sigma=2$. For $\mathrm{Re}\,\sigma>1$, the first term of the
asymptotics~\eqref{app:eq:u:Asympt-uniform} is smaller than the correction term, and can, therefore, be omitted,
and, if $\mathrm{Re}\,\sigma>2$, the third term of the asymptotics~\eqref{app:eq:u:Asympt-uniform} is smaller
than the correction term, and can also be neglected, thus the leading terms in the asymptotic
formulae~\eqref{eq:Asympt0u-} and \eqref{app:eq:u:Asympt-uniform} coincide, even though they are obtained
by different methods! Note, however, that for $0<\mathrm{Re}\,\sigma<2$, the asymptotic
formula~\eqref{app:eq:u:Asympt-uniform} provides a more accurate approximation for $u(\tau)$ than the
asymptotic results presented in Section~\ref{sec:general}; furthermore, for $\mathrm{Re}\,\sigma\in(-2,2)$, the
asymptotics~\eqref{app:eq:u:Asympt-uniform} better approximates the function $u(\tau)$ than does the
asymptotics~\eqref{app:eq:Asympt0:u2004expanded} obtained in \cite{KitVar2004} (see the discussion of the
error estimation in the paragraph subsequent to equation~\eqref{app:eqs:betas}).

In light of the discussion above, it is interesting to see how one can go beyond the
leading term of the uniform asymptotics and obtain explicitly at least the first correction term. In this respect,
our strategy is the same as for the derivation of the uniform asymptotic expansion~\eqref{app:eq:u:Asympt-uniform}:
rewrite the original asymptotic expansion~\eqref{app:eq:0-u-expansion} with the help of the functions
$A_0(x)$ and $A_1(x)$ (cf. equations~\eqref{app:eq:A0-series} and \eqref{app:eq:An-series} for $n=1$) and their
symmetry conjugates $\bar{A}_0(\bar{x})$ and $\bar{A}_1(\bar{x})$ keeping all terms up to the level $k=3$:
\begin{equation}\label{app:eq:derivation-level3}
u(\tau)\underset{\tau\to0^+}=\frac{1}{\tau}\big(A_0(x)+\bar{A}_0(\bar{x})+yA_1(x)+\bar{y}\bar{A}_1(\bar{x})
-b_{1,0}\tau^2+b_{3,0}\tau^4\big)+\mathcal{O}\big(\tau^{5-|\mathrm{Re}\,\sigma|}\big),
\end{equation}
where we subtracted $b_{1,0}\tau^2$, because it is counted twice, once in each of the terms $yA_1(x)$
and $\bar{y}\bar{A}_1(\bar{x})$. Taking note of the leading term of the expansion as presented in
equation~\eqref{app:eq:u:Asympt-uniformA}, we next rewrite the expansion~\eqref{app:eq:derivation-level3}
using the relations $yx=\bar{y}\bar{x}=\tau^2$:
\begin{equation}\label{app:eq:derivation-level3n}
u(\tau)\underset{\tau\to0^+}=\frac{1}{\tau}\big(A_0(x)+\bar{A}_0(\bar{x})+b_{1,0}\tau^2\big)
+\tau\big(A_1(x)/x+\bar{A}_1(\bar{x})/\bar{x}-2b_{1,0}\big)+b_{3,0}\tau^3
+\mathcal{O}\big(\tau^{5-|\mathrm{Re}\,\sigma|}\big).
\end{equation}
Using equation~\eqref{app:eq:A1z-explicit}, where $z$ is defined in equation~\eqref{app:eq:A0-explicit}, and
recalling that $b_{1,0}=2ab/\sigma^2$ (cf. equation~\eqref{app:eq:sigma-b0-b1pm1}), one finds that
\begin{equation}\label{app:eq:A1:x-b10}
\frac{A_1(x)}{x}-b_{1,0}=-\frac{4abz(\sigma + 2)}{\sigma^2(\sigma+4)^2(z+1)}
\left(\frac{2(\sigma+2)^2}{(z+1)^2}-\frac{\sigma^2 - 4}{z + 1}+4\right),
\end{equation}
and its conjugate reads
\begin{equation}\label{app:eq:barA1:x-b10}
\frac{\bar{A}_1(\bar{x})}{\bar{x}}-b_{1,0}=\frac{4ab\bar{z}(\sigma-2)}{\sigma^2(\sigma-4)^2(\bar{z}+1)}
\left(\frac{2(\sigma-2)^2}{(\bar{z}+1)^2}-\frac{\sigma^2-4}{\bar{z}+1}+4\right),\qquad
\bar{z}=\frac{4b_{1,-1}\bar{x}}{(\sigma-2)^2}.
\end{equation}
\begin{remark}\label{app:eq:Theorem3.1error-corr-term}
The asymptotic formula~\eqref{app:eq:derivation-level3n} allows us to justify the error estimate for the asymptotics
obtained in Section~\ref{sec:general} (cf. equations~\eqref{eq:Asympt0u-} and \eqref{eq:Asympt0u+}).
If $\mathrm{Re}\,\varrho\in(0,1/2)$, these asymptotic formulae correspond to the parameter
$\mathrm{Re}\,\sigma\in(0,2)$; then, referring
to the asymptotic expansion~\eqref{app:eq:0-u-expansion} and comparing these expansions, one finds that the term
$\mathcal{O}(\tau)$ as $\tau\to0$ is absent in the expansions~\eqref{eq:Asympt0u-} and \eqref{eq:Asympt0u+}.
The leading term of asymptotics in Section~\ref{sec:general} is of the order $\tau^{1-4\varrho}$; denoting
the correction term as $\mathcal{O}(\tau^{\delta})$, we get the condition
$1-4\mathrm{Re}\,\varrho+\delta=1$, so that $\delta=4\mathrm{Re}\,\varrho$.

If $\mathrm{Re}\,\varrho\in[1/2,1)$, then $\mathrm{Re}\,\sigma\in[2,4)$, so that, as $\tau\to0$,
$\bar{z}=\mathcal{O}(\tau^{2-\sigma})\to\infty$ and $z=\mathcal{O}(\tau^{2+\sigma})\to0$.
Then, from the asymptotic expansion~\eqref{app:eq:derivation-level3n}, we see again that, in the
expansions~\eqref{eq:Asympt0u-} and \eqref{eq:Asympt0u+}, the $\mathcal{O}(\tau)$ term is omitted. The order of
the leading term in the asymptotics~\eqref{eq:Asympt0u-} and \eqref{eq:Asympt0u+} is $\tau^{-3+4\varrho}$;
denoting the correction term as $\mathcal{O}(\tau^{\delta})$, we arrive at the condition
$-3+4\mathrm{Re}\,\varrho+\delta=1$, so that $\delta=4(1-\mathrm{Re}\,\varrho)$.
\hfill $\blacksquare$\end{remark}
\section{Appendix. The Complete Small-$\tau$ Regular Logarithmic\\ Asymptotic Expansion of $u(\tau)$:
Theorem~\ref{th:ln-regular-a-non0}}\label{app:sec:full0expansionLog}
\subsection{Error Correction Term of the Isomonodromy Logarithmic Asymptotics As $\tau\to0$}
\label{app:subsec:error-correction-termsLog}
The generic logarithmic asymptotic expansion as $\tau\to0$ can be written as~\footnote{\label{foot:reglogexpansion}
This expansion can be obtained by considering a formal $\sigma\to0$ limit of the general power-like asymptotic
expansion~\eqref{app:eq:0-u-expansion}.}
\begin{equation}\label{app:eq:0-u-expansionLog}
u(\tau)=\sum_{k=1}^{\infty}\tau^{2k-1}\sum_{m=0}^{2k}c_{2k-1,m}(\ln\tau)^m.
\end{equation}
This expansion is convergent in a neighborhood of $\tau=0$.
We assume that $|\arg\,\tau|<\pi$ and the principal branch of $\ln$-function is taken. In this subsection, we study
the expansion~\eqref{app:eq:0-u-expansionLog} under the assumption that $a\neq0$. This expansion is also applicable
for $a=\mi k$, $k\in\mathbb{Z}\setminus\{0\}$; however, for $a=\mi k\in\mi\mathbb{Z}$, there are special variants of
the expansion~\eqref{app:eq:0-u-expansionLog} which we will discuss in a follow-up paper.

Substituting the expansion~\eqref{app:eq:0-u-expansionLog} into equation~\eqref{eq:dp3}, one finds:
\begin{equation}\label{eqs:c12-c11-c10}
c_{1,2}=-ab,\qquad
c_{1,1}=-abc,\qquad
c_{1,0}=-\frac{b(a^2c^2+1)}{4a},
\end{equation}
where $c\in\mathbb{C}$ is a parameter. We call the coefficients $c_{2k-1,m}$, $m=0,1\ldots,2k$, the
\emph{coefficients of level $k$}. The coefficients of level $k=1$ are given by equations~\eqref{eqs:c12-c11-c10}.
Below, we present the coefficients of levels $k=2$ and $3$:
\begin{gather*}
c_{3,4}=-2a^2b^2,\quad
c_{3,3}=-4a^2b^2(c-1),\quad
c_{3,2}=-b^2(3a^2c^2-6a^2c+4a^2+1),\label{app:eqs:c34-c33-c32}\\
c_{3,1}=-b^2(c-1)(a^2c^2-2a^2c+2a^2+1),\label{app:eq:c31}\\
c_{3,0}=-\frac{b^2}{8a^2}(a^4c^4-4a^4c^3+2a^2(4a^2+1)c^2-4a^2(2a^2+1)c+1),\label{app:eq:c30}
\end{gather*}
\begin{gather*}
c_{5,6}=-3a^3b^3,\quad
c_{5,5}=-3a^3b^3(3c-4),\quad
c_{5,4}=-\frac{ab^3}{8}(90a^2c^2-240a^2c+185a^2+18),\label{app:eqs:c56-c55-c54}
\end{gather*}
\begin{gather*}
c_{5,3}=-\frac{ab^3}{8}(60a^2c^3-240a^2c^2+(370a^2+36)c-209a^2-48),\label{app:eq:c53}\\
c_{5,2}=-\frac{3b^3}{16a}(15a^4c^4-80a^4c^3+(185a^4+18a^2)c^2-(209a^4+48a^2)c+91a^4+35a^2+3),\label{app:eq:c52}
\end{gather*}
\begin{gather*}
c_{5,1}=-\frac{b^3}{64a}(36a^4c^5-240a^4c^4+(740a^4+72a^2)c^3-(1254a^4+288a^2)c^2\\
+(1092a^4+420a^2+36)c-401a^4-258a^2-48),\label{app:eq:c51}\\
c_{5,0}=-\frac{b^3}{128a^3}(6a^6c^6-48a^6c^5+(185a^6+18a^4)c^4-(418a^6+96a^4)c^3\\+(546a^6+210a^4+18a^2)c^2
-(401a^6+258a^4+48a^2)c+128a^6+128a^4+25a^2+6)\label{app:eq:c50}
\end{gather*}
Although the number of parameters on which the coefficients $c_{2k-1,m}$
depend is one less than the number of parameters in the coefficients $b_{2k-1,m}$ studied in
Appendix~\ref{app:sec:full0expansion} ($\sigma$ is absent), they look even more
complicated.\footnote{\label{app:foot:computation-ckm}
We measured the amount of time that was required in order to compute the coefficients $c_{2k-1,m}$ for the
first 9 levels, as done for the coefficients $b_{2k-1,m}$ (see footnote~\ref{app:footnote:computation-bkm}), and
the results were quite surprising! On the old notebook, \textsc{Maple} 15  showed the quickest computation time was
about 105 seconds, while on the new notebook, \textsc{Maple} 2017 executed the same calculation in approximately
470 seconds! We also ran the same calculation on the new notebook using \textsc{Maple} 2022, and its fastest
computation time was roughly 137 seconds, whilst for the calculation discussed in
footnote~\ref{app:footnote:computation-bkm}, \textsc{Maple} 2022 executed it nearly 15 seconds slower than
\textsc{Maple} 2017! The only thing we can confirm  is the fact that the length of the \textsc{Maple} output for
the coefficients $b_{17,0}$ and $c_{17,0}$, when compared side-by-side, look very similar; both coefficients require
about 36 lines to display.}
As a result of the computation of the coefficients $c_{2k-1,m}$, one can formulate the following conjecture.
\begin{conjecture}\label{app:con:c2k-1m}
\begin{equation*}\label{app:eq:c2k-1m}
c_{2k-1,m}=b^kP_k(a,c)a^{2\lfloor\frac{m+1}{2}\rfloor-k},
\end{equation*}
where $P_k(a,c)$ is a polynomial in the two variables $a$ and $c$ such that
$P_k(0,c)$ is a non-vanishing polynomial in $c$, $\deg_cP_k(a,c)=2k-m$, and
$\deg_aP_k(a,c)=2k-2\lfloor\frac{m+1}2\rfloor$.
\end{conjecture}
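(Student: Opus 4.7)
The plan is a double induction: outer induction on the level $k$, with a nested downward induction on the log-power $m$ within each level. For the base case $k=1$, the claim follows directly from the explicit formulae for $c_{1,0},c_{1,1},c_{1,2}$ recorded immediately after equation~\eqref{app:eq:0-u-expansionLog}: one reads $P_1(a,c) = -1$, $-c$, and $-(a^2c^2+1)/4$, respectively, and checks $\deg_c P_1 = 2-m$, $\deg_a P_1 = 2-2\lfloor(m+1)/2\rfloor$, and $P_1(0,c) \not\equiv 0$ in each case.

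For the inductive step, I would substitute the ansatz~\eqref{app:eq:0-u-expansionLog} into equation~\eqref{eq:dp3} (with $\varepsilon=1$) after clearing denominators by multiplication by $\tau u^2$, and view the result as a polynomial identity in $\tau$ and $L=\ln\tau$. The coefficient at $(\tau^{2k+1},L^m)$ yields a linear relation of the schematic form
\begin{equation*}
\Lambda_1(m)\,c_{2k-1,m+2} + \Lambda_2(k,m)\,c_{2k-1,m+1} + \Lambda_3(k,m)\,c_{2k-1,m} = R_{k,m},
\end{equation*}
where the $\Lambda_j$ are explicit polynomials in $k,m,a,b,c$ (their $a,b,c$ dependence arising from the level-one factors that couple against level-$k$ terms), and $R_{k,m}$ is a polynomial in $\{c_{2j-1,\ell}:j<k,\;0\le\ell\le 2j\}$. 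Because $\Lambda_3(k,m)$ is nonzero on the admissible parameter range, the recurrence determines $c_{2k-1,m}$ once $c_{2k-1,m+1}$ and $c_{2k-1,m+2}$ are known; descending $m$ from $m=2k$ down to $m=0$ yields all level-$k$ coefficients.

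The degree bookkeeping then proceeds as follows. The nonlinear terms $u^2/\tau$, $uu^\prime/\tau$, and $(u^\prime)^2/u$ produce convolutions of lower-level coefficients $c_{2j-1,p}\cdot c_{2(k-j)-1,q}$ whose $a$-exponent, by the outer induction hypothesis, sums to $2\lfloor(p+1)/2\rfloor+2\lfloor(q+1)/2\rfloor-k$; this matches the target $2\lfloor(m+1)/2\rfloor-k$ after accounting for the $\tau$- and $L$-shifts caused by differentiation. The parity observation that only powers of $a$ of parity $k$ appear in $c_{2k-1,m}$ follows from the symmetry $(a,b)\mapsto(-a,-b)$ of equation~\eqref{eq:dp3}, which forces $c_{2k-1,m}(-a,-b,c)=c_{2k-1,m}(a,b,c)$ and thus makes the apparent pole $a^{2\lfloor(m+1)/2\rfloor-k}$ (when that exponent is negative) compatible with polynomiality in $a$. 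The $c$-degree bound $\deg_c P_k=2k-m$ is preserved because each downward step in $m$ introduces at most one additional power of $c$ via the coupling terms.

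The main obstacle will be the non-vanishing assertion $P_k(0,c)\not\equiv 0$. After the parity argument shows that $R_{k,m}$ is indeed divisible by the required negative power of $a$, one must isolate the $a=0$ residue and verify it is a nonzero polynomial in $c$. A natural candidate is the top-$c$-degree monomial, which arises by iterating the $\Lambda_1$-step $c_{2k-1,m}\leftarrow c_{2k-1,m+2}$ back up to the explicit seed $c_{2k-1,2k}=-k\,a^k b^k$; one then needs to verify that each $\Lambda_1$-step multiplies by a nonzero combinatorial factor and shifts the $c$-degree by $+1$ without cancellation. A potentially cleaner alternative would be to extract the logarithmic expansion as a formal limit $\sigma\to 0$ of the super-generating function $A(x,y)$ of Appendix~\ref{app:subsec:super-generating-function}, exploiting the explicit rational form of each $A_n(x)$ to read off $P_k(a,c)$ directly; this route would make the polynomial structure in $a,c$ manifest, though one would have to handle carefully the singular $1/\sigma^2$ poles present in $b_{1,0}$ and $b_{1,\pm 1}$.
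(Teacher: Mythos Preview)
This statement is explicitly labeled a \emph{conjecture} in the paper, and no proof is given. The authors support it only by direct computation of the $c_{2k-1,m}$ through level $k=9$, and immediately afterward write that ``the proof of Conjecture~\ref{app:con:c2k-1m} may turn out to be more complicated despite the fact that the recursion relation would be presented explicitly.'' There is therefore no paper-proof to compare against; you are attempting something the authors leave open.

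Your outline has a reasonable skeleton---outer induction on $k$, inner descent in $m$, and the parity observation that the symmetry $(a,b)\mapsto(-a,-b)$ of equation~\eqref{eq:dp3} forces $P_k$ to contain only even powers of $a$---but the decisive steps are not carried out. The $a$-exponent bookkeeping fails as written: in a convolution the inductive exponents add as $2\lfloor(p+1)/2\rfloor+2\lfloor(q+1)/2\rfloor$, and floor functions do not add, so ``this matches the target $2\lfloor(m+1)/2\rfloor$'' is unjustified (e.g.\ $p=q=1$ gives $1+1=2$, not $\lfloor 3/2\rfloor=1$). You would have to track separately the minimum and maximum $a$-degree through every term of the cleared equation---the cubic $u^3$, the linear $2ab\,u$, and the source $b^2$ each shift the count differently---and establish both bounds as \emph{equalities}. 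You rightly flag $P_k(0,c)\not\equiv0$ as the crux, but neither your top-$c$-monomial sketch nor the proposed $\sigma\to0$ limit of the power-like super-generating function is executed; the latter would require controlling how the $1/\sigma^{2j}$ poles in the $b_{2k-1,m}$ of Appendix~\ref{app:subsec:error-correction-terms} reorganize into powers of $\ln\tau$, which is itself nontrivial. The exact $c$-degree $\deg_c P_k=2k-m$ is likewise an equality needing the same non-cancellation analysis. These are precisely the points the authors flag as potentially hard, and they remain open in your proposal as in the paper.
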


Of course, in order to justify the existence of the expansion~\eqref{app:eq:0-u-expansionLog}, one has to derive
and study the recursion relation for the coefficients; this relation establishes the existence of the
expansion~\eqref{app:eq:0-u-expansionLog} for all $a\in\mathbb{C}\setminus\{0\}$ and
$c\in\mathbb{C}$.\footnote{\label{app:foot:a=0} The logarithmic asymptotics as $\tau\to0$ for $a=0$ are obtained
in \cite{KitVar2004} and simplified in \cite{KitVarZapiski2024}. The full asymptotic expansion, together with the
corresponding super-generating function, will appear in a follow-up paper.}
In Appendix~\ref{app:subsec:coefficients-expansionLog2} below, we show how one can corroborate
a similar statement for the other logarithmic expansion~\eqref{app:eq:0-u-expansionLog2} without having to write
an explicit formula for the recursion relation; however, the proof of Conjecture~\ref{app:con:c2k-1m} may turn out to
be more complicated despite the fact that the recursion relation would be presented explicitly.
\begin{remark}\label{rem:errorTh61}
We now verify the error-correction term for the asymptotics of the functions
$u(\tau)$ and $\me^{\mi\varphi(\tau)}$ given in Theorem~\ref{th:ln-regular-a-non0}. Note that the error-correction
term originally obtained for these asymptotics as $\tau\to0$ was $\mathcal{O}(\tau^{\delta})$
\cite{KitVar2004,KitVarZapiski2024}. The expansion~\eqref{app:eq:0-u-expansionLog} allows one to obtain a more precise
estimate for the error presented in Theorem~\ref{th:ln-regular-a-non0}.
Consider the solution $u(\tau)$ defined by the expansion~\eqref{app:eq:0-u-expansionLog} with the same parameter $c$
(cf. equation~\eqref{eq:def-c-log}) as in the asymptotics for $u(\tau)$ given in Theorem~\ref{th:ln-regular-a-non0};
both asymptotics, therefore, have the same leading-order behaviour.
The function $u(\tau)$ defined by the asymptotic expansion~\eqref{app:eq:0-u-expansionLog} is a solution of
equation~\eqref{eq:dp3} and thus corresponds to some point on the monodromy manifold: the solution $u(\tau)$ in
Theorem~\ref{th:ln-regular-a-non0} corresponds to the same point since the remaining points
on the monodromy manifold correspond to solutions with different asymptotic behaviours, as follows from the
results of this paper. The coincidence of the solutions implies that the correction term in the
asymptotics~\eqref{eq:ln-regular-u-factor} can be determined by referring to expansion~\eqref{app:eq:0-u-expansionLog}.
The error estimate in the asymptotics~\eqref{eq:ln-regular-varphi} for $\me^{\mi\varphi(\tau)}$ is obtained by
integrating equation~\eqref{eq:varphi}.
\hfill $\blacksquare$\end{remark}
\subsection{Super-Generating Function for the Regular Logarithmic Asymptotics}
\label{app:subsec:super-generating-functionLog}
The formal construction of the super-generating function for the coefficients of the
expansion~\eqref{app:eq:0-u-expansionLog} is similar to the one presented in
Subsection~\ref{app:subsec:super-generating-function}, namely,
\begin{equation}\label{app:eq:supergenfunctonLog}
\hat{A}(x,y)=\sum_{n=0}^{\infty}y^n\hat{A}_{n}(x),
\end{equation}
where the coefficient functions $\hat{A}_n(x)$, $n\in\mathbb{Z}_{\geqslant0}$, are generating functions for
the coefficients $c_{2k-1,2k-n}$:
\begin{gather}
\hat{A}_0(x)=\sum_{k=1}^{\infty}c_{2k-1,2k}x^k,\label{app:eq:A0-seriesLog}\\
\hat{A}_n(x)=\sum_{k=\lfloor(n-1)/2\rfloor+1}^{\infty}c_{2k-1,2k-n}x^k,\qquad
n\geqslant1.\label{app:eq:hatAn-series}
\end{gather}
Define the linear differential operator $\hat{D}$ acting in the space of formal power
series of two variables $x$ and $y$ as follows:
\begin{equation}\label{app:eq:hatD-definition}
\hat{D}:=2x(1+y)\frac{\partial}{\partial x}-y^2\frac{\partial}{\partial y};
\end{equation}
then, the function $\hat{A}\equiv\hat{A}(x,y)$ solves the PDE
\begin{equation}\label{app:eq:hatA-PDE}
\hat{D}^2(\ln\hat{A})=-8\hat{A}+2a\frac{bxy^2}{\hat{A}}+\left(\frac{bxy^2}{\hat{A}}\right)^2.
\end{equation}
Assuming that a proper solution of the PDE~\eqref{app:eq:hatA-PDE} is constructed, one can obtain the solution of
equation~\eqref{eq:dp3} via the relation
\begin{equation}\label{app:eq:u-hatA-relation}
u(\tau)=\frac1{\tau}\hat{A}\big(\tau^2\ln^2\tau,(\ln\tau)^{-1}\big).
\end{equation}

Now, in a manner similar to the one taken for the functions $A_n(x)$ in
Subsection~\ref{app:subsec:super-generating-function}, we show that this construction leads to explicit formulae
for the functions $\hat{A}_n(x)$.

For $n=0$, we get exactly the same equation (cf. equation~\eqref{app:eq:A0}) for $\hat{A}_0(x)$ that we got for
the function $A_0(x)$, but with $\sigma=0$,
\begin{equation}\label{app:eq:hatA0}
\hat{D}_x^2\ln\hat{A}_0(x)=-8\hat{A}_0(x),
\end{equation}
where
\begin{equation}\label{app:eq:hatDx}
\hat{D}_x:=2x\frac{\md}{\md x}
\end{equation}
is the $x$-part of the operator $\hat{D}$. The analysis of this equation is even simpler
than the one for equation~\eqref{app:eq:A0}, where, for equation~\eqref{app:eq:hatA0}, we have to choose the rational
solution
\begin{equation}\label{app:eq:hatA0explicit}
\hat{A}_0(x)=-\frac{Cx}{(1-Cx)^2},
\end{equation}
where $C$ is the constant of integration. To determine $C$, one expands the right-hand side of
equation~\eqref{app:eq:hatA0explicit} into a Taylor series centred at $x=0$,
\begin{equation*}\label{app:eq:A0series}
\hat{A}_0(x)=-\sum_{k=1}^{\infty}k(Cx)^k,
\end{equation*}
and compares it with the series~\eqref{app:eq:A0-seriesLog}; then,
\begin{equation}\label{app:eq:C}
C=-c_{1,2}=ab,
\end{equation}
where, for the latter equation, we used the first relation in \eqref{eqs:c12-c11-c10}; thus, we've calculated all
the coefficients
\begin{equation*}\label{app:eq:c2k-1,2k-explicit}
c_{2k-1,2k}=-k(ab)^k,\qquad
k\in\mathbb{N}.
\end{equation*}

To calculate the next generating function $\hat{A}_1(x)$, substitute the expansion~\eqref{app:eq:supergenfunctonLog}
into equation~\eqref{app:eq:hatA-PDE} and equate the terms that are linear in $y$ on both sides of the resulting
equation,
\begin{equation}\label{app:eq:hatA1}
\hat{D}_x^2\left(\frac{\hat{A}_1(x)}{\hat{A}_0(x)}\right)+8\hat{A}_1(x)+2\hat{D}_x^2\ln\hat{A}_0(x)=0,
\end{equation}
where $\hat{A}_0(x)$ is given in equation~\eqref{app:eq:hatA0explicit}; thus, equation~\eqref{app:eq:hatA1} is the
inhomogeneous degenerate hypergeometric equation with general solution
\begin{equation*}\label{app:eq:hatA1-generalsolution}
\hat{A}_1(x)=-\frac{x(1+Cx)C_1}{(1-Cx)^3}-\frac{x(4+\ln(x)+Cx\ln(x))C_0}{(1-Cx)^3}-\frac{4Cx}{(1-Cx)^3},
\end{equation*}
where $C_0$ and $C_1$ are constants of integration. One sets $C_0=0$ because the expansion~\eqref{app:eq:hatAn-series}
for $n=1$ does not contain any logarithmic terms; therefore,
\begin{equation*}\label{app:eq:hatA1-specialsolution}
\hat{A}_1(x)=-\frac{x(1+Cx)C_1}{(1-Cx)^3}-\frac{4Cx}{(1-Cx)^3}\underset{x\to0}{=}-x(C_1+4C)+\mathcal{O}\big(x^2\big).
\end{equation*}
Comparing the latter expansion with the definition of $\hat{A}_1(x)$ (cf. equation~\eqref{app:eq:hatAn-series} for
$n=1$ and the second equation in \eqref{eqs:c12-c11-c10}), we find $-C_1-4C=c_{1,1}=-abc=-Cc$, so that $C_1=C(c-4)$.
It is convenient to decompose $\hat{A}_1(x)$ into partial fractions:
\begin{equation}\label{app:eq:hatA1-partialfractions}
\hat{A}_1(x)=-\frac{c-4}{1-C x}+\frac{3c-8}{(1-C x)^2}-\frac{2(c-2)}{(1-C x)^3}.
\end{equation}
Expanding each fraction in equation~\eqref{app:eq:hatA1-partialfractions} into Taylor series centred at $x=0$,
one finds that
\begin{equation}\label{app:eq:hatA1-Taylor}
\hat{A}_1(x)=-\sum_{k=1}^{\infty}k(ck-2(k-1))(Cx)^k.
\end{equation}
Comparing, now, the expansions~\eqref{app:eq:hatAn-series} and \eqref{app:eq:hatA1-Taylor}, and taking into account
\eqref{app:eq:C}, we get
\begin{equation*}\label{app:eq:c2k-1,2k-1}
c_{2k-1,2k-1}=-k(k(c-2)+2)(ab)^k,\qquad
k\in\mathbb{N}.
\end{equation*}

We present below, without detailed explanations, the construction for the functions $\hat{A}_2(x)$ and $\hat{A}_3(x)$.
The inhomogeneous degenerate hypergeometric equation for the function $\hat{A}_2(x)$  is
\begin{equation}\label{app:eq:hatA2}
\begin{gathered}
\hat{D}_x^2\left(\frac{\hat{A}_2(x)}{\hat{A}_0(x)}\right)+8\hat{A}_2(x)-\frac{2abx}{\hat{A}_0(x)}
-\frac12\hat{D}_x^2\left(\frac{\hat{A}_1(x)}{\hat{A}_0(x)}\right)^2
+2\hat{D}_x^2\left(\frac{\hat{A}_1(x)}{\hat{A}_0(x)}\right)-2\hat{D}_x\left(\frac{\hat{A}_1(x)}{\hat{A}_0(x)}\right)\\
+\hat{D}_x^2\ln\hat{A}_0(x)-\hat{D}_x\ln\hat{A}_0(x)=0.
\end{gathered}
\end{equation}
\begin{remark}\label{app:rem:choice-of-C}
A rational solution of equation~\eqref{app:eq:hatA2} exists if and only if the parameter $C$ in $\hat{A}_0(x)$ is
chosen as per equation~\eqref{app:eq:C}; therefore, it was not necessary to calculate $c_{1,2}$ in order to determine
the value of the parameter $C$, and the possibility of the continuation of our construction dictates the correct
value of $C$. Henceforth, we assume that $C=ab$.
\hfill $\blacksquare$\end{remark}
With reference to Remark~\ref{app:rem:choice-of-C}, one finds the one-parameter ($C_2$) rational solution of
equation~\eqref{app:eq:hatA2} which, when decomposed into partial fractions, reads
\begin{equation}\label{app:eq:hatA2-partialfractions}
\begin{gathered}
\hat{A}_2(x)=-\frac12+\frac{abx}{8}-\frac{4C_2-11}{4(1-abx)}+\frac{12C_2-8c^2+24c-35}{4(1-abx)^2}\\
-\frac{4C_2-10c^2+36c-37}{2(1-abx)^3}-\frac{3(c-2)^2}{(1-abx)^4}.
\end{gathered}
\end{equation}
Expand $\hat{A}_2(x)$ into a Taylor series centred at $x=0$:
\begin{equation}\label{app:eq:A2Taylor1}
\hat{A}_2(x)=-\frac{ab}{8}(8C_2+8c^2-48c+57)x+\mathcal{O}\big(x^2\big).
\end{equation}
Comparing the leading coefficient of the expansion~\eqref{app:eq:A2Taylor1} with the analogous one in
the expansion~\eqref{app:eq:hatAn-series}, one finds
\begin{equation}\label{app:eq:A2C2}
-\frac{ab}{8}(8C_2+8c^2-48c+57)=c_{1,0}=-\frac{b(a^2c^2+1)}{4a};
\end{equation}
thus, solving equation~\eqref{app:eq:A2C2} for $C_2$, we obtain
\begin{equation}\label{app:eq:C2explicit}
C_2= -\frac{1}{8a^2}(6a^2c^2-48a^2c+57a^2-2).
\end{equation}
Now, using the partial fraction expansion~\eqref{app:eq:hatA2-partialfractions} for the function $\hat{A}_2(x)$,
we find, after a straightforward calculation, that, for $k\geqslant2$,
\begin{equation}\label{app:eq:c[2k-1,2k-2]}
c_{2k-1,2k-2}=-\frac{(ab)^k}{8}\left(2k^2(2k-1)(c-2)^2+8k(2k-1)(c-2)+(k+2)(5k-2)+\frac{2k^2}{a^2}\right).
\end{equation}
\begin{remark}\label{app:rem:C2free}
The initial coefficient $c_{1,0}$ has the off-set value (cf. equation~\eqref{eqs:c12-c11-c10}); however, its value
allows us to determine the remaining ``regular'' coefficients of the series \eqref{app:eq:c[2k-1,2k-2]}. It seems
that there should be some other idea that would allow one to fix the coefficients without having to appeal to
\emph{a priori} calculated coefficients for small values of $k$.
This idea has already been demonstrated in Remark~\ref{app:rem:choice-of-C}, where it was explained that the
determination of the value of $C$ is the crucial issue for the existence of the
expansion~\eqref{app:eq:supergenfunctonLog} with rational coefficient functions. To check whether or not this idea
works, we continue with the calculation of the function $\hat{A}_3(x)$, but without reference to the particular value
of $C_2$ obtained above (cf. equation~\eqref{app:eq:C2explicit}).
\hfill $\blacksquare$\end{remark}
To simplify the notation in some of the formulae below, we write $\hat{A}_k(x)=\hat{A}_k$ for $k=0,1,2,3,4$.
With this notation, the equation for the determination of the function $\hat{A}_3(x)$ reads:
\begin{equation}\label{app:eq:hatA3}
\begin{gathered}
\hat{D}_x^2\left(\frac{\hat{A}_3}{\hat{A}_0}\right)+8\hat{A}_3+2\frac{\hat{A}_1}{\hat{A}_0}+2abx\frac{\hat{A}_1}{\hat{A}_0^2}
+\hat{D}_x^2\left(\frac{\hat{A}_1}{\hat{A}_0}\right)-3\hat{D}_x\left(\frac{\hat{A}_1}{\hat{A}_0}\right)\\
+\hat{D}_x^2\left(\frac13\left(\frac{\hat{A}_1}{\hat{A}_0}\right)^3-\frac{\hat{A}_1\hat{A}_2}{\hat{A}_0^2}\right)
+2\hat{D}_x^2\left(\frac{\hat{A}_2}{\hat{A}_0}-\frac12\left(\frac{\hat{A}_1}{\hat{A}_0}\right)^2\right)
-4\hat{D}_x\left(\frac{\hat{A}_2}{\hat{A}_0}-\frac12\left(\frac{\hat{A}_1}{\hat{A}_0}\right)^2\right)=0.
\end{gathered}
\end{equation}
Equation~\eqref{app:eq:hatA3} has, for any value of the parameter $C_2\in\mathbb{C}$, a rational solution that
depends on
a parameter $C_3$. The partial fraction decomposition of $\hat{A}_3(x)$ is
\begin{equation}\label{app:eq:A3partialfractions}
\begin{gathered}
\hat{A}_3(x)=-\frac{abx}{8}+\frac{c}{2}-1-\frac{4c^3-24c^2+48c-32}{(1-abx)^5}\\
+\frac{14c^3-60c^2+87c-46-12cC_2+24 C_2}{2(1-abx)^4}-\frac{6 c^3-21c+9+4C_3-20cC_2+32C_2}{2(1-abx)^3}\\
+\frac{24c^2-16c-29+12C_3-16cC_2+16C_2}{4(1-abx)^2}-\frac{10c-15+4C_3}{4(1-abx)}.
\end{gathered}
\end{equation}
Consider the first terms of the Taylor expansion at $x=0$ of the function $\hat{A}_3(x)$,
\begin{equation}\label{app:eq:hatA3Taylor2}
\begin{aligned}
\hat{A}_3(x)=&-\frac{ab}{8}(8C_3+8c^3-96c^2+360c-349+16cC_2-64C_2)x\\
&-\frac{(ab)^2}{2}(8C_3+16c^3-156c^2+473c-410+24cC_2-72C_2)x^2+\mathcal{O}\big(x^3\big),
\end{aligned}
\end{equation}
and compare it with its definition~\eqref{app:eq:hatAn-series} for $n=3$; the term proportional to $x$
vanishes, which allows us to get the constant $C_3$ in terms of $C_2$:
\begin{equation}\label{app:eq:C3inC2}
C_3=(8-2c)C_2+349/8-45c+12c^2-c^3.
\end{equation}
At this stage of the calculation, we continue to assume that $C_2$ has not been determined. We carry forward with our
construction and try to get a rational function $\hat{A}_4(x)$ which solves equation~\eqref{app:eq:An-diffeq}
for $n=4$ (see below). In case one uses either \textsc{Maple} or \textsc{Mathematica}, finding a general solution
to this equation is not problematic, and the solution for $n=4$ is obtained almost immediately; however, we see
that the rational solution of this equation exists iff the constant of integration $C_2$ is fixed according to
equation~\eqref{app:eq:C2explicit}, in which case, the final form of the function $\hat{A}_3(x)$ reads
\begin{equation}\label{app:eq:A3parfracFINAL}
\begin{gathered}
\hat{A}_3(x)=-\frac{abx}{8}+\frac{c}{2}-1-\frac{4(c-2)^3}{(1-abx)^5}
+\frac{(c-2)\big((46c^2-208c+217)a^2-6\big)}{4a^2(1-abx)^4}\\
-\frac{(46c^3-336c^2+765c-545)a^2-14c+32}{4a^2(1-abx)^3}
+\frac{(36c^3-312c^2+802c-607)a^2-20c+56}{8a^2(1-abx)^2}\\
-\frac{(4c^3-48c^2+158c-137)a^2-4c+16}{8a^2(1-abx)}.
\end{gathered}
\end{equation}
Using the representation~\eqref{app:eq:A3parfracFINAL} for the function $\hat{A}_3(x)$, we obtain the general formula
for the coefficients $c_{2k-1,2k-3}$:
\begin{equation}\label{app:eq:c2k-1,2k-3}
\begin{gathered}
c_{2k-1,2k-3}=-\frac{(ab)^k}{2}\left(\frac{k^2(2k-1)(k-1)}{6}(c-2)^3+k(2k-1)(k-1)(c-2)^2\right.\\
\left.+(k-1)\left(\frac{5k^2}{4}+2k-1+\frac{k^2}{2a^2}\right)(c-2)+k\left(\frac{k-1}{a^2}+\frac{9k-10}{4}\right)\right),\qquad
k\geqslant2.
\end{gathered}
\end{equation}
As stated above, only if the constant of integration $C_2$ in the rational function $\hat{A}_3(x)$ is chosen as per
equation \eqref{app:eq:C2explicit} can one find a rational solution of equation~\eqref{app:eq:An-diffeq} for $n=4$:
\begin{equation}\label{app:eq:hatA4partialfractions}
\begin{gathered}
\hat{A}_4(x)=-\left(1+\frac{4}{a^2}\right)\frac{(abx)^3}{256}+\left(17+\frac{44}{a^2}\right)\frac{(abx)^2}{576}
+\left(144c-469-\frac{388}{a^2}\right)\frac{abx}{2304}\\
-\frac{3}{16}\left(2c^2-8c+3-\frac{2}{a^2}\right)-\frac{5(c-2)^4}{(1-abx)^6}
+\frac{(c-2)^2\big((36c^2-160c+161)a^2-6\big)}{2a^2(1-abx)^5}\\
-\frac{(1564c^4-14176c^3+46212c^2-64352c+32315)a^4-12(50c^2-216c+227)a^2+12}{64a^4(1-abx)^4}\\
+\frac{\big(72(243c^4-2432c^3+8489c^2-12238c)+441091\big)a^4-4(2988c^2-14400c+16265)a^2+504}{1152a^4(1-abx)^3}\\
-\frac{\big(48(65c^4-760c^3+2965c^2-4523c)+107041\big)a^4-4(888c^2-5088c+6491)a^2+240}{768a^4(1-abx)^2}\\
+\frac{(720c^4-11520c^3+56880c^2-98640c+46009)a^4-4(360c^2-2880c+4763)a^2+144}{2304a^4(1-abx)}.
\end{gathered}
\end{equation}
Developing, with the help of equation~\eqref{app:eq:hatA4partialfractions}, the function $\hat{A}_4(x)$ into a Taylor
series about $x=0$ and comparing the resulting expansion with the definition of the function $\hat{A}_n(x)$
for $n=4$ (cf. equation~\eqref{app:eq:hatAn-series}), one finds, for $k=2$ and $k=3$, two off-set coefficients,
\begin{equation}\label{app:eq:c3,0:c5,2}
\begin{gathered}
c_{3,0}=-\frac{(ab)^2}{8}\left(\frac{1}{a^4}+\frac{2c(c-2)}{a^2}+c(c-2)(c^2-2c+4)\right),\\
c_{5,2}=-\frac{3(ab)^3}{16}\left(\frac{3}{a^4}+\frac{18c^2-48c+35}{a^2}+15c^4-80c^3+185c^2-209c+91\right),
\end{gathered}
\end{equation}
and the general formula
\begin{equation}\label{app:eq:c2k-1,2k-4}
\begin{gathered}
c_{2k-1,2k-4}=-\frac{(abx)^k}{96}\bigg(k^2(k-1)(2k-1)(2k-3)(c-2)^4+8k(k-1)(2k-1)(2k-3)(c-2)^3\\
+3(k-1)(2k-3)\left(5k^2+8k-4+\frac{2k^2}{a^2}\right)(c-2)^2+6k(2k-3)\left(\frac{4(k-1)}{a^2}+9k-10\right)(c-2)\\
+3\left(\frac52+\frac1{a^2}\right)^2k^3-\left(\frac{3}{a^4}+\frac{43}{6a^2}+\frac{55}{24}\right)k^2
-6\left(19+\frac{10}{a^2}\right)k+9\left(10+\frac{4}{a^2}\right)\!\!\bigg),
\quad
k=4,5,6,\dotsc.
\end{gathered}
\end{equation}

In general, for $n=0,1,2,\ldots$, the functions $\hat{A}_n(x)\equiv\hat{A}_n$ are defined as rational solutions of
the following degenerate hypergeometric equation,
\begin{equation}\label{app:eq:An-diffeq}
\begin{gathered}
\hat{D}_x^2(f_n+2f_{n-1}+f_{n-2})-\hat{D}_x\big(2(n-1)f_{n-1}+(2n-3)f_{n-2}\big)+(n-1)(n-2)f_{n-2}+8\hat{A}_n\\
=\frac{2abx}{\hat{A}_0}\sum_{l=0}^{n-2}(-1)^l\!\!\sum_{\substack{p_1+\ldots+p_l=n-2\\p_i\geqslant1,\;i=1,\ldots,l}}
\hat{A}_{p_10}\cdot\ldots\cdot\hat{A}_{p_l0}
+\frac{b^2x^2}{\hat{A}_0^2}\sum_{l=1}^{n-4}(-1)^l(l+1)
\sum_{\substack{p_1+\ldots+p_l=n-4\\p_i\geqslant1,i=1,\ldots,l}}\tilde{A}_{p_10}\cdot\ldots\cdot\tilde{A}_{p_l0},
\end{gathered}
\end{equation}
whose Taylor-series expansions start with the term $x^{\lfloor(n-1)/2\rfloor+1}$, and where,
in equation~\eqref{app:eq:An-diffeq}, we adopted the notations
\begin{equation}\label{app:eq:hatAp0-fk-def}
\hat{A}_{p0}:=\frac{\hat{A}_{p}}{\hat{A}_{0}},\quad
p\in\mathbb{N},\qquad
\mathrm{and}\qquad
f_n=\sum_{l=1}^{n}\frac{(-1)^{l-1}}{l}\sum_{\substack{p_1+\ldots+p_l=n\\p_i\geqslant1,\;i=1,\ldots,l}}
\hat{A}_{p_10}\cdot\ldots\cdot\hat{A}_{p_l0}.
\end{equation}
Since, for $n>4$, the value of the inner sums in both of the double sums vanish for $l=0$, it follows that the
summation in both of the double sums  actually begins at $l=1$. For $n=0,1,2,3,4$, equation~\eqref{app:eq:An-diffeq}
remains valid. In this case, though, one has to assume that  $f_{-2}=f_{-1}=0$ and $f_0=\ln\hat{A}_0$; moreover,
the following---natural---conventions regarding the summation in the double sums are made: (i) if the upper limit
of the external sum of a double sum in equation~\eqref{app:eq:An-diffeq} is negative, then the double sum equals
zero; (ii) if the upper limit of the external sum is zero, then the corresponding double sum equals $1$; and (iii)
if the upper limit of the external sum is positive, then the value of the inner sum corresponding to $l=0$ vanishes,
so that the summation starts at $l=1$.
\begin{remark}\label{rem:app:calculationsLog1}
The explicit formula for the function $\hat{A}_4(x)$ (cf. equation~\eqref{app:eq:hatA4partialfractions}) is rather
cumbersome, so that the question arises as to whether or not the procedure can effectively be continued to construct
the functions $\hat{A}_n(x)$ for larger values of $n$, especially when the growth in size with respect to $n$
of the inhomogeneous part of the ODE~\eqref{app:eq:An-diffeq} is taken into account. Inherent in such calculations is
the substantial problem of having to store into the memory of the program all of the previously obtained results,
that is, the functions $\hat{A}_p(x)$ for $p<n$. We calculated $\hat{A}_n(x)$ for $n=5$ and $6$: the calculation of
each function, modulo the previously obtained functions, was completed within 2 seconds. We assume that such
straightforward calculations on a standard, modern laptop can be performed in a reasonable time frame (a few hours),
at least for values of $n$ up to $15$.
\hfill $\blacksquare$\end{remark}
\begin{remark}\label{rem:app:specialSolution:c=2}
The partial fraction decomposition of the generating function $\hat{A}_n(x)$ consists of fractions with
denominators of the form $(1-abx)^k$ with $k\leqslant n+2$; this leads to the fact that, for generic values of
the parameters, the coefficients $c_{2k-1,2k-n}$ are polynomials in $k$ of degree $n+1$.
It is easy to see that the residues of the partial fractions beginning with the fraction with highest order
$k=n+2$ in the denominator are successively proportional to $(c-2)^n$, $(c-2)^{n-2}$, etc., up to powers
of $(c-2)$ that remain positive.
For $c=2$, therefore, the partial fractions corresponding to the first $\lfloor\tfrac{n+1}{2}\rfloor$ members of the
pole expansion of $\hat{A}_n(x)$ vanish. Thus, for $c=2$, the order of the pole of $\hat{A}_n(x)$
is $n+2-\lfloor\tfrac{n+1}{2}\rfloor$, which means that the coefficients $c_{2k-1,2k-n}$ are polynomials in $k$
of $\deg_kc_{2k-1,2k-n}=n+1-\lfloor\tfrac{n+1}{2}\rfloor$.

An interesting observation is the fact that $c$ depends on the scaling parameter $b$
(cf. equation~\eqref{eq:def-c-log}), which means that, for any solution having regular logarithmic behaviour as
$\tau\to0$, one can choose a proper scaling for which the growth of the coefficients of the
corresponding asymptotic expansion achieves its minimal possible value.
\hfill $\blacksquare$\end{remark}
\section{Appendix. The Complete Small-$\tau$ Irregular Logarithmic\\ Asymptotic Expansion of $u(\tau)$:
Theorems~\ref{th:Asympt0-ln-rho12-} and \ref{th:Asympt0-ln-rho12+}${}^{\mathbf\prime}$}
\label{app:sec:full0expansionLog2}
\subsection{Asymptotic Expansion and Properties of its Coefficients}\label{app:subsec:coefficients-expansionLog2}
Bearing in mind the application of B\"acklund transformations to the expansion~\eqref{app:eq:0-u-expansionLog},
the complete logarithmic asymptotic expansion related with the leading term of asymptotics given in
Theorems~\ref{th:Asympt0-ln-rho12-} and \ref{th:Asympt0-ln-rho12+}${}^{\mathbf\prime}$ is
\begin{equation}\label{app:eq:0-u-expansionLog2}
u(\tau)=\sum_{k=0}^{+\infty}\tau^{2k-1}\sum_{m=-2\lfloor k/2\rfloor}^{+\infty}\tilde{c}_{2k-1,m}(\ln\tau)^{-m},\qquad
\tilde{c}_{-1,0}=\tilde{c}_{-1,1}=0,\quad
\tilde{c}_{-1,2}=-\frac14.
\end{equation}
The expansion~\eqref{app:eq:0-u-expansionLog2} depends on the single parameter $\tilde{c}_{-1,3}\in\mathbb{C}$ and
is convergent in a neighbourhood of $\tau=0$; furthermore, as in the expansion~\eqref{app:eq:0-u-expansion},
we assume that $|\arg\,\tau|<\pi$ and the principle branch of $\ln$-function is chosen.
The expansion~\eqref{app:eq:0-u-expansionLog2} is valid for all values of the parameter of formal monodromy
$a\in\mathbb{C}$, although for $a=\mi k$, $k\in\mathbb{Z}\setminus\{0\}$, there are some simplifications that we
address in an upcoming paper.
We say that the coefficients $\tilde{c}_{2k-1,m}$ are the \emph{coefficients of level $k$}. In contrast to the cases
considered in Appendices~\ref{app:sec:full0expansion} and \ref{app:sec:full0expansionLog}, for the present case, the
number of coefficients that belong to each level in the generic situation is infinite; there is, however, one
interesting special case, $\tilde{c}_{-1,3}=0$, for which all the levels are finite.

Below, we present explicit expressions for the coefficients of the first few levels that were calculated by
directly substituting the expansion~\eqref{app:eq:0-u-expansionLog2} into equation~\eqref{eq:dp3} with
$\varepsilon=1$:\footref{foot:epsilon}
\begin{equation}\label{app:eq:tilde-c-1m}
\pmb{\text{Level}\;k=0:}\quad
\tilde{c}_{-1,m}=(-1)^{m-1}2^{m-4}(m-1)\,\tilde{c}_{-1,3}^{m-2},\qquad
m\in\mathbb{N}.
\end{equation}
Numerically, we obtained only the first few members of this sequence: a generic formula can be verified  with the
help of the generating function studied in the next subsection.
\begin{equation}\label{app:eq:tilde-c1m}
\begin{gathered}
\pmb{\text{Level}\;k=1:}\quad
\tilde{c}_{1,0}=\frac{ab}{2},\quad
\tilde{c}_{1,1}=-ab,\quad
\tilde{c}_{1,2}=ab\,(2\,\tilde{c}_{-1,3}+1),\\
\tilde{c}_{1,3}=-\frac{ab}{2}(8\,\tilde{c}_{-1,3}^2+8\,\tilde{c}_{-1,3}+1),\quad
\tilde{c}_{1,4}= ab\,\tilde{c}_{-1,3}(8\,\tilde{c}_{-1,3}^2+12\,\tilde{c}_{-1,3}+3),\\
\tilde{c}_{1,5}=-4ab\,\tilde{c}_{-1,3}^2(2\tilde{c}_{-1,3}+3)(2\tilde{c}_{-1,3}+1),\quad
\tilde{c}_{1,6}=8ab\,\tilde{c}_{-1,3}^3(4\,\tilde{c}_{-1,3}^2+10\,\tilde{c}_{-1,3}+5),\\
\tilde{c}_{1,7}=-8ab\,\tilde{c}_{-1,3}^4(8\,\tilde{c}_{-1,3}^2+24\,\tilde{c}_{-1,3}+15),\quad
\tilde{c}_{1,8}=16ab\,\tilde{c}_{-1,3}^5(8\,\tilde{c}_{-1,3}^2+28\,\tilde{c}_{-1,3}+21),\\
\tilde{c}_{1,9}=-128ab\,\tilde{c}_{-1,3}^6(2\,\tilde{c}_{-1,3}^2+8\,\tilde{c}_{-1,3}+7),\quad
\tilde{c}_{1,10}=256ab\,\tilde{c}_{-1,3}^7(\tilde{c}_{-1,3}+3)(2\tilde{c}_{-1,3}+3);
\end{gathered}
\end{equation}
\begin{equation}\label{app:eq:tilde-c3m}
\begin{gathered}
\pmb{\text{Level}\;k=2:}\quad
\tilde{c}_{3,-2}=-\frac{b^2(a^2+1)}{4},\quad
\tilde{c}_{3,-1}=-b^2\big((a^2+1)\tilde{c}_{-1,3}-a^2-1/2\big),\\
\tilde{c}_{3,0}=-b^2\Big((a^2+1)\big(\tilde{c}_{-1,3}^2-2\,\tilde{c}_{-1,3}+71/32\big)+\tilde{c}_{-1,3}-13/8\Big),\quad
\tilde{c}_{3,1}=\frac{3b^2(37a^2+5)}{32},\\
\tilde{c}_{3,2}=-\frac{b^2}{64}(444a^2\tilde{c}_{-1,3}+239a^2+60\,\tilde{c}_{-1,3}+15),\\
\tilde{c}_{3,3}=\frac{b^2}{256}(3552a^2\tilde{c}_{-1,3}^2+3824a^2\tilde{c}_{-1,3}+623a^2+480\,\tilde{c}_{-1,3}^2
+240\,\tilde{c}_{-1,3}+15),\\
\tilde{c}_{3,4}=-\frac{3b^2}{128}\big((1184\,\tilde{c}_{-1,3}^3+1912\,\tilde{c}_{-1,3}^2+623\,\tilde{c}_{-1,3}+32)a^2
+160\,\tilde{c}_{-1,3}^3+120\,\tilde{c}_{-1,3}^2+15\,\tilde{c}_{-1,3}\big),\\
\tilde{c}_{3,5}=\frac{b^2\tilde{c}_{-1,3}}{32}\big((1776\,\tilde{c}_{-1,3}^3\!+3824\,\tilde{c}_{-1,3}^2\!+1869\,\tilde{c}_{-1,3}\!
+192)a^2+240\,\tilde{c}_{-1,3}^3\!+240\,\tilde{c}_{-1,3}^2\!+45\,\tilde{c}_{-1,3}\big),\\
\tilde{c}_{3,6}=-\frac{b^2\tilde{c}_{-1,3}^2}{16}\big((1776\,\tilde{c}_{-1,3}^3\!+4780\,\tilde{c}_{-1,3}^2\!
+3115\,\tilde{c}_{-1,3}\!+480)a^2\!+240\,\tilde{c}_{-1,3}^3\!+300\,\tilde{c}_{-1,3}^2\!+75\,\tilde{c}_{-1,3}\big),\\
\tilde{c}_{3,7}=\frac{3b^2\tilde{c}_{-1,3}^3}{16}\big((1184\,\tilde{c}_{-1,3}^3\!+3824\,\tilde{c}_{-1,3}^2\!
+3115\,\tilde{c}_{-1,3}\!+640)a^2\!+160\,\tilde{c}_{-1,3}^3\!+240\,\tilde{c}_{-1,3}^2\!+75\,\tilde{c}_{-1,3}\big),\\
\tilde{c}_{3,8}=-\frac{b^2\tilde{c}_{-1,3}^4}{8}\big((3552\,\tilde{c}_{-1,3}^3\!+13384\,\tilde{c}_{-1,3}^2\!
+13083\,\tilde{c}_{-1,3}\!+3360)a^2\\
+480\,\tilde{c}_{-1,3}^3\!+840\,\tilde{c}_{-1,3}^2\!+315\,\tilde{c}_{-1,3}\big);
\end{gathered}
\end{equation}
\begin{equation}\label{app:eq:tilde-c5:-2:-1:0}
\begin{gathered}
\pmb{\text{Level}\;k=3:}\quad
\tilde{c}_{5,-2}=\frac{ab^3(a^2+1)}{4},\quad
\tilde{c}_{5,-1}=ab^3\big((a^2+1)\tilde{c}_{-1,3} - 13a^2/8 - 9/8\big),\\
\tilde{c}_{5,0}=\frac{ab^3}{36}\big((36\,\tilde{c}_{-1,3}^2\!-117\,\tilde{c}_{-1,3}\!+176)a^2\!
+36\,\tilde{c}_{-1,3}^2-81\,\tilde{c}_{-1,3}+83\big),\\
\end{gathered}
\end{equation}
\begin{gather*}\label{app:eq:tilde-c5m}
\tilde{c}_{5,1}=-\frac{ab^3}{864}(7685a^2+2309),\quad
\tilde{c}_{5,2}=\frac{ab^3}{10368}\big((184440\,\tilde{c}_{-1,3}\!+111659)a^2\!+55416\,\tilde{c}_{-1,3}\!+20171\big),\\
\tilde{c}_{5,3}=-\frac{ab^3}{7776}\big((276660\,\tilde{c}_{-1,3}^2\!+334977\,\tilde{c}_{-1,3}\!+67630)a^2\!
+83124\,\tilde{c}_{-1,3}^2\!+60513\,\tilde{c}_{-1,3}\!+6622\big),\\
\tilde{c}_{5,4}=\frac{ab^3}{20736}\big((1475520\,\tilde{c}_{-1,3}^3\!+2679816\,\tilde{c}_{-1,3}^2\!
+1082080\,\tilde{c}_{-1,3}\!+89181)a^2\!\\
+443328\,\tilde{c}_{-1,3}^3\!+484104\,\tilde{c}_{-1,3}^2\!+105952\,\tilde{c}_{-1,3}\!+3645\big),\\
\tilde{c}_{5,5}=-\frac{ab^3}{2592}\big((368880\,\tilde{c}_{-1,3}^4\!+893272\,\tilde{c}_{-1,3}^3\!
+541040\,\tilde{c}_{-1,3}^2\!+89181\,\tilde{c}_{-1,3}\!+2592)a^2\\
+110832\,\tilde{c}_{-1,3}^4\!+161368\,\tilde{c}_{-1,3}^3\!+52976\,\tilde{c}_{-1,3}^2\!+3645\,\tilde{c}_{-1,3}\big),\\
\tilde{c}_{5,6}=\frac{ab^3\tilde{c}_{-1,3}}{7776}\big((2213280\,\tilde{c}_{-1,3}^4\!+6699540\,\tilde{c}_{-1,3}^3\!
+5410400\,\tilde{c}_{-1,3}^2\!+1337715\,\tilde{c}_{-1,3}\!+77760)a^2\!\\
+664992\,\tilde{c}_{-1,3}^4\!+1210260\,\tilde{c}_{-1,3}^3\!+529760\,\tilde{c}_{-1,3}^2\!
+54675\,\tilde{c}_{-1,3}\big),\\
\tilde{c}_{5,7}=-\frac{ab^3\tilde{c}_{-1,3}^2}{648}\big((368880\,\tilde{c}_{-1,3}^4\!+1339908\,\tilde{c}_{-1,3}^3\!
+1352600\,\tilde{c}_{-1,3}^2\!+445905\,\tilde{c}_{-1,3}\!+38880)a^2\\
+110832\,\tilde{c}_{-1,3}^4\!+242052\,\tilde{c}_{-1,3}^3\!+132440\,\tilde{c}_{-1,3}^2\!+18225\,\tilde{c}_{-1,3}\big),\\
\tilde{c}_{5,8}=\frac{ab^3\tilde{c}_{-1,3}^3}{1296}\big((1475520\,\tilde{c}_{-1,3}^4\!+6252904\,\tilde{c}_{-1,3}^3\!
+7574560\,\tilde{c}_{-1,3}^2\!+3121335\,\tilde{c}_{-1,3}\!+362880)a^2\\
+443328\,\tilde{c}_{-1,3}^4\!+ 1129576\,\tilde{c}_{-1,3}^3\!+741664\,\tilde{c}_{-1,3}^2\!+127575\,
\tilde{c}_{-1,3}\big),\dotsc.
\end{gather*}
Perusing these formulae, one can deduce several interesting properties of the coefficients $\tilde{c}_{2k-1,m}$;
for example,
\begin{equation}\label{app:eq:c-2k-1-mPolynomial}
\tilde{c}_{2k-1,m}=b^kP_{k,m}(a,\tilde{c}_{-1,3}),
\end{equation}
where $P_{k,m}(a,\tilde{c}_{-1,3})$ is a polynomial with rational coefficients; moreover, the following properties
can be conjectured.
\begin{conjecture}\label{con:singular-log-coeff}
For $k,m,n\in\mathbb{N}:$
\begin{enumerate}
\item[{\rm(1)}]
$a=0$ is a first-order zero of the polynomials $P_{2n-1,m}(a,\tilde{c}_{-1,3})$ and
$P_{2n,m}(0,\tilde{c}_{-1,3})\neq0;$
\item[{\rm(2)}]
for $m\geqslant k+3$, $\tilde{c}_{-1,3}=0$ is a zero of order $m-k-2$ for the polynomials
$P_{k,m}(a,\tilde{c}_{-1,3});$
\item[{\rm(3)}]
$\deg P_{k,m}=k+m-1$, $\deg_{a} P_{k,m}=k$, and $\deg_{\tilde{c}_{-1,3}} P_{k,m}=m-1$.
\end{enumerate}
\end{conjecture}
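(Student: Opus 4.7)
The plan is to derive the recurrence satisfied by the coefficients $\tilde c_{2k-1, m}$ and carry out strong induction on the level $k$. To this end, multiply equation~\eqref{eq:dp3} (with $\varepsilon = 1$) by $\tau u$ to clear denominators, obtaining the polynomial relation
\[
\tau u u'' - \tau (u')^2 + u u' + 8 u^3 - 2ab u - \tau b^2 = 0,
\]
and substitute the ansatz~\eqref{app:eq:0-u-expansionLog2} written as $u = \sum_{k \geqslant 0} \tau^{2k-1} U_k(L)$ with $L := \ln\tau$ and $U_k(L) = \sum_m \tilde c_{2k-1, m} L^{-m}$. Extracting the coefficient of $\tau^{2n-1} L^{-m}$ yields, for each $(n, m)$, a second-order inhomogeneous linear difference equation in $m$ which determines the level-$(n+1)$ coefficient $\tilde c_{2n+1, m}$ uniquely in terms of the lower-level data; the source is polynomial in $a$, $b$, and the lower-level coefficients, carries an explicit factor $b^{n+1}$, and acquires its $a$-dependence solely through the lower-level coefficients together with the linear term $-2ab u_n$. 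The representation $\tilde c_{2k-1, m} = b^k P_{k, m}(a, \tilde c_{-1, 3})$ then follows inductively, with the base cases $k = 0, 1, 2, 3$ verified from the explicit data in~\eqref{app:eq:tilde-c-1m}--\eqref{app:eq:tilde-c5:-2:-1:0}.

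For property~\textup{(1)}, the plan is to track the parity of the $a$-degree level by level. The bilinear terms $\tau u u''$, $\tau(u')^2$, $u u'$ and the cubic $u^3$ transmit $a$-parity additively: at order $\tau^{2n-1}$ they combine levels whose indices sum to $n + 1$, contributing $a$-parity $n + 1 \pmod{2}$. The linear term $-2ab u$ at level $n$ carries intrinsic $a$-parity one and level-parity $n$, again matching $n + 1 \pmod{2}$. An induction then establishes that $\deg_a P_{k, m} \equiv k \pmod{2}$, yielding the first-order zero at $a = 0$ for odd $k$. The non-vanishing $P_{2n, m}(0, \tilde c_{-1, 3}) \not\equiv 0$ is established by a separate non-degeneracy argument: the $a = 0$ reduction of the recurrence retains a non-trivial principal operator, is seeded by the non-vanishing $\tilde c_{-1, 2} = -\tfrac{1}{4}$, and propagates a non-trivial even-level component through each iteration.

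For property~\textup{(2)}, specialize to $\tilde c_{-1, 3} = 0$. By~\eqref{app:eq:tilde-c-1m} this collapses the level-$0$ expansion to $U_0(L) = -\tfrac{1}{4} L^{-2}$, and a direct analysis of the recurrence at this specialization shows that the restricted level-$k$ function $U_k|_{\tilde c_{-1, 3} = 0}$ is a truncated Laurent polynomial in $L^{-1}$ of highest power $L^{-(k+2)}$, giving $P_{k, m}(a, 0) = 0$ for $m \geqslant k + 3$. Treating $\tilde c_{-1, 3}$ as a small parameter, each additional power of $L^{-1}$ beyond $L^{-(k+2)}$ in the level-$k$ expansion must be fed by a level-$0$ coefficient $\tilde c_{-1, m'}$ with $m' \geqslant 3$, each of which carries the factor $\tilde c_{-1, 3}^{m' - 2}$ by~\eqref{app:eq:tilde-c-1m}. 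A combinatorial bookkeeping along the recurrence paths producing $\tilde c_{2k - 1, m}$ then yields the predicted order of vanishing $m - k - 2$.

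Property~\textup{(3)} combines the preceding analyses. The claim $\deg_a P_{k, m} = k$ is obtained as the saturation of the upper bound $\deg_a \leqslant k$, achieved by iterating the $-2ab u$ contribution once per level; the claim $\deg_{\tilde c_{-1, 3}} P_{k, m} = m - 1$ follows from tracking how the highest $\tilde c_{-1, 3}$-degree propagates, the dominant input at position $(k, m)$ being the level-$0$ coefficient $\tilde c_{-1, m+1}$ contributing $\tilde c_{-1, 3}^{m - 1}$. The total degree $k + m - 1$ is then the sum. The main obstacle will be proving saturation (as opposed to merely the upper bounds) of these degrees, since this requires isolating the leading monomials in $a$ and $\tilde c_{-1, 3}$ and verifying that the many bilinear and cubic terms of the recurrence do not conspire to produce accidental cancellations; a cleaner alternative would be to introduce a super-generating function mimicking~\eqref{app:eq:supergenfunctonLog} and the analysis of Subsection~\ref{app:subsec:super-generating-functionLog} adapted to the present expansion, reducing the structural claims to algebraic statements about rational solutions of an inhomogeneous hierarchy of degenerate hypergeometric equations, although the presence of the free parameter $\tilde c_{-1, 3}$ in the base solution $U_0(L)$ would introduce additional subtleties requiring careful resolution.
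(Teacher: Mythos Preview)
The statement you are attempting to prove is labelled \emph{Conjecture} in the paper, and the paper offers no proof of it. The authors write explicitly that their calculations of the coefficients $\tilde c_{k,m}$ up to levels $k=4,5$ ``support the conjectures (1)--(3)'', and that while ``some of the properties mentioned above could be proved with the help of the recurrence relation for the coefficients $\tilde{c}_{2k-1,m}$ via mathematical induction; this recurrence relation, however, is quite cumbersome, and such considerations would lead us far too astray from our current goals.'' So there is nothing to compare your attempt against: the paper leaves the statement open.

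As for the proposal itself, it is a reasonable outline of how one might try to attack the conjecture, but it is not a proof, and you say as much. The parity argument for the first half of (1) is plausible, but the non-vanishing claim $P_{2n,m}(0,\tilde c_{-1,3})\not\equiv0$ is dispatched by appeal to ``a separate non-degeneracy argument'' that you do not carry out; asserting that the $a=0$ recurrence ``propagates a non-trivial even-level component through each iteration'' is exactly the thing to be proved. For (2), the truncation at $\tilde c_{-1,3}=0$ is consistent with what the paper establishes later via the super-generating function, but the order of vanishing $m-k-2$ is reduced to ``combinatorial bookkeeping along the recurrence paths'' that is again not performed. For (3), you explicitly flag the main obstacle --- proving saturation of the degree bounds rather than just the upper bounds --- and do not resolve it; ruling out accidental cancellation among the bilinear and cubic contributions is the entire content of the claim. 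Your closing suggestion to pass to a super-generating function is in fact what the paper does in Subsection~\ref{app:subsec:super-generating-functionLog2}, but even there the authors arrive only at a further Conjecture~\ref{app:con:tilde-c-2k-1m} rather than a proof of the present one.
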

We, in fact, calculated the coefficients $\tilde{c}_{k,m}$ for levels $k=4$ and $5$ up to $m=9$ and $7$,
respectively; these calculations support the conjectures (1)--(3) made above.\footnote{\label{foot:calc-time-AppC}
The time required for the calculation of all the coefficients mentioned in this sentence on the new notebook
computer is approximately 2 minutes and 10 seconds. Our attempt to calculate the coefficients of
level $k=8$ up to $m=10$, however, failed, so we halted the calculation after roughly 1 hour and 30 minutes,
when almost 4Gb of RAM was used. Note that, to
calculate $\tilde{c}_{15,10}$, one has to calculate the coefficients of the previous level $7$ up to $m=12$,
of the level $6$ up to $m=14$, etc.}
The formulae presented above allow the reader to posit further conjectures regarding the properties of the
polynomials $P_{k,m}$.

We would like to draw the reader's attention to conjecture (2) above for the polynomials
$P_{k,m}(a,\tilde{c}_{-1,3})$, because it is the only---proposed---case for which all the levels $k$ in the
expansion~\eqref{app:eq:0-u-expansionLog2} become finite! In Section~\ref{sec:logarithm}
(cf. Corollary~\ref{cor:c+=c-=0}), we found the monodromy data of the solution corresponding to $\tilde{c}_{-1,3}=0$.
At the end of Subsection~\ref{app:subsec:super-generating-functionLog2} below, we explain how the hypothesis
of the finiteness of the levels follows from the construction of the super-generating function; see the discussion
below Conjecture~\ref{app:con:tilde-c-2k-1m}.

At the very least, we expect that some of the properties mentioned above could be proved with the help of the
recurrence relation for the coefficients $\tilde{c}_{2k-1,m}$ via mathematical induction; this recurrence relation,
however, is quite cumbersome, and such considerations would lead us far too astray from our current goals.
In particular, it is important to prove that the coefficients  $\tilde{c}_{k,m}$ are defined for all values of the
parameters $a,\tilde{c}_{-1,3}\in\mathbb{C}$, and $b>0$. For this purpose, it is sufficient to prove the
property~\eqref{app:eq:c-2k-1-mPolynomial}. First, we establish the $b$-dependence
of the coefficients $\tilde{c}_{2k-1,m}$. Recall that $b>0$ is a parameter that can be removed from
equation~\eqref{eq:dp3} via scaling, which, for $\varepsilon=1$, reads
\begin{equation}\label{app:eq:rescaling}
u(\tau)=\sqrt{b}\,u_1(\tau_1),\qquad
\tau_1=\sqrt{b}\,\tau,
\end{equation}
where $u_1(\tau_1)$ is a solution of equation~\eqref{eq:dp3} for $b=1$ and $u(\tau)$ solves equation~\eqref{eq:dp3}
for arbitrary $b>0$.
To make the distinction between the coefficients that correspond to $b=1$ and those that correspond to generic values
of $b$, we denote them, respectively, as $\tilde{c}_{-1,3}^1$ and $\tilde{c}_{-1,3}^b$. Now, we write the
asymptotics~\eqref{app:eq:0-u-expansionLog2} for the function $u_1(\tau_1)$, and then perform a re-scaling of the
function $u(\tau)$ (cf. equation~\eqref{app:eq:rescaling}); then,
expanding as $\tau\to0$,
\begin{equation*}\label{app:eq:lnb-tau-expansion}
(\ln\sqrt{b}+\ln\tau)^{-k}=(\ln\tau)^{-k}-k\ln\sqrt{b}\,(\ln\tau)^{-k-1}
+\frac{k(k+1)}{2}\ln^2\sqrt{b}\,(\ln\tau)^{-k-2}-\ldots, \quad k\in\mathbb{Z},
\end{equation*}
and comparing the coefficients, we get
\begin{equation}\label{app:eq:c-2k-1,m-b-expansion}
\tilde{c}_{2k-1,m}^b=b^k\big(\tilde{c}_{2k-1,m}^1-(m-1)\ln\sqrt{b}\,\tilde{c}_{2k-1,m-1}^1
+\frac{(m-2)(m-1)}{2}\ln^2\sqrt{b}\,\tilde{c}_{2k-1,m-2}^1+\ldots\big).
\end{equation}
At first glance, it appears that there is a contradiction with equation~\eqref{app:eq:c-2k-1-mPolynomial}, because
we get a polynomial depending not only on $a$ and $\tilde{c}_{-1,3}$, but also on ($\ln\sqrt{b}$)! This, surely,
seems wrong, because equation~\eqref{eq:dp3} depends quadratically on $b$, and thus, by substituting into it the
ansatz~\eqref{app:eq:0-u-expansionLog2}, we get that, in the worst case, the coefficients $\tilde{c}_{2k-1,m}$ are
rational functions of $b$. The resolution of this apparent visual contradiction is to express all of our coefficients
in terms of $a$ and $\tilde{c}_{-1,3}^b$ (note that, in equation~\eqref{app:eq:c-2k-1-mPolynomial}, we have, in fact,
$\tilde{c}_{-1,3}^b$, and not $\tilde{c}_{-1,3}^1$), because $\tilde{c}_{-1,3}^1$ is independent of $b$ while
$\tilde{c}_{-1,3}^b$ may be $b$-dependent.
This observation proves the $b$-dependence of equation~\eqref{app:eq:c-2k-1-mPolynomial}, because it shows that
the polynomial of $\ln\sqrt{b}$ inside the parentheses in equation~\eqref{app:eq:c-2k-1,m-b-expansion} is,
in fact, of order zero, and  we can rewrite it in the form $\tilde{c}_{2k-1,m}=b^kP_{k,m}(a,\tilde{c}_{-1,3}^b)$,
where, at this juncture, we cannot conclude that the function $P_{k,m}$ is necessarily a polynomial, but can claim
that it is some rational function of its variables.

For the reader who is perplexed by this proof, we present a couple of examples showing how the mechanism of the
ln-cancellation works. A surprise appears at the stage that, by definition,  both  $\tilde{c}_{-1,3}^1$  and
$\tilde{c}_{-1,3}^b$ are defined in the same way, namely, as $\mathbb{C}$-valued parameters, that is,
they coincide at first glance. Recall that $\tilde{c}_{-1,2}^b=\tilde{c}_{-1,2}^1=-1/4$: they coincide because
$\tilde{c}_{-1,0}=\tilde{c}_{-1,1}=0$, independent of the value of $b$.
To find  $\tilde{c}_{-1,3}^b$,  we refer to equation~\eqref{app:eq:c-2k-1,m-b-expansion} for $k=0$ and $m=3$:
\begin{equation}\label{app:eq:tilde-c-13b}
\tilde{c}_{-1,3}^b=\tilde{c}_{-1,3}^1-2\ln(\sqrt{b})\,\tilde{c}_{-1,2}^1=\tilde{c}_{-1,3}^1+\frac12\ln\sqrt{b}.
\end{equation}
Now, let's check equation~\eqref{app:eq:c-2k-1,m-b-expansion} for the next value, $m=4$:
\begin{equation*}\label{app:eq:c-14b}
\tilde{c}_{-1,4}^b=\tilde{c}_{-1,4}^1-3\ln\sqrt{b}\,\tilde{c}_{-1,3}^1+3\ln^2\sqrt{b}\,\tilde{c}_{-1,2}
=-3(\tilde{c}_{-1,3}^1)^2-3\ln\sqrt{b}\,\tilde{c}_{-1,3}^1-\frac34\ln^2\sqrt{b}=-3(\tilde{c}_{-1,3}^b)^2,
\end{equation*}
where, in the last calculation, we used equation~\eqref{app:eq:tilde-c-1m} for $b=1$ and $m=4$. Consider one more
example for $k=1$: setting $m=0$, $m=1$, and $m=2$ successively in equation~\eqref{app:eq:c-2k-1,m-b-expansion},
we get
\begin{equation*}\label{app:eq:tilde-c10-c11-c12}
\tilde{c}_{1,0}^b=b\,\tilde{c}_{1,0}^1,\quad
\tilde{c}_{1,1}^b=b\,\tilde{c}_{1,1}^1,\;\;\mathrm{and}\;\;
\tilde{c}_{1,2}^b=b(\tilde{c}_{1,2}^1-\ln\sqrt{b}\,c_{1,1}^1);
\end{equation*}
since the first two equations demonstrate the correct dependence of the coefficients on $b$, let us consider the third
one, namely,
\begin{equation*}\label{app:eq:tilde-c12}
\tilde{c}_{1,2}^b=b(a(2\tilde{c}_{-1,3}^1+1)-\ln\sqrt{b}\,\tilde{c}_{1,1}^1)
=b\big(2a(\tilde{c}_{-1,3}^b-\frac12\ln\sqrt{b})+a-\ln\sqrt{b}\,\tilde{c}_{1,1}^1\big)
=ab(2\tilde{c}_{-1,3}^b+1),
\end{equation*}
where, to obtain the first equation, we used the third equation in the first line of the list~\eqref{app:eq:tilde-c1m}
for $b=1$, to get the second equation, we employed the relation~\eqref{app:eq:tilde-c-13b}, and, finally, the second
equation in the first line of the list~\eqref{app:eq:tilde-c1m}, that is, $\tilde{c}_{1,1}^1=-a$, provided the
cancellation of the $\ln$-terms.

We revert back to our original notation and prove that the function $P_{k,m}(a,\tilde{c}_{-1,3})$ is a polynomial
of two variables. In principle, this fact can be established by appealing to the recurrence relation for the
coefficients $\tilde{c}_{2k-1,m}$; but, because this relation is complicated, we are not going to use its explicit
form in this work, and will, therefore, exploit only those properties of this relation that are pertinent to
the current proof.

To derive the recurrence relation, multiply both sides of equation~\eqref{eq:dp3} by $\tau^3(u(\tau))^2$ and
substitute for $u(\tau)$ its asymptotic expansion~\eqref{app:eq:0-u-expansionLog2}; then, for $k=0, 1, 2, \ldots$,
collect, successively, the ``coefficients'' of like powers of $\tau^{2k}$. These ``coefficients'' are, in fact,
series of powers of $\ln\tau$, the coefficients of which are $\tau$-independent polynomials of the
$\tilde{c}_{2k-1,m}$'s.

Consider what happens for $k=0$. The first non-trivial coefficient corresponds to the term $(\ln\tau)^{-6}$, and
equals $8\tilde{c}_{-1,2}^3+2\tilde{c}_{-1,2}^2$, which consists of two contributions: the first one stems from the
term $8\tau^3(u(\tau))^3$ on the right-hand side of this equation, and the second one originates from its
differential part. Setting $8\tilde{c}_{-1,2}^3+2\tilde{c}_{-1,2}^2=0$, it follows that the only solution allowing
one to develop a non-trivial expansion~\eqref{app:eq:0-u-expansionLog2} is $\tilde{c}_{-1,2}=-1/4$. Then, proceeding
to the subsequent power of $\ln\tau$, we get $(24\tilde{c}_{-1,2}^2+6\tilde{c}_{-1,2})\tilde{c}_{-1,3}=0$, which
implies that $\tilde{c}_{-1,3}$ is a complex parameter. The following terms, for $m>3$, read:
\begin{equation}\label{app:eq:f-1m}
(24\tilde{c}_{-1,2}^2+(m(m-3)+6)\tilde{c}_{-1,2})\tilde{c}_{-1,m}\equiv
m(m-3)\tilde{c}_{-1,2}\tilde{c}_{-1,m}=f_{-1,m},
\end{equation}
where
$f_{-1,m}$ is a polynomial with integer coefficients of the variables $\tilde{c}_{-1,m'}$ for $m'<m$ and the
parameters $a$ and $b$. The last statement is apparent because the transformed equation~\eqref{eq:dp3} is a
polynomial with integer coefficients in terms of $u(\tau)$, its derivatives, and the parameters $a$ and $b$, and,
at the same time, the ansatz~\eqref{app:eq:0-u-expansionLog2} does not have any coefficients $\tilde{c}_{-1,m}$
in the denominator.
Solving equation~\eqref{app:eq:f-1m} successively for $m=4,5,\ldots$, one arrives at the
formulae~\eqref{app:eq:tilde-c-1m}. Then, we continue this procedure for the higher levels $k=1,2,\ldots$, and,
in this way, obtain equations of the form
\begin{equation}\label{app:eq:ckm}
(2k)^2\tilde{c}_{-1,2}\tilde{c}_{2k-1,m}=f_{k,m},
\end{equation}
where $f_{k,m}$ is a polynomial of the ``lower-order'' coefficients $\tilde{c}_{2k'-1,m'}$, $k'<k$ and
$m'\leqslant m+2(k-k')$, and the parameters $a$ and $b$. Note that,
in equation~\eqref{app:eq:ckm}, since the coefficient $(2k)^2c_{-1,2}=-k^2\neq0$ for all $m$,
it is clear that all the $\tilde{c}_{2k-1,m}$'s are polynomials with rational coefficients of the
variables $a$, $b$, and $\tilde{c}_{-1,3}$.
\subsection{Super-Generating Function for the Irregular Logarithmic Asymptotics}
\label{app:subsec:super-generating-functionLog2}
In contrast to the expansions~\eqref{app:eq:0-u-expansion} and \eqref{app:eq:0-u-expansionLog} studied in the
previous appendices, the asymptotic expansion~\eqref{app:eq:0-u-expansionLog2} has infinite levels, as a result of
which, we present in this subsection the super-generating function that computes the coefficients of the levels.
The construction of the super-generating function for the coefficients of the
expansion~\eqref{app:eq:0-u-expansionLog2}, though similar to those presented in
Subsections~\ref{app:subsec:super-generating-function} and \ref{app:subsec:super-generating-functionLog}, is simpler:
\begin{equation}\label{app:eq:supergenfunctonLog2}
\tilde{A}(x,y)=\sum_{k=0}^{\infty}y^k\tilde{A}_{k}(x),
\end{equation}
where the coefficient functions $\tilde{A}_k(x)$, $k\in\mathbb{Z}_{\geqslant0}$, are generating functions for
the coefficients $\tilde{c}_{2k-1,m}$:
\begin{equation}\label{app:eq:tildeAk-series}
\tilde{A}_k(x)=\sum_{m=-2\lfloor k/2\rfloor}^{\infty}\tilde{c}_{2k-1,m}x^k,\qquad
k\geqslant0.
\end{equation}
Define the linear differential operator $\tilde{D}$ acting in the space of formal power series of two variables
$x$ and $y$ as follows:
\begin{equation}\label{app:eq:tildeD-definition}
\tilde{D}=-x^2\frac{\partial}{\partial x}+2y\frac{\partial}{\partial y};
\end{equation}
then, the function $\tilde{A}\equiv\tilde{A}(x,y)$ solves the PDE
\begin{equation}\label{app:eq:tildeA-PDE}
\tilde{D}^2(\ln\tilde{A})=-8\tilde{A}+2a\frac{by}{\tilde{A}}+\left(\frac{by}{\tilde{A}}\right)^2.
\end{equation}
Assuming that a proper solution of the PDE~\eqref{app:eq:tildeA-PDE} is constructed, one can obtain the solution of
the ODE~\eqref{eq:dp3} with the help of the relation
\begin{equation}\label{app:eq:u-tildeA-relation}
u(\tau)=\frac1{\tau}\tilde{A}\big(1/\ln\tau,\tau^2\big).
\end{equation}
What, then, is the proper solution? The function $\tilde{A}(x,y)$ (cf. expansion~\eqref{app:eq:supergenfunctonLog2})
is a formal solution of equation~\eqref{app:eq:tildeA-PDE} with generating functions $\tilde{A}_k(x)$ that are
rational functions of $x$, and the function $\tilde{A}_0(x)$ is normalized by the small-$x$
expansion~\eqref{app:eq:tildeAk-series} with $k=0$, $\tilde{c}_{-1,0}=\tilde{c}_{-1,1}=0$, and $\tilde{c}_{-1,2}=-1/4$.

Define the $x$-part of the operator $\tilde{D}$ as
\begin{equation}\label{app:eq:tildeDx}
\tilde{D}_x:=-x^2\frac{\md}{\md x},
\end{equation}
and construct the first few generating functions $\tilde{A}_k(x)$, $k=0,1,2,3$.

Substituting the expansion~\eqref{app:eq:supergenfunctonLog2} into equation~\eqref{app:eq:tildeA-PDE} and equating to
zero the coefficients of the monomials $y^k$ for $k=0,1,2,\ldots$, one obtains ODEs defining the generating functions
$\tilde{A}_k(x)$. The function $\tilde{A}_0(x)$ satisfies the ODE
\begin{equation}\label{app:eq:tildeA0}
\tilde{D}_x^2\ln\tilde{A}_0(x)+8\tilde{A}_0(x)=0.
\end{equation}
The general solution of this ODE reads:
\begin{equation}\label{app:eq:generictildeA0}
\tilde{A}_0(x)=-\frac{C_1^2}{16\cos^2(C_1(C_2x-1)/(2x))},
\end{equation}
where $C_1$ and $C_2$ are constants of integration. To achieve our goal, we need a solution $\tilde{A}_0(x)$ that is
a rational function of $x$. This is a special solution of the ODE~\eqref{app:eq:tildeA0} that can be obtained
from the general one by making the scaling limit $C_1\to0$, $C_2=C+\pi/C_1$ in equation~\eqref{app:eq:generictildeA0},
where $C$ is a complex parameter; then, we find that
\begin{equation}\label{app:eq:tildeA0rational}
\tilde{A}_0(x)=-\frac{1}{4(1/x - C)^2}.
\end{equation}
Comparing the expansion~\eqref{app:eq:supergenfunctonLog2} with the definition~\eqref{app:eq:u-tildeA-relation} for
$u(\tau)$ in terms of $\tilde{A}(x,y)$, we see that $\tilde{A}_0(x)/\tau$
(cf. equation~\eqref{app:eq:tildeA0rational}) coincides with the leading term of asymptotics of the function $u(\tau)$
obtained in Theorems~\ref{th:Asympt0-ln-rho12-} and \ref{th:Asympt0-ln-rho12+}${}^{\mathbf\prime}$; moreover, the
following conditions hold:
\begin{equation}\label{app:eq:Cc+c-c-1,3}
-\frac{C}2=c_+=c_-=c_{-1,3},
\end{equation}
where $c_+$ and $c_-$ are defined in terms of the monodromy data in
Theorems~\ref{th:Asympt0-ln-rho12-} and \ref{th:Asympt0-ln-rho12+}${}^{\mathbf\prime}$. Expanding, now, the
function $\tilde{A}_0(x)$ (cf. equation~\eqref{app:eq:tildeA0rational}) into a Taylor series about $x=0$, one proves
the general formula for the coefficients $\tilde{c}_{-1,m}$ given in equation~\eqref{app:eq:tilde-c-1m}.

The equation for the generating function $\tilde{A}_1(x)$ reads:
\begin{equation}\label{app:eq:tildeA1}
\big((\tilde{D}_x+2)^2+8\tilde{A}_0(x)\big)\tilde{A}_{10}=\frac{2ab}{\tilde{A}_0(x)},\qquad
\tilde{A}_{10}:=\frac{\tilde{A}_1(x)}{\tilde{A}_0(x)}.
\end{equation}
The general solution of equation~\eqref{app:eq:tildeA1} is
\begin{equation}\label{app:eq:tildeA1gensol}
\begin{aligned}
\tilde{A}_{10}\,\tilde{A}_0(x)=&\frac{\big(C_1x^3+C_2(3C^2x^2-3Cx+1)\big)\me^{-\frac{2}{x}}}{(Cx - 1)^3}\\
 &+\frac{ab}{2}\,\frac{((C^2+C+1)x^2-(2C+1)x+1)((C+1)x-1)}{(Cx - 1)^3},
\end{aligned}
\end{equation}
where $C_1$ and $C_2$ are constants of integration, and $C$ is given in equation~\eqref{app:eq:Cc+c-c-1,3}.
Since we need a rational solution, we set $C_1=C_2=0$, and obtain, finally,
\begin{equation}\label{app:eq:tildeA1rational}
\tilde{A}_1(x)=\frac{ab}{2}\,\frac{((C^2+C+1)x^2-(2C+1)x+1)((C+1)x-1)}{(Cx - 1)^3}.
\end{equation}
Expanding the function $\tilde{A}_1(x)$ given in equation~\eqref{app:eq:tildeA1rational} into a Taylor series about
$x=0$, one shows that
\begin{equation}\label{app:eq:tildeA1-series}
\begin{aligned}
\tilde{A}_1(x)=&\frac{ab}{2C^3}\left(C^3+2C^2+2C+1-\sum_{m=0}^{\infty}\left(\frac{(m+2)(m+1)}{2}-(2C+3)(m+1)
\right.\right.\\
&+2C^2+4C+3\Bigg)C^mx^m\Bigg).
\end{aligned}
\end{equation}
Comparing expansion~\eqref{app:eq:tildeAk-series} for $k=1$ with the expansion~\eqref{app:eq:tildeA1-series},
we get that $\tilde{c}_{1,0}=ab/2$, and
\begin{equation}\label{app:eq:tilde-c1m-formula}
\begin{aligned}
\tilde{c}_{1,m}&=-abC^{m-3}\left(C^2-(m-1)C+\frac{(m-1)(m-2)}{4}\right)\\
&=(-1)^m2^{m-5}ab\,\tilde{c}_{-1,3}^{m-3}\big(16\tilde{c}_{-1,3}^2+8(m-1)\tilde{c}_{-1,3}+(m-1)(m-2)\big),
\quad
m\in\mathbb{N}.
\end{aligned}
\end{equation}
The formula~\eqref{app:eq:tilde-c1m-formula} should be compared with the coefficients
(cf. equations~\eqref{app:eq:tilde-c1m}) $\tilde{c}_{1,m}$, $m=1,2\ldots10$, that were calculated by directly
substituting the expansion~\eqref{app:eq:0-u-expansionLog2} into equation~\eqref{eq:dp3}.

The equation for the determination of the generating function for level $2$ can be written as follows:
\begin{equation}\label{app:eq:tildeA2ode}
\big((\tilde{D}_x+4)^2+8\tilde{A}_0(x)\big)\tilde{A}_{20}=\frac12(\tilde{D}_x+4)^2\tilde{A}_{10}^2-
2ab\frac{\tilde{A}_{10}}{\tilde{A}_0(x)}+\frac{b^2}{\tilde{A}_0(x)^2},\qquad
\tilde{A}_{20}:=\frac{\tilde{A}_2(x)}{\tilde{A}_{0}(x)},
\end{equation}
where the function $\tilde{A}_{10}$ is defined by equation~\eqref{app:eq:tildeA1}. The general solution of
equation~\eqref{app:eq:tildeA2ode} can be presented as
\begin{equation}\label{app:eq:tildeA2gensol}
\tilde{A}_{2}^{gen}(x)=\frac{\big(C_1x^3+C_2(3C^2x^2-3Cx+1)\big)\me^{-\frac{4}{x}}}{(Cx - 1)^3}+\tilde{A}_{2}(x),
\end{equation}
where, with slight abuse of notation, we denote by $\tilde{A}_{2}(x)$ a special rational solution of
equation~\eqref{app:eq:tildeA2ode} corresponding to vanishing values of the constants of integration, $C_1=C_2=0$.
This special solution coincides with the generating function for level 2. It is convenient to present
it via a partial-fraction decomposition:
\begin{equation}\label{app:eq:tildeA2partfrac}
\begin{gathered}
\tilde{A}_{2}(x)=-\frac{b^2(a^2+1)}{4x^2}+\frac{b^2\big((a^2+1)C+2a^2+1\big)}{2x}
-\frac{b^2}{256C^4}\Bigg(\big(64(a^2+1)C^6+128(2a^2+1)C^5\\
+8(71a^2+19)C^4+24(37a^2+5)C^3+4(239a^2+15)C^2+(623a^2+15)C+192a^2\big)\\
-\frac{192a^2}{(Cx-1)^4}-\frac{(623a^2\!+15)C+768a^2}{(Cx-1)^3}
-\frac{4(239a^2\!+15)C^2+3(623a^2\!+15)C+1152a^2}{(Cx-1)^2}\\
-\frac{24(37a^2+5)C^3+8(239a^2+15)C^2+3(623a^2+15)C+768a^2}{Cx-1}\Bigg).
\end{gathered}
\end{equation}
Equation~\eqref{app:eq:tildeA2partfrac} allows one to derive a general formula for the coefficients $\tilde{c}_{3,m}$
(cf. equations~\eqref{app:eq:tilde-c3m}). The first term in equation~\eqref{app:eq:tildeA2partfrac} immediately
provides us with the formula for $\tilde{c}_{3,-2}$, while the second term, after substituting $C=-2\tilde{c}_{-1,3}$,
coincides with $\tilde{c}_{3,-1}$, and, finally, setting $x=0$ in the denominators of the terms in the
third and fourth lines, we find that
\begin{equation}\label{app:eq:tilde-c30}
\tilde{c}_{3,0}=b^2\left(\frac{C^2}{4}(a^2+1)+C(a^2+1/2)+\frac{71}{32}a^2+\frac{19}{32}\right).
\end{equation}
Substituting  $C=-2\tilde{c}_{-1,3}$ into equation~\eqref{app:eq:tilde-c30}, we arrive at the formula for
$\tilde{c}_{3,0}$ that is equivalent to the one written in the list of equations~\eqref{app:eq:tilde-c3m}.
Expanding $\tilde{A}_{2}(x)$ into a Taylor series about $x=0$, we find that
\begin{equation}\label{app:eq:tilde-c3mformula}
\begin{aligned}
\tilde{c}_{3,m}=&\frac{b^2C^{m-4}}{512}\Big(48(37a^2+5)C^3-8(239a^2+15)(m - 1)C^2\\
&+(623a^2+15)(m - 1)(m - 2)C-64a^2(m - 1)(m - 2)(m - 3)\Big)\\
=&(-1)^{m-1}2^{m-12}b^2\tilde{c}_{-1,3}^{m-4}\Big(192(37a^2+5)\tilde{c}_{-1,3}^3
+16(239a^2+15)(m - 1)\tilde{c}_{-1,3}^2\\
&+(623a^2+15)(m - 1)(m - 2)\tilde{c}_{-1,3}+32a^2(m - 1)(m - 2)(m - 3)\Big),
\qquad
m\in\mathbb{N}.
\end{aligned}
\end{equation}
The generating function $\tilde{A}_3(x)$ is the rational solution of the ODE
\begin{equation}\label{app:eq:tilde-A3ode}
\Big(\big(\tilde{D}_x+6\big)^2+8\tilde{A}_0\Big)\tilde{A}_{30}
=\big(\tilde{D}_x+6\big)^2\big(\tilde{A}_{10}\tilde{A}_{20}-\frac13\tilde{A}_{10}^3\big)
+\frac{2ab}{\tilde{A}_0}\big(\tilde{A}_{10}^2-\tilde{A}_{20}\big)
-\frac{2b^2}{\tilde{A}_0^2}\tilde{A}_{10},
\end{equation}
where, for $k=1,2,3$, $\tilde{A}_{k0}:=\frac{\tilde{A}_{k}(x)}{\tilde{A}_{0}(x)}$, with the rational functions
$\tilde{A}_{k-1}(x)$ obtained in the previous steps (cf. equations~\eqref{app:eq:tildeA0rational},
\eqref{app:eq:tildeA1rational}, and \eqref{app:eq:tildeA2partfrac}). The form of the general solution of
equation~\eqref{app:eq:tilde-A3ode} is similar to the one in equation~\eqref{app:eq:tildeA2gensol}, namely,
\begin{equation}\label{app:eq:tildeA3gensol}
\tilde{A}_{3}^{gen}(x)=\frac{\big(C_1x^3+C_2(3C^2x^2-3Cx+1)\big)\me^{-\frac{6}{x}}}{(Cx - 1)^3}+\tilde{A}_{3}(x),
\end{equation}
but, in this case, the unique particular rational solution $\tilde{A}_{3}(x)$ defining the generating function for
level 3 is more complicated:
\begin{equation}\label{app:eq:tildeA3partfrac}
\begin{gathered}
\tilde{A}_{3}(x)=\frac{b^3a(a^2+1)}{4x^2}-\frac{b^3a\big(4(a^2+1)C+13a^2+9\big)}{8x}
+\frac{b^3a}{4C^5}\Bigg(\Big((a^2+1)C^7+(13a^2+9)\frac{C^6}{2}\\
+(176a^2\!+83)\frac{C^5}{9}+(7685a^2\!+2309)\frac{C^4}{216}+(111659a^2\!+20171)\frac{C^3}{2592}
+(33815a^2\!+3311)\frac{C^2}{972}\\
+3(367a^2+15)\frac{C}{64}+4a^2\Big)+\frac{4a^2}{(Cx-1)^5}+\frac{\tilde{\kappa}_4}{(Cx-1)^4}
+\frac{\tilde{\kappa}_3}{(Cx-1)^3}
+\frac{\tilde{\kappa}_2}{(Cx-1)^2}
+\frac{\tilde{\kappa}_1}{Cx-1}\Bigg),
\end{gathered}
\end{equation}
where
\begin{align*}\label{app:eqs:kappa1kappa2kappa3}
\tilde{\kappa}_1=&\,(7685a^2+2309)\frac{C^4}{216}+(111659a^2+20171)\frac{C^3}{1296}+(33815a^2+3311)\frac{C^2}{324}\\
&+3(367a^2+15)\frac{C}{16}+20a^2,\\
\tilde{\kappa}_2=&\,(111659a^2+20171)\frac{C^3}{2592}+(33815a^2+3311)\frac{C^2}{324}+9(367a^2+15)\frac{C}{32}+40a^2,\\
\tilde{\kappa}_3=&\,(33815a^2+3311)\frac{C^2}{972}+3((367a^2+15)\frac{C}{16}+40a^2,\\
\tilde{\kappa}_4=&\,3(367a^2+15)\frac{C}{64}+20a^2.
\end{align*}
The first two terms in the first line of equation~\eqref{app:eq:tildeA3partfrac} give rise to the coefficients
$\tilde{c}_{5,-2}$ and $\tilde{c}_{5,-1}$ presented in the list of equations~\eqref{app:eq:tilde-c5:-2:-1:0}.
To obtain the next coefficient $\tilde{c}_{5,0}$ in this list, one has to set $x=0$ in the ``large parentheses''
in equation~\eqref{app:eq:tildeA3partfrac}, yielding
\begin{equation*}\label{app:eq:tilde-c50}
\tilde{c}_{5,0}=\frac{b^3a}{72}\big((18C^2+117C+352)a^2+18C^2+81C+166\big),
\end{equation*}
which, after the substitution $C=-2\tilde{c}_{-1,3}$, coincides with the corresponding formula in the
list of equations~\eqref{app:eq:tilde-c5:-2:-1:0}. Expanding $\tilde{A}_{3}(x)$ given in
equation~\eqref{app:eq:tildeA3partfrac} into a Taylor series about $x=0$, we find that
\begin{equation*}\label{app:eq:tilde-c5m-formula}
\tilde{c}_{5,m}=\frac{ab^3C^{m-5}}{4}\left(-4a^2\frac{(m+4)!}{m!\,4!}
+\tilde{\kappa}_{4}\frac{(m+3)!}{m!\,3!}-\tilde{\kappa}_{3}\frac{(m+2)!}{m!\,2!}
+\tilde{\kappa}_{2}(m+1)-\tilde{\kappa}_{1}\right),\quad
m\in\mathbb{N},
\end{equation*}
or, more explicitly,
\begin{equation}\label{app:eq:tilde-c5m-formula-explicit}
\begin{aligned}
\tilde{c}_{5,m}=&(-1)^m2^{m-7}ab^3\tilde{c}_{-1,3}^{m-5}\Bigg(\frac{2(7685a^2+2309)}{27}\tilde{c}_{-1,3}^4
+\frac{111659a^2+20171}{324}(m-1)\tilde{c}_{-1,3}^3\\
&+\frac{33815a^2+3311}{486}(m-1)(m-2)\tilde{c}_{-1,3}^2
+\frac{367a^2+15}{64}(m-1)(m-2)(m-3)\tilde{c}_{-1,3}\\
&+\frac{a^2}{6}(m-1)(m-2)(m-3)(m-4)\Bigg),
\qquad
m\in\mathbb{N}.
\end{aligned}
\end{equation}
These coefficients, for $m=1,2,\ldots,8$, coincide with the ones computed directly by substituting the
expansion~\eqref{app:eq:0-u-expansionLog2} into equation~\eqref{eq:dp3} (cf. the list of equations following
\eqref{app:eq:tilde-c5:-2:-1:0}).

We calculated, furthermore, the generating functions $\tilde{A}_4(x)$, $\tilde{A}_5(x)$, and $\tilde{A}_6(x)$.
We did not observe an increase in the computation time, although, of course, there was some: the answers appear
virtually the moment one's finger is lifted {}from the ``enter'' button! The complexity of the answers, however,
increases; for example, on our \textsc{Maple} output sheet, the functions $\tilde{A}_3(x)$, $\tilde{A}_4(x)$,
$\tilde{A}_5(x)$, and $\tilde{A}_6(x)$ require $5$, $15$, $21$, and $46$ lines, respectively, to display. It seems
that the principal limitation with such computations is the number of digits required for printing the answers.
Another limitation for the continuation of these computations is that the functions very quickly become unobservable.
Although, as follows, say, {}from the construction of the function $\tilde{A}_3(x)$, there is some obvious pole
structure of these functions, the main problem is the calculation of the corresponding residues. Based on this pole
structure, which can be deduced {}from the recurrence relation for the generating functions (see
equations~\eqref{app:eq:recurrence-tildeA-k} and \eqref{app:eq:tilde-f-k} below), we arrive at the following
conjecture:
\begin{conjecture}\label{app:con:tilde-c-2k-1m}
\begin{equation}\label{app:eq:tilde-c-2k-1-m-formula}
\tilde{c}_{2k-1,m}=(-1)^{m-k-1}C^{m-k-2}\sum_{l=0}^{k+1}P_{k,m}^l(a^2)(m-1)_lC^{k+1-l},
\qquad
m\in\mathbb{N},
\end{equation}
where $P_{k,m}^l(t)$ are polynomials in $t$ of degree $\deg P_{k,m}^l(t)=\lfloor\tfrac{k}{2}\rfloor$,
and $(m-1)_l=(m-1)\cdot\ldots\cdot(m-l)$ is the falling factorial of
length $l$.\footnote{\label{app:foot:fallingfactorial}Note that, by definition, $(m-1)_0=1$.}
The coefficients of the polynomials $P_{k,m}^l(t)$ are positive rational numbers.
\end{conjecture}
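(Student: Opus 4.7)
The plan is to prove Conjecture~\ref{app:con:tilde-c-2k-1m} by strong induction on the level $k$, leveraging the structural analysis of the generating functions $\tilde{A}_k(x)$ developed in Appendix~\ref{app:subsec:super-generating-functionLog2}. The key observation is that the conjectured formula is precisely the ``Taylor-series shadow'' at $x=0$ of a rational function with a single finite pole at $x=1/C$ of order at most $k+2$, so the problem reduces to establishing this pole structure inductively.

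First I would prove, by strong induction on $k$, that the unique rational solution $\tilde{A}_k(x)$ of the ODE governing the $k$th generating function admits a partial-fraction decomposition
\begin{equation*}
\tilde{A}_k(x)=Q_k(x)+\sum_{j=1}^{k+2}\frac{R_{k,j}(a,b,C)}{(Cx-1)^j},
\end{equation*}
where $Q_k(x)$ is a polynomial of controlled degree. The base cases $k=0,1,2,3$ are verified directly from equations~\eqref{app:eq:tildeA0rational}, \eqref{app:eq:tildeA1rational}, \eqref{app:eq:tildeA2partfrac}, and \eqref{app:eq:tildeA3partfrac}. For the inductive step one uses the recursion
\begin{equation*}
\bigl((\tilde{D}_x+2k)^2+8\tilde{A}_0(x)\bigr)\tilde{A}_{k,0}(x)=\tilde{F}_k(x),
\end{equation*}
where $\tilde{F}_k(x)$ is a rational function of $x$ built from $\tilde{A}_0,\ldots,\tilde{A}_{k-1}$. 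Since $\tilde{D}_x$ maps $(Cx-1)^{-n}$ into a $\mathbb{C}$-linear combination of $(Cx-1)^{-(n-1)},(Cx-1)^{-n},(Cx-1)^{-(n+1)}$ (as shown in Appendix~\ref{app:subsec:super-generating-functionLog2}), and since $\tilde{A}_0$ contributes a second-order pole, elementary bookkeeping shows that the order of the finite pole of $\tilde{F}_k$ increases by exactly one at each level, and the particular rational solution inherits the same maximal pole order $k+2$.

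Second, I would track the degree in $a^2$ of each residue $R_{k,j}$. The two source terms $2ab/\tilde{A}_0$ and $b^2/\tilde{A}_0^2$ contribute $a$-degrees $1$ and $0$ respectively, while quadratic combinations $\tilde{A}_i\tilde{A}_j$ with $i+j\leq k$ accumulate $a^2$-degrees that sum to at most $\lfloor k/2\rfloor$. Parity considerations show that for even $k$ only even powers of $a$ appear, while for odd $k$ a single overall factor of $a$ is present; this matches the explicit examples~\eqref{app:eq:tilde-c1m}, \eqref{app:eq:tilde-c3m}, \eqref{app:eq:tilde-c5m-formula-explicit}. Converting the partial-fraction form into the falling-factorial expansion claimed in the conjecture is then a purely combinatorial step: expanding $(Cx-1)^{-j}=(-1)^j\sum_{m\geq 0}\binom{m+j-1}{j-1}(Cx)^m$ and using the identity $\binom{m+j-1}{j-1}=\sum_{l=0}^{j-1}s_{j,l}\,(m-1)_l$ for appropriate rational coefficients $s_{j,l}$ re-indexes the Taylor coefficients into the form asserted in equation~\eqref{app:eq:tilde-c-2k-1-m-formula}.

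The hard part will be positivity of the rational coefficients of the polynomials $P_{k,m}^l(t)$. The structural induction and the degree tracking follow routine templates, but positivity is not preserved by every step of the recursion: the source term $\tfrac12(\tilde{D}_x+2k)^2\tilde{A}_{1,0}^2$ and related quadratic combinations produce terms of both signs, and the Stirling-type coefficients $s_{j,l}$ arising from the combinatorial re-expansion also have mixed signs. My approach would be to introduce an auxiliary combinatorial model for the residues $R_{k,j}$, for instance by interpreting them as weighted enumerations of decorated trees encoding the iterated inversion of the operator $(\tilde{D}_x+2k)^2+8\tilde{A}_0$, and then to show that after the re-indexing to falling factorials the negative contributions cancel systematically. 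An alternative would be to find a closed-form expression for the residues that makes positivity manifest. Absent such an insight, one can at least verify positivity for a substantial range of $(k,m,l)$ using the explicit partial-fraction decompositions computed in the appendix, which lends strong numerical support to the conjecture even in advance of a complete proof.
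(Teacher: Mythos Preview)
The paper does not prove this statement: it is explicitly labelled a \emph{conjecture}, arrived at by computing the generating functions $\tilde{A}_k(x)$ for $k\leqslant6$ and observing the pole structure and the form of the resulting residues. The only thing the paper actually establishes in the surrounding text is the much weaker consequence that for $C=0$ all levels are finite, and that argument (the paragraph following the conjecture) uses only that $\tilde{A}_k(x)$ is rational with poles confined to $x=0$ and $x=1/C$, together with the polynomiality of the coefficients proved at the end of Subsection~\ref{app:subsec:coefficients-expansionLog2}. There is no proof in the paper to compare your attempt against.

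Your proposal is likewise not a proof, and you acknowledge this. The structural induction on the pole order is plausible and in the spirit of the paper's heuristics, but you do not actually verify that the maximal pole order increases by \emph{exactly} one at each step (one must rule out cancellation or an unexpected jump coming from the nonlinear terms in $\tilde f_k$), nor do you justify that the degree in $a^2$ is \emph{exactly} $\lfloor k/2\rfloor$ rather than at most that. More seriously, the positivity assertion---that all coefficients of the $P_{k,m}^l$ are positive rationals---is the crux, and your own analysis shows why it is delicate: the recursion mixes signs, and the change of basis from binomial to falling-factorial form does too. Your suggested combinatorial model is speculative, and your fallback (``verify positivity for a substantial range'') is exactly what the paper already does. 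In short, you have reproduced the paper's heuristic reasoning for the structural part and correctly isolated the genuine obstruction, but neither you nor the authors have a proof.
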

Conjecture~\ref{app:con:tilde-c-2k-1m} shows, in particular, that there is only the case corresponding to $C=0$
when all the levels have finite length, namely, for $m>k+2$ the coefficients $\tilde{c}_{2k-1,m}=0$, that is,
the non-vanishing coefficients correspond to $m=-2\lfloor k/2\rfloor,\ldots, k+2$, so that the length
of the level of order $k$ (the number of non-vanishing coefficients) is $2\lfloor k/2\rfloor+k+3$.

The fact that for $C=0$, all levels have finite length does not require as elaborate
a conjecture as \ref{app:con:tilde-c-2k-1m}. Since we know that the coefficients
$\tilde{c}_{2k-1,m}$ are defined via the Laurent-series expansion of $\tilde{A}_k(x)$ about $x=0$
(cf. equation~\eqref{app:eq:tildeAk-series}) and $\tilde{A}_k(x)$ satisfies the linear
ODE~\eqref{app:eq:recurrence-tildeA-k}, \eqref{app:eq:tilde-f-k} with singular points only at
$x=0$ and $x=1/C$, the rational solution $\tilde{A}_k(x)$ has poles at these, and only these, points;
in fact, the orders of the poles at $x=0$ and $x=1/C$ are $-2\lfloor k/2\rfloor$ and $k+2$, respectively,
but we will not use these facts explicitly.
The coefficients $\tilde{c}_{2k-1,m}$, for positive $m$, are defined via the Taylor-series expansions of
$\kappa_l/(Cx-1)^l$ which constitute the partial-fraction decomposition of $\tilde{A}_k(x)$.
It is clear that the numerator of the function $\tilde{A}_k(x)$ is a polynomial in the variables
$x$, $a$, and $C$, and its denominator is just the product of $x^{r_1}$ and $(Cx-1)^{r_2}$, where $r_1$ and
$r_2$ are some positive integers; however, the decomposition of $\tilde{A}_k(x)$ into partial fractions may lead
to the appearance of a non-trivial denominator of the residues $\kappa_l$: in our examples, we see that this
denominator actually appears, and equals $C^{k+2}$.
We expand our partial fractions into Taylor series centred at $x=0$ and take the coefficient of the term $x^m$;
then, this coefficient will be a linear combination of the residues multiplied
by $C^m$. Therefore, whichever power of $C$ appeared in the denominator of the residues will be cancelled for large
enough $m$, because the power of $C$ in the denominator is less than or equal to $k+2$, while $m\to+\infty$.
Now, we set $C=0$, and conclude that, for fixed $k>0$, all the coefficients $\tilde{c}_{2k-1,m}$ vanish for large
enough $m$. What happens for small values of $m$? At the end of
Subsection~\ref{app:subsec:coefficients-expansionLog2}, we proved that the coefficients $\tilde{c}_{2k-1,m}$
are polynomials of $a$ and $C$, which, in turn, implies that the coefficients $\tilde{c}_{2k-1,m}$ are well defined
for all values of $m$, and that a possible negative power of $C$ which appeared in the construction should be
cancelled by a proper positive power of $C$ that is ``reserved" for this purpose in the linear combination of
the residues.
The formula~\eqref{app:eq:tilde-c-2k-1-m-formula} sheds light as to how this occurs.

The recurrence relation for the determination of the generating functions $\tilde{A}_{k}(x)$, $k\in\mathbb{N}$,
reads:
\begin{equation}\label{app:eq:recurrence-tildeA-k}
\left(\left(\tilde{D}_x+2k\right)^2+8\tilde{A}_0(x)\right)\tilde{A}_{k0}=
\tilde{f}_k(\tilde{A}_0(x);\tilde{A}_{10},\ldots,\tilde{A}_{(k-1)0}),
\qquad
\tilde{A}_{p0}=\frac{\tilde{A}_p(x)}{\tilde{A}_0(x)},
\quad
p=1,\ldots,k,
\end{equation}
where
\begin{multline}\label{app:eq:tilde-f-k}
\tilde{f}_k(\tilde{A}_0(x);\tilde{A}_{10},\ldots,\tilde{A}_{(k-1)0})=
\left(\tilde{D}_x+2k\right)^2\left(\sum_{l=2}^k\frac{(-1)^l}{l}
\sum_{\substack{p_1+\ldots+p_l=k\\p_i\geqslant1,i=1,\ldots,l}}
\tilde{A}_{p_10}\cdot\ldots\cdot\tilde{A}_{p_l0}\right)\\
{}+\frac{2ab}{\tilde{A}_0(x)}\sum_{l=1}^{k-1}(-1)^l\sum_{\substack{p_1+\ldots+p_l=k-1\\p_i\geqslant1,i=1,\ldots,l}}
\tilde{A}_{p_10}\cdot\ldots\cdot\tilde{A}_{p_l0}\\
{}+\frac{b^2}{(\tilde{A}_0(x))^2}\sum_{l=1}^{k-2}(-1)^l(l+1)
\sum_{\substack{p_1+\ldots+p_l=k-2\\p_i\geqslant1,i=1,\ldots,l}}\tilde{A}_{p_10}\cdot\ldots\cdot\tilde{A}_{p_l0}.
\end{multline}

\vspace*{0.35cm}
\vspace*{0.35cm}
\noindent
\textbf{\Large Acknowledgements}
\vspace*{0.25cm}

\noindent
A.~V. is grateful to the St.~Petersburg Department of the Steklov Mathematical Institute
for hospitality during the summer of 2023, when this work began.

\end{document}